\documentclass[10pt,reqno]{amsart}

\usepackage{amsmath,amsfonts,amsbsy,amsgen,amscd,mathrsfs,amssymb,amsthm}
\usepackage{enumerate,mathtools}

\usepackage[usenames,dvipsnames]{xcolor}
\usepackage[colorlinks=true,citecolor=blue,linkcolor=blue]{hyperref}

\usepackage{lmodern}
\usepackage{mathabx}

\usepackage{tikz}
\usetikzlibrary{shadings}

\newtheorem{thm}{Theorem}[section]
\newtheorem{lem}[thm]{Lemma}

\newtheorem{prop}[thm]{Proposition}
\newtheorem{cor}[thm]{Corollary}

\theoremstyle{definition}

\newtheorem{defn}[thm]{Definition}
\newtheorem{example}[thm]{Example}
\newtheorem{rem}[thm]{Remark}

\newtheorem*{qst}{Question}

\hyphenation{Mink-ow-ski}

\numberwithin{equation}{section} 
\numberwithin{figure}{section}
\numberwithin{table}{section}

\newcommand{\wto}{\stackrel{w}{\to}}

\newcommand{\Tr}{\mathop{\mathrm{Tr}}}

\newcommand{\Vol}{\mathrm{Vol}}
\newcommand{\Dom}{\mathop{\mathrm{Dom}}}

\newcommand{\intr}{\mathop{\mathrm{int}}}

\newcommand{\rank}{\mathop{\mathrm{rank}}}
\newcommand{\spn}{\mathop{\mathrm{span}}}
\newcommand{\supp}{\mathop{\mathrm{supp}}}
\newcommand{\spec}{\mathop{\mathrm{spec}}}
\newcommand{\W}{\mathrm{W}}
\newcommand{\cl}{\mathop{\mathrm{cl}}}

\newcommand{\cof}{\mathop{\mathrm{cof}}}
\newcommand{\D}{\mathsf{D}}
\newcommand{\V}{\mathsf{V}}
\renewcommand{\div}{\mathrm{div}}

\newcommand{\II}{\mathrm{II}}

\begin{document}

\title{The Extremals of Minkowski's Quadratic Inequality}

\author{Yair Shenfeld}
\address{Department of Mathematics, Massachusetts Institute of Technology, 
Cambridge, MA, USA}
\email{shenfeld@mit.edu}

\author{Ramon van Handel}
\address{Fine Hall 207, Princeton University, Princeton, NJ 
08544, USA}
\email{rvan@math.princeton.edu}

\begin{abstract}
In a seminal paper ``Volumen und Oberfl\"ache'' (1903), Minkowski 
introduced the basic notion of mixed volumes and the corresponding
inequalities that lie at the heart of convex geometry.
The fundamental importance of characterizing the extremals of these
inequalities was already emphasized by Minkowski himself, but has to date 
only been resolved in special cases. In this paper, we completely settle 
the extremals of Minkowski's quadratic inequality, confirming a
conjecture of R.\ Schneider. Our proof is based on the representation
of mixed volumes of arbitrary convex bodies as Dirichlet forms associated 
to certain highly degenerate elliptic operators. A key ingredient of the
proof is a quantitative rigidity property associated to these operators.
\end{abstract}

\subjclass[2000]{52A39; 
                 52A40; 
                 58J50} 

\keywords{Mixed volumes; Minkowski's quadratic inequality; 
Alexandrov-Fenchel inequality; extremum problems; convex geometry}

\maketitle

\thispagestyle{empty}

\section{Introduction}
\label{sec:intro}

\subsection{History of the problem}
\label{sec:history}

The systematic study of the geometry of convex bodies dates back to the 
work of Brunn and Steiner in the 1880s. It is however arguably the work of 
Minkowski that laid the foundation for the modern theory of convex 
geometry. In his seminal paper ``Volumen und Oberfl\"ache'' (1903) 
\cite{Min03}, and in an unfinished manuscript ``Theorie der konvexen 
K\"orper'' that was published posthumously \cite{Min11}, Minkowski 
introduced the basic notion of mixed volumes and the corresponding 
inequalities that play a central role in the modern theory 
\cite{BF87,Sch14}. The aim of this paper is to settle a fundamental 
question arising from Minkowski's original paper that has hitherto 
remained open.

As was customary at that time, Minkowski restricted attention to 
3-dimensional bodies. While our main results are formulated in any 
dimension, let us first explain the problem investigated here in its 
original context. Let $K_1,K_2,K_3$ be convex bodies in $\mathbb{R}^3$ 
(throughout this paper, a \emph{convex body} is a nonempty compact convex 
set). The starting point for Minkowski's theory is the fact that the 
volume of convex bodies is a homogeneous polynomial: for any 
$\lambda_1,\lambda_2,\lambda_3\ge 0$, we have
\begin{equation}
\label{eq:poly3d}
	\Vol(\lambda_1K_1+\lambda_2K_2+\lambda_3K_3) =
	\sum_{i_1,i_2,i_3=1}^3 \V(K_{i_1},K_{i_2},K_{i_3})\,
	\lambda_{i_1}\lambda_{i_2}\lambda_{i_3},
\end{equation}
where we denote $\lambda K+\mu L:=\{\lambda x+\mu y:x\in K,~y\in L\}$. The
coefficients $\V(K,L,M)$ in this polynomial are called \emph{mixed 
volumes}. They are nonnegative, symmetric in their arguments, and linear 
in each argument. Mixed volumes admit various natural geometric 
interpretations, and give rise to many familiar notions as special cases. 
For example, if $K$ is any convex body and $B$ denotes the (Euclidean) 
unit ball, then the volume, surface area, and mean width 
of $K$ may be expressed as
$$
	\Vol(K) = \V(K,K,K), \qquad
	\mathrm{S}(K) = 3\,\V(B,K,K), \qquad
	\W(K) = \frac{3}{2\pi}\V(B,B,K),
$$
respectively. We refer to \cite{Sch14,BZ88,BF87} for a detailed 
exposition.

Once the central role of mixed volumes has been realized, it is 
natural to expect that many geometric properties of convex bodies may be 
expressed in terms of relations between mixed volumes. This perspective 
lies at the heart of Minkowski's theory. In particular, Minkowski established 
\cite[p.\ 479]{Min03} the following fundamental inequality for three 
convex bodies $K,L,M$ in $\mathbb{R}^3$:
\begin{equation}
\label{eq:minkq3d}
	\V(K,L,M)^2 \ge \V(K,K,M)\,\V(L,L,M).
\end{equation}
We refer to \eqref{eq:minkq3d} as (the 3-dimensional case of) 
\emph{Minkowski's quadratic inequality}. This inequality unifies many
classical geometric inequalities for 3-dimensional convex bodies, and 
gives rise to numerous new ones.

\begin{example}
The special cases of \eqref{eq:minkq3d} that involve only
$\Vol$, $\mathrm{S}$, and $\W$ are
$$
	\mathrm{S}(K)^2 \ge 6\pi\,\W(K)\,\Vol(K),\qquad\quad
	\pi\,\W(K)^2 \ge  \mathrm{S}(K).
$$
When combined, these recover
the classical isoperimetric and Urysohn inequalities
$$
	\mathrm{S}(K)^3 \ge 36\pi\,\Vol(K)^2,\qquad\quad
	\W(K)^3 \ge  \frac{6}{\pi}\Vol(K).
$$
\end{example}

\begin{example}
Define $C_t := (1-t)K+tL$ for
$t\in[0,1]$. Then
$\frac{d^2}{dt^2}\Vol(C_t)^{1/3} =
-\frac{2}{(1-t)^{2}}\Vol(C_t)^{-5/3}\{\V(L,C_t,C_t)^2-
\V(L,L,C_t)\V(C_t,C_t,C_t)\}\le 0$ by \eqref{eq:minkq3d}. Thus
$$
	\Vol((1-t)K+tL)^{1/3} \ge
	(1-t)\Vol(K)^{1/3} + t\,\Vol(L)^{1/3},
$$
that is, we recover the Brunn-Minkowski inequality as another consequence
of \eqref{eq:minkq3d}.
\end{example}

These are only some basic examples of the breadth of implications
of \eqref{eq:minkq3d}, cf.\ \cite{BZ88,Sch14};
many others are obtained from more general choices of convex bodies or 
from higher-dimensional analogues that will be discussed below.

Minkowski viewed \eqref{eq:minkq3d} as a far-reaching generalization of 
the classical isoperimetric inequality. To deduce a genuine isoperimetric 
statement, however, an inequality in itself does not suffice: one must 
also understand the associated extremal problem. For example, the 
classical isoperimetric theorem states that among all bodies $K$ with 
fixed volume, surface area is minimized if and only if $K$ is a ball. This 
extremal principle follows from the isoperimetric inequality 
$\mathrm{S}(K)^3 \ge 36\pi\,\Vol(K)^2$ once it is understood that balls 
are the unique \emph{equality cases} or \emph{extremals} of this 
inequality: these are precisely the bodies for which the left-hand side is 
minimized when the right-hand side is fixed. It is far from obvious what 
is the analogous statement for the general inequality \eqref{eq:minkq3d}. 
These considerations motivate the following:

\begin{qst}
For which $K,L,M$ is equality attained in \eqref{eq:minkq3d}?
\end{qst}

Minkowski realized that striking new phenomena arise from this question 
already in the most basic example. Consider the inequality
\begin{equation}
\label{eq:cap}
	\V(B,K,K)^2\ge \V(B,B,K)\,\V(K,K,K),
\end{equation}
which is equivalent to $\mathrm{S}(K)^2 \ge 6\pi\,\W(K)\,\Vol(K)$.
This inequality has the following isoperimetric interpretation:
\emph{among all bodies $K$ with fixed volume and mean width, the surface 
area is minimized if and only if $K$ attains equality in \eqref{eq:cap}.} 
Remarkably, it turns out that this generalized isoperimetric problem 
possesses many unusual extremals, in sharp contrast to the classical 
isoperimetric theorem. For example, equality holds in \eqref{eq:cap} 
whenever $K$ is any cap body of the ball, that is, the convex hull of $B$ 
with a finite or countable number of points so that the ``caps'' emanating 
from these points are disjoint (see Figure \ref{fig:cap}).
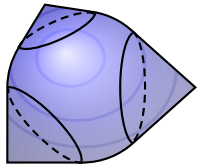
\begin{figure}
\centering
\begin{tikzpicture}


  \shade[ball color = blue, opacity = 0.15] (0,-1) arc (270:313.2:1) -- (1.5,0) -- (0.666,0.745) arc (46.8:80.3:1) -- (-.5,1.1) -- (-.853,.521) arc (148.6:180:1) -- (-1,-1) -- (0,-1);

  \draw[thick, densely dashed] (0,-1) [rotate=-225] arc [start angle = 170, end angle = 10,
    x radius = .707, y radius = .2];

  \draw[thick] (-1,0) [rotate=-45] arc [start angle = 170, end angle = 10,
    x radius = .707, y radius = .2];

  \draw[thick] (0.666,-0.745) [rotate=90] arc [start angle = 170, end angle = 10,
    x radius = .745, y radius = .2];

  \draw[thick, densely dashed] (0.666,0.745) [rotate=-90] arc [start angle = 170, end angle = 10,
    x radius = .745, y radius = .2];

  \draw[thick] (.169,.986) [rotate=204.5] arc [start angle = 170, end angle = 10,
    x radius = .561, y radius = .15];

  \draw[thick, densely dashed] (-.853,.521) [rotate=24.5] arc [start angle = 170, end angle = 10,
    x radius = .561, y radius = .15];

  \draw[thick] (0,-1) arc (270:313.2:1) -- (1.5,0) -- (0.666,0.745) arc (46.8:80.3:1) -- (-.5,1.1) -- (-.853,.521) arc (148.6:180:1) -- (-1,-1) -- (0,-1);

\end{tikzpicture}
\caption{A cap body of the ball.\label{fig:cap}}
\end{figure}

These observations motivate the attention paid by Minkowski to the 
extremals of his inequalities. In particular, in \cite[p.\ 477]{Min03}, he 
asserts that the only extremals of \eqref{eq:cap} are cap bodies of the 
ball. No proof of this statement appears, however, in \cite{Min03,Min11}, 
and it seems unlikely that Minkowski had a correct proof of this fact. 
Nonetheless, the statement is correct, as was shown 40 years later by Bol 
\cite{Bol43} using methods that are specific to this special case. On the 
other hand, Minkowski does not formulate any general conjecture on the 
extremals of \eqref{eq:minkq3d}. The characterization of these extremals 
is a long-standing open problem in convex geometry; see, for example, 
\cite[p.\ 248]{Fav33}, \cite[p.\ 80]{Ale96}, \cite[\S 52 and \S 55]{BF87}, 
\cite{Sch85}, \cite[\S 20.5]{BZ88}, \cite[\S 7.6]{Sch14}. The main results
of this paper will provide a complete solution to this problem.

\subsection{Setting and results}

The results of this paper will in fact be developed in a more general 
setting than the 3-dimensional case considered by Minkowski. We will 
presently describe the general setting considered in this paper, as well 
as how the results of this paper fit in a broader context.

The theory of mixed volumes is by no means restricted to three dimensions: 
a full theory was developed by many authors, following key contributions 
by Bonnesen, Fenchel, Favard, and Alexandrov. If $K_1,\ldots,K_m$ are 
convex bodies in $\mathbb{R}^n$, then
\begin{equation}
\label{eq:mixedv}
	\Vol(\lambda_1K_1+\cdots+\lambda_mK_m) =
	\sum_{i_1,\ldots,i_n=1}^m \V(K_{i_1},\ldots,K_{i_n})\,
	\lambda_{i_1}\cdots\lambda_{i_n}
\end{equation}
for $\lambda_1,\ldots,\lambda_m\ge 0$ in direct analogy with the 
$3$-dimensional case \eqref{eq:poly3d}. This identity serves as the 
definition of mixed volumes in dimension $n$.

With this general definition in hand, one can directly adapt the proof of
\eqref{eq:minkq3d} to higher dimension \cite[p.\ 99]{BF87}, which yields the 
inequality
\begin{equation}
\label{eq:minkq}
	\V(K,L,M,\ldots,M)^2 \ge \V(K,K,M,\ldots,M)\,\V(L,L,M,\ldots,M)
\end{equation}
for any convex bodies $K,L,M$ in $\mathbb{R}^n$.
We will henceforth refer to \eqref{eq:minkq} as (the $n$-dimensional case 
of) \emph{Minkowski's quadratic inequality}.
The main result of this paper is a \emph{complete characterization of the 
extremals of \eqref{eq:minkq}}. In particular, the 3-dimensional case of 
our main result fully settles the extremal problem that arises in 
Minkowski's 1903 paper. As the structure of the family of extremals is 
somewhat intricate, we postpone a detailed description of our main results 
to section \ref{sec:main}.

The characterization of the extremals provided by our main results was 
conjectured by Schneider \cite{Sch85} (where credit for a special case is 
given to Loritz) in a more general setting. We now briefly describe the 
broader context of Schneider's conjectures, and how it relates to the 
setting of this paper.

Minkowski's quadratic inequality \eqref{eq:minkq} involves mixed volumes 
of three convex bodies $K,L,M$. In the $3$-dimensional case, all mixed 
volumes are of this form. However, in the $n$-dimensional case, mixed 
volumes may in principle involve $n$ distinct convex bodies.
It was conjectured by Fenchel that Minkowski's quadratic 
inequality can be generalized to arbitrary $n$-dimensional mixed volumes; 
such a general inequality, however, does not follow from Minkowski's 
methods. Its existence was finally proved by 
Alexandrov in 1937 \cite{Ale96}, who showed that
\begin{equation}
\label{eq:af}
        \V(K,L,C_1,\ldots,C_{n-2})^2 \ge
        \V(K,K,C_1,\ldots,C_{n-2}) \,
        \V(L,L,C_1,\ldots,C_{n-2})
\end{equation}
for any convex bodies $K,L,C_1,\ldots,C_{n-2}$ in $\mathbb{R}^n$. The 
fundamental inequality \eqref{eq:af} is known as the 
\emph{Alexandrov-Fenchel inequality}.

The Alexandrov-Fenchel inequality
has numerous applications and connections with various areas of 
mathematics and is a deep result in its own right, cf.\ 
\cite{Ale96,Sch14,BZ88,AGM15,SvH18}. The characterization 
of its equality cases is a well-known open problem that dates back to 
Alexandrov's 1937 paper.\footnote{%
	Alexandrov writes \cite[p.\ 80]{Ale96}: 
	``Serious difficulties occur in determining the conditions
	for equality to hold in the general inequalities just derived.
	This question has not been solved even for the 
	Minkowski quadratic inequality in three dimensional space.''
}
Not even a plausible conjecture was available on the nature of these 
equality cases, until detailed conjectures were proposed by Schneider in 
1985 \cite{Sch85}. To date, only very limited cases of Schneider's 
conjectures have been settled, cf.\ \cite[section 7.6]{Sch14} and the 
references therein. Our main results confirm Schneider's conjectures in 
the setting of Minkowski's quadratic inequality (which already fully 
captures the 3-dimensional case). While a part of our proofs is specific 
to this setting (cf.\ Remark \ref{rem:whyjustmink}), many techniques that 
are introduced in this paper apply to general mixed volumes and may be of 
independent interest. Some key elements of our approach are described in 
section \ref{sec:keyideas}.

Beside their direct significance to the foundations of convex geometry, 
the extremals of Minkowski's quadratic inequality and of the 
Alexandrov-Fenchel inequality are closely connected to several other 
problems. They arise, for example, in the study of infinitesimal rigidity 
of convex surfaces \cite{Fil92,Bla12}, in the mixed analogue of 
Minkowski's uniqueness problem \cite[Theorem 7.4.2]{Sch14}, and in graph 
theory \cite{Izm10}. Moreover, remarkable connections with algebraic 
geometry \cite[section 27]{BZ88} relate extremals of the 
Alexandrov-Fenchel inequality to those of the Hodge index theorem, which 
are not well understood. These connections are orthogonal to the main 
contribution of this paper, so we will not develop them further here.

\subsection{Some key ideas}
\label{sec:keyideas}

In first instance, it may be expected that the characterization of the 
extremals of the Minkowski and Alexandrov-Fenchel inequalities should 
follow from a careful analysis of the proofs of these inequalities. It 
turns out, however, that none of the classical proofs provides information 
on the cases of equality: the proofs rely on strong regularity 
assumptions (such as smooth bodies or polytopes with restricted face 
directions) under which only trivial equality cases arise, and deduce the 
general result by approximation. The study of the nontrivial extremals 
requires one to work directly with general convex bodies, whose analysis 
gives rise to basic open questions in the foundation of convex geometry.

From their definition, it is clear that mixed volumes are well defined for 
arbitrary convex bodies. The behavior of mixed volumes 
$\V(K_1,\ldots,K_n)$ as a function of the bodies $K_1,\ldots,K_n$ is far 
from clear, however. In the early literature on convex geometry 
\cite{Min03,BF87}, explicit formulae for mixed volumes were only available 
for smooth bodies or for polytopes, which do not provide a basis for the 
direct study of general convex bodies.
An important advance in convex geometry, due independently to Fenchel 
and Jessen \cite{FJ38} and Alexandrov \cite{Ale96} in the late 1930s, was 
the explicit formulation of mixed volumes as a functional of a 
\emph{single} body 
$K\mapsto \V(K,C_1,\ldots,C_{n-1})$. These authors discovered that for 
arbitrary convex bodies $C_1,\ldots,C_{n-1}$ in $\mathbb{R}^n$, 
there exists a finite measure $S_{C_1,\ldots,C_{n-1}}$ on $S^{n-1}$ so 
that
$$
	\V(K,C_1,\ldots,C_{n-1}) =
	\frac{1}{n}\int h_K\,dS_{C_1,\ldots,C_{n-1}},
$$
where $h_K:S^{n-1}\to\mathbb{R}$ denotes the support function 
$h_K(x):=\sup_{y\in K}\langle y,x\rangle$. The \emph{mixed area measure} 
$S_{C_1,\ldots,C_{n-1}}$ plays a fundamental role in convex geometry: its 
introduction made it possible to study classical problems that were 
previously restricted to smooth bodies or polytopes (such as the 
Minkowski existence and uniqueness theorems \cite[chapter 8]{Sch14}) in 
the setting of general convex bodies.

The Minkowski and Alexandrov-Fenchel inequalities, however, capture a more 
subtle phenomenon: \eqref{eq:af} expresses the behavior of mixed volumes 
as a functional of \emph{two} bodies $(K,L)\mapsto 
\V(K,L,C_1,\ldots,C_{n-2})$. The structure of this ``quadratic form'', and 
in particular its spectral theory, is central both to proofs of the 
inequality and to the study of its extremals (cf.\ Lemma \ref{lem:hyper}). 
To date, however, such a spectral theory is known to exist only for very 
smooth bodies or polytopes with identical face directions, by virtue of 
the special representations that are available in these cases. The latter 
suffices to prove the Minkowski and Alexandrov-Fenchel inequalities by 
approximation, but provides no information on their nontrivial extremals.

One of the most basic ideas contained in the present paper is the 
introduction of an explicit formulation of mixed volumes as a function of 
two bodies, which gives rise to a spectral theory for mixed volumes of 
arbitrary convex bodies.
What we show (Theorem \ref{thm:forms}) is that for any 
collection of convex bodies $C_1,\ldots,C_{n-2}$ in $\mathbb{R}^n$, 
there is a 
self-adjoint operator $\mathscr{A}_{C_1,\ldots,C_{n-2}}$ on the Hilbert 
space 
$L^2(S^{n-1},S_{B,C_1,\ldots,C_{n-2}})$ so that mixed volumes can be 
expressed as the quadratic form
$$
	\V(K,L,C_1,\ldots,C_{n-2}) = 
	\langle 
	h_K,\mathscr{A}_{C_1,\ldots,C_{n-2}}
	h_L\rangle_{L^2(S^{n-1},S_{B,C_1,\ldots,C_{n-2}})}
$$
for any convex bodies $K,L$ (in particular, $h_K$ is 
guaranteed to lie in the domain of the quadratic form for every convex 
body $K$; see section \ref{sec:presa} for a brief review of the relevant 
notions of functional analysis). The operator
$\mathscr{A}_{C_1,\ldots,C_{n-2}}$ should be viewed as the natural
bilinear counterpart of the mixed area measure 
$S_{C_1,\ldots,C_{n-1}}$, providing a fundamental new tool for the 
study of mixed volumes of general convex bodies in the spirit of the 
Alexandrov-Fenchel-Jessen theory.

It should be emphasized that the introduction of mixed area measures and 
operators does not in itself solve any problem in convexity. For general 
convex bodies, these objects are quite abstract: even the structure of 
mixed area measures is still not fully understood more than 80 years after 
their introduction. However, these objects provide an essential foundation 
for the formulation and investigation of concrete problems involving mixed 
volumes of general convex bodies. In the proof of our main results, we 
will exploit the fact that tractable expressions for the operator 
$\mathscr{A}_{C_1,\ldots,C_{n-2}}$ exist for smooth bodies and for 
polytopes to develop quantitative rigidity estimates that enable us to 
pass to the limit of general convex bodies (while the existence of such 
expressions in the smooth case dates back to Hilbert \cite{Hil12}, they 
appear to be new even for polytopes where they give rise to ``quantum 
graphs'' \cite{BK13}). The structure of our proof, and in particular the 
quantitative estimates that lie at its core, will be described in further
detail in section \ref{sec:overview}.

\subsection{Organization of this paper}

The rest of this paper is organized as follows. The main results of this 
paper are formulated in section \ref{sec:main}. Section \ref{sec:overview} 
is devoted to a high-level overview of the main ingredients of our proofs, 
and of how they fit together. Section \ref{sec:pre} reviews, mostly 
without proofs, some basic results of convex geometry and functional 
analysis that will be used throughout the paper. Section \ref{sec:forms} 
is devoted to the construction and basic properties of the self-adjoint 
operators that form the foundation for our analysis. Sections 
\ref{sec:weak} and \ref{sec:rigid} develop the two main ingredients of the 
proof of our main result: a weak stability theorem and a quantitative 
rigidity theorem for Minkowski's inequality. In particular, the proof of 
our main results will be completed in section \ref{sec:rigid} in the case 
that $M$ in \eqref{eq:minkq} is full-dimensional. Finally, section 
\ref{sec:lower} is devoted to the case that $M$ in \eqref{eq:minkq} is 
lower-dimensional.

\section{The extremals of Minkowski's quadratic inequality}
\label{sec:main}

\subsection{Main results}

Before we can formulate our main results, we must recall some basic
notions of convex geometry. Here and throughout the paper, our standard
reference on convexity will be the monograph \cite{Sch14}.

Throughout the remainder of this paper we fix the dimension $n\ge 3$. A 
convex body is a nonempty compact convex subset of $\mathbb{R}^n$. Given a 
convex body $K$ and a vector $u\in\mathbb{R}^n\backslash\{0\}$, we denote 
by $F(K,u)$ the unique exposed face of $K$ with outer normal vector $u$. 
The supporting hyperplane of $K$ in the normal direction $u$ is the 
hyperplane $F(K,u)+u^\perp$. A key role in the present context will be 
played by normal directions that are extreme in the following sense 
\cite[p.\ 85]{Sch14}.

\begin{defn}
A vector $u\in\mathbb{R}^n\backslash\{0\}$ is called an \emph{$r$-extreme 
normal vector} of a convex body $K$ if there do not exist linearly 
independent normal vectors $u_1,\ldots,u_{r+2}$ at a
boundary point of $K$ such that $u=u_1+\cdots+u_{r+2}$.
\end{defn}

For example, if $K$ is a polytope, then $u$ is an $r$-extreme normal 
vector if and only if it is an outer normal of a face of $K$ of dimension
$\dim F(K,u)\ge n-1-r$.

We are now ready to describe the extremals of Minkowski's quadratic 
inequality \eqref{eq:minkq}. We must distinguish several cases, depending 
on the dimension of $M$. Let us observe at the outset that 
\eqref{eq:minkq} is invariant under translation and scaling of each  
body, so that the extremals must be invariant under homothety as well.

We begin by considering the main case where $M$ has nonempty 
interior. The following result confirms a conjecture of Schneider 
\cite{Sch85}, cf.\ \cite[section 7.6]{Sch14}.

\begin{thm}[\textbf{Extremals: full-dimensional case}]
\label{thm:mainfull}
Let $M\subset\mathbb{R}^n$ be a convex body with nonempty interior,
and let $K,L\subset\mathbb{R}^n$ be arbitrary convex bodies such that
$\V(L,L,M,\ldots,M)>0$. Then we have
$$
	\V(K,L,M,\ldots,M)^2 = \V(K,K,M,\ldots,M)\,\V(L,L,M,\ldots,M)
$$
if and only if there exist $a\ge 0$ and
$v\in\mathbb{R}^n$ such that $K$ and $aL+v$ have the same 
supporting hyperplanes in all $1$-extreme normal directions of $M$.
\end{thm}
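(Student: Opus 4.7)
The plan is to reformulate the extremal problem as a spectral problem for the self-adjoint operator $\mathscr{A}_M := \mathscr{A}_{M,\ldots,M}$ on $L^2(S^{n-1}, S_{B,M,\ldots,M})$ provided by Theorem~\ref{thm:forms}, and then translate the resulting kernel condition back to geometry. First I would use the identity
\[
  \V(K,L,M,\ldots,M) = \langle h_K, \mathscr{A}_M h_L \rangle
\]
to rewrite Minkowski's inequality as a reverse Cauchy-Schwarz inequality for $\mathscr{A}_M$. By the hyperbolic property recorded in Lemma~\ref{lem:hyper} (i.e.\ $\mathscr{A}_M$ has exactly one positive direction, essentially spanned by $h_M$), the standard Lorentzian argument shows that equality in Minkowski's inequality is equivalent to $h_K - a\, h_L \in \ker \mathscr{A}_M$ for some $a \ge 0$, which can be taken to be $\sqrt{\V(K,K,M,\ldots,M)/\V(L,L,M,\ldots,M)}$. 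The problem thus reduces to an explicit description of $\ker \mathscr{A}_M$ on differences of support functions.

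Next I would prove the key claim that for $h = h_K - a h_L$, one has $h \in \ker \mathscr{A}_M$ if and only if there exists $v \in \mathbb{R}^n$ with $h(u) = \langle v, u\rangle$ for every $1$-extreme normal direction $u$ of $M$. The easy direction relies on the general principle that the $1$-extreme normals (essentially) carry the mixed area measure $S_{B,M,\ldots,M}$, together with the fact that linear functions always lie in $\ker \mathscr{A}_M$ by translation invariance of mixed volumes. The substantive direction is the quantitative rigidity theorem of Section~\ref{sec:rigid}. I would first prove it for smooth $M$, where $\mathscr{A}_M$ becomes a degenerate elliptic operator on $S^{n-1}$ and kernel elements are forced to be linear by a Hilbert-type PDE argument, and then for polytopal $M$, where $\mathscr{A}_M$ is a discrete Laplace-type operator on the normal fan (a ``quantum graph''), handled by a combinatorial rigidity estimate on the $1$-skeleton. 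In both cases the estimate must be made quantitative, with dependence on $M$ that is stable under approximation.

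To treat arbitrary $M$, I would approximate $M$ by smooth bodies or polytopes $M_\varepsilon$, apply the quantitative rigidity bound to $\mathscr{A}_{M_\varepsilon}$, and use the weak stability theorem of Section~\ref{sec:weak} to pass to the limit so that the resulting control survives on the $1$-extreme normals of $M$ itself. The output is that $h_K - a h_L$ agrees with some linear function $\ell_v(u) = \langle v, u\rangle$ at every $1$-extreme normal direction of $M$; taking support functions, this is exactly the statement that $K$ and $aL + v$ share the same supporting hyperplane in each $1$-extreme normal direction of $M$, which is the desired extremality condition. The translation by $v$ appears naturally because linear functions are always in $\ker \mathscr{A}_M$, and the fact that $a\ge 0$ is built into the reverse Cauchy-Schwarz step.

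The main obstacle will be the quantitative rigidity step. The kernel of $\mathscr{A}_M$ always contains the $n$-dimensional space of linear functions, and acquires extra elements precisely where $M$ fails to be strictly convex or smooth, so one cannot hope for a uniform lower bound on $\mathscr{A}_M$ on a complement of the linear functions. What is needed instead is a stability estimate whose constants, while depending on $M_\varepsilon$, degenerate only in directions that cease to be $1$-extreme in the limit; otherwise the limiting argument would lose information about exactly the singular directions where cap-body-like non-uniqueness is allowed. Designing such estimates in both the smooth and the polytopal case, and assembling them via weak stability so that these two local pictures glue into a global rigidity statement for arbitrary $M$, appears to be where the real difficulty lies.
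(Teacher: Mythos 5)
Your overall reduction is correct: by Theorem \ref{thm:forms} and Lemma \ref{lem:hyper} the extremal problem is equivalent to determining $\ker\mathscr{A}\cap\{h_Q-h_R\}$, the ``if'' direction is the easy translation via $\supp S_{B,\mathcal{M}}$ and Theorem \ref{thm:supp}, and the ``only if'' direction requires quantitative estimates that survive Hausdorff approximation. This matches the paper's strategy in broad outline. However, the way you propose to organize the hard direction misrepresents the architecture of the argument, and the difference is not cosmetic.

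You describe ``the quantitative rigidity theorem'' as one result, to be proved for smooth $M$ by a Hilbert-type PDE argument and for polytopal $M$ by a quantum-graph estimate, and then to be transferred to general $M$ by approximation with the weak stability theorem used ``to pass to the limit.'' That is not the structure of the proof, and you would hit a wall trying to carry it out. The obstacle (stated explicitly in the overview) is that the methods that prove Minkowski's inequality --- Reilly's formula in the smooth case, Alexandrov's argument in the polytopal case --- naturally control differences of support functions only against the measure $S_{M,\mathcal{M}}$, whose support is the \emph{closure of the $0$-extreme directions}, which is strictly smaller than $\supp S_{B,\mathcal{M}}$ (the $1$-extreme directions) where the extremals live. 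No amount of quantification of Hilbert's argument yields a bound against $S_{B,\mathcal{M}}$ directly. The paper therefore factors the problem into two genuinely different quantitative estimates, applied \emph{in sequence}: Theorem \ref{thm:weak} (proved for smooth $M$ via Reilly plus the Payne--Weinberger Poincar\'e inequality, then by smooth approximation) gives the seed $h_K-ah_L-\langle v,\cdot\,\rangle=0$ on $\supp S_{M,\mathcal{M}}$; Theorem \ref{thm:rigid} (proved for polytopes via an edgewise Poincar\'e inequality on the quantum graph with the edge-length bound of Lemma \ref{lem:2d}, then by polytope approximation with the carefully chosen measure $\mu_M$ of Lemma \ref{lem:0approx}) then amplifies vanishing on $\supp S_{M,\mathcal{M}}$ to vanishing on $\supp S_{B,\mathcal{M}}$. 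Neither ingredient implies the other, each is proved with a different class of approximants, and the order cannot be reversed: rigidity is vacuous without the seed, and weak stability alone gives much too little (cf.\ Example \ref{ex:weak}). Your phrasing inverts this order and treats the smooth and polytope cases as alternative routes rather than complementary halves of a single two-stage argument.

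A second, smaller inaccuracy: your ``main obstacle'' diagnosis suggests that the constants in the rigidity estimate should degenerate only in directions that stop being $1$-extreme in the limit. In the paper the constant $C_M$ in Theorem \ref{thm:rigid} is uniform, depending only on $r,R$ with $rB\subseteq M\subseteq RB$; what makes the estimate compatible with the limit is not a direction-dependent constant but the presence of the subtracted boundary term $\|h_K-h_L\|^2_{L^2(\mu_M)}$, where $\mu_M$ is supported in $\supp S_{M,\mathcal{M}}$ and is precisely the term annihilated by weak stability. Relatedly, Remark \ref{rem:gap} explains why a clean spectral-gap statement on $L^2(S_{B,\mathcal{M}})$ cannot be hoped for ($\mathscr{A}$ need not have compact resolvent), so the $\mu_M$ device is not an artifact but a necessity. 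Finally, a minor slip: for $C^\infty_+$ bodies the operator $\mathscr{A}_M$ is genuinely (non-degenerately) elliptic; degeneracy is exactly the phenomenon of the general case.

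In short: right ingredients, but the two-stage weak-stability-then-rigidity structure --- and the reason it is forced, namely the mismatch between $S_{M,\mathcal{M}}$ and $S_{B,\mathcal{M}}$ --- is the essential novelty here, and your proposal as written does not reproduce it.
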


When $M$ is lower-dimensional, the conclusion of Theorem 
\ref{thm:mainfull} is no longer valid and additional extremals appear. A 
suitable modification of Schneider's conjecture in this case was proposed 
by Ewald and Tondorf \cite{ET94}, see also \cite[section 4.2]{Sch94}. The 
following result confirms this conjecture in the present setting.

\begin{thm}[\textbf{Extremals: lower-dimensional case}]
\label{thm:mainlower}
Let $M\subset\mathbb{R}^n$ be a convex body with empty interior, so that
$M-M\subset w^\perp$ for some $w\in S^{n-1}$.
Let $K,L\subset\mathbb{R}^n$ be arbitrary convex bodies such that
$\V(L,L,M,\ldots,M)>0$. Then we have
$$
	\V(K,L,M,\ldots,M)^2 = \V(K,K,M,\ldots,M)\,\V(L,L,M,\ldots,M)
$$
if and only if $\tilde L:=\frac{\V(K,L,M,\ldots,M)}{\V(L,L,M,\ldots,M)}L$
satisfies that
$K+F(\tilde L,w)$ and $\tilde L+F(K,w)$ have the same supporting hyperplanes in 
all $1$-extreme normal directions of $M$.
\end{thm}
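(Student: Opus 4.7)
The plan is to reduce Theorem \ref{thm:mainlower} to Theorem \ref{thm:mainfull} by a perturbation argument, taking advantage of the fact that when $M - M \subset w^\perp$, the quadratic form $(A, B) \mapsto \V(A, B, M, \ldots, M)$ is insensitive to certain modifications of its arguments in the $w$-direction. The candidate extremal condition involves the augmented bodies $K + F(\tilde L, w)$ and $\tilde L + F(K, w)$; I expect these augmentations to encode precisely this invariance, since $F(\tilde L, w)$ and $F(K, w)$ are convex sets lying in hyperplanes parallel to $w^\perp$, and their contribution to a mixed volume paired with $n-2$ copies of $M$ should be expressible via lower-dimensional data symmetric in $K$ and $\tilde L$.

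The first technical step is to establish an algebraic identity showing that the quadratic form $(A, B) \mapsto \V(A, B, M, \ldots, M)$ is invariant, modulo explicit symmetric adjustments, under the replacements $A \mapsto A + F(B, w)$ and $B \mapsto B + F(A, w)$, so that the Minkowski quadratic inequality and its equality case are preserved. This identity should follow either from slicing $A$ and $B$ by hyperplanes orthogonal to $w$ (since $M$ lies in a single such slice, adding a subset of a slice parallel to $w^\perp$ interacts with $M$ in a simple, bilinear manner) or from direct manipulation of the mixed area measure $S_{M,\ldots,M,\cdot}$, which is concentrated on directions governed by $M$. Once the invariance is in hand, one replaces $(K, \tilde L)$ by the augmented pair $(K + F(\tilde L, w),\, \tilde L + F(K, w))$, which now share aligned profiles in the $+w$-direction and satisfy the same equality case.

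The second step is to approximate $M$ by full-dimensional bodies $M_\epsilon := M + \epsilon B$ and to apply Theorem \ref{thm:mainfull}. Under equality for the original problem, one obtains near-equality at the $\epsilon$-level, and the $1$-extreme normal directions of $M_\epsilon$ consist of $\pm w$ together with the $1$-extreme normals of $M$ (all lying in $w^\perp$). The full-dimensional theorem forces the augmented bodies to agree on supporting hyperplanes in \emph{all} of these normals; the $\pm w$ constraints, however, are precisely the ones absorbed by the face augmentations, leaving only the stated condition on $1$-extreme normals of $M$. The main obstacle I anticipate is the rigorous passage $\epsilon \to 0$: since, as the introduction emphasizes, nontrivial equality cases are invisible to limit arguments in the proofs of these inequalities, one must invoke the quantitative rigidity estimate of section \ref{sec:rigid} to ensure that the limiting extremal condition is exactly the stated one, with no spurious additional constraints and no extremals lost. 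A secondary difficulty is the analysis of pairs $K, \tilde L$ that are themselves flat in the $w$-direction, where $F(K, w)$ essentially equals $K$ and the augmentation degenerates; this boundary case likely requires a separate reduction, perhaps by a further slicing onto the hyperplane $w^\perp$ followed by an application of the full-dimensional result in dimension $n-1$.
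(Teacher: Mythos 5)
Your proposal takes a fundamentally different route from the paper: the paper does not reduce to the full-dimensional case at all. Instead it constructs the self-adjoint operator $\mathscr{A}$ explicitly for lower-dimensional $M$ (Theorem \ref{thm:lowera}), using polar coordinates around $\pm w$ to realize $\mathscr{A}$ as a one-dimensional operator $\frac{1}{n}(\partial_\theta^2 + 1)$ on a ``quantum graph'' with two vertices $\pm w$ and a family of half-circle edges indexed by $\supp S_\mathcal{M}$, then computes the full spectrum and kernel of $\mathscr{A}$ in closed form (Lemma \ref{lem:lowerspec}), and finally translates the kernel elements into the stated geometric condition. The key point, emphasized in section \ref{sec:lower}, is that the lower-dimensional case is actually \emph{simpler} than the full-dimensional one, because the operator is explicitly diagonalizable; the paper never needs to invoke Theorem \ref{thm:mainfull}.

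Your reduction, by contrast, runs into a structural obstruction that I do not think can be repaired. If you approximate $M$ by $M_\epsilon := M + \epsilon B$, then $M_\epsilon$ is a body of class $C^1$ (each boundary point has a unique outer normal), so \emph{every} direction in $S^{n-1}$ is $1$-extreme for $M_\epsilon$ and $\supp S_{B,\mathcal{M}_\epsilon} = S^{n-1}$. Theorem \ref{thm:mainfull} applied to $M_\epsilon$ therefore forces the bodies in question (augmented or not) to be \emph{homothetic}. But when $M$ is genuinely lower-dimensional the extremal condition of Theorem \ref{thm:mainlower} is strictly weaker than homothety of $K + F(\tilde L, w)$ and $\tilde L + F(K, w)$, since these need only agree on the proper subset $\supp S_{B,\mathcal{M}} = [0,\pi]\times\supp S_{\mathcal{M}}$ of the sphere. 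No $\epsilon\to 0$ argument can recover a genuinely larger equality class from a family of full-dimensional problems each of whose extremals are a strictly smaller (homothety) class. The quantitative rigidity estimate of section \ref{sec:rigid} cannot save this: the constant $C_{M_\epsilon}$ in Proposition \ref{prop:polyrigid} is proportional to $(r_\epsilon/R_\epsilon)^4$ where $r_\epsilon B \subseteq M_\epsilon \subseteq R_\epsilon B$, and $r_\epsilon/R_\epsilon \to 0$ as $\epsilon\to 0$ precisely because $M_\epsilon$ is flattening; so the estimate degenerates and provides no uniform control in the limit. This is exactly the phenomenon the introduction warns about — equality cases are invisible to approximation arguments — and here it is fatal because the extremal \emph{set} changes discontinuously as $\epsilon\to 0$. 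The ``algebraic identity'' in your first step is also never precisely stated, but even granting a version of it, the limit argument in step two cannot produce the right answer.
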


The only case that remains to be considered is  
$\V(L,L,M,\ldots,M)=0$, in which case equality in \eqref{eq:minkq} can 
only arise for the trivial reason $\V(K,L,M,\ldots,M)=0$. The 
characterization of positivity of mixed volumes is well-known and 
is unrelated to the questions in this paper. For completeness, we 
spell out the resulting characterization without further comment 
\cite[Theorem 5.1.8]{Sch14}.

\begin{thm}[\textbf{Extremals: trivial case}]
\label{thm:maintriv}
Let $K,L,M\subset\mathbb{R}^n$ be convex bodies such that
$\V(L,L,M,\ldots,M)=0$. Then we have
$$
        \V(K,L,M,\ldots,M)^2 = \V(K,K,M,\ldots,M)\,\V(L,L,M,\ldots,M)
$$
if and only if
one of the following holds: $\dim K=0$; $\dim L=0$; 
$\dim(K+L)\le 1$; $\dim M\le n-3$; $\dim(K+M)\le n-2$; $\dim(L+M)\le n-2$; 
$\dim(K+L+M)\le n-1$.
\end{thm}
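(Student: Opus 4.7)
The plan is to turn the statement into a transcription of the classical positivity criterion for mixed volumes. Under the hypothesis $\V(L,L,M,\ldots,M)=0$, the right-hand side of the claimed equality is zero, and Minkowski's inequality \eqref{eq:minkq} itself guarantees $\V(K,L,M,\ldots,M)^2 \ge 0$; equality therefore holds if and only if $\V(K,L,M,\ldots,M)=0$. In particular, none of the machinery developed in this paper for the nontrivial extremals of \eqref{eq:minkq} enters. The statement is purely one about \emph{degeneracy} of a mixed volume, independent of the analytic framework built up in sections \ref{sec:forms}--\ref{sec:rigid}.

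For the degeneracy characterization I would invoke the classical positivity theorem \cite[Theorem 5.1.8]{Sch14}: $\V(K_1,\ldots,K_n)>0$ if and only if there exist segments $[x_i,y_i]\subset K_i$ whose directions $y_i-x_i$ are linearly independent. Setting $U=\mathrm{span}(K-K)$, $V=\mathrm{span}(L-L)$, $W=\mathrm{span}(M-M)$, so that $\dim U=\dim K$, $\dim V=\dim L$, $\dim W=\dim M$, $\dim(U+V)=\dim(K+L)$, and similarly for the remaining dimensions, this criterion makes $\V(K,L,M,\ldots,M)>0$ equivalent to the existence of a linearly independent transversal for the subspace family $(U,V,W,\ldots,W)$ containing $n-2$ copies of $W$. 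By the Rado--Hall theorem for subspaces (or, equivalently, a short greedy argument), such a transversal exists if and only if the Hall-type inequality $\dim\bigl(\sum_{i\in I} V_i\bigr) \ge |I|$ holds for every $I \subset \{1,\ldots,n\}$.

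It then remains to enumerate the non-redundant instances of this Hall condition over the family $(U,V,W,\ldots,W)$. A direct check shows they consist of exactly the seven inequalities $\dim U \ge 1$, $\dim V \ge 1$, $\dim(U+V) \ge 2$, $\dim W \ge n-2$, $\dim(U+W) \ge n-1$, $\dim(V+W) \ge n-1$, and $\dim(U+V+W) \ge n$; every other Hall instance (for example $\dim(U+W) \ge |S|+1$ with $S \subsetneq \{3,\ldots,n\}$) is automatically weaker, since the subspace sum does not grow when $W$ is adjoined again. Negating each inequality and translating back into dimensions of $K$, $L$, $M$, and their Minkowski sums yields precisely the seven conditions listed in the theorem. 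The only step requiring any effort is the bookkeeping that makes this enumeration exhaustive and verifies that the seven conditions are pairwise independent (by exhibiting, for each pair, a configuration violating one but not the other); this is elementary linear algebra. No substantive obstacle arises, in line with the authors' remark that the result is ``well-known and unrelated to the questions in this paper.''
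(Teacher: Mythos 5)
Your proof is correct and takes the same route as the paper, which simply transcribes \cite[Theorem 5.1.8]{Sch14} without further argument. The one streamlining to note is that Schneider's theorem already states the positivity criterion in the dimension form $\dim(K_{i_1}+\cdots+K_{i_k})\ge k$ for every index set $\{i_1,\ldots,i_k\}\subseteq\{1,\ldots,n\}$ --- precisely your Hall condition --- so the detour via the segment criterion and the Rado--Hall theorem for linear matroids can be bypassed, leaving only the enumeration of the seven non-redundant instances for the tuple $(K,L,M,\ldots,M)$, which you carry out correctly.
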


We also note for completeness that $\V(L,L,M,\ldots,M)=0$ holds if and 
only if $\dim L\le 1$, $\dim M\le n-3$, or $\dim(L+M)\le n-1$. Thus the 
different cases covered by Theorems \ref{thm:mainfull}--\ref{thm:maintriv} 
are completely determined by the dimensions of $L,M,L+M$.

\subsection{Examples and prior work}
\label{sec:prior}

In order to understand the statement of Theorem \ref{thm:mainfull}, it is 
instructive to revisit the case of cap bodies. It is readily seen by 
inspection of Figure \ref{fig:cap} that the normal directions of a 
3-dimensional cap body $K$ that are \emph{not} $1$-extreme are exactly 
those that lie in the interior of the normal cone of the vertex of one of 
its caps. Therefore, any supporting hyperplane of $K$ whose normal 
direction is $1$-extreme also supports the ball $B$ from which the cap 
body was formed. Thus Theorem \ref{thm:mainfull} explains precisely why 
cap bodies arise as the extremals of the special case \eqref{eq:cap} of 
Minkowski's quadratic inequality.

The extremals of \eqref{eq:cap}, and more generally of the case $L=M$ of 
Theorem \ref{thm:mainfull}, were obtained by Bol in 1943 \cite{Bol43}. 
Prior to the present paper, this was the only case of \eqref{eq:minkq} 
whose extremals have been characterized for general convex bodies. Despite 
its importance, Bol's proof turns out to be fundamentally based on the 
following very special feature of the case $L=M$. Note that if equality is 
attained in \eqref{eq:minkq}, we may always assume (by rescaling $L$) that
\begin{equation}
\label{eq:bol}
	\V(K,K,M,\ldots,M) = \V(K,L,M,\ldots,M) = 
	\V(L,L,M,\ldots,M)
\end{equation}
as \eqref{eq:minkq} is invariant under scaling. By means of a delicate 
geometric argument using the method of inner parallel bodies, Bol was able 
to show that if \eqref{eq:bol} holds with $L=M$, then it must be the case 
that $K\subseteq L$ (up to translation of $L$). This reduces the extremal 
problem of Minkowski's quadratic inequality to equality 
cases of the monotonicity of mixed volumes, which were already 
characterized in the case $L=M$ by Minkowski himself \cite[p.\ 
227]{Min11} (cf.\ \cite[Theorem 7.6.17]{Sch14}). Unfortunately, the entire
premise of this argument fails for general $K,L,M$.

\begin{example}
Let $M=\mathrm{conv}\{B,x,-x\}$, where $B$ is a ball in $\mathbb{R}^3$ and 
$x\not\in B$. Let $L$ be any body that has the same supporting hyperplanes 
as $B$ in all normal directions except those in the interior of the
normal cone of $M$ at $x$, and 
let $K=-L$ (cf.\ Figure \ref{fig:noncap}). Then the supporting 
hyperplanes of $K$ and $L$ coincide in all $1$-extreme normal directions 
of $M$, so by Theorem \ref{thm:mainfull} we have equality in 
\eqref{eq:minkq}. Moreover, by symmetry
$\V(K,K,M,\ldots,M)=\V(L,L,M,\ldots,M)$, so \eqref{eq:bol} holds. But 
clearly $K+v\not\subseteq L$ and $L\not\subseteq K+v$ for any 
$v$. Thus \eqref{eq:bol} need not imply any monotonicity.
\begin{figure}
\centering
\begin{tikzpicture}[scale=.8]

\begin{scope}[xshift=-10cm]

  \draw (-1.3,0) node[left] {$K=$};

  \shade[ball color = blue, opacity = 0.15] (-1.2,0.268) -- (-0.666,0.745) arc (133.2:-133.2:1) -- (-1.2,-0.268) [rotate=90] arc [start angle = 170, end angle = 10, x radius = .268, y radius = .07];

  \draw[thick] (-0.64,-0.73) [rotate=90] arc [start angle = 170, end angle = 10,
    x radius = .745, y radius = .2];

  \draw[thick, densely dashed] (-0.66,0.745) [rotate=-90] arc [start angle = 170, end angle = 10,
    x radius = .745, y radius = .2];

  \draw[thick, densely dashed, dash phase=.5mm] (-1.2,-0.268) [rotate=-90] arc [end angle = 170, start angle = 10,
    x radius = .268, y radius = .07];

  \draw[thick] (-1.2,0.268) -- (-0.666,0.745) arc (133.2:-133.2:1) -- (-1.2,-0.268) [rotate=90] arc [start angle = 170, end angle = 10, x radius = .268, y radius = .07];

\end{scope}

\begin{scope}[xshift=-5.5cm]

  \draw (-1,0) node[left] {$L=$};

  \shade[ball color = blue, opacity = 0.15] (1.2,0.268) -- (0.666,0.745) arc (46.8:313.2:1) -- (1.2,-0.268) [rotate=-90] arc [end angle = 170, start angle = 10, x radius = .268, y radius = .07];

  \draw[thick] (0.66,-0.73) [rotate=90] arc [start angle = 170, end angle = 10,
    x radius = .745, y radius = .2];

  \draw[thick, densely dashed] (0.65,0.75) [rotate=-90] arc [start angle = 170, end angle = 10,
    x radius = .745, y radius = .2];

  \draw[thick] (1.2,-0.268) [rotate=90] arc [start angle = 170, end angle = 10,
    x radius = .268, y radius = .07];

  \draw[thick] (1.2,0.268) -- (0.666,0.745) arc (46.8:313.2:1) -- (1.2,-0.268) {[rotate=-90] arc [end angle = 170, start angle = 10, x radius = .268, y radius = .07]};

\end{scope}

\begin{scope}[xshift=0]

  \draw (-1.5,0) node[left] {$M=$};

  \shade[ball color = blue, opacity = 0.15] (1.5,0) -- (0.666,0.745) arc (46.8:133.2:1) -- (-1.5,0) -- (-0.666,-0.745) arc (226.8:313.2:1) -- (1.5,0);

  \draw[thick] (0.666,-0.75) [rotate=90] arc [start angle = 170, end angle = 10,
    x radius = .745, y radius = .2];

  \draw[thick, densely dashed] (0.666,0.745) [rotate=-90] arc [start angle = 170, end angle = 10,
    x radius = .745, y radius = .2];

  \draw[thick] (-0.666,-0.73) [rotate=90] arc [start angle = 170, end angle = 10,
    x radius = .745, y radius = .2];

  \draw[thick, densely dashed] (-0.666,0.745) [rotate=-90] arc [start angle = 170, end angle = 10,
    x radius = .745, y radius = .2];

  \draw[thick] (1.5,0) -- (0.666,0.745) arc (46.8:133.2:1) -- (-1.5,0) -- (-0.666,-0.745) arc (226.8:313.2:1) -- (1.5,0);

\end{scope}
\end{tikzpicture}
\caption{A non-monotone equality case.\label{fig:noncap}}
\end{figure}
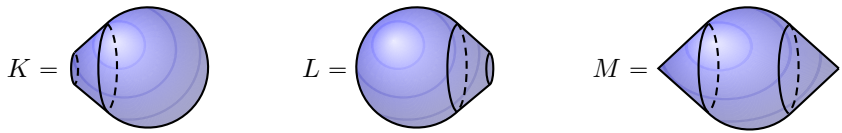
\end{example}

While the method of inner parallel bodies that is introduced by Bol
has found many other applications in convex geometry, the above 
example illustrates why Bol's methods cannot explain the more general 
extremals of \eqref{eq:minkq}.

Other than Bol's theorem, special cases of the extremal problem have been 
proved under various regularity assumptions on the bodies involved. If $M$ 
is a smooth convex body (that is, each boundary point has a unique 
normal), then all normal directions are $1$-extreme, so that equality can 
arise only in the trivial case that $K$ and $L$ are homothetic. The lack 
of nontrivial extremals in this case was proved by Hilbert \cite{Hil12} 
when $M$ has a sufficiently smooth boundary (see section \ref{sec:hilbert}),
and by Schneider \cite{Sch90} 
for general smooth bodies. Similarly, no nontrivial equality cases can 
arise when $K,L,M$ are all strongly isomorphic simple polytopes; this case 
follows from Alexandrov's proof of the Alexandrov-Fenchel inequality 
\cite{Ale96}. Another special case, where $M$ is a zonoid and $K,L$ are 
symmetric bodies, was proved by Schneider in \cite{Sch88}. The case that 
$M$ is lower-dimensional was investigated by Ewald and Tondorf \cite{ET94} 
when $K,L,M$ are polytopes; a simple example that illustrates Theorem 
\ref{thm:mainlower} may be found in their paper (see also \cite[p.\ 
429]{Sch14}).

Most importantly for our purposes, the validity of Theorems 
\ref{thm:mainfull} and \ref{thm:mainlower} was previously established by 
Schneider in the case that $M$ is a simple polytope and $K,L$ are 
arbitrary convex bodies; cf.\ \cite[Theorem 7.6.21]{Sch14} and 
\cite[Theorem 4.2]{Sch94} (in the lower-dimensional case, $M$ need not be 
simple). As will be explained below, our approach to Theorems 
\ref{thm:mainfull} and \ref{thm:mainlower} was partially inspired by 
Schneider's results for polytopes. The key contribution of this paper, 
however, is the introduction of new tools that open the door to the 
investigation of arbitrary convex bodies.

\section{Overview}
\label{sec:overview}

The formulation of our main results is of a purely geometric nature. 
Nonetheless, it will shortly become clear that the core difficulty in our 
proofs does not arise from geometry, but rather from analytic 
questions: at the heart of our results lies an analysis of the behavior of 
certain highly degenerate elliptic operators. The aim of this section is 
to give a high-level overview of the main ingredients of our proofs and 
how they fit together, in order to help the reader navigate the rest of 
the paper. We have deliberately kept this overview as concise as possible, 
as further discussion is best postponed until after formal definitions 
have been given.

\subsection{The Hilbert method}
\label{sec:hilbert}

The original proofs derive Minkowski's quadratic inequality as a limiting 
case of other inequalities, such as the Brunn-Minkowski inequality 
\cite[sections 49 and 52]{BF87}. These proofs do not lend themselves to 
the study of extremals, however, as extremals are not preserved by taking 
limits. Instead, our starting point will be the direct approach to 
Minkowski's inequality due to Hilbert \cite[Chapter XIX]{Hil12} (cf.\ 
\cite[section 52]{BF87}), which also forms the foundation for the 
Alexandrov-Fenchel inequality. Let us briefly explain its basic
premise.

It will be convenient to identify a convex body $K$ 
with its support function
$$
	h_K(u) := \sup_{y\in K}\langle y,u\rangle.
$$
Geometrically, if $u\in S^{n-1}$, then $h_K(u)$ is the signed distance to 
the origin of the supporting hyperplane of $K$ in the direction $u$; 
as any convex body is the intersection of its supporting halfspaces,
$h_K:S^{n-1}\to\mathbb{R}$ uniquely determines $K$.
Support functions satisfy $h_{\lambda K+\mu L}=
\lambda h_K+\mu h_L$ for $\lambda,\mu\ge 0$, i.e., they map linear 
operations on sets to linear operations on functions. In particular, we 
may view
$$
	(h_K,h_L) \mapsto \V(K,L,M,\ldots,M)
$$
as a symmetric quadratic form of the support functions. By 
linearity of mixed volumes, this quadratic form can be uniquely extended 
to the linear space of differences of support functions (which contains 
$C^2(S^{n-1})$, cf.\ Lemma \ref{lem:diffsf} below). Minkowski's 
inequality \eqref{eq:minkq} is nothing other than the statement that this 
quadratic form satisfies a reverse form of the Cauchy-Schwarz inequality.

We are therefore led to ask which quadratic forms satisfy reverse 
Cauchy-Schwarz inequalities. A particularly useful characterization arises 
if we consider closed symmetric quadratic forms on a Hilbert space, i.e., 
forms $\mathcal{E}(f,g)=\langle f,\mathscr{A}g\rangle$ associated to a 
self-adjoint operator $\mathscr{A}$. The reason this setting is powerful 
is that we can bring spectral theory to bear on the problem: $\mathcal{E}$ 
satisfies a reverse Cauchy-Schwarz inequality if and only if $\mathscr{A}$ 
has a one-dimensional positive eigenspace. Let us formulate for future 
reference a general statement for (possibly unbounded) 
self-adjoint operators on a Hilbert space, whose proof is given in 
section \ref{sec:presa}.\footnote{%
Here and in the sequel, we use the following notation for a self-adjoint 
operator $\mathscr{A}$ and the associated closed quadratic form 
$\mathcal{E}$: $\spec\mathscr{A}$ denotes the spectrum, $\ker\mathscr{A}$ 
the kernel, and $\rank\mathscr{A}$ the rank (i.e., the dimension of the 
range) of $\mathscr{A}$; and $\Dom\mathscr{A}$ and $\Dom\mathcal{E}$ 
denote the domains of $\mathscr{A}$ and $\mathcal{E}$. A brief review of 
the relevant notions of functional analysis may be found in section 
\ref{sec:presa}.}

\begin{lem}[\textbf{Hyperbolic quadratic forms}]
\label{lem:hyper}
Let $\mathscr{A}$ be a self-adjoint operator on a Hilbert 
space $H$ with $0<\sup\spec\mathscr{A}<\infty$, and let $\mathcal{E}(f,g)$ 
be the associated closed quadratic form. Then the following are equivalent:
\begin{enumerate}[$1$.]
\item $\mathcal{E}(f,g)^2\ge\mathcal{E}(f,f)\,\mathcal{E}(g,g)$
for all $f,g\in\Dom\mathcal{E}$ such that $\mathcal{E}(g,g)>0$.
\item $\rank \mathrm{1}_{(0,\infty)}(\mathscr{A})=1$.
\end{enumerate}
Moreover, if either \upn{(}hence both\upn{)} of these conditions is 
satisfied, the following are equivalent for given $f,g\in\Dom\mathcal{E}$ 
such that $\mathcal{E}(g,g)>0$:
\begin{enumerate}[$1'$.]
\item $\mathcal{E}(f,g)^2=\mathcal{E}(f,f)\,\mathcal{E}(g,g)$.
\item $f-ag\in\ker\mathscr{A}\subset\Dom\mathscr{A}$ for some 
$a\in\mathbb{R}$.
\end{enumerate}
\end{lem}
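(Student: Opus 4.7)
The plan is to apply the spectral theorem to $\mathscr{A}$ to split $H = H_+ \oplus H_0 \oplus H_-$, where $H_+ = \ran \mathrm{1}_{(0,\infty)}(\mathscr{A})$, $H_0 = \ker\mathscr{A}$, and $H_- = \ran \mathrm{1}_{(-\infty,0)}(\mathscr{A})$; these subspaces are mutually orthogonal in both the Hilbert inner product and the form $\mathcal{E}$, with $\mathcal{E}$ strictly positive on $H_+\setminus\{0\}$, zero on $H_0$, and strictly negative on $H_-\setminus\{0\}$. The hypothesis $0 < \sup\spec\mathscr{A} < \infty$ guarantees both that $H_+$ is nontrivial and that $\mathscr{A}$ is bounded on $H_+$, so $H_+ \subset \Dom\mathscr{A}$. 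The entire argument then revolves around the auxiliary quadratic $P(t) := \mathcal{E}(f-tg, f-tg) = \mathcal{E}(g,g)\,t^2 - 2\mathcal{E}(f,g)\,t + \mathcal{E}(f,f)$, whose discriminant is precisely $4(\mathcal{E}(f,g)^2 - \mathcal{E}(f,f)\mathcal{E}(g,g))$.

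For the implication $2 \Rightarrow 1$, assume $\dim H_+ = 1$, spanned by a unit eigenvector $e$ with $\mathscr{A}e = \lambda e$, $\lambda > 0$. Decompose $f = \alpha e + f_0$ and $g = \beta e + g_0$ with $f_0, g_0 \in H_+^\perp$; the hypothesis $\mathcal{E}(g,g) > 0$ forces $\beta \neq 0$, and the choice $a := \alpha/\beta$ puts $f - ag = f_0 - ag_0$ in $H_+^\perp$, so $P(a) \leq 0$ because $\mathcal{E} \leq 0$ there. An upward-opening parabola that is nonpositive somewhere has nonnegative discriminant, yielding the reverse Cauchy--Schwarz inequality. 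For $1 \Rightarrow 2$, if $\dim H_+ \geq 2$ one splits $H_+$ into two $\mathcal{E}$-orthogonal nonzero pieces (via spectral projections onto disjoint Borel subsets of the positive spectrum, or by splitting an eigenspace if $\mathscr{A}|_{H_+}$ is scalar) and picks a unit vector $e_i$ from each: then $\mathcal{E}(e_i, e_i) > 0$ but $\mathcal{E}(e_1, e_2) = 0$, violating the reverse Cauchy--Schwarz inequality.

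For the equality case, if $\mathcal{E}(f,g)^2 = \mathcal{E}(f,f)\mathcal{E}(g,g)$ the discriminant of $P$ vanishes, so $P \geq 0$ everywhere with a unique zero at its minimizer $t^*$; combined with $P(a) \leq 0$ this forces $a = t^*$ and $P(a) = 0$. Thus $f - ag \in H_+^\perp$ is $\mathcal{E}$-null inside the subspace on which $-\mathcal{E}$ is positive semidefinite, so Cauchy--Schwarz for this semi-inner product gives $\mathcal{E}(f-ag, h) = 0$ for every $h \in \Dom\mathcal{E} \cap H_+^\perp$; together with the trivial $\mathcal{E}(f-ag, e) = \lambda\langle f-ag, e\rangle = 0$, this yields $\mathcal{E}(f-ag, \cdot) \equiv 0$ on $\Dom\mathcal{E}$. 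The form--operator correspondence reviewed in Section \ref{sec:presa} --- characterizing $\Dom\mathscr{A}$ as those $v$ for which $h \mapsto \mathcal{E}(v,h)$ extends to a bounded linear functional on $H$ --- then identifies $f - ag \in \ker\mathscr{A} \subset \Dom\mathscr{A}$. The converse $2' \Rightarrow 1'$ is immediate upon expanding $\mathcal{E}(f-ag, g) = \mathcal{E}(f-ag, f-ag) = 0$.

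The main technical subtlety will be the bookkeeping of domains when $\mathscr{A}$ is unbounded, since $\Dom\mathcal{E}$ is then strictly larger than $\Dom\mathscr{A}$: one must verify that the decomposition along $H_+ \oplus H_+^\perp$ respects $\Dom\mathcal{E}$ (which it does because the projection onto $H_+$ is a bounded operator commuting with $\mathscr{A}$) and invoke the form--operator correspondence carefully so that the equality conclusion lands in the operator domain $\Dom\mathscr{A}$ rather than merely in the form domain $\Dom\mathcal{E}$.
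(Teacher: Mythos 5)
Your proposal is correct and follows essentially the same route as the paper's proof: spectral decomposition of $H$, reduction to the upward-opening quadratic $P(t)=\mathcal{E}(f-tg,f-tg)$, the observation that $\mathcal{E}\le 0$ on $H_+^\perp$, and passage from $\mathcal{E}(z,z)=0$ on the non-positive part to $z\in\ker\mathscr{A}$ via the form--operator correspondence (the paper packages this last step as Lemma~\ref{lem:ker}, whereas you re-derive it through the Cauchy--Schwarz inequality for $-\mathcal{E}$ on $H_+^\perp$, which is the same underlying argument). The only cosmetic differences are in the proof of $1\Rightarrow 2$, where the paper simply picks $g\in H_+\setminus\{0\}$ and $f\in H_+$ with $f\perp\mathscr{A}g$ rather than splitting $H_+$ into two $\mathcal{E}$-orthogonal spectral pieces, and in the citation of Section~\ref{sec:presa}, which states the form--operator correspondence slightly differently from the bounded-linear-functional characterization you invoke (though the latter is of course a standard equivalent formulation).
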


It is not clear, a priori, that mixed volumes fit into the setting of
Lemma \ref{lem:hyper}. However, Hilbert realized that \emph{for 
sufficiently smooth bodies}, a classical representation formula of 
Minkowski (see, e.g., Lemma \ref{lem:repc2}) can be used to write
$$
	\V(K,L,M,\ldots,M) =
	\langle h_K,\mathscr{A}h_L\rangle_{L^2(\omega)},
$$
where $\omega$ denotes the surface measure on $S^{n-1}$ and $\mathscr{A}$ 
is a certain elliptic second order differential operator. Elliptic 
regularity theory (cf.\ section \ref{sec:elliptic})
shows that $\mathscr{A}$ has a self-adjoint extension on 
$L^2(\omega)$ with compact resolvent. Hilbert exploited this 
regularity, using a clever homotopy argument, to prove 
condition $2$ of Lemma~\ref{lem:hyper}. This yields Minkowski's inequality 
for smooth bodies $M$, and the general case follows by approximation. As a 
key step in his proof, Hilbert shows that
$$
	\ker\mathscr{A} = \{x\mapsto\langle v,x\rangle:v\in\mathbb{R}^n\}
$$
consists precisely of the linear functions restricted to $S^{n-1}$. 
Thus for \emph{smooth} bodies $M$, Lemma \ref{lem:hyper} implies 
that equality holds in Minkowski's inequality if and only if
$h_K-ah_L=\langle 
v,\cdot\,\rangle$ for some $a,v$, i.e., when
$K,L$ are homothetic $K=aL+v$.

The above discussion shows that, while innocent for the purpose of proving 
Minkowski's inequality, the smoothness assumption on $M$ completely 
eliminates its nontrivial extremals. The key difficulty we face in the 
analysis of extremals is to make sense of the above ideas for arbitrary 
non-smooth bodies $M$.

\subsection{Mixed volumes and Dirichlet forms}

It is far from clear that it is possible, even in principle, to define 
self-adjoint representations of mixed volumes for non-smooth bodies $M$. 
While we have argued above that $(h_K,h_L)\mapsto \V(K,L,M,\ldots,M)$ 
defines a symmetric quadratic form on $C^2(S^{n-1})$, it is a basic 
fact of functional analysis that not every such form is associated to a 
self-adjoint operator. A self-adjoint representation is crucial, however, 
for the applicability of spectral theory in the infinite-dimensional 
setting. The basic criterion for a quadratic form to admit a self-adjoint 
representation is that it is \emph{closable}, see section \ref{sec:presa} 
below. This property may not be taken for granted. For example, one may 
verify that when $M$ is a polytope, the quadratic form 
$(h_K,h_L)\mapsto\V(K,L,M,\ldots,M)$ is not closable in $L^2(\omega)$ and 
therefore does not admit any self-adjoint representation on this space. 
For this reason, self-adjoint representations have only appeared in the 
literature in special cases: for smooth bodies (due to Hilbert 
\cite{Hil12}), and for certain special families of polytopes with common 
face normals (due to Alexandrov \cite{Ale96}).

The first step towards our analysis of extremals is the realization that 
mixed volumes do in fact admit a self-adjoint representation for 
\emph{any} body $M$, provided one chooses the Hilbert space appropriately. 
Let us recall that the mixed area measure $S_{K_1,\ldots,K_{n-1}}$ 
on $S^{n-1}$ was introduced in section \ref{sec:keyideas}.

\begin{thm}
\label{thm:formsminkq}
For any convex body $M$, there is a self-adjoint operator
$\mathscr{A}$ on $L^2(S_{B,M,\ldots,M})$ such that the associated
closed quadratic form $\mathcal{E}(f,g)$ satisfies 
$$
	h_K,h_L\in\Dom\mathcal{E}\quad\mbox{and}\quad
	\V(K,L,M,\ldots,M) = \mathcal{E}(h_K,h_L)
$$
for all convex bodies $K,L$.
\end{thm}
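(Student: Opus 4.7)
My strategy is to define a symmetric bilinear form $\mathcal{E}_0$ on a dense core $D\subset L^2(S_{B,M,\ldots,M})$ that extends $(h_K,h_L)\mapsto \V(K,L,M,\ldots,M)$ multilinearly, identify $\mathcal{E}_0$ with a densely defined symmetric operator $\mathscr{A}_0$ on $D$, produce a self-adjoint extension $\mathscr{A}$ via von Neumann's extension theorem, and finally verify by Hausdorff approximation that $h_K$ lies in the form domain for every convex body $K$. I take $D:=C^2(S^{n-1})$. By Lemma~\ref{lem:diffsf} every $f\in D$ is a difference of smooth support functions, so multilinear extension of $\V(\cdot,\cdot,M,\ldots,M)$ gives a well-defined symmetric bilinear form $\mathcal{E}_0$ on $D$, and $D$ is dense in $L^2(S_{B,M,\ldots,M})$ because continuous functions are dense in the $L^2$ space of any finite Borel measure on the compact set $S^{n-1}$.

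\medskip

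\noindent\textbf{Representing the form by a symmetric operator.} Minkowski's classical representation (cf.\ Lemma~\ref{lem:repc2}) gives
\[
\mathcal{E}_0(f,g)=\frac{1}{n}\int_{S^{n-1}} g\,dS_{f,M,\ldots,M}\quad\text{for } f\in D,
\]
where $S_{f,M,\ldots,M}$ is the signed mixed area measure obtained by multilinearly extending $K\mapsto S_{K,M,\ldots,M}$. Recasting this as an inner product $\mathcal{E}_0(f,g)=\langle g,\mathscr{A}_0 f\rangle_{L^2(S_{B,M,\ldots,M})}$ requires the Radon--Nikodym statement $S_{f,M,\ldots,M}\ll S_{B,M,\ldots,M}$ with density $\tfrac{1}{n}\mathscr{A}_0 f\in L^2(S_{B,M,\ldots,M})$, and I expect this absolute-continuity / $L^2$-boundedness step to be \emph{the main obstacle} of the proof: when $M$ is non-smooth (say, a polytope), the base measure $S_{B,M,\ldots,M}$ can be highly singular and no global pointwise formula for $\mathscr{A}_0 f$ is available. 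I would attack it by smooth approximation of $M$, taking $M_\varepsilon\to M$ in Hausdorff distance with $\partial M_\varepsilon$ of class $C^2_+$; for such $M_\varepsilon$, Hilbert's elliptic representation writes $\mathscr{A}_0^\varepsilon f$ as an explicit second-order differential operator in $f$ with coefficients encoded by the curvature of $\partial M_\varepsilon$, making $L^2$-boundedness manifest. The passage $\varepsilon\downarrow 0$ is controlled by continuity of mixed volumes and the weak convergence $S_{B,M_\varepsilon,\ldots,M_\varepsilon}\to S_{B,M,\ldots,M}$.

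\medskip

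\noindent\textbf{Self-adjoint extension and form domain.} Once $\mathscr{A}_0$ is constructed, it is a densely defined real symmetric operator on $L^2(S_{B,M,\ldots,M})$, so by the von Neumann extension theorem it admits a self-adjoint extension $\mathscr{A}$; let $\mathcal{E}$ denote the associated closed quadratic form. For an arbitrary convex body $K$, take smooth $K_\varepsilon\to K$ in Hausdorff distance; then $h_{K_\varepsilon}\to h_K$ uniformly, hence in $L^2(S_{B,M,\ldots,M})$, and continuity of mixed volumes gives $\mathcal{E}_0(h_{K_\varepsilon}-h_{K_{\varepsilon'}},g)\to 0$ as $\varepsilon,\varepsilon'\downarrow 0$ for every $g\in D$. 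Because $\mathscr{A}$ is not semibounded, the conclusion $h_K\in\Dom\mathcal{E}$ with $\mathcal{E}(h_K,h_L)=\V(K,L,M,\ldots,M)$ is not a bare closability statement; one uses the approximation of the previous step to inherit from the smooth operators $\mathscr{A}^\varepsilon$ (each of which has a one-dimensional positive eigenspace by Hilbert) that $\mathscr{A}$ has a one-dimensional positive spectral subspace, so $\mathcal{E}$ decomposes as a rank-one positive part plus a negative semidefinite part, and standard form-theoretic closure arguments applied to each part yield $h_K\in\Dom\mathcal{E}$ and the required identification.
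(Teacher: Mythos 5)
Your identification of the main analytic obstacle --- the absolute continuity $S_{f,M,\ldots,M}\ll S_{B,M,\ldots,M}$ with bounded density, attacked by smooth Hausdorff approximation and the explicit elliptic formula for $C^\infty_+$ bodies --- is exactly right and matches the paper's Lemma~\ref{lem:density}. However, there is a genuine gap in the self-adjoint extension step, stemming from a mistaken premise: you assert that $\mathscr{A}$ is \emph{not} semibounded, and this is false. The form $\mathcal{E}_0(f,f)=\frac{1}{n}\int f\,dS_{f,M,\ldots,M}$ is semibounded \emph{above} by $\frac{1}{n}\|f\|^2_{L^2(S_{B,M,\ldots,M})}$. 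This is precisely the paper's Lemma~\ref{lem:asemib}, and it follows from the very Hilbert-spectral facts you already invoke: for smooth $M_\varepsilon$ the operator $\mathscr{A}^\varepsilon$ is elliptic with compact resolvent, its top eigenvalue is $\frac1n$ (with eigenfunction $h_B=1$), so $\langle f,\mathscr{A}^\varepsilon f\rangle\le\frac1n\|f\|^2$, and this inequality passes to the Hausdorff limit by continuity of mixed volumes and weak convergence of mixed area measures.

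Missing this semiboundedness forces you onto the wrong tool. Von Neumann's theorem does produce \emph{some} self-adjoint extension of a real symmetric operator, but not a canonical one, and the quadratic form of an arbitrary such extension need not be the closure of $\mathcal{E}_0$ (indeed, without semiboundedness $\mathcal{E}_0$ need not even be closable, so there is no closure to recover). Your closing ``rank-one positive plus negative semidefinite'' decomposition is not well-founded as an independent input: the claim that the positive spectral subspace is one-dimensional requires already having a self-adjoint operator with a well-controlled relation to the form, and moreover that claim is \emph{equivalent} to upper semiboundedness, since a rank-one positive part is in particular bounded. The clean path --- and the paper's path --- is: prove semiboundedness first (Lemma~\ref{lem:asemib}), then Friedrichs extension (Lemma~\ref{lem:friedrichs}) gives the canonical self-adjoint $\mathscr{A}$ and a closed form $\mathcal{E}$ whose domain is the completion of $C^2(S^{n-1})$ in the form norm $[2\|f\|^2-\mathcal{E}(f,f)]^{1/2}$; then the Cauchy estimate you wrote for $h_{K^{(s)}}$ via continuity of mixed volumes shows $h_K\in\Dom\mathcal{E}$ and $\mathcal{E}(h_K,h_L)=\V(K,L,M,\ldots,M)$. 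The one-dimensionality of the positive eigenspace is then deduced \emph{afterwards} from the Alexandrov--Fenchel inequality, not fed in as an input to the construction.
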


The operator $\mathscr{A}$ of Theorem \ref{thm:formsminkq} is a highly 
degenerate elliptic operator in the sense of the theory of Dirichlet forms 
\cite{FOT11,BGL14}. Its existence and basic properties will be 
investigated in section \ref{sec:forms} (in the general setting of 
arbitrary mixed volumes). While this operator is, in general, a rather 
abstract object, it provides the foundation for extending Hilbert's method 
to arbitrary convex bodies $M$.

Theorem \ref{thm:formsminkq} and Lemma \ref{lem:hyper} reduce the study of 
extremals of Minkowski's inequality to the characterization of 
$\ker\mathscr{A}$. Translation-invariance of mixed volumes  
implies that $\ker\mathscr{A}$ always contains the linear functions, so 
that homothetic bodies $K,L$ trivially yield equality. It should be 
emphasized at this point that there are two distinct mechanisms for the 
appearance of nontrivial extremals:
\begin{enumerate}[a.]
\itemsep\abovedisplayskip
\item It is possible that $\ker\mathscr{A}$ may be strictly larger
than the set of linear functions. Its additional (nonlinear) elements 
can give rise to new extremals.
\item Even if $\ker\mathscr{A}$ contains only linear functions, new
extremals can arise as the underlying measure
$S_{B,M,\ldots,M}$ need not be supported on the entire sphere. Thus
Lemma \ref{lem:hyper} only guarantees that $h_K-ah_L=\langle 
v,\cdot\,\rangle$ $S_{B,M,\ldots,M}$-a.e.
\end{enumerate}
The statements of Theorems \ref{thm:mainfull} and \ref{thm:mainlower} can
now be interpreted in a new light. A fundamental result of Schneider
(Theorem \ref{thm:supp} below) states that
$$
	\supp S_{B,M,\ldots,M} = \cl\{u\in S^{n-1}:u\mbox{ is a }
	1\mbox{-extreme normal vector of }M\}.
$$
Thus Theorem \ref{thm:mainfull} shows that when $M$ has nonempty 
interior, only mechanism b.\ arises.\footnote{
	More precisely, our results show that $\ker\mathscr{A}\cap
	\{h_Q-h_R:Q,R\mbox{ convex bodies}\}$ consists of linear functions 
	only, which is equivalent to Theorem \ref{thm:mainfull} by
	Lemma \ref{lem:hyper} (see also Remark \ref{rem:fdj}).
}
In contrast, 
both a.\ and b.\ arise when the body $M$ has empty 
interior, which explains the appearance of additional extremals in
Theorem \ref{thm:mainlower}. 

The proofs of these facts occupy the main part of this paper. The proof of 
Theorem \ref{thm:mainlower} will turn out to be conceptually simpler, as 
in this case one can compute explicitly the operator $\mathscr{A}$ and its 
kernel. This will be done in section \ref{sec:lower}. The main difficulty 
lies in the proof of Theorem \ref{thm:mainfull}, as in this case we do not 
have an explicit description of the operator $\mathscr{A}$ that is 
amenable to computation.

\begin{rem}
That the extremals of \eqref{eq:minkq} are determined by the support of 
the measure $S_{B,M,\ldots,M}$ was conjectured by Schneider \cite{Sch85} 
(in the general setting of the Alexandrov-Fenchel inequality). Even before we 
explain how the extremal characterization is proved, however, we observe 
that the abstract statement of Theorem \ref{thm:formsminkq} already 
demystifies why this particular measure appears: this is precisely the 
measure that must be chosen on the sphere so that the quadratic form 
$(h_K,h_L)\mapsto \V(K,L,M,\ldots,M)$ is closable. Thus the existence of a 
self-adjoint representation is far from being merely a technical device: 
it explains the fundamental structure of the extremals of Minkowski's 
quadratic inequality.
\end{rem}

\subsection{Weak stability and rigidity}
\label{sec:overviewmain}

The most natural starting point for understanding the extremals of a given 
inequality is to attempt to deduce these from a careful examination of the 
proof of the inequality. In the case of Minkowski's inequality, however, 
there is a surprising and apparently fundamental obstacle to such an 
approach. In view of Lemma \ref{lem:hyper} and Theorem 
\ref{thm:formsminkq}, it is clear that $S_{B,M,\ldots,M}$ is the relevant 
measure for the study of the \emph{extremals}. However, it is a different 
measure $S_{M,\ldots,M}$ that appears naturally (explicitly or implicitly) 
in the various known proofs of Minkowski's \emph{inequality} 
\cite{Ale96,KM17,SvH18}.\footnote{%
	We emphasize that the quadratic form 
	$(h_K,h_L)\mapsto\V(K,L,M,\ldots,M)$ is not 
	closable on $L^2(S_{M,\ldots,M})$ unless one assumes strong 
	regularity assumptions on $M$ (such as smoothness), which
	explains why regularity assumptions play a fundamental role in 
	proofs of the inequality.
}
As (cf.\ Theorem \ref{thm:supp})
$$
        \supp S_{M,\ldots,M} = \cl\{u\in S^{n-1}:u\mbox{ is a }
        0\mbox{-extreme normal vector of }M\},
$$
the support of $S_{M,\ldots,M}$ is generally much smaller than that of
$S_{B,M,\ldots,M}$, and does not suffice to characterize the 
extremals (see Example \ref{ex:weak} below). Thus there appears to be a 
disconnect between the setting of this paper that gives rise 
to a good analytic theory, and the setting used in the proofs of 
Minkowski's inequality that is needed to exploit the algebraic 
structure of mixed volumes.

The conceptual challenge behind the proof of Theorem \ref{thm:mainfull} is 
to understand how to surmount this obstacle. To this end, we will prove 
that the operator $\mathscr{A}$ satisfies a strong rigidity property: 
once the extremals of Minkowski's inequality have been fixed in the 
$0$-extreme directions of $M$, their extension to the $1$-extreme 
directions of $M$ is uniquely determined. This property closes the gap 
between the information provided by proofs of Minkowski's inequality and 
its extremals.

More precisely, we will proceed in two steps. First, in section 
\ref{sec:weak}, we obtain weak control on the extremals by refining an 
approach to Minkowski's inequality that was recently developed by 
Kolesnikov and Milman \cite{KM18,KM17}.

\begin{thm}
\label{thm:weakintro}
Let $K,L,M$ be convex bodies so that $M$ has nonempty interior and
$\V(L,L,M,\ldots,M)>0$. If equality holds in \eqref{eq:minkq}, 
then there exist $a\ge 0$, $v\in\mathbb{R}^n$ so that 
$h_K(x)-ah_L(x)=\langle v,x\rangle$ for all $x\in\supp S_{M,\ldots,M}$.
\end{thm}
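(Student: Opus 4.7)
The plan is to adapt the Bochner/Brascamp-Lieb approach to Minkowski's inequality developed by Kolesnikov and Milman~\cite{KM17,KM18}, and to extract the equality case from a quantitative version of their identity that survives smoothing. This route is particularly suited to Theorem~\ref{thm:weakintro} because the KM identity is naturally weighted by $S_{M,\ldots,M}$ rather than by $S_{B,M,\ldots,M}$, so the information it yields in the equality case is delivered directly on the support of the right measure.

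\textbf{Smoothing.} The KM identity is a Bochner-type integration-by-parts on $\partial M$ that requires $\partial M\in C^\infty$ with strictly positive Gauss curvature. I would therefore first approximate $M$ by such smooth strictly convex bodies $M_t\to M$ (for instance by mollifying $M+tB$ in support-function space). By continuity of mixed volumes in each argument, the Minkowski deficit for $(K,L,M_t,\ldots,M_t)$ tends to $0$ as $t\to 0$.

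\textbf{Quantitative KM identity.} For each smooth $M_t$, I would rewrite the KM identity as an exact expression for the Minkowski deficit $\mathcal{E}_t(h_K,h_K)\mathcal{E}_t(h_L,h_L)-\mathcal{E}_t(h_K,h_L)^2$, with $\mathcal{E}_t$ the form from Theorem~\ref{thm:formsminkq} applied to $M_t$, as a manifestly nonnegative Brascamp-Lieb-type integral against $S_{M_t,\ldots,M_t}$. Choosing $a_t\ge 0$ to realize $\inf_a\mathcal{E}_t(h_K-ah_L,h_K-ah_L)$ and $v_t\in\mathbb{R}^n$ to subtract off the linear component in $L^2(S_{M_t,\ldots,M_t})$, the identity should produce the stability bound
\[
	c_t\,\bigl\|h_K-a_th_L-\langle v_t,\cdot\rangle\bigr\|^2_{L^2(S_{M_t,\ldots,M_t})}
	\le \mathcal{E}_t(h_K,h_K)\mathcal{E}_t(h_L,h_L)-\mathcal{E}_t(h_K,h_L)^2
\]
for some stability constant $c_t>0$.

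\textbf{Passage to the limit.} The sequence $(a_t)$ is bounded thanks to $\V(L,L,M,\ldots,M)>0$, and $(v_t)$ is bounded because $M$ has nonempty interior, which forces $\supp S_{M,\ldots,M}$ to linearly span $\mathbb{R}^n$ (the same spanning property persists for $\supp S_{M_t,\ldots,M_t}$ for $t$ small, by weak convergence). Extracting a subsequence, $a_t\to a\ge 0$ and $v_t\to v$. Since $\phi_t:=h_K-a_th_L-\langle v_t,\cdot\rangle$ converges uniformly on $S^{n-1}$ to $\phi:=h_K-ah_L-\langle v,\cdot\rangle$, and since $S_{M_t,\ldots,M_t}\wto S_{M,\ldots,M}$ weakly, provided $c_t$ is bounded below one obtains $\int\phi^2\,dS_{M,\ldots,M}=0$, hence $\phi\equiv 0$ on $\supp S_{M,\ldots,M}$ by continuity of $\phi$.

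\textbf{Main obstacle.} The crucial point is a uniform lower bound on $c_t$. The KM identity involves the second fundamental form of $\partial M_t$ (equivalently, the Hessian of $h_{M_t}$), which blows up as $M_t\to M$ whenever $M$ is not smooth, so an abstract spectral-gap argument is not available. One must reformulate the KM identity so that its deficit dominates the $L^2(S_{M_t,\ldots,M_t})$-norm in question with a constant depending only on integral quantities against $S_{M_t,\ldots,M_t}$ and on coarse geometric data of $M$ (such as inradius and diameter) that remain controlled in the limit. This is likely where the bulk of the effort in section~\ref{sec:weak} is to be invested.
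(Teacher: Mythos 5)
Your outline correctly identifies the route the paper actually takes: Reilly/KM-type integration by parts on $\partial M$, smoothing $M$ by $C^\infty_+$ bodies, and the observation that the resulting estimate is naturally weighted by $S_{M,\ldots,M}$ rather than $S_{B,M,\ldots,M}$. You have also correctly located the crux: a lower bound on the stability constant $c_t$ that does not degenerate as $M_t\to M$. But that crux is precisely the mathematical content of section~\ref{sec:weak}, and your proposal asserts it rather than proves it; as written, the step ``the identity should produce the stability bound'' is not a proof. The raw second-variation identity contains $\II^{-1}$ and $\Tr[\II]$, whose pointwise magnitudes blow up under polytope or edge approximations, so nothing in the bare identity yields a constant controlled by coarse data.

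What the paper actually does to close this gap (Proposition~\ref{prop:km}) is not a reformulation of the identity but a genuinely new estimate obtained by introducing an auxiliary Neumann potential: given $g$ with $\int_{\partial M}g=0$ and $\int_{\partial M}xg=0$, one solves $\Delta u=0$ in $M$ with $u_n=g$ on $\partial M$, applies Reilly's formula to $u$, and then lower-bounds $\int_M\Tr[(\nabla^2u)^2]$ using the Payne--Weinberger Poincar\'e inequality together with a divergence-theorem computation for $\int_{\partial M}u_n^2\langle x,n_M\rangle\,dx$. This is what produces the constant $r^2/(nR^2)$ depending only on inradius and circumradius. A further non-trivial step (Lemma~\ref{lem:minmax}) is then needed to convert the orthogonality conditions $\int_{\partial M}g=0$, $\int_{\partial M}xg=0$ (which do \emph{not} coincide with orthogonality to the zero eigenspace in $L^2(\tilde\nu)$) into a spectral-gap statement, via the min-max principle applied to the $(n+1)$-dimensional space $\spn\{h_M,\,h_M\langle v,\nabla h_M\rangle:v\in\mathbb{R}^n\}$. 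Both ingredients are essential and absent from your proposal. A smaller imprecision: the form $\mathcal{E}_t$ of Theorem~\ref{thm:formsminkq} lives on $L^2(S_{B,\mathcal{M}_t})$; the operator that makes the KM computation work is a differently normalized one, $\tilde{\mathscr{A}}_t$ on $L^2(S_{\mathcal{M}_t}/h_{M_t})$, and it is the latter whose spectral gap is being estimated. The deficit $\V(K,L,\mathcal{M}_t)^2-\V(K,K,\mathcal{M}_t)\V(L,L,\mathcal{M}_t)$ is of course normalization-independent, but conflating the two operators obscures why the $S_{M,\ldots,M}$-weighting appears.
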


We emphasize that Theorem \ref{thm:weakintro}, on its own, provides only 
very weak information on the extremals, as is illustrated by the following 
example.

\begin{example}
\label{ex:weak}
When $M$ is a polytope, the $0$-extreme directions are precisely the facet 
normals. Thus, for example, if equality holds in \eqref{eq:minkq} for 
convex bodies $K$ and $L=M$ with nonempty interior, then Theorem 
\ref{thm:weakintro} only implies that $K$ touches every facet of a 
homothet of $M$. While this is nontrivial geometric information, this 
property is much too weak to characterize the extremals. For example, if 
$K$ is the unit ball and $L=M$ is the unit cube, then $K$ touches every 
facet of $M$, but this is not an equality case of Minkowski's inequality.
\end{example}

In the second step, we amplify the weak control provided by Theorem 
\ref{thm:weakintro} to fully characterize the extremals. The following 
rigidity property, which will be proved in section \ref{sec:rigid}, is one 
of the central results of this paper.

\begin{thm}
\label{thm:rigidintro}
Let $K,L,M$ be convex bodies so that $M$ has nonempty interior and
$\V(L,L,M,\ldots,M)>0$. If equality holds in \eqref{eq:minkq} and 
$h_K(x)=h_L(x)$ for all $x\in\supp S_{M,\ldots,M}$, then necessarily
$h_K(x)=h_L(x)$ for all $x\in\supp S_{B,M,\ldots,M}$.
\end{thm}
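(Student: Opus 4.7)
The plan is to reduce the rigidity statement to a unique-continuation property for the self-adjoint operator $\mathscr{A}$ furnished by Theorem \ref{thm:formsminkq}, and then to prove this property by a quantitative rigidity estimate obtained through polytopal approximation. To start, by Theorem \ref{thm:formsminkq} and Lemma \ref{lem:hyper}, equality in \eqref{eq:minkq} together with $\V(L,L,M,\ldots,M)>0$ forces $h_K-ah_L\in\ker\mathscr{A}$ for some $a\ge 0$. To pin down $a$ I would test this kernel relation against $g=h_M$: combining $\mathcal{E}(h_K-ah_L,h_M)=0$ with the Fenchel--Jessen formula $\V(\,\cdot\,,M,\ldots,M)=\frac{1}{n}\int h_\bullet\,dS_{M,\ldots,M}$ and the hypothesis $h_K=h_L$ on $\supp S_{M,\ldots,M}$ yields $(1-a)\V(L,M,\ldots,M)=0$. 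Since $M$ has nonempty interior and $L$ cannot be a point or segment (else $\V(L,L,M,\ldots,M)=0$), we have $\V(L,M,\ldots,M)>0$, so $a=1$ and $\phi:=h_K-h_L\in\ker\mathscr{A}$.

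\textbf{Rigidity as unique continuation.} It remains to prove the following intrinsic property of $\mathscr{A}$: any $\phi\in\ker\mathscr{A}$ which is a difference of support functions and vanishes on $\supp S_{M,\ldots,M}$ must also vanish on $\supp S_{B,M,\ldots,M}$. The cleanest route is through a quantitative rigidity estimate of the form
\[
    \int \phi^2\,dS_{B,M,\ldots,M} \le C(M)\Big[\int \phi^2\,dS_{M,\ldots,M}+\mathcal{E}(\phi,\phi)\Big]
\]
for $\phi$ in a suitable subspace (e.g., differences of support functions orthogonal to the linear functions), since the $\mathcal{E}(\phi,\phi)$ term vanishes for kernel elements and the other two then give $\phi=0$ on $\supp S_{B,M,\ldots,M}$. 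I would prove such an estimate by approximating $M$ with simple polytopes $M_k\to M$ in Hausdorff distance. For each $M_k$ the operator $\mathscr{A}_k:=\mathscr{A}_{M_k,\ldots,M_k}$ admits an explicit ``quantum graph''-type description (cf.\ section \ref{sec:keyideas}), and Schneider's polytope case of Theorem \ref{thm:mainfull} directly implies that the rigidity, and hence the estimate with some finite constant $C(M_k)$, holds in this setting. The remaining task is to show that $C(M_k)$ can be controlled uniformly in $k$, so that the estimate passes to the weak limit of the mixed area measures and applies to the given $\phi=h_K-h_L$.

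\textbf{Main obstacle.} The principal difficulty lies in this uniform spectral control. The operator $\mathscr{A}$ is highly degenerate, $\ker\mathscr{A}$ may grow strictly in the limit $M_k\to M$, and new nonlinear kernel elements can appear which are invisible to the approximating polytopes. Securing a stable spectral gap between $\ker\mathscr{A}_k$ and the rest of the spectrum, together with a quantitative rigidity estimate whose constant survives the limit and can be applied to the specific element $\phi=h_K-h_L$ produced by Minkowski equality, is where the distinctive algebraic structure of mixed volumes has to be exploited, well beyond the qualitative information furnished by Theorem \ref{thm:weakintro}.
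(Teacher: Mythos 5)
Your high-level architecture matches the paper: reduce to a quantitative estimate proved via polytopal approximation. But there are several genuine gaps, and the part you label the ``main obstacle'' is precisely the content of the proof, so the proposal stops short of an argument.

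First, a structural remark: your opening kernel-membership step ($a=1$, $\phi=h_K-h_L\in\ker\mathscr{A}$) is correct but unnecessary here. In the paper, Theorem \ref{thm:rigidintro} is deduced directly from the quantitative Theorem \ref{thm:rigid}: the hypothesis $h_K=h_L$ on $\supp S_{M,\ldots,M}$ together with $\supp\mu_M\subseteq\supp S_{M,\ldots,M}$ and continuity of $h_K-h_L$ kills the $L^2(\mu_M)$ term, and the equality assumption then forces the $L^2(S_{B,M,\ldots,M})$ term to vanish. The kernel-normalization argument you give is closer to what is needed in the proof of Theorem \ref{thm:mainfull}, where one must first pin down $a$ and $v$ via Theorem \ref{thm:weakintro}.

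Second, and more seriously, the estimate you write down, with $\int\phi^2\,dS_{M,\ldots,M}$ on the right-hand side, is exactly the form that the paper explicitly states is \emph{not} known (see the discussion around Theorem \ref{thm:rigid} and Remark \ref{rem:gap}). The measure that actually comes out of the polytope computation is not $S_{M,\ldots,M}$ but an auxiliary measure $\mu_M$ (Proposition \ref{prop:polyrigid}): the mass at each facet normal $n_F$ is the $S_{B,\mathcal{M}}$-mass of its incident edges in the quantum graph, $\mu_M(\{n_F\})=\frac{1}{n-1}\sum_{F'\sim F}\mathcal{H}^{n-2}(F\cap F')\,l_{F,F'}$. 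This measure has the same support as $S_{M,\ldots,M}$ for a polytope, which is all that the qualitative theorem needs, but it is not a mixed area measure, it is not Hausdorff-continuous in $M$, and it is not canonically attached to a general convex body. Your plan to ``pass to the weak limit of the mixed area measures'' does not apply, because the relevant object on the right-hand side is not a mixed area measure. Instead (Lemma \ref{lem:0approx}) the paper chooses the approximating polytopes $M_k$ so that their facet normals lie in $\supp S_{M,\mathcal{M}}$ and extracts a weakly convergent subsequence of $\mu_{M_k}$ by compactness, thereby forcing the limit to be supported in $\supp S_{M,\mathcal{M}}$.

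Third, you invoke ``Schneider's polytope case \ldots directly implies that the rigidity, and hence the estimate with some finite constant $C(M_k)$, holds'' and say what remains is to show uniformity. But Schneider's result is a purely qualitative characterization of extremals; combined with compact resolvent for a fixed polytope it gives a finite but uncontrolled spectral gap. The paper's actual mechanism for a uniform constant does not go through Schneider at all: Lemma \ref{lem:edge} is a one-dimensional Poincar\'e/Hardy-type inequality on each edge $e_{F,F'}$ of the metric graph controlling $\int_e f^2$ by $\int_e(f')^2$ and the vertex values $f(n_F)^2,f(n_{F'})^2$, with constants depending only on the edge length; Lemma \ref{lem:2d} shows that when $rB\subseteq M\subseteq RB$ all edge lengths are bounded away from $\pi$ by a quantity depending only on $r/R$; and Corollary \ref{cor:poincedge} and Proposition \ref{prop:polyrigid} sum these edgewise bounds against the weights $\mathcal{H}^{n-2}(F\cap F')$. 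This produces a constant $C_{M_k}$ depending only on $r,R$, hence uniform along a Hausdorff-convergent sequence. None of this is visible in the proposal: you correctly flag uniformity as the crux, but do not supply any route to it, and the Schneider-plus-spectral-gap route you suggest would not yield it (the polytope spectral gaps need not be bounded away from zero a priori).

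In short: the outline is sound, the polytope-approximation strategy is the right one, and you correctly localize the hard point; but the missing pieces — the Hardy-type edge estimate, the geometric edge-length bound, the auxiliary measure $\mu_M$ together with its non-canonical limit, and the careful choice of approximating polytopes — constitute the entire substantive content of the proof.
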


Unfortunately, we cannot prove Theorems \ref{thm:weakintro} and 
\ref{thm:rigidintro} directly, as we do not have a sufficiently explicit 
description of $\mathscr{A}$ for general convex bodies. Instead, we will 
prove quantitative versions of both these theorems for special 
convex bodies. A quantitative form of Theorem \ref{thm:weakintro}, i.e., a 
weak stability form of Minkowski's inequality, will be proved for smooth 
bodies using Reilly's formula in Riemannian geometry. A quantitative 
form of Theorem \ref{thm:rigidintro}, i.e., a quantitative rigidity 
theorem, will be proved for polytopes: in this case $\mathscr{A}$ is a 
``quantum graph'' (cf.\ section \ref{sec:qgraph}), and the proof will
exploit a stability estimate for the solution of the Dirichlet problem 
for $\mathscr{A}$ with boundary data on $\supp S_{M,\ldots,M}$. In both 
cases, a key aspect of the proof is to discover the correct quantitative 
formulation that does not degenerate when we take the appropriate limit to 
approximate arbitrary $M$.

\begin{rem}
The formulation of Theorems \ref{thm:weakintro} and \ref{thm:rigidintro} 
was inspired by the proof of a result of Schneider \cite[Theorem 
7.6.21]{Sch14} for the case that $M$ is a simple polytope. In this 
setting, the statement of Theorem \ref{thm:weakintro} can be deduced from 
Alexandrov's polytope proof of the Alexandrov-Fenchel inequality 
\cite{Ale96}, while the statement of Theorem \ref{thm:rigidintro} follows 
from a form of Minkowski's uniqueness theorem. However, these qualitative 
statements do not allow one to pass to the limit of general convex 
bodies; the tools that are needed to do so are developed in this paper.

On the other hand, Schneider's result goes beyond Minkowski's 
inequality to cover some additional cases of the Alexandrov-Fenchel 
inequality. Similarly, most of the techniques that are developed in this 
paper are not specific to Minkowski's inequality, and extend to 
general mixed volumes. The only ingredient of this paper that is 
fundamentally restricted to Minkowski's classical setting is Theorem 
\ref{thm:weakintro}, for reasons that will become clear in section 
\ref{sec:weak}. While a replacement for this argument in the 
Alexandrov-Fenchel setting will require new ideas, we expect that the 
techniques that are introduced in this paper could provide a basis for 
further developments.
\end{rem}

\section{Preliminaries}
\label{sec:pre}

The aim of this section is to recall some basic definitions and facts from 
convex geometry and functional analysis that will be used in the sequel.

We highlight at the outset the following convention. In 
Minkowski's inequalities and in many arguments in this paper, 
mixed volumes $\V(K,L,C_1,\ldots,C_{n-2})$ of convex bodies 
$K,L,C_1,\ldots,C_{n-2}$ in $\mathbb{R}^n$ are considered for fixed 
$C_1,\ldots,C_{n-2}$, and only $K$ and $L$ are varied. We therefore 
introduce once and for all the notation
$$
	\mathcal{C}:=(C_1,\ldots,C_{n-2}),\qquad
	\mathcal{M}:=(\underbrace{M,\ldots,M}_{n-2}),
$$
and we will write $\V(K,L,\mathcal{C}):=\V(K,L,C_1,\ldots,C_{n-2})$,
$S_{B,\mathcal{M}}:=S_{B,M,\ldots,M}$, etc. Throughout this paper,
we always denote by $B$ the Euclidean unit ball in $\mathbb{R}^n$.

By convention, we will consider linear operations 
$\lambda_1K_1+\cdots+\lambda_mK_m$ on convex bodies to always have 
positive coefficients $\lambda_1,\ldots,\lambda_m\ge 0$. The linearity of 
mixed volumes $K\mapsto \V(K,C_1,\ldots,C_{n-1})$ should be intepreted in 
this sense. In contrast, differences of support functions
form a genuine linear space (over the reals).

\subsection{Mixed volumes and mixed area measures}
\label{sec:mixva}

Mixed volumes are defined by \eqref{eq:mixedv}. They have the following 
basic properties \cite[section 5.1]{Sch14}.

\begin{lem}
\label{lem:mvprop}
Let $K,K',K_1,\ldots,K_n$ be convex bodies in $\mathbb{R}^n$.
\begin{enumerate}[a.]
\item $\V(K,\ldots,K) = \Vol(K)$.
\item $\V(K_1,\ldots,K_n)$ is a symmetric and multilinear functional of
$K_1,\ldots,K_n$.
\item $\V(K_1,\ldots,K_n)\ge 0$.
\item $\V(K,K_2,\ldots,K_n)\ge \V(K',K_2,\ldots,K_n)$ if
$K\supseteq K'$.
\item $\V(K_1,\ldots,K_n)$ is invariant under translation $K_i\mapsto
K_i+v_i$.
\end{enumerate}
\end{lem}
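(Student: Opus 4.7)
The plan is to derive (a)--(e) from the defining polynomial identity \eqref{eq:mixedv} combined with elementary properties of Lebesgue volume (translation invariance, homogeneity, monotonicity under set inclusion). All five assertions are classical \cite[Section 5.1]{Sch14}, so I will only sketch the main steps.

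Properties (a), (b), and (e) follow essentially from uniqueness of polynomial expansion. For (a), I specialize \eqref{eq:mixedv} to $m=1$, $K_1=K$ to obtain $\lambda^n\Vol(K)=\V(K,\ldots,K)\lambda^n$. For symmetry in (b), one either takes the coefficient of $\lambda_{i_1}\cdots\lambda_{i_n}$ in \eqref{eq:mixedv} to be defined in fully symmetrized form, or uses the M\"obius-inverted formula
$$n!\,\V(K_1,\ldots,K_n) = \sum_{\varnothing\neq J\subseteq\{1,\ldots,n\}} (-1)^{n-|J|}\Vol\Bigl(\textstyle\sum_{i\in J}K_i\Bigr),$$
which manifestly exhibits symmetry. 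Multilinearity follows by substituting $K_1=\lambda K_1'+\mu K_1''$ into \eqref{eq:mixedv}, expanding, and matching monomials in $(\lambda,\mu,\lambda_2,\ldots,\lambda_m)$. For (e), translation invariance of $\Vol$ gives $\Vol(\sum_i\lambda_i(K_i+v_i))=\Vol(\sum_i\lambda_i K_i)$, so the two polynomial expansions on either side of \eqref{eq:mixedv} coincide and hence their coefficients agree.

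For (d), monotonicity of Minkowski addition yields $\Vol(\lambda K+\sum_{i\ge 2}\lambda_i K_i)\ge\Vol(\lambda K'+\sum_{i\ge 2}\lambda_i K_i)$ for all $\lambda,\lambda_i\ge 0$. Both sides are polynomials in $(\lambda,\lambda_2,\ldots,\lambda_n)$ whose coefficients are (up to combinatorial factors) mixed volumes of the respective tuples of bodies, and by (c) the difference of these polynomials has nonnegative coefficients. Extracting the coefficient of the monomial $\lambda\lambda_2\cdots\lambda_n$ then gives the desired inequality.

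The only substantive step is (c), which I would obtain by reduction to polytopes. For polytopes, one has the inductive representation
$$\V(K_1,K_2,\ldots,K_n) = \frac{1}{n}\sum_{u} h_{K_1}(u)\,\V'\bigl(F(K_2,u),\ldots,F(K_n,u)\bigr),$$
where $u$ ranges over the outer unit facet normals of $K_2+\cdots+K_n$ and $\V'$ denotes the $(n-1)$-dimensional mixed volume within the supporting hyperplane $u^\perp$. Using (e) to translate $K_1$ so that the origin lies in $K_1$ makes $h_{K_1}\ge 0$ on the sphere, and induction on $n$ with the trivial base case $n=1$ yields nonnegativity for polytopes. The general case then follows from Hausdorff continuity of mixed volumes together with the density of polytopes among convex bodies. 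The main (mild) subtlety I expect is to avoid circularity between (c) and (d): the natural order is to establish (c) first from the polytope formula and only then invoke it in the proof of (d).
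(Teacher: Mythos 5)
The paper does not actually prove Lemma \ref{lem:mvprop}; it states the properties and cites \cite[section 5.1]{Sch14}, so there is no in-paper proof to compare against. Your outline for (a), (b), (c), and (e) is the standard route and is sound.

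There is, however, a genuine gap in your argument for (d). You assert that since $\Vol(\lambda K+\sum_{i\ge 2}\lambda_i K_i)\ge\Vol(\lambda K'+\sum_{i\ge 2}\lambda_i K_i)$ on the positive orthant, ``by (c) the difference of these polynomials has nonnegative coefficients.'' But (c) only says that each mixed volume is nonnegative --- it says nothing about \emph{differences} of mixed volumes, which is exactly what the coefficients of the difference polynomial are, so the citation of (c) does not support the claim. Nor does pointwise nonnegativity of a homogeneous polynomial on the positive orthant imply nonnegativity of its coefficients (for instance $\lambda^2-\lambda\lambda_2+\lambda_2^2$ is everywhere nonnegative yet has a negative coefficient). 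One also cannot repair this by writing $K=K'+L$ and invoking multilinearity, since $K\supseteq K'$ does not in general make $K'$ a Minkowski summand of $K$. The fix is to run the same induction you already set up for (c): in the polytope formula $\V(K,K_2,\ldots,K_n)=\frac{1}{n}\sum_u h_K(u)\,\V'\bigl(F(K_2,u),\ldots,F(K_n,u)\bigr)$, subtract the corresponding expression for $K'$ to get $\frac{1}{n}\sum_u \bigl(h_K(u)-h_{K'}(u)\bigr)\,\V'(\ldots)$; each term is nonnegative because $K\supseteq K'$ gives $h_K\ge h_{K'}$ and (c) gives $\V'\ge 0$, and approximation by polytopes together with Hausdorff continuity handles the general case. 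Equivalently, one can invoke the representation \eqref{eq:mvrep} together with nonnegativity of the mixed area measure $S_{K_2,\ldots,K_n}$.
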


There is a close connection between mixed volumes and mixed area measures
that we describe presently.
The \emph{surface area measure} of a convex body $K$ in 
$\mathbb{R}^n$ is the measure on $S^{n-1}$ such that
for any measurable set $A\subseteq S^{n-1}$
$$
	S(K,A) := \mathcal{H}^{n-1}(\{x\in\partial K: x\in F(K,u)
	\mbox{ for some }
	u\in A\}),
$$
where $\mathcal{H}^k$ denotes the $k$-dimensional Hausdorff measure.
That is, $S(K,A)$ is the surface area of the part of the 
boundary of $K$ with outer normal vectors in $A$. Just like volume, the 
surface area measure $S(K,\cdot\,)$ is polynomial in $K$, cf. \cite[p.\ 
279]{Sch14}:
$$
	S(\lambda_1K_1+\cdots+\lambda_m K_m,A) =
	\sum_{i_1,\ldots,i_{n-1}=1}^m S_{K_{i_1},\ldots,K_{i_{n-1}}}(A)
	\,\lambda_{i_1}\cdots\lambda_{i_{n-1}}
$$
for $\lambda_1,\ldots,\lambda_m\ge 0$.
The measures $S_{K_1,\ldots,K_{n-1}}$ are called \emph{mixed area 
measures}. They have the following properties \cite[section 5.1]{Sch14}.

\begin{lem}
\label{lem:maprop}
Let $K,K_1,\ldots,K_{n-1}$ be convex bodies in $\mathbb{R}^n$.
\begin{enumerate}[a.]
\item $S_{K,\ldots,K} = S(K,\cdot\,)$.
\item $S_{K_1,\ldots,K_{n-1}}$
is a symmetric and multilinear functional of $K_1,\ldots,K_{n-1}$.
\item $S_{K_1,\ldots,K_{n-1}}\ge 0$.
\item $S_{K_1,\ldots,K_{n-1}}$ is invariant under translation $K_i\mapsto
K_i+v_i$.
\item $\int \langle v,x\rangle\,S_{K_1,\ldots,K_{n-1}}(dx)=0$
for all $v\in\mathbb{R}^n$.
\end{enumerate}
\end{lem}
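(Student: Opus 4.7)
The plan is to derive all five properties from the defining polynomial expansion
$$
	S(\lambda_1K_1+\cdots+\lambda_mK_m,A) =
	\sum_{i_1,\ldots,i_{n-1}=1}^m S_{K_{i_1},\ldots,K_{i_{n-1}}}(A)\,
	\lambda_{i_1}\cdots\lambda_{i_{n-1}}
$$
together with the already-stated Fenchel--Jessen representation $\V(L,K_1,\ldots,K_{n-1})=\tfrac{1}{n}\int h_L\,dS_{K_1,\ldots,K_{n-1}}$ and the properties of mixed volumes in Lemma \ref{lem:mvprop}. I would set these up in a way that mirrors the proof of Lemma \ref{lem:mvprop}, so that each mixed-area-measure property is obtained by polarizing, or by inserting a specific convex body $L$, into a known property of $S(K,\cdot)$ or of $\V$.

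\emph{Properties (a), (b), (d).} For (b), since the polynomial in the $\lambda_i$'s on both sides of the expansion must be identical, comparison of coefficients gives both that $S_{K_1,\ldots,K_{n-1}}$ is determined by its values and that we may as well take it to be symmetric, and multilinearity follows by substituting $K_i\mapsto\lambda K_i+\mu K_i'$ and re-expanding. For (a), setting $K_1=\cdots=K_{n-1}=K$ and $\lambda_1=1$ collapses the right-hand side to $S_{K,\ldots,K}(A)$, which equals $S(K,A)$. For (d), translation invariance of $S(K,\cdot)$ gives $S(\sum\lambda_i(K_i+v_i),A)=S(\sum\lambda_iK_i,A)$ since $\sum\lambda_iv_i$ is a translation; equating polynomials in $\lambda$ then forces every coefficient $S_{K_{i_1},\ldots,K_{i_{n-1}}}(A)$ to be unchanged under $K_i\mapsto K_i+v_i$.

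\emph{Property (e).} Apply the Fenchel--Jessen representation with $L=\{v\}$, for which $h_L(x)=\langle v,x\rangle$. Then
$$
	\int\langle v,x\rangle\,S_{K_1,\ldots,K_{n-1}}(dx) = n\,\V(\{v\},K_1,\ldots,K_{n-1}).
$$
By translation invariance of mixed volumes (Lemma~\ref{lem:mvprop}(e)), this equals $n\,\V(\{0\},K_1,\ldots,K_{n-1})$, and since $\{0\}+\{0\}=\{0\}$, multilinearity forces $\V(\{0\},K_1,\ldots,K_{n-1})=0$.

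\emph{Property (c) — the main obstacle.} Nonnegativity does not follow directly from the polynomial identity, since polarization of a nonnegative polynomial typically produces a signed object. The cleanest route is to first handle the polytope case and then pass to the limit. For polytopes $K_1,\ldots,K_{n-1}$ in sufficiently general position one has the explicit formula
$$
	S_{K_1,\ldots,K_{n-1}} = \sum_{u\in\mathcal{N}} \V_{n-1}\!\bigl(F(K_1,u),\ldots,F(K_{n-1},u)\bigr)\,\delta_u,
$$
where the sum runs over common facet normals and the coefficient is an $(n-1)$-dimensional mixed volume within $u^\perp$, hence nonnegative. Symmetry and multilinearity (already established) let one reduce the general polytope case to this setting by perturbation, and weak continuity of mixed area measures in their arguments — which itself follows from the polynomial expansion together with the weak continuity of $S(K,\cdot)$ under Hausdorff convergence — then yields nonnegativity for arbitrary convex bodies. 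I expect this approximation step to be the most delicate part of the argument; everything else is a formal consequence of the polynomial identity and of Lemma \ref{lem:mvprop}.
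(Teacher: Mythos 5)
The paper does not include a proof of this lemma; it is cited from Schneider \cite[section 5.1]{Sch14}. Your argument is precisely the standard one found there — polarization of the defining polynomial for (a), (b), (d); the representation formula \eqref{eq:mvrep} applied with $L=\{v\}$ together with translation invariance and multilinearity of mixed volumes for (e); and the explicit nonnegative polytope formula plus weak continuity under Hausdorff approximation for (c) — and is correct.
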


The key relation between mixed volumes and mixed area measures is the 
representation formula \cite[Theorem 5.1.7]{Sch14} that
we already introduced in section \ref{sec:keyideas}:
\begin{equation}
\label{eq:mvrep}
	\V(K_1,\ldots,K_n) = \frac{1}{n}\int h_{K_1}dS_{K_2,\ldots,K_n}.
\end{equation}
Note that while $K_1$ and $K_2,\ldots,K_n$ play different roles
in this representation, the expression is nonetheless symmetric under 
permutation of the $K_i$ by Lemma \ref{lem:mvprop}(\textit{b}).

We now record two important facts. First, mixed volumes and mixed area 
measures are continuous in the topology of Hausdorff convergence (i.e., 
$K^{(s)}\to K$ if and only if
$\|h_{K^{(s)}}-h_K\|_\infty\to 0$); see the proof of 
\cite[Theorem 5.1.7]{Sch14}.

\begin{thm}
\label{thm:conv}
Suppose that $K_1^{(s)},\ldots,K_n^{(s)}$ are convex bodies such that
$K_i^{(s)}\to K_i$ as $s\to\infty$ in the sense of Hausdorff convergence.
Then
$$
	\V(K_1^{(s)},\ldots,K_n^{(s)})\to
	\V(K_1,\ldots,K_n),\qquad
	S_{K_1^{(s)},\ldots,K_{n-1}^{(s)}}\wto
	S_{K_1,\ldots,K_{n-1}}
$$
as $s\to\infty$, where the limit of measures is in the sense of weak 
convergence.
\end{thm}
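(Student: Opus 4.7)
The strategy is to reduce the convergence of mixed volumes to the classical continuity of $\Vol$ under Hausdorff convergence, and then to transfer that fact, via the representation formula \eqref{eq:mvrep}, into weak convergence of the mixed area measures. Both reductions are essentially formal.

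For the first claim, I would apply polarization to \eqref{eq:mixedv}. The identity \eqref{eq:mixedv} exhibits $(\lambda_1,\ldots,\lambda_n)\mapsto\Vol(\lambda_1K_1+\cdots+\lambda_nK_n)$ as a symmetric homogeneous polynomial of degree $n$ whose coefficients are (up to combinatorial factors) the numbers $\V(K_{i_1},\ldots,K_{i_n})$. A standard polarization identity therefore expresses each mixed volume as a fixed, universal linear combination of values of $\Vol(\sum_j\lambda_j^{(\alpha)} K_j)$ over finitely many parameter vectors $\lambda^{(\alpha)}$. Since $h_{\sum_j\lambda_j K_j^{(s)}}=\sum_j\lambda_j h_{K_j^{(s)}}$ converges uniformly on $S^{n-1}$, Minkowski combinations preserve Hausdorff convergence; and the Lebesgue volume is continuous on convex bodies in the Hausdorff topology (a classical fact, provable by dominated convergence applied to the indicators inside a fixed ball, using that $K^{(s)}$ and $K$ are contained in each other's $\varepsilon$-neighborhoods for large $s$). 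Combining these observations yields $\V(K_1^{(s)},\ldots,K_n^{(s)})\to\V(K_1,\ldots,K_n)$.

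For the weak convergence of mixed area measures, fix an arbitrary convex body $L$ and apply \eqref{eq:mvrep}:
$$
\int h_L\,dS_{K_1^{(s)},\ldots,K_{n-1}^{(s)}}
= n\,\V(L,K_1^{(s)},\ldots,K_{n-1}^{(s)})
\to n\,\V(L,K_1,\ldots,K_{n-1})
= \int h_L\,dS_{K_1,\ldots,K_{n-1}}.
$$
By bilinearity the same convergence holds for any test function of the form $h_K-h_{K'}$. Taking $L=B$ gives the uniform total-mass bound $\sup_s S_{K_1^{(s)},\ldots,K_{n-1}^{(s)}}(S^{n-1}) = n\sup_s\V(B,K_1^{(s)},\ldots,K_{n-1}^{(s)})<\infty$. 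By Lemma \ref{lem:diffsf}, differences of support functions contain $C^2(S^{n-1})$ and are therefore uniformly dense in $C(S^{n-1})$. A standard $3\varepsilon$-argument, combining this density with the uniform mass bound, then extends the convergence to $\int f\,dS_{K_1^{(s)},\ldots,K_{n-1}^{(s)}}\to\int f\,dS_{K_1,\ldots,K_{n-1}}$ for every $f\in C(S^{n-1})$, which is exactly weak convergence.

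I do not expect a serious obstacle: the only genuinely nontrivial input is the Hausdorff-continuity of $\Vol$ on convex bodies, which is classical, and the rest of the argument is a formal consequence of \eqref{eq:mvrep} together with the density of support-function differences in $C(S^{n-1})$. The mild subtlety is simply ensuring the uniform mass bound before invoking density, but this drops out for free by choosing $L=B$ in the representation formula.
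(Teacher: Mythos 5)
Your proof is correct. The paper does not give a proof of this statement, citing instead the proof of Schneider \cite[Theorem 5.1.7]{Sch14}, and your argument---continuity of $\Vol$ under Hausdorff convergence, polarization of \eqref{eq:mixedv} to get continuity of mixed volumes, then the representation formula \eqref{eq:mvrep} combined with the density of $C^2$ functions (differences of support functions, via Lemma \ref{lem:diffsf}) and the uniform mass bound $S_{K_1^{(s)},\ldots,K_{n-1}^{(s)}}(S^{n-1})=n\,\V(B,K_1^{(s)},\ldots,K_{n-1}^{(s)})$ to upgrade to weak convergence---is the standard route used there.
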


Second, we have the following support characterization
\cite[Theorem 4.5.3]{Sch14}.

\begin{thm}
\label{thm:supp}
Let $M$ be a convex body in $\mathbb{R}^n$ with nonempty interior. Then
\begin{align*}
	\supp S_{M,\mathcal{M}} &=
	\cl\{u\in S^{n-1}:u\mbox{ is a }0\mbox{-extreme normal vector
	of } M\},\\
	\supp S_{B,\mathcal{M}} &=
	\cl\{u\in S^{n-1}:u\mbox{ is a }1\mbox{-extreme normal vector
	of } M\}.
\end{align*}
\end{thm}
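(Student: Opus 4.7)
My plan is to treat the two identities by a common strategy: reduce to the smooth case via the approximation $M_\varepsilon := M + \varepsilon B$ together with Theorem \ref{thm:conv}, combined with a direct geometric analysis of the spherical image map. For the first identity the simplification is immediate: by Lemma \ref{lem:maprop}(a), $S_{M,\mathcal{M}} = S(M,\cdot\,)$, which is by definition the pushforward of $\mathcal{H}^{n-1}|_{\partial M}$ under the (multivalued) spherical image map sending each $x \in \partial M$ to its set of outer unit normals. Thus the first identity reduces to the classical characterization of $\supp S(M,\cdot\,)$.

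For $\supp S(M,\cdot\,) \supseteq \cl\{u : u\text{ is }0\text{-extreme}\}$: if $u$ is $0$-extreme at some $x \in \partial M$, then $u$ lies on an extreme ray of the normal cone at $x$. A small rotation of the supporting hyperplane along this extreme ray produces a family of nearby normal directions, each attained at a distinct boundary point, yielding positive $\mathcal{H}^{n-1}$-measure in any neighborhood of $u$. For the reverse inclusion, I would invoke the classical fact (a consequence of Alexandrov's almost-everywhere twice-differentiability theorem for convex functions) that the set of boundary points at which the normal cone has dimension $\geq 2$ has $\mathcal{H}^{n-1}$-measure zero; hence if a neighborhood $U$ of $u$ contains only non-$0$-extreme normals, the spherical preimage of $U$ lies in this null set and $S(M,U) = 0$.

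For the second identity, I would use the polarization of area measures,
$$
S(M+\varepsilon B,\,\cdot\,) \,=\, \sum_{k=0}^{n-1} \binom{n-1}{k}\, \varepsilon^k\, S_{\underbrace{\scriptstyle B,\ldots,B}_{k},\,\underbrace{\scriptstyle M,\ldots,M}_{n-1-k}}(\cdot\,),
$$
to isolate $S_{B,\mathcal{M}}$ as the coefficient of $\varepsilon^1$. The bodies $M_\varepsilon$ have $C^{1,1}$ boundary, for which $S(M_\varepsilon,\,\cdot\,)$ admits an explicit density involving the principal radii of curvature. Translating the definitions: $u$ is $1$-extreme iff it admits no decomposition into three linearly independent normals at a common point, i.e.\ lies on a face of dimension $\leq 2$ of some normal cone (equivalently $\dim F(M,u) \geq n-2$). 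The extra $B$-factor in $S_{B,\mathcal{M}}$ contributes one additional independent direction of curvature compared with $S(M,\cdot\,)$, which accounts for the shift from $0$-extreme to $1$-extreme.

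The main obstacle will be the $\supseteq$ inclusion in the second identity. One must show that a direction $u$ which is $1$-extreme but not $0$-extreme---invisible to $S(M,\cdot\,)$---nonetheless contributes positive mass to $S_{B,\mathcal{M}}$ in every neighborhood. This requires a careful local construction around the $(n-2)$-dimensional face $F(M,u)$: the perturbation $\varepsilon B$ rounds this face into a tubular neighborhood of $\partial M_\varepsilon$ whose spherical image is a strip around $u$ of $(n-1)$-dimensional area of order $\varepsilon$, which is precisely the magnitude of the $\varepsilon^1$-term in the polarization. Extracting only the linear-in-$\varepsilon$ contribution quantitatively, and doing so uniformly in the general non-polytopal case, is the technical heart of the argument.
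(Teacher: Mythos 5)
The paper does not give its own proof of this statement: it is quoted directly from Schneider \cite[Theorem 4.5.3]{Sch14}, so there is no internal argument to compare against. Your overall strategy---smooth/polytope approximation and reading off $S_{B,\mathcal{M}}$ from the $\varepsilon^1$-coefficient in $S(M+\varepsilon B,\cdot\,)$---is a reasonable one, but the step you yourself flag as the technical heart rests on a false equivalence. It is \emph{not} true for general convex bodies that ``$u$ is $1$-extreme iff $\dim F(M,u)\ge n-2$''; that holds only for polytopes. Take $M$ to be the intersection of two balls in $\mathbb{R}^3$ and let $u$ lie in the relative interior of the normal cone at a point $y$ on the edge circle: the normal cone at $y$ is $2$-dimensional, so no three linearly independent normals at $y$ exist and $u$ is $1$-extreme (but not $0$-extreme), yet $F(M,u)=\{y\}$ has dimension $0$, not $n-2=1$. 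The dimension that governs $r$-extremity is that of the smallest face of the normal cone containing $u$ in its relative interior (Schneider's touching cone), not $\dim F(M,u)$. There is therefore no ``$(n-2)$-dimensional face $F(M,u)$'' to round into a tube in general, and the $\supseteq$ inclusion for $S_{B,\mathcal{M}}$---exactly the part that makes the theorem nontrivial---is left without a valid argument.

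A second, more local gap: in the $\subseteq$ direction for $S_{M,\mathcal{M}}$ you assert without justification that the spherical preimage of a neighborhood $U$ consisting only of non-$0$-extreme directions lies inside the $\mathcal{H}^{n-1}$-null set of singular boundary points. This needs the following observation, which should be made explicit: if $v=v_1+v_2$ with $v_1,v_2$ linearly independent normal vectors at some boundary point $y$, then $h_M(v)=h_M(v_1)+h_M(v_2)$, whence $F(M,v)=F(M,v_1)\cap F(M,v_2)$; so every $x\in F(M,v)$ has both $v_1$ and $v_2$ in its normal cone and is therefore singular. Without this, knowing that $v$ fails to be $0$-extreme ``at some point $y$'' does not by itself preclude $v$ from being the unique normal at a different, regular, boundary point $x$.
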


Let us finally note that as mixed volumes and mixed area measures are 
linear functionals of the underlying bodies (and hence of their support 
functions), their definitions extend naturally by linearity to functions 
that are differences of two support functions \cite[section 5.2]{Sch14}. 
By a slight abuse of notation, we will write
\begin{align*}
	&\V(f,K_2,\ldots,K_n) := \V(K,K_2,\ldots,K_n)-
	\V(K',K_2,\ldots,K_n),\\
	&S_{f,K_2,\ldots,K_{n-1}} :=
	S_{K,K_2,\ldots,K_{n-1}}-S_{K',K_2,\ldots,K_{n-1}}
\end{align*}
for functions of the form $f=h_{K}-h_{K'}$. We may similarly extend 
further arguments by linearity to write $\V(f,g,K_3,\ldots,K_n)$, etc.
In particular, as will be recalled in the following section, any $C^2$ 
function on the sphere can be written as the difference of two support 
functions, so that mixed volumes and mixed area measures are well defined 
when their arguments are arbitrary $C^2$ functions. Of course, 
$\V(f,K_2,\ldots,K_n)$ need not be nonnegative and 
$S_{f,K_2,\ldots,K_{n-1}}$ may be a signed measure.

\subsection{Smooth convex bodies}
\label{sec:smooth}

For sufficiently smooth bodies, mixed volumes and area measures may 
be expressed in a more explicit form, cf.\ \cite[section 2]{SvH18}.

To define the appropriate regularity, we recall that the support function 
$h_K$ may be viewed either as a function on the sphere $S^{n-1}$ or, 
equivalently, as a $1$-homogeneous function on $\mathbb{R}^n$. Now suppose 
that $h_K$ is a $C^2$ function on $\mathbb{R}^n\backslash\{0\}$. Then 
$\nabla h_K$ is $0$-homogeneous, so the derivative of $\nabla h_K$ in the 
radial direction vanishes. The Hessian $\nabla^2h_K(x)$ in $\mathbb{R}^n$ 
may therefore be viewed as a linear mapping from $x^\perp$ to itself. We 
denote this mapping by $D^2h_K$. For an arbitrary $C^2$ function 
$f:S^{n-1}\to\mathbb{R}$, the restricted Hessian $D^2f$ is defined 
analogously by applying the above construction to the $1$-homogeneous 
extension of $f$. We may also express this notion intrinsically as 
$D^2f=\nabla_{S^{n-1}}^2f+fI$ in terms of the covariant Hessian on the 
sphere. We now have the following classical fact \cite[section 
2.1]{SvH18}; here and below, $A>0$ ($A\ge 0$) denotes that the matrix $A$ 
is positive definite (semidefinite).

\begin{lem}
\label{lem:diffsf}
Let $f:S^{n-1}\to\mathbb{R}$ be a $C^2$ function. Then $f=h_K$ for some
convex body $K$ if and only if
$D^2f\ge 0$. In particular, any $C^2$ function
satisfies $f=h_K-h_L$ for some convex bodies $K,L$ \upn{(}as
$D^2(f+h_{\lambda B}) = D^2f + \lambda I \ge 0$ for large 
$\lambda$\upn{)}.
\end{lem}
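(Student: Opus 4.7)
The plan is to pass from functions on $S^{n-1}$ to their positively $1$-homogeneous extensions on $\mathbb{R}^n$, and to invoke the classical fact that the support functions of convex bodies are exactly the positively $1$-homogeneous convex functions on $\mathbb{R}^n$. Define $\bar f(x):=|x|\,f(x/|x|)$ for $x\neq 0$ and $\bar f(0):=0$, so that $\bar f$ is $C^2$ on $\mathbb{R}^n\setminus\{0\}$ and positively $1$-homogeneous. By $1$-homogeneity, $\nabla\bar f$ is $0$-homogeneous; differentiating $\nabla\bar f(tx)=\nabla\bar f(x)$ in $t$ at $t=1$ gives $\nabla^2\bar f(x)\,x=0$, so the radial direction lies in $\ker\nabla^2\bar f(x)$ and the restriction of $\nabla^2\bar f(x)$ to $x^\perp$ is precisely $D^2f(x)$. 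Hence $D^2f\ge 0$ on $S^{n-1}$ if and only if $\nabla^2\bar f\ge 0$ on $\mathbb{R}^n\setminus\{0\}$, and the lemma reduces to showing that $\bar f$ is the support function of a convex body if and only if it is convex on $\mathbb{R}^n$.

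The forward direction is then immediate: if $f=h_K$ then $\bar f=h_K$ is a supremum of linear functionals, hence convex, so its Hessian is positive semidefinite wherever it exists. For the converse, I would first show that $\nabla^2\bar f\ge 0$ on $\mathbb{R}^n\setminus\{0\}$ implies convexity of $\bar f$ on all of $\mathbb{R}^n$: for any segment $[a,b]$ avoiding the origin, $[a,b]$ lies in a convex open subset of $\mathbb{R}^n\setminus\{0\}$ on which the PSD Hessian gives convexity in the standard way; for segments through the origin, the only thing that remains to be checked is the inequality $\bar f(x)+\bar f(-x)\ge 0$, which follows by perturbing to a nearby segment avoiding $0$ and passing to the limit using continuity of $\bar f$. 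Now define $K:=\{y\in\mathbb{R}^n:\langle y,x\rangle\le\bar f(x)\text{ for all }x\}$; this is closed and convex as an intersection of half-spaces, and bounded because $|y|=\sup_{|x|=1}\langle y,x\rangle\le\max_{S^{n-1}}f<\infty$ for $y\in K$. To show that $K$ is nonempty and that $h_K=\bar f$, I would use the subdifferential of $\bar f$: at any $x_0\neq 0$, convexity gives $\partial\bar f(x_0)\neq\emptyset$, and for $y\in\partial\bar f(x_0)$ the subgradient inequality applied at $x=0$ and $x=2x_0$, combined with $1$-homogeneity, forces $\langle y,x_0\rangle=\bar f(x_0)$ and $\langle y,x\rangle\le\bar f(x)$ for all $x$; thus $y\in K$ and $h_K(x_0)\ge\langle y,x_0\rangle=\bar f(x_0)$, while the reverse inequality $h_K\le\bar f$ is built into the definition of $K$.

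The ``in particular'' clause then follows at once: for any $C^2$ function $f$ on $S^{n-1}$, the continuous field $D^2f$ is bounded below on the compact sphere, so $D^2f+\lambda I\ge 0$ for $\lambda$ large enough; since $h_{\lambda B}(x)=\lambda|x|$ satisfies $D^2h_{\lambda B}=\lambda I$, the first part of the lemma gives $f+h_{\lambda B}=h_K$ for some convex body $K$, whence $f=h_K-h_{\lambda B}$. The main obstacle in executing this plan is the passage from positive semidefiniteness of $\nabla^2\bar f$ on the non-convex open set $\mathbb{R}^n\setminus\{0\}$ to global convexity of $\bar f$ on $\mathbb{R}^n$; once this (and the subgradient identification $h_K=\bar f$) is handled, the remainder is the standard correspondence between $1$-homogeneous convex functions and closed convex sets via Legendre/support-function duality.
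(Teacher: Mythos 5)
Your proof is correct. The paper does not actually prove Lemma~\ref{lem:diffsf}; it records it as a classical fact with pointers to \cite{SvH18} and \cite{Sch14}, so there is no internal argument to compare against. Your route is the standard one underlying those references: pass to the $1$-homogeneous extension $\bar f$, note that $\nabla^2\bar f(x)$ annihilates $x$ and restricts to $D^2f$ on $x^\perp$ so that $D^2f\ge 0$ on $S^{n-1}$ is equivalent to $\nabla^2\bar f\ge 0$ on $\mathbb{R}^n\setminus\{0\}$, and then invoke the correspondence between positively $1$-homogeneous convex functions and support functions of compact convex sets. The two steps that genuinely need care are handled correctly: a segment $[a,b]$ avoiding the origin can be strictly separated from $0$ and hence placed in an open half-space (a convex subset of $\mathbb{R}^n\setminus\{0\}$), the perturbation of a segment through the origin by $\varepsilon v$ with $0\ne v\perp x$ (available since $n\ge 3$) yields $\bar f(x)+\bar f(-x)\ge 0$ in the limit, and the subgradient computation at $x_0\ne 0$ using the two test points $x=0$ and $x=2x_0$ together with $1$-homogeneity forces $\langle y,x_0\rangle=\bar f(x_0)$ and $y\in K$, giving both $K\ne\varnothing$ and $h_K=\bar f$. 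Boundedness of $K$ follows as you say (and the needed $\max_{S^{n-1}}f\ge 0$ is itself a consequence of $\bar f(x)+\bar f(-x)\ge 0$). The \emph{in particular} clause is immediate once the main equivalence is in place.
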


We now formulate the following definition.

\begin{defn}
A convex body $K$ is of class $C^k_+$ ($k\ge 2$) if $h_K$ is $C^k$ and
$D^2h_K>0$.
\end{defn}

It can be shown \cite[sections 2.5 and 3.4]{Sch14} that $K$ is of class 
$C^k_+$ if and only if its boundary $\partial K$ is a $C^k$-submanifold 
of $\mathbb{R}^n$, and the function $n_K:\partial K\to S^{n-1}$ that 
maps each boundary point to its (unique) outer normal is a 
$C^{k-1}$-diffeomorphism.

For a $C^2_+$ body $K$, one can obtain an explicit expression for the 
surface area measure $S(K,\cdot\,)$ by using the outer normal map $n_K$ 
to perform a change of variables in its definition. Using the basic fact 
$n_K^{-1}=\nabla h_K$ \cite[Corollary 1.7.3]{Sch14}, this yields
$$
	S(K,d\omega) = \det(D^2h_K)\,d\omega,
$$
where $\omega$ denotes the surface measure on $S^{n-1}$. Geometrically,
this expression states that the density of the surface area measure of a 
$C^2_+$ body $K$ at the point $u\in S^{n-1}$ is the reciprocal Gauss 
curvature of its boundary $\partial K$ at the point $n_K^{-1}(u)$.

To extend this expression to mixed area measures, note that we can write
$$
	\det(\lambda_1A_1+\cdots+\lambda_mA_m) =
	\sum_{i_1,\ldots,i_{n-1}=1}^m
	\D(A_{i_1},\ldots,A_{i_{n-1}})\,\lambda_{i_1}\cdots\lambda_{i_{n-1}}
$$
for any $(n-1)$-dimensional matrices $A_i$ and $\lambda_i\ge 0$, as 
$A\mapsto\det A$ is a homogeneous polynomial. The coefficients 
$\D(A_1,\ldots,A_{n-1})$ are called \emph{mixed discriminants}.
The definition of mixed area measures and \eqref{eq:mvrep}
now yield the following.

\begin{lem}
\label{lem:repc2}
Let $K_1,\ldots,K_n$ be convex bodies in $\mathbb{R}^n$ of class
$C^2_+$. Then
\begin{align*}
	S_{K_2,\ldots,K_{n}}(d\omega) &=
	\D(D^2h_{K_2},\ldots,D^2h_{K_{n}})\,d\omega,
	\\
	\V(K_1,\ldots,K_n) &= \frac{1}{n}
	\int h_{K_1}\D(D^2h_{K_2},\ldots,D^2h_{K_{n}})\,d\omega.
\end{align*}
\end{lem}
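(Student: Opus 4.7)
The plan is to reduce both identities to the already-established single-body formula $S(K,d\omega)=\det(D^2h_K)\,d\omega$ for $C^2_+$ bodies $K$, and then to extract the coefficients of a polynomial identity. The key preliminary observation is that the class $C^2_+$ is closed under nonnegative Minkowski combinations: if $K_1,\ldots,K_m$ are $C^2_+$ and $\lambda_1,\ldots,\lambda_m\ge 0$ are not all zero, then $h_{\lambda_1K_1+\cdots+\lambda_mK_m}=\lambda_1h_{K_1}+\cdots+\lambda_mh_{K_m}$ is $C^2$ and $D^2h=\lambda_1D^2h_{K_1}+\cdots+\lambda_mD^2h_{K_m}$ is a nonnegative combination of positive definite matrices, hence positive definite. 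Consequently the single-body formula applies to every such combination.

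First I would apply the single-body formula to $K=\lambda_1K_1+\cdots+\lambda_{n-1}K_{n-1}$ with $C^2_+$ bodies $K_i$ and scalars $\lambda_i\ge 0$, to obtain
$$
S(\lambda_1K_1+\cdots+\lambda_{n-1}K_{n-1},d\omega)
=\det\bigl(\lambda_1 D^2h_{K_1}+\cdots+\lambda_{n-1}D^2h_{K_{n-1}}\bigr)\,d\omega.
$$
The left-hand side expands, by the defining polynomial identity of mixed area measures recalled in section \ref{sec:mixva}, as the symmetric polynomial $\sum_{i_1,\ldots,i_{n-1}}S_{K_{i_1},\ldots,K_{i_{n-1}}}(d\omega)\,\lambda_{i_1}\cdots\lambda_{i_{n-1}}$. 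The right-hand side expands, by the definition of mixed discriminants, as the symmetric polynomial $\sum_{i_1,\ldots,i_{n-1}}\D(D^2h_{K_{i_1}},\ldots,D^2h_{K_{i_{n-1}}})\,d\omega\cdot\lambda_{i_1}\cdots\lambda_{i_{n-1}}$. Since both polynomials agree on the open cone $\lambda_i>0$ and both families of coefficients are symmetric in their indices, coefficient comparison yields the first claim $S_{K_2,\ldots,K_n}(d\omega)=\D(D^2h_{K_2},\ldots,D^2h_{K_n})\,d\omega$.

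The second identity then follows immediately by substituting the first into the representation \eqref{eq:mvrep}:
$$
\V(K_1,\ldots,K_n) = \frac{1}{n}\int h_{K_1}\,dS_{K_2,\ldots,K_n}
= \frac{1}{n}\int h_{K_1}\,\D(D^2h_{K_2},\ldots,D^2h_{K_n})\,d\omega.
$$

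There is no serious obstacle to this argument; the only point requiring a moment of care is the coefficient-extraction step, which must be done with the symmetrized coefficients (as the monomials $\lambda_{i_1}\cdots\lambda_{i_{n-1}}$ are not linearly independent when regarded without symmetrization). Since both the mixed area measures and the mixed discriminants are, by definition or convention, the symmetric coefficients of the respective polynomial expansions, the identification is unambiguous and the proof is complete once positivity of $D^2h$ on the Minkowski combinations has been observed.
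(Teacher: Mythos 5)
Your argument is correct and follows exactly the path the paper intends: apply the single-body formula $S(K,d\omega)=\det(D^2h_K)\,d\omega$ to the Minkowski combination $\lambda_1K_1+\cdots+\lambda_{n-1}K_{n-1}$ (noting that $C^2_+$ is closed under such combinations), expand both sides polynomially, and compare symmetric coefficients, then invoke \eqref{eq:mvrep} for the second identity. The paper gives no proof of this lemma beyond the one-sentence remark preceding it, and the details you have supplied — in particular the observation that the coefficient comparison is unambiguous precisely because both mixed area measures and mixed discriminants are defined as symmetric coefficients — are the right ones.
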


Let us recall some basic properties of mixed 
discriminants.

\begin{lem}
\label{lem:mdprop}
Let $A,B,A_1,\ldots,A_{n-1}$ be symmetric $(n-1)$-dimensional matrices.
\begin{enumerate}[a.]
\item $\D(A,\ldots,A) = \det(A)$.
\item $\D(B,A,\ldots,A) = \frac{1}{n-1}\Tr[\cof(A)B]$.
\item $\D(A_1,\ldots,A_{n-1})$ is symmetric and multilinear in its 
arguments.
\item $\D(A,A_2,\ldots,A_{n-1})\ge \D(B,A_2,\ldots,A_{n-1})$ if
$A\ge B$, $A_2,\ldots,A_{n-1}\ge 0$.
\end{enumerate}
\end{lem}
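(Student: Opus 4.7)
The plan is to derive all four properties directly from the defining polynomial expansion of $\D$, treating parts (a)--(c) as essentially formal consequences of the expansion and the Jacobi formula, and reserving the substantive work for the monotonicity in (d).

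For (a), I specialize the defining identity to $m=1$, $A_1 = A$, $\lambda_1 = 1$, which immediately gives $\det(A) = \D(A, \ldots, A)$. For (c), I regard $\D(A_{i_1}, \ldots, A_{i_{n-1}})$ as symmetric by convention: since the defining sum runs over all ordered tuples, only the symmetrization of the coefficients is determined, so we may take $\D$ symmetric without loss of generality. Multilinearity then follows by substituting $A_1 \leftarrow \mu B + \nu C$ into the expansion and matching coefficients, using that $\det$ is polynomial in the matrix entries. For (b), I expand with $m=2$, $\lambda_1 = 1$, $\lambda_2 = t$:
$$\det(A + tB) = \sum_{k=0}^{n-1} \binom{n-1}{k}\,\D(\underbrace{A,\ldots,A}_{n-1-k},\underbrace{B,\ldots,B}_{k})\,t^k,$$
where the binomial coefficient counts tuples with exactly $k$ indices equal to $2$ and symmetry collapses each group. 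On the other hand, the Jacobi formula gives $\frac{d}{dt}\det(A+tB)|_{t=0} = \Tr[\cof(A)\,B]$, so matching the $t^1$ coefficient yields $(n-1)\,\D(B, A, \ldots, A) = \Tr[\cof(A)\,B]$.

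Part (d) is the real content. By the multilinearity from (c),
$$\D(A, A_2, \ldots, A_{n-1}) - \D(B, A_2, \ldots, A_{n-1}) = \D(A-B, A_2, \ldots, A_{n-1}),$$
so it suffices to prove that $\D(C_1, \ldots, C_{n-1}) \ge 0$ whenever every $C_i \ge 0$. By continuity of polynomial coefficients in the matrix entries, I may perturb and assume each $C_i$ is positive definite, hence decomposes as $C_i = \sum_j v_{i,j}\, v_{i,j}^T$ for some vectors $v_{i,j} \in \mathbb{R}^{n-1}$. Multilinearity then reduces the problem to the rank-one case $\D(v_1 v_1^T, \ldots, v_{n-1} v_{n-1}^T) \ge 0$. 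Letting $V$ be the $(n-1)\times(n-1)$ matrix with columns $v_1, \ldots, v_{n-1}$, I compute $\sum_i \lambda_i v_i v_i^T = V \diag(\lambda_1, \ldots, \lambda_{n-1}) V^T$, so
$$\det\Bigl(\sum_i \lambda_i v_i v_i^T\Bigr) = \det(V)^2 \,\lambda_1 \cdots \lambda_{n-1}.$$
Matching the coefficient of $\lambda_1 \cdots \lambda_{n-1}$ against the defining expansion (which by symmetry of $\D$ produces $(n-1)!\,\D(v_1 v_1^T, \ldots, v_{n-1} v_{n-1}^T)$) gives $\D(v_1 v_1^T, \ldots, v_{n-1} v_{n-1}^T) = \det(V)^2/(n-1)! \ge 0$, as desired.

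The only genuine obstacle is the polarization step in (d): the combinatorial factor produced by the symmetrization in the defining sum must be tracked correctly so that it matches the $(n-1)!$ from permutations of $\lambda_1 \cdots \lambda_{n-1}$. Once the rank-one reduction is made, the computation collapses to the classical $V\Lambda V^T$ identity and the nonnegativity is transparent; everything else is bookkeeping from the defining expansion.
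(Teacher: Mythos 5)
Your proposal is correct, and for part (b) — the only part the paper actually proves — your argument (matching the $t^1$ coefficient of $\det(A+tB)$ against the Jacobi formula) is exactly the paper's one-line derivation. The paper delegates parts (a), (c), (d) to a citation of \cite[Lemma 2.6]{SvH18}; your proofs of those parts, in particular the reduction of (d) to rank-one matrices via multilinearity followed by the $\det(V\Lambda V^{T})=\det(V)^2\lambda_1\cdots\lambda_{n-1}$ identity, are the standard arguments and are correct (the perturbation to positive definiteness is not even needed, since positive semidefinite matrices already admit a decomposition $C_i=\sum_j v_{i,j}v_{i,j}^{T}$).
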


These properties may be found in \cite[Lemma 2.6]{SvH18} except part
$(\textit{b})$, which follows readily by differentiation
$\frac{d}{dt}\det(A+tB)|_{t=0}=(n-1)\D(B,A,\ldots,A)$.

We finally recall that the directional derivatives of support functions 
have a geometric meaning for any (non-smooth) convex body 
\cite[Theorem 1.7.2]{Sch14}.

\begin{lem}
\label{lem:dirdir}
Let $K$ be any convex body in $\mathbb{R}^n$. Then
$\nabla_xh_K(u) = h_{F(K,u)}(x)$ for all $u,x\in S^{n-1}$, where
$\nabla_x$ denotes the directional derivative in $\mathbb{R}^n$ in 
direction $x$.
\end{lem}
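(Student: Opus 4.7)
The plan is to interpret $\nabla_x h_K(u)$ as the one-sided directional derivative
\[
\nabla_x h_K(u) = \lim_{t \downarrow 0} \frac{h_K(u + tx) - h_K(u)}{t},
\]
whose existence is guaranteed by convexity of $h_K$: the difference quotient is monotone non-decreasing in $t > 0$ and bounded below by $-h_K(-x)$. The strategy is to establish matching upper and lower bounds whose common value is $h_{F(K,u)}(x)$.

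For the lower bound, I would restrict the supremum defining $h_K(u + tx)$ to the face $F(K,u) = \{y \in K : \langle y, u\rangle = h_K(u)\}$. Any $y \in F(K, u)$ satisfies $\langle y, u + tx\rangle = h_K(u) + t\langle y, x\rangle$, so taking the sup over $F(K,u) \subseteq K$ gives
\[
h_K(u + tx) \ge h_K(u) + t\, h_{F(K,u)}(x).
\]
Dividing by $t > 0$ and sending $t \downarrow 0$ yields $\nabla_x h_K(u) \ge h_{F(K,u)}(x)$.

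For the matching upper bound I would use compactness. For each $t > 0$, pick $y_t \in K$ attaining the sup, so $h_K(u + tx) = \langle y_t, u + tx\rangle$. Since $\langle y_t, u\rangle \le h_K(u)$,
\[
h_K(u + tx) - h_K(u) \le t \langle y_t, x\rangle.
\]
For any sequence $t_k \downarrow 0$, extract a subsequential limit $y_{t_k} \to y_\star \in K$. Passing to the limit in $\langle y_{t_k}, u + t_k x\rangle = h_K(u + t_k x)$ and using continuity of $h_K$ forces $\langle y_\star, u\rangle = h_K(u)$, so $y_\star \in F(K,u)$; hence $\limsup_k \langle y_{t_k}, x\rangle = \langle y_\star, x\rangle \le h_{F(K,u)}(x)$, producing the desired upper bound.

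The only real obstacle is recognising that the limit point $y_\star$ of approximate maximizers lies in the face $F(K,u)$ --- a soft compactness argument, but the step in which the geometry of faces actually enters. No deeper analytic input is required: the lemma is essentially a first-order unfolding of the sup defining $h_K$ around its active constraints in direction $u$.
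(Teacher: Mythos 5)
Your proof is correct. The paper does not supply a proof of this lemma---it simply cites \cite[Theorem 1.7.2]{Sch14}---so there is nothing to compare against directly, but your argument is the standard one that appears in Schneider's monograph and elsewhere: the lower bound comes from restricting the supremum defining $h_K(u+tx)$ to $F(K,u)$, and the upper bound comes from tracking exact maximizers $y_t$ for the perturbed direction $u+tx$ and using compactness of $K$ together with continuity of $h_K$ to show that any cluster point of the $y_t$ lies in $F(K,u)$. The only cosmetic point worth tightening is the phrasing around the compactness step: you should pick a subsequence of $\{t_k\}$ along which $y_{t_k}$ converges, and then note that since the full limit $\nabla_x h_K(u)$ already exists by convexity, the subsequential bound $\nabla_x h_K(u)\le\langle y_\star,x\rangle\le h_{F(K,u)}(x)$ applies to the whole limit. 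Everything else---the existence of the one-sided directional derivative by monotonicity of the difference quotient, the subadditivity lower bound $-h_K(-x)$, the nonemptiness of $F(K,u)$ by compactness---is handled correctly.
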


\subsection{Self-adjoint operators and quadratic forms}
\label{sec:presa}

It is an elementary fact of linear algebra that every symmetric quadratic 
form $Q$ on $\mathbb{R}^n$ can be represented in terms of a symmetric 
matrix $A$ as $Q(x,y)=\langle x,Ay\rangle$. Moreover, the spectral theorem 
states that $A$ can be diagonalized by a proper choice of basis. Analogous 
notions exist in infinite dimension, but the definitions 
and properties of the relevant objects are more subtle. The aim of 
this section is to give a brief review of the requisite notions; the 
reader is referred to \cite{RS72,Dav95} for textbook treatments.

\subsubsection*{\bf Self-adjointness and the spectral theorem}

Let $H$ be a real\footnote{%
	While most references on spectral theory assume complex $H$, the 
	relevant notions apply readily to the real case
	\cite[Remark 20.18]{MV97}; 
	we work with real $H$ primarily for notational convenience.
} separable Hilbert space. A \emph{linear operator} on $H$ is a 
linear map $\mathscr{L}:\Dom \mathscr{L}\to H$ whose domain is a 
dense subspace $\Dom\mathscr{L}\subseteq H$. The specification of 
the domain of $\mathscr{L}$ is part of its definition. For any linear 
operator $\mathscr{L}$, the associated \emph{adjoint operator}
$\mathscr{L}^*$ is defined by $\langle \mathscr{L}^*x,y\rangle= \langle 
x,\mathscr{L}y\rangle$ for all $y\in\Dom\mathscr{L}$ and 
$x\in\Dom\mathscr{L}^*$, where
$$
	\Dom\mathscr{L}^* :=
	\{ x\in H:\mbox{there is }z\in H\mbox{ so that }
	\langle x,\mathscr{L}y\rangle =
	\langle z,y\rangle\mbox{ for all }y\in\Dom\mathscr{L}\}.
$$
A linear operator $\mathscr{L}$ is \emph{self-adjoint}
if $\mathscr{L}=\mathscr{L}^*$ (in particular, $\Dom\mathscr{L}=
\Dom\mathscr{L}^*$). Note that the condition
$\langle \mathscr{L}x,y\rangle= \langle x,\mathscr{L}y\rangle$ for
$x,y\in\Dom\mathscr{L}$ is \emph{not} sufficient to ensure 
self-adjointness (such $\mathscr{L}$ is said to be 
\emph{symmetric}).

The spectral theorem for self-adjoint operators 
\cite[Theorem VIII.4]{RS72} states that any self-adjoint operator
can be diagonalized.

\begin{thm}[Spectral theorem]
\label{thm:spectral}
For any self-adjoint operator $\mathscr{L}$, there exists a measure space
$(\Omega,\mu)$ with a finite measure $\mu$, an invertible linear isometry
$U:H\to L^2(\Omega,\mu)$, and a measurable function 
$\eta:\Omega\to\mathbb{R}$ such that
\begin{enumerate}[a.]
\item $\Dom\mathscr{L} = \{ U^{-1}\psi : \psi\in L^2(\Omega,\mu)
\mbox{ such that }\eta\psi\in L^2(\Omega,\mu)\}$; and
\item $U\mathscr{L}U^{-1}\psi = \eta\psi$ for every $\psi\in 
L^2(\Omega,\mu)$.
\end{enumerate}
\end{thm}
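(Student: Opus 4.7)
The plan is to prove the spectral theorem first for bounded self-adjoint operators, then bootstrap to the unbounded case via a bounded transform. Throughout, it is convenient to work in a complex Hilbert space and then descend to the real case by complexification; since any real self-adjoint $\mathscr{L}$ extends to $\mathscr{L}_{\mathbb{C}}$ on $H \otimes_{\mathbb{R}} \mathbb{C}$ that is self-adjoint and commutes with complex conjugation, the resulting spectral data can be chosen compatibly with conjugation, and restriction gives the statement over $\mathbb{R}$.

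For the bounded case, let $T = T^*$ be bounded. First I would develop the continuous functional calculus: the polynomial calculus $p \mapsto p(T)$ satisfies $\|p(T)\| = \sup_{\lambda \in \spec T}|p(\lambda)|$ (a consequence of the spectral radius formula applied to $p(T)^*p(T)$), so by Stone--Weierstrass it extends uniquely to an isometric $*$-homomorphism $C(\spec T) \to B(H)$. Next, using Zorn's lemma, decompose $H = \bigoplus_{k} H_k$ into an orthogonal sum of $T$-cyclic subspaces, each with a unit cyclic vector $v_k$. On each $H_k$ the positive linear functional $f \mapsto \langle v_k, f(T) v_k\rangle$ on $C(\spec T)$ yields, via the Riesz--Markov theorem, a finite Borel measure $\mu_k$ on $\spec T$; the assignment $f(T) v_k \mapsto f$ then extends to a unitary isomorphism $U_k : H_k \to L^2(\spec T, \mu_k)$ intertwining $T$ with multiplication by the coordinate function. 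Gluing together on the disjoint union $\Omega = \bigsqcup_k \spec T$, with $\mu$ rescaled so that $\mu_k$ has total mass $2^{-k}$, produces the desired finite measure space, isometry $U$, and multiplier $\eta$.

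For the general unbounded $\mathscr{L}$, the key device is a bounded transform. Working in the complexification, the resolvent $R = (\mathscr{L} + i)^{-1}$ is everywhere defined and bounded (this uses the standard facts that $\mathscr{L} \pm i$ are bijective with bounded inverse whenever $\mathscr{L}$ is self-adjoint, proved via the identity $\|(\mathscr{L} + i)\xi\|^2 = \|\mathscr{L}\xi\|^2 + \|\xi\|^2$ for $\xi \in \Dom\mathscr{L}$ and a density argument). The operator $R$ is normal with $R^* = (\mathscr{L} - i)^{-1}$, and $\ker R = 0$, $\ran R = \Dom\mathscr{L}$. Apply the bounded spectral theorem to the self-adjoint pair $\mathrm{Re}\, R, \mathrm{Im}\, R$ (which commute) to obtain a measure space $(\Omega, \mu)$, isometry $U$, and a complex multiplier $\varphi$ representing $R$ with $\varphi \ne 0$ a.e. Then $\eta := \varphi^{-1} - i$ is a real-valued measurable function (reality follows from self-adjointness of $\mathscr{L}$), and one verifies that under $U$ the operator $\mathscr{L}$ becomes multiplication by $\eta$ with the precise domain described in parts (a)--(b), since $\xi \in \Dom\mathscr{L}$ iff $\xi = R\zeta$ for some $\zeta \in H$ iff $U\xi = \varphi\, U\zeta$ with $U\zeta = (\eta + i)\, U\xi \in L^2(\Omega,\mu)$, i.e.\ iff $\eta\, U\xi \in L^2(\Omega,\mu)$.

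The main obstacle I anticipate is the bookkeeping around the bounded transform in the unbounded step: verifying the precise domain equality in (a) and that self-adjointness of $\mathscr{L}$ is exactly what forces $\eta$ to be real-valued (if $\mathscr{L}$ were merely symmetric, $R$ need not be normal and the argument fails). Once the resolvent identity and the characterization $\Dom\mathscr{L} = \ran R$ are in hand the rest is bookkeeping, but this step is where symmetry versus self-adjointness becomes crucial. The construction of cyclic decompositions and the transfer from $\mathbb{C}$ back to $\mathbb{R}$ are routine by comparison.
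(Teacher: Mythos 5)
The paper does not actually prove Theorem \ref{thm:spectral}: it is stated and used as a black box, with a citation to Reed--Simon \cite[Theorem VIII.4]{RS72} and a footnote pointing to \cite[Remark 20.18]{MV97} for the passage from complex to real Hilbert spaces. So there is no "paper's own proof" to compare against. Your sketch reproduces the standard textbook route (continuous functional calculus via Stone--Weierstrass, cyclic decomposition via Zorn and Riesz--Markov for the bounded case, then a bounded transform for the unbounded case), and it is in substance correct.

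One step deserves to be flagged because it is a genuine gap in your sketch as written. You propose to apply "the bounded spectral theorem to the self-adjoint pair $\mathrm{Re}\, R, \mathrm{Im}\, R$ (which commute)," but the bounded theorem you just developed is for a \emph{single} self-adjoint operator; the cyclic subspaces you produce for $\mathrm{Re}\, R$ need not be invariant under $\mathrm{Im}\, R$, so you cannot simply run the same construction twice. You need the multivariate version (cyclic decomposition relative to the commutative $C^*$-algebra generated by $\mathrm{Re}\, R$ and $\mathrm{Im}\, R$, with Riesz--Markov applied on its Gelfand spectrum) or, more economically, apply the bounded spectral theorem for a single \emph{unitary} operator to the Cayley transform $W = (\mathscr{L}-i)(\mathscr{L}+i)^{-1}$, which requires only the functional calculus for one normal operator. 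Either repair is routine, but as written the claim is an invocation of a theorem you have not proved. The other elided points --- that self-adjointness forces the essential range of $\varphi$ onto the circle $|z + i/2| = 1/2$ (so $\eta = \varphi^{-1}-i$ is real a.e.), and that cyclic vectors can be chosen in the real subspace so the representation descends to $\mathbb{R}$ --- are both correct and fillable by short computations, so I would not count them as gaps so much as compression.
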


The spectral theorem gives rise to a spectral calculus in direct
analogy to the finite-dimensional case. Let
$\mathscr{L},\Omega,\mu,U,\eta$ be as in Theorem \ref{thm:spectral}. 
Then for any measurable function $\varphi:\mathbb{R}\to\mathbb{R}$, we can 
define a self-adjoint operator $\varphi(\mathscr{L})$ with 
$$
	\Dom \varphi(\mathscr{L}) :=
	\{ U^{-1}\psi : \psi\in L^2(\Omega,\mu)\mbox{ such that }
	(\varphi\circ\eta)\psi\in L^2(\Omega,\mu)\}
$$
and $\varphi(\mathscr{L})x:= U^{-1}(\varphi\circ\eta)Ux$ for $x\in\Dom 
\varphi(\mathscr{L})$.

The \emph{spectrum} $\spec\mathscr{L}$ of a self-adjoint operator is the 
essential range of the function $\eta$ in the spectral theorem. $\mathscr{L}$ 
is \emph{nonnegative} if $\spec\mathscr{L}\subseteq[0,\infty)$.

\subsubsection*{\bf Nonnegative forms}

We now consider the correspondence between self-adjoint operators and 
symmetric quadratic forms. Let 
$\mathcal{E}:\Dom\mathcal{E}\times\Dom\mathcal{E}\to\mathbb{R}$ be a 
bilinear map that is defined on a dense subspace $\Dom\mathcal{E}\subseteq 
H$. We call $\mathcal{E}$ a \emph{nonnegative form} if 
$\mathcal{E}(f,g)=\mathcal{E}(g,f)$ and $\mathcal{E}(f,f)\ge 0$. A 
nonnegative form is \emph{closed} if $\Dom\mathcal{E}$ is complete for the 
norm $\vvvert f\vvvert := [\|f\|^2+\mathcal{E}(f,f)]^{1/2}$. 

It is a basic fact that closed nonnegative forms are in one-to-one 
correspondence with nonnegative self-adjoint operators on $H$: for 
any nonnegative self-adjoint operator $\mathscr{L}$, the form 
$\mathcal{E}(f,g):=\langle\mathscr{L}^{1/2}f,\mathscr{L}^{1/2}g\rangle$ 
with $\Dom\mathcal{E}=\Dom\mathscr{L}^{1/2}$ is closed; while for every 
closed nonnegative form $\mathcal{E}$, there is a nonnegative self-adjoint 
operator $\mathscr{L}$ such that the above representation holds 
\cite[Theorem 4.4.2]{Dav95}.

Note that when $g\in\Dom\mathscr{L}$, 
we may also write $\mathcal{E}(f,g)=\langle f,\mathscr{L}g\rangle$; in 
general, however, $\Dom\mathcal{E}$ is considerably larger than 
$\Dom\mathscr{L}$. On the other hand, any minimizer $f$ of 
$\mathcal{E}(f,f)$ must exhibit additional regularity 
$f\in\Dom\mathscr{L}$.

\begin{lem}
\label{lem:ker}
Let $\mathcal{E}$ be a closed nonnegative form and
$\mathscr{L}$ the associated self-adjoint operator.
Then $f\in\Dom\mathcal{E}$, $\mathcal{E}(f,f)=0$ if and only if  
$f\in\Dom\mathscr{L}$, $\mathscr{L}f=0$.
\end{lem}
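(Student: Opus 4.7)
The plan is to read off the lemma from the spectral calculus together with the fact that the form $\mathcal{E}$ is, by definition, the square of $\mathscr{L}^{1/2}$. Concretely, the correspondence stated just above the lemma gives
$$
\mathcal{E}(f,f) = \langle \mathscr{L}^{1/2}f, \mathscr{L}^{1/2}f\rangle = \|\mathscr{L}^{1/2}f\|^2, \qquad f\in\Dom\mathcal{E}=\Dom\mathscr{L}^{1/2},
$$
so the condition $\mathcal{E}(f,f)=0$ is exactly the statement $f\in\ker\mathscr{L}^{1/2}$. The entire lemma therefore reduces to the identity $\ker\mathscr{L}^{1/2}=\ker\mathscr{L}$ (with, on the right-hand side, the domain constraint $f\in\Dom\mathscr{L}$ built into the definition of the kernel).

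First I would handle the easy direction. If $f\in\Dom\mathscr{L}$ with $\mathscr{L}f=0$, then in particular $f\in\Dom\mathscr{L}\subset\Dom\mathscr{L}^{1/2}=\Dom\mathcal{E}$, and applying the spectral theorem (Theorem \ref{thm:spectral}) to the nonnegative function $\eta$ associated to $\mathscr{L}$ shows that $\eta\,Uf=0$ $\mu$-a.e., which forces $Uf=0$ on the set $\{\eta>0\}$, and hence $\eta^{1/2}Uf=0$ $\mu$-a.e. Translating back, $\mathscr{L}^{1/2}f=0$ and so $\mathcal{E}(f,f)=\|\mathscr{L}^{1/2}f\|^2=0$.

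For the converse, suppose $f\in\Dom\mathcal{E}$ with $\mathcal{E}(f,f)=0$. Then $f\in\Dom\mathscr{L}^{1/2}$ and $\|\mathscr{L}^{1/2}f\|=0$, i.e.\ $\mathscr{L}^{1/2}f=0$. In the spectral representation this reads $\eta^{1/2}Uf=0$ $\mu$-a.e., so $Uf$ vanishes a.e.\ on $\{\eta>0\}$ and therefore $\eta\,Uf=0$ $\mu$-a.e. In particular $\eta\,Uf\in L^2(\Omega,\mu)$, so by Theorem \ref{thm:spectral}(a) we have $f\in\Dom\mathscr{L}$, and Theorem \ref{thm:spectral}(b) gives $\mathscr{L}f=U^{-1}(\eta\,Uf)=0$. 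This completes the proof.

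There is no real obstacle here: the content of the lemma is entirely contained in the elementary fact that for a nonnegative measurable function $\eta$ one has $\eta^{1/2}\psi=0$ $\mu$-a.e.\ iff $\eta\psi=0$ $\mu$-a.e., which the spectral theorem allows us to apply to $\mathscr{L}^{1/2}$ versus $\mathscr{L}$. The only small care required is to distinguish the two domains $\Dom\mathscr{L}\subsetneq\Dom\mathscr{L}^{1/2}$ in the two directions of the equivalence, so that the forward implication is seen to promote the a priori weaker regularity $f\in\Dom\mathcal{E}$ to the stronger regularity $f\in\Dom\mathscr{L}$ for free, once one knows that $\mathscr{L}^{1/2}f=0$.
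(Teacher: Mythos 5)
Your proof is correct and is essentially the same as the paper's. Both hinge on the identity $\mathcal{E}(f,f)=\|\mathscr{L}^{1/2}f\|^2$ and the characterization of $\Dom\mathscr{L}$ in terms of $\mathscr{L}^{1/2}$; the paper states this abstractly (using $\Dom\mathscr{L}=\{f\in\Dom\mathscr{L}^{1/2}:\mathscr{L}^{1/2}f\in\Dom\mathscr{L}^{1/2}\}$, so $\mathscr{L}^{1/2}f=0$ automatically lies in $\Dom\mathscr{L}^{1/2}$), while you unwind the same content explicitly in the spectral multiplication representation via $\eta^{1/2}\psi=0\iff\eta\psi=0$ a.e. — a fully equivalent, slightly more concrete rendering of the same argument.
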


\begin{proof}
As $\Dom 
\mathscr{L}=\{f\in\Dom\mathscr{L}^{1/2}:\mathscr{L}^{1/2}f\in 
\Dom\mathscr{L}^{1/2}\}$ by definition of $\mathscr{L}^{1/2}$,
the conclusion follows from 
$\mathcal{E}(f,f)=\|\mathscr{L}^{1/2}f\|^2=\langle f,\mathscr{L}f\rangle$
for $f\in\Dom\mathscr{L}$.
\end{proof}

\subsubsection*{\bf Semibounded forms}

In the context of Lemma \ref{lem:hyper}, the operator $\mathscr{A}$ is not 
nonnegative but rather bounded from above. The above notions extend 
directly to this setting modulo a change in notation.
A densely defined symmetric bilinear map $\mathcal{E}(f,g)$ 
is called a \emph{$c$-semibounded form} if $\mathcal{E}(f,f)\le c\|f\|^2$.
Then $\mathcal{E}'(f,g):=c\langle 
f,g\rangle-\mathcal{E}(f,g)$ defines a nonnegative form, and $\mathcal{E}$ 
is said to be closed if $\mathcal{E}'$ is closed, that is, if
$\Dom\mathcal{E}$ is complete for the norm $\vvvert f\vvvert := 
[(1+c)\|f\|^2-\mathcal{E}(f,f)]^{1/2}$ (here $1+c$ may be replaced
by any $C\ge 1+c$, as this defines an equivalent norm).

By applying the correspondence between nonnegative self-adjoint operators 
and closed nonnegative forms to $\mathcal{E}'$, we obtain the analogous 
statements for the semibounded form $\mathcal{E}$. In particular, for any 
closed $c$-semibounded form $\mathcal{E}$, there is a self-adjoint 
operator $\mathscr{A}\le cI$ so that $\mathcal{E}(f,g)=\langle 
f,\mathscr{A}g\rangle$ for $g\in\Dom\mathscr{A}$. Conversely, any 
self-adjoint operator $\mathscr{A}\le cI$ defines such a closed 
$c$-semibounded form $\mathcal{E}$.


\subsubsection*{\bf Friedrichs extension}

It is often not a trivial matter to define self-adjoint operators. In 
practice, an operator is often naturally specified on a smaller domain, 
and one must understand how to extend its definition to a larger domain on 
which it is self-adjoint. Semibounded forms provide a powerful tool for 
this purpose.

A semibounded form $\mathcal{E}$ is \emph{closable} if it can 
be extended to a larger domain on which it is closed. The smallest closed 
extension is called the \emph{closure} $\mathcal{\bar E}$, and
$\Dom\mathcal{\bar E}$ is the completion of $\Dom\mathcal{E}$ 
with respect to the norm $\vvvert\cdot\vvvert$ defined above.

The following procedure is 
known as Friedrichs extension \cite[Theorem 4.4.5]{Dav95}.

\begin{lem}
\label{lem:friedrichs}
Let $\mathscr{A}$ be a densely defined operator on $H$ such that the 
bilinear map $\mathcal{E}(f,g)=\langle f,\mathscr{A}g\rangle$ is symmetric 
and semibounded on the domain of $\mathscr{A}$. Then 
$\mathcal{E}$ is closable and its closure defines a 
self-adjoint extension of $\mathscr{A}$.
\end{lem}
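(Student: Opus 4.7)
The plan is to reduce the statement to the correspondence between closed nonnegative forms and nonnegative self-adjoint operators recalled just before the lemma. Assume $\mathcal{E}$ is $c$-semibounded on $\Dom\mathscr{A}$, and introduce the auxiliary form $\mathcal{E}'(f,g) := (c+1)\langle f,g\rangle - \mathcal{E}(f,g)$ on $\Dom\mathscr{A}$. Then $\mathcal{E}'$ is a densely defined symmetric nonnegative form with $\mathcal{E}'(f,f)\ge \|f\|^2$; the associated norm $\vvvert f\vvvert := \mathcal{E}'(f,f)^{1/2}$ is precisely the one attached to $\mathcal{E}$ in the excerpt, dominates $\|\cdot\|$, and satisfies Cauchy--Schwarz $|\mathcal{E}'(f,g)|\le\vvvert f\vvvert\vvvert g\vvvert$. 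Since $\mathcal{E}$ and $\mathcal{E}'$ differ by a bounded form, it suffices to show that $\mathcal{E}'$ is closable, that is, if $f_n\in\Dom\mathscr{A}$ is Cauchy in $\vvvert\cdot\vvvert$ and $f_n\to 0$ in $H$, then $\mathcal{E}'(f_n,f_n)\to 0$.

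The main analytic point is this closability step. Splitting
\[
\mathcal{E}'(f_n,f_n) = \mathcal{E}'(f_n, f_n - f_m) + \mathcal{E}'(f_n, f_m),
\]
the second term can be rewritten using symmetry and $f_m\in\Dom\mathscr{A}$ as
\[
\mathcal{E}'(f_n,f_m) = \langle f_n, (c+1)f_m - \mathscr{A}f_m\rangle,
\]
which tends to $0$ as $n\to\infty$ for every fixed $m$ because $f_n\to 0$ in $H$. The first term is bounded by $\vvvert f_n\vvvert\,\vvvert f_n-f_m\vvvert$ via Cauchy--Schwarz, and $\sup_n\vvvert f_n\vvvert<\infty$ as the sequence is Cauchy. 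Given $\varepsilon>0$, choose $m$ so large that $\vvvert f_n-f_m\vvvert<\varepsilon$ for all $n\ge m$ and let $n\to\infty$; this yields $\limsup_n\mathcal{E}'(f_n,f_n)\le\varepsilon\sup_n\vvvert f_n\vvvert$, and since $\varepsilon$ was arbitrary one gets $\mathcal{E}'(f_n,f_n)\to 0$. The crux is that symmetry moves $\mathscr{A}$ off the ``bad'' argument $f_n$ and onto the fixed $f_m$, sidestepping the absence of any continuity for $\mathscr{A}$.

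Once closability is in hand, the closure $\overline{\mathcal{E}'}$ is a closed nonnegative symmetric form, and the stated correspondence furnishes a nonnegative self-adjoint operator $\mathscr{L}_0$ with $\overline{\mathcal{E}'}(f,g) = \langle\mathscr{L}_0^{1/2}f,\mathscr{L}_0^{1/2}g\rangle$. Setting $\mathscr{L}:=(c+1)I-\mathscr{L}_0$ produces a self-adjoint operator satisfying $\mathscr{L}\le cI$, and the closed $c$-semibounded form $\overline{\mathcal{E}}(f,g):=(c+1)\langle f,g\rangle - \overline{\mathcal{E}'}(f,g)$ is the closure of $\mathcal{E}$ and is represented by $\mathscr{L}$. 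To check $\mathscr{A}\subseteq\mathscr{L}$, fix $f\in\Dom\mathscr{A}$ and $g\in\Dom\overline{\mathcal{E}}$, and approximate $g$ by $g_n\in\Dom\mathscr{A}$ in $\vvvert\cdot\vvvert$ (hence in $H$). By symmetry of $\mathcal{E}$ on $\Dom\mathscr{A}$, $\overline{\mathcal{E}}(f,g_n)=\mathcal{E}(g_n,f)=\langle g_n,\mathscr{A}f\rangle$, and passing $n\to\infty$ gives $\overline{\mathcal{E}}(f,g)=\langle g,\mathscr{A}f\rangle$ for every $g\in\Dom\overline{\mathcal{E}}$. Since $g\mapsto\overline{\mathcal{E}}(f,g)$ is therefore $H$-continuous on $\Dom\overline{\mathcal{E}}$, the form-theoretic characterization of $\Dom\mathscr{L}$ places $f\in\Dom\mathscr{L}$ with $\mathscr{L}f=\mathscr{A}f$. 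The hard part is closability; the remainder is routine bookkeeping around the semibounded correspondence.
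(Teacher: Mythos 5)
Your proof is correct and gives a self-contained version of the standard Friedrichs extension argument, which the paper itself does not reproduce but delegates to the textbook reference \cite[Theorem 4.4.5]{Dav95}. The closability step --- splitting $\mathcal{E}'(f_n,f_n)$ against a fixed $f_m$ so that symmetry moves $\mathscr{A}$ onto the fixed vector, then using Cauchy--Schwarz for the nonnegative form on the remainder --- together with the representation-theorem bookkeeping to verify $\mathscr{A}\subseteq\mathscr{L}$, is exactly the argument of that standard proof.
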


\subsubsection*{\bf Hyperbolic forms}

We conclude this section by proving Lemma \ref{lem:hyper}. By 
the spectral theorem, we may assume without loss of generality in the 
proof that $H=L^2(\Omega,\mu)$, $\mathscr{A}\psi=\eta\psi$ with
$0<\mathop{\mathrm{ess\,sup}}\eta<\infty$, and
$\mathcal{E}(f,g)=\int fg\eta\,d\mu$ with $\Dom\mathcal{E}=
\{f\in L^2(\Omega,\mu):|\int f^2\eta\,d\mu|<\infty\}$, for which 
the various arguments become explicit. Note that in this setting, the 
condition $\rank \mathrm{1}_{(0,\infty)}(\mathscr{A})=1$ holds if and 
only if $\mu$ has an atom at some point $\alpha\in\Omega$ with 
$\eta(\alpha)>0$ and $\eta(\omega)\le 0$ for $\mu$-a.e.\ $\omega\ne\alpha$.

\begin{proof}[Proof of Lemma \ref{lem:hyper}]
We prove each implication separately.

\vskip.1cm

$\boldsymbol{2\Rightarrow 1}$. Suppose condition $2$ holds. Then the 
spectral theorem 
implies that there exists $v\in \Dom\mathscr{A}$, $\lambda>0$
such that $\mathscr{A}v=\lambda v$, and such that
the following holds:
\begin{equation}
\label{eq:neg}
	\mathcal{E}(h,h)\le 0\quad\mbox{for }
	h\in\Dom\mathcal{E},~
	h\perp v.
\end{equation}
Let $f,g\in\Dom\mathcal{E}$ such that $\mathcal{E}(g,g)>0$.
Then $\langle g,v\rangle\ne 0$ by \eqref{eq:neg}. Define
$z = f-ag$ with $a=\langle f,v\rangle/\langle g,v\rangle$. Then
$z\perp v$, so applying \eqref{eq:neg} again yields
$$
	0\ge \mathcal{E}(z,z) =
	\mathcal{E}(f,f) -2a\mathcal{E}(f,g)+a^2\mathcal{E}(g,g)
	\ge
	\mathcal{E}(f,f) - \frac{\mathcal{E}(f,g)^2}{\mathcal{E}(g,g)},
$$
where the last inequality follows by minimizing over $a$.
Condition $1$ follows readily.

\vskip.2cm

$\boldsymbol{1\Rightarrow 2}$. Suppose condition $2$ is violated. As 
$0<\sup\spec\mathscr{A}<\infty$, the spectral theorem implies that 
$H_+:=1_{(0,\infty)}(\mathscr{A})H$ satisfies $H_+\subset\Dom\mathscr{A}$, 
$\mathscr{A}H_+\subseteq H_+$, $\dim H_+\ge 2$, and $\mathcal{E}(g,g)>0$ 
for $g\in H_+\backslash\{0\}$. That is, $\mathscr{A}$ 
is a bounded positive definite operator on its positive eigenspace, which 
has dimension at least two.

Now choose any nonzero $g\in H_+$ and $f\in H_+$ such that $f\perp 
\mathscr{A}g$. Then $\mathcal{E}(f,f)>0$, $\mathcal{E}(g,g)>0$, and 
$\mathcal{E}(f,g)=\langle f,\mathscr{A}g\rangle=0$. Thus condition
$1$ is violated.

\vskip.2cm

$\boldsymbol{1'\Rightarrow 2'}$. Suppose condition $1'$ holds. Then each 
inequality in the proof of $2\Rightarrow 1$ must be equality. In 
particular, $\mathcal{E}(z,z)=0$. Now note that by the spectral theorem,
$z\in H_-:=1_{(-\infty,0]}(\mathscr{A})H$ and $\mathscr{A}$ is a 
nonpositive operator on $H_-$. By Lemma \ref{lem:ker}, we obtain
$z\in\Dom\mathscr{A}$ and $\mathscr{A}z=0$. Thus we have established
condition $2'$.

\vskip.2cm

$\boldsymbol{2'\Rightarrow 1'}$. Suppose that condition $2'$ holds.
As $z=f-ag\in\ker\mathscr{A}$, we have 
$0=\mathcal{E}(z,g) = \mathcal{E}(f,g)-a\mathcal{E}(g,g)$, so
$a=\mathcal{E}(f,g)/\mathcal{E}(g,g)$.
But then
$$
	0=\mathcal{E}(z,z) = \mathcal{E}(f,f) -
	\frac{\mathcal{E}(f,g)^2}{\mathcal{E}(g,g)},
$$
where we used again $z\in\ker\mathscr{A}$.
Condition $1'$ follows.
\end{proof}

\subsection{Elliptic operators}
\label{sec:elliptic}

In finite dimension, the spectrum of a symmetric matrix always coincides 
with its set of eigenvalues. This is not the case for general self-adjoint 
operators, however, unless additional regularity assumptions are imposed.

The basic criterion for a semibounded self-adjoint operator 
$\mathscr{A}\le cI$ to possess a nice eigendecomposition is that is has a 
\emph{compact resolvent}. For our purposes, the term need not be defined 
precisely, as we may take it to be synonymous with the following 
equivalent statement \cite[Theorem XIII.64]{RS78}: there is an orthonormal 
basis $\{\psi_n\}_{n\ge 1}\subset\Dom\mathscr{A}$ that spans $H$, and 
$c\ge\lambda_1\ge\lambda_2\ge\cdots$ with $\lambda_n\to-\infty$,
so that $\mathscr{A}\psi_n=\lambda_n\psi_n$ for each $n$ (and thus
$\spec\mathscr{A}=\{\lambda_n\}_{n\ge 1}$).
Just as for matrices, the eigenvalues $\lambda_n$ may be computed by 
the min-max principle \cite[section XIII.1]{RS78}.

\begin{lem}
\label{lem:minmaxprinciple}
Let $\mathscr{A}\le cI$ be a semibounded self-adjoint operator with 
compact resolvent, with eigenvalues $\{\lambda_n\}_{n\ge 1}$
and eigenvectors $\{\psi_n\}_{n\ge 1}$. Then
$$
	\lambda_n =
	\inf_{L\subset H:\dim L=n-1}
	\sup_{f\perp L}
	\frac{\langle f,\mathscr{A}f\rangle}{\|f\|^2} =
	\sup_{f\perp\{\psi_1,\ldots,\psi_{n-1}\}}
	\frac{\langle f,\mathscr{A}f\rangle}{\|f\|^2}.
$$
\end{lem}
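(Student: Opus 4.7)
The plan is to use the eigenbasis $\{\psi_k\}_{k\ge 1}$ provided by the compact resolvent hypothesis to reduce everything to an explicit calculation with weighted sums of eigenvalues. Throughout, we interpret $\langle f,\mathscr{A}f\rangle/\|f\|^2$ for $f\in\Dom\mathscr{A}\setminus\{0\}$; expanding $f=\sum_k c_k\psi_k$, orthonormality of the basis together with $\mathscr{A}\psi_k=\lambda_k\psi_k$ give $\|f\|^2=\sum_k c_k^2$ and $\langle f,\mathscr{A}f\rangle=\sum_k \lambda_k c_k^2$, with $f\in\Dom\mathscr{A}$ precisely when $\sum_k\lambda_k^2c_k^2<\infty$. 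Everything then reduces to estimating a convex combination of the $\lambda_k$'s (up to normalization) against $\lambda_n$.

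First I would verify the second (easier) equality and, simultaneously, the upper bound $\inf_L\sup_{f\perp L}\le\lambda_n$. Take the distinguished test subspace $L_0:=\spn\{\psi_1,\ldots,\psi_{n-1}\}$. Any $f\in\Dom\mathscr{A}$ with $f\perp L_0$ has an expansion $f=\sum_{k\ge n}c_k\psi_k$, so
\begin{equation*}
\langle f,\mathscr{A}f\rangle=\sum_{k\ge n}\lambda_k c_k^2\le \lambda_n\sum_{k\ge n}c_k^2=\lambda_n\|f\|^2,
\end{equation*}
using that $\lambda_k\le\lambda_n$ for $k\ge n$ by the monotone ordering of eigenvalues. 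The bound is saturated by $f=\psi_n\in\Dom\mathscr{A}$, which yields the supremum equal to $\lambda_n$; this simultaneously proves the second equality and shows $\inf_L\sup\le\lambda_n$.

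For the matching lower bound $\inf_L\sup_{f\perp L}\ge\lambda_n$, fix an arbitrary $(n-1)$-dimensional subspace $L\subset H$. The key is a dimension-count argument: let $V_n:=\spn\{\psi_1,\ldots,\psi_n\}$, so $\dim V_n=n>\dim L$, and hence $V_n\cap L^\perp\ne\{0\}$. Pick a nonzero $f=\sum_{k=1}^n c_k\psi_k$ in this intersection; then $f\in\Dom\mathscr{A}$ (it lies in a finite-dimensional span of eigenvectors) and
\begin{equation*}
\langle f,\mathscr{A}f\rangle=\sum_{k=1}^n\lambda_k c_k^2\ge \lambda_n\sum_{k=1}^n c_k^2=\lambda_n\|f\|^2,
\end{equation*}
using $\lambda_k\ge\lambda_n$ for $k\le n$. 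Hence $\sup_{f\perp L}\langle f,\mathscr{A}f\rangle/\|f\|^2\ge\lambda_n$ for every admissible $L$, giving the reverse inequality.

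The main obstacles are purely bookkeeping rather than conceptual: one should be careful that the supremum over $f\perp L$ is taken over $f\in\Dom\mathscr{A}\setminus\{0\}$ (so that the Rayleigh quotient is defined), and that in the lower-bound step the chosen $f$ lies in $\Dom\mathscr{A}$ — which is automatic since $V_n\subset\Dom\mathscr{A}$. No argument involving unbounded sums is needed beyond the convergent Parseval expansion, and the closed-form nature of the eigenbasis circumvents any need for the more subtle form-domain considerations that would arise without the compact resolvent assumption.
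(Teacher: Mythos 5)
Your proof is correct. The paper states this lemma as a classical fact and simply cites Reed--Simon rather than proving it; your argument is exactly the standard Courant--Fischer min--max argument one finds there --- the test subspace $\spn\{\psi_1,\ldots,\psi_{n-1}\}$ gives the second equality and the upper bound (saturated at $f=\psi_n$), and the dimension count $V_n\cap L^\perp\ne\{0\}$ gives the lower bound for an arbitrary $(n-1)$-dimensional $L$, with the absolute convergence of $\sum_k\lambda_k c_k^2$ for $f\in\Dom\mathscr{A}$ following from Cauchy--Schwarz applied to $\sum_k\lambda_k^2c_k^2<\infty$ and $\sum_kc_k^2<\infty$.
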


We now recall a very useful example of self-adjoint operators with compact 
resolvent: elliptic second-order differential operators on compact 
manifolds. For concreteness, we limit the discussion to the special case 
of such operators that will be needed in this paper; see
\cite{GT01,Gri09,Dav95} for a comprehensive treatment.

Let $\mathrm{M}$ be a compact, connected Riemannian manifold (without 
boundary). Define the measure $d\mu:=\rho\,d\mathrm{Vol}_{\mathrm{M}}$, 
where $\rho>0$ is a positive smooth function on $\mathrm{M}$. Moreover, 
let $V$ be a smooth function on $\mathrm{M}$, and let $A$ be a smooth 
symmetric $(1,1)$-tensor field on $\mathrm{M}$. We now define for any 
$f\in C^2$
$$
	\mathscr{A}f(x) := \Tr[A(x)\nabla^2f(x)] + V(x)f(x).
$$
The operator $\mathscr{A}$ is called \emph{elliptic} if $A(x)$ is positive 
definite for every $x\in\mathrm{M}$, and is called \emph{symmetric} if 
$\langle f,\mathscr{A}g\rangle_{L^2(\mathrm{M},\mu)} = 
\langle\mathscr{A}f,g\rangle_{L^2(\mathrm{M},\mu)}$ for all $f,g\in C^2$.

Now note that by integration by parts, we have
for $f,g\in C^2$
$$
	\langle f,\mathscr{A}g\rangle_{L^2(\mathrm{M},\mu)}
	=
	-\int f \langle \rho^{-1}\div(\rho A),\nabla g\rangle\,d\mu
	-\int \langle \nabla f,A\nabla g\rangle\,d\mu
	+\int Vfg\, d\mu.
$$
If $\mathscr{A}$ is symmetric, it follows readily that $\div(\rho A)=0$.
Therefore, if $\mathscr{A}$ is a symmetric elliptic operator, then 
$\langle
f,\mathscr{A}f\rangle_{L^2(\mathrm{M},\mu)}\le (\sup
V)\|f\|^2_{L^2(\mathrm{M},\mu)}$ for all $f\in C^2$. We may then
apply Lemma \ref{lem:friedrichs} to extend $\mathscr{A}$ to a self-adjoint
operator on $L^2(\mathrm{M},\mu)$. As no confusion can arise, we will
denote the Friedrichs extension by the same symbol $\mathscr{A}$, and
refer to it as a self-adjoint elliptic operator.

The following result summarizes some basic facts of elliptic regularity 
theory; we refer to \cite[section 8.12]{GT01} and \cite[chapter 10]{Gri09} 
for a detailed treatment.

\begin{prop}
\label{prop:elliptic}
Let $\mathscr{A}$ be a self-adjoint elliptic operator on 
$(\mathrm{M},\mu)$. Then $\mathscr{A}$ has compact resolvent. Moreover, 
its top eigenvalue is simple $\lambda_1>\lambda_2$, and its eigenfunction 
$\psi_k$ has the same sign everywhere on $\mathrm{M}$ if and only if 
$k=1$.
\end{prop}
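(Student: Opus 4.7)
The plan is to treat the three assertions separately using standard tools of elliptic PDE combined with the strong maximum principle.

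\textbf{Compact resolvent.} Choose $c > \sup V$. Ellipticity of $A$ and smoothness of the coefficients yield a G{\aa}rding-type inequality
$$
(c+1)\|f\|_{L^2(\mathrm{M},\mu)}^2-\mathcal{E}(f,f) \ \ge\ \kappa\,\|f\|_{H^1(\mathrm{M},\mu)}^2
$$
for some $\kappa>0$ and all $f\in C^\infty(\mathrm{M})$, where $\mathcal{E}$ is the Friedrichs form of $\mathscr{A}$. Hence $(cI-\mathscr{A})^{-1}$ is well defined on $L^2(\mathrm{M},\mu)$, and interior elliptic regularity refines its range into $H^2(\mathrm{M},\mu)$. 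Since $\mathrm{M}$ is compact, the Rellich-Kondrachov embedding $H^1(\mathrm{M})\hookrightarrow L^2(\mathrm{M})$ is compact, so the resolvent is a compact operator on $L^2(\mathrm{M},\mu)$. Standard spectral theory and Lemma \ref{lem:minmaxprinciple} then give the eigendecomposition $c\geq\lambda_1\geq\lambda_2\geq\cdots\to-\infty$.

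\textbf{Simplicity of $\lambda_1$ and positivity of $\psi_1$.} By elliptic regularity each eigenfunction $\psi_k$ is $C^\infty$ and satisfies $\mathscr{A}\psi_k=\lambda_k\psi_k$ classically. The symmetry of $\mathscr{A}=\Tr[A\nabla^2\cdot\,]+V$ on $L^2(\mathrm{M},\mu)$ together with the smoothness of $A$, $V$, $\rho$ forces the divergence-form representation $\mathscr{A}f=\rho^{-1}\mathrm{div}(\rho A\nabla f)+Vf$, so the associated quadratic form reads
$$
\mathcal{E}(f,f) \ =\ -\!\int_{\mathrm{M}}\langle A\nabla f,\nabla f\rangle\,d\mu+\int_{\mathrm{M}}Vf^2\,d\mu.
$$
The pointwise a.e.\ identity $\nabla|f|=\sgn(f)\,\nabla f$ (Stampacchia) gives $\mathcal{E}(|f|,|f|)=\mathcal{E}(f,f)$. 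Applying this to a maximizer of the Rayleigh quotient supplied by Lemma \ref{lem:minmaxprinciple} shows that $|\psi_1|$ is also a $\lambda_1$-eigenfunction. As $|\psi_1|\ge 0$ solves the elliptic equation $(\mathscr{A}-\lambda_1 I)u=0$ classically, the strong maximum principle (Hopf's lemma) forces $|\psi_1|>0$ on all of $\mathrm{M}$, so $\psi_1$ has constant sign everywhere. The same argument applied to any element of $\ker(\mathscr{A}-\lambda_1 I)$ shows every eigenfunction at the top eigenvalue has constant sign; but two strictly nonzero functions of constant sign cannot be $L^2(\mu)$-orthogonal. Hence $\dim\ker(\mathscr{A}-\lambda_1 I)=1$, i.e., $\lambda_1>\lambda_2$.

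\textbf{Sign-change for $k\ge 2$, and the main obstacle.} For $k\ge 2$, the orthogonality $\int\psi_k\psi_1\,d\mu=0$ together with $\psi_1>0$ forces the continuous function $\psi_k$ to take both positive and negative values, which completes the proof. The main obstacle is ensuring that the abstract Friedrichs quadratic form is genuinely compatible with the pointwise manipulations above: one must verify (i) that $\mathcal{E}$ has the displayed divergence-form representation, so that replacing $f$ by $|f|$ leaves it unchanged, and (ii) that $|\psi_1|$ is sufficiently regular for Hopf's lemma to apply. Both points are standard consequences of the smoothness of $A$, $\rho$, $V$ and interior elliptic regularity on compact manifolds, but they constitute the only nontrivial ingredients in an otherwise classical argument.
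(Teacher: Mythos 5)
Your proof is correct, and it follows the standard route for this classical fact. Note that the paper itself offers no proof of Proposition~\ref{prop:elliptic}: it simply cites \cite[section 8.12]{GT01} and \cite[chapter 10]{Gri09} and invokes the result as a black box, so there is no in-paper argument to compare against.

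Two small remarks on rigor, neither of which is a genuine gap. First, the claim that symmetry forces the divergence-form representation is indeed correct, but it is worth spelling out: integrating $\int f\,\Tr[A\nabla^2 g]\,d\mu$ by parts twice and subtracting the expression with $f$ and $g$ swapped leaves $\int(g\nabla^j f - f\nabla^j g)\,\nabla_i(\rho A^i_j)\,d\mathrm{Vol}_{\mathrm{M}}$; taking $f=gh$ and localizing shows this vanishes for all $f,g$ precisely when $\nabla_i(\rho A^i_j)\equiv 0$, which is exactly the condition under which $\rho^{-1}\div(\rho A\nabla f)=\Tr[A\nabla^2 f]$. Second, when you invoke the strong maximum principle for $(\mathscr{A}-\lambda_1 I)|\psi_1|=0$, the zeroth-order coefficient $V-\lambda_1$ has no sign; one should first rewrite the equation as $\Tr[A\nabla^2 u]-(V-\lambda_1)^- u = -(V-\lambda_1)^+ u\le 0$ for $u=|\psi_1|\ge 0$, after which the interior strong maximum principle (e.g.\ \cite[Theorem 3.5]{GT01}) applies and yields $u>0$ on all of $\mathrm{M}$. (Also, ``Hopf's lemma'' is the boundary-point lemma, which is not what is used here on a closed manifold; the interior strong maximum principle is the correct reference.) With these clarifications your argument is complete.
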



\section{Mixed volumes and Dirichlet forms}
\label{sec:forms}

The aim of this section is to show that mixed volumes of arbitrary convex 
bodies can be represented as closed quadratic forms associated to 
self-adjoint operators. This was stated in the setting of Minkowski's 
quadratic inequality as Theorem \ref{thm:formsminkq} above, which is a 
special case of the following general theorem.

\begin{thm}
\label{thm:forms}
Let $\mathcal{C}=(C_1,\ldots,C_{n-2})$ be convex bodies in
$\mathbb{R}^n$ with $S_{B,\mathcal{C}}\not\equiv 0$.
Then there is a self-adjoint operator $\mathscr{A}$
on $L^2(S_{B,\mathcal{C}})$ with 
$C^2(S^{n-1}) \subset \Dom\mathscr{A}$ such that:
\begin{enumerate}[a.]
\item $\spec\mathscr{A}\subseteq (-\infty,0]\cup\{\frac{1}{n}\}$.
\item $\rank 1_{(0,\infty)}(\mathscr{A})=1$ and $\mathscr{A}1=\frac{1}{n}1$.
\item $\mathscr{A}\ell=0$ for any linear function
$\ell:x\mapsto\langle v,x\rangle$ on $S^{n-1}$ 
\upn{(}$v\in\mathbb{R}^n$\upn{)}.
\end{enumerate}
Moreover, the closed quadratic form $\mathcal{E}$ associated to 
$\mathscr{A}$ satisfies the following:
\begin{enumerate}[a.]
\setcounter{enumi}{3}
\item $h_K\in\Dom\mathcal{E}$ for every convex body $K$ in $\mathbb{R}^n$.
\item $\mathcal{E}(h_K,h_L)=\V(K,L,\mathcal{C})$ for any convex
bodies $K,L$ in $\mathbb{R}^n$.
\end{enumerate}
\end{thm}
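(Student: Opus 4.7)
The plan is to construct $\mathscr{A}$ first as a densely defined symmetric semibounded operator on $C^2(S^{n-1})\subseteq L^2(S_{B,\mathcal{C}})$, apply Friedrichs extension to obtain the self-adjoint realization, verify the spectral properties by smooth approximation combined with the Alexandrov--Fenchel inequality and Lemma \ref{lem:hyper}, and then extend the associated closed form to all support functions by a Cauchy-sequence argument. The main technical obstacle will be the first step: showing that for $g\in C^2(S^{n-1})$, the signed measure $S_{g,\mathcal{C}}$ (defined by bilinear extension via Lemma \ref{lem:diffsf}) is absolutely continuous with respect to $S_{B,\mathcal{C}}$, with density controlled by $\|D^2g\|_\infty$.

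To establish this domination, I would approximate each $C_i$ by bodies $C_i^{(s)}$ of class $C^2_+$. In this smooth setting Lemma \ref{lem:repc2} gives explicit Lebesgue densities for both $S_{g,\mathcal{C}^{(s)}}$ and $S_{B,\mathcal{C}^{(s)}}$ in terms of mixed discriminants, and multilinearity and monotonicity of mixed discriminants (Lemma \ref{lem:mdprop}) applied to $-\|D^2g\|_\infty I\le D^2g\le\|D^2g\|_\infty I$ yield $|S_{g,\mathcal{C}^{(s)}}|\le\|D^2g\|_\infty S_{B,\mathcal{C}^{(s)}}$ as signed measures. Integrating nonnegative continuous test functions and invoking weak convergence (Theorem \ref{thm:conv}) preserves the inequality in the limit, so $\mathscr{A}g:=\frac{1}{n}\frac{dS_{g,\mathcal{C}}}{dS_{B,\mathcal{C}}}\in L^\infty(S_{B,\mathcal{C}})$ is well-defined. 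The representation formula \eqref{eq:mvrep} gives $\langle f,\mathscr{A}g\rangle_{L^2(S_{B,\mathcal{C}})}=\V(f,g,\mathcal{C})$ for $f,g\in C^2$, so symmetry of this bilinear map follows from symmetry of mixed volumes. Semiboundedness $\mathcal{E}(f,f)\le\tfrac{1}{n}\|f\|^2$ follows from the same smooth approximation: in the smooth case $\mathscr{A}^{(s)}$ is a second-order elliptic operator on $S^{n-1}$ (via Lemma \ref{lem:mdprop}(b)) whose constant-sign eigenfunction $1$ has eigenvalue $\mathscr{A}^{(s)}1=\frac{1}{n}$, so by Proposition \ref{prop:elliptic} this is its top eigenvalue. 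Friedrichs extension (Lemma \ref{lem:friedrichs}) then produces the required self-adjoint $\mathscr{A}$ with $C^2(S^{n-1})\subseteq\Dom\mathscr{A}$.

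Properties (b) and (c) are direct: $\mathscr{A}1=\frac{1}{n}$ since $h_B\equiv 1$ on $S^{n-1}$ and $\V(f,B,\mathcal{C})=\frac{1}{n}\int f\,dS_{B,\mathcal{C}}$, while $\mathscr{A}\ell=0$ for $\ell(x)=\langle v,x\rangle$ holds because $\ell=h_{\{v\}}$ and the mixed area measure $S_{\{v\},\mathcal{C}}$ vanishes (any mixed area measure involving a point vanishes by translation invariance and the polynomial expansion). For (a), I would apply Alexandrov--Fenchel \eqref{eq:af} with Lemma \ref{lem:hyper} in the smooth case to obtain $\mathcal{E}^{(s)}(h,h)\le 0$ for $h\in C^2$ orthogonal to $1$ in $L^2(S_{B,\mathcal{C}^{(s)}})$; a centering step $h\mapsto h-c^{(s)}$ absorbs the $s$-dependence of the orthogonality condition, and taking $s\to\infty$ yields the same inequality for $h\in C^2$ orthogonal to $1$ in $L^2(S_{B,\mathcal{C}})$. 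Density of $C^2$ in $\Dom\mathcal{E}$ extends this to the full orthogonal complement of $1$, forcing $\rank 1_{(0,\infty)}(\mathscr{A})=1$ and $\spec\mathscr{A}\subseteq(-\infty,0]\cup\{\tfrac{1}{n}\}$.

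For (d) and (e), approximate an arbitrary convex body $K$ by bodies $K^{(s)}$ of class $C^2_+$ with $K^{(s)}\to K$ in the Hausdorff metric. Uniform convergence $h_{K^{(s)}}\to h_K$ gives $L^2(S_{B,\mathcal{C}})$-convergence, while
$$
    \mathcal{E}(h_{K^{(s)}}-h_{K^{(t)}},h_{K^{(s)}}-h_{K^{(t)}})
    =\V(K^{(s)},K^{(s)},\mathcal{C})-2\V(K^{(s)},K^{(t)},\mathcal{C})
    +\V(K^{(t)},K^{(t)},\mathcal{C})
$$
tends to zero as $s,t\to\infty$ by Theorem \ref{thm:conv}. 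Hence $\{h_{K^{(s)}}\}$ is Cauchy in the form norm, so $h_K\in\Dom\mathcal{E}$ and $\mathcal{E}(h_K,h_L)=\lim_{s\to\infty}\V(K^{(s)},L^{(s)},\mathcal{C})=\V(K,L,\mathcal{C})$.
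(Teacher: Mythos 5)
Your proof is correct and follows essentially the same path as the paper's: the absolute-continuity bound $|S_{g,\mathcal{C}}|\le\|D^2g\|_\infty S_{B,\mathcal{C}}$ obtained by smooth approximation and mixed-discriminant monotonicity (this is the paper's Lemma~\ref{lem:density}), semiboundedness via the elliptic spectral theory of the smooth approximants together with the constant eigenfunction (Lemma~\ref{lem:asemib}), Friedrichs extension, the Alexandrov--Fenchel inequality to pin down the one-dimensional positive eigenspace, and the form-norm Cauchy argument for parts (d) and (e). The two small organizational differences are worth noting. For (c), you argue via $\ell=h_{\{v\}}$ and $S_{\{v\},\mathcal{C}}\equiv 0$, whereas the paper writes $\ell=h_{K+v}-h_K$ and invokes translation invariance of mixed volumes; both are valid and equally quick. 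For (a)--(b), you first establish the full spectral structure for the smooth approximants $\mathcal{C}^{(s)}$ via Lemma~\ref{lem:hyper} and then pass to the limit with a centering step, while the paper instead approximates the hypothetical second positive-eigenspace element $f$ by $C^2$ functions $f_s$ and applies Alexandrov--Fenchel directly to $t+f_s$ and $B$; your route requires unwinding Lemma~\ref{lem:hyper} once more in the smooth case (where one must observe that the finitely many eigenfunctions in $H_+^{(s)}$ are smooth by elliptic regularity and hence differences of support functions, so Alexandrov--Fenchel applies to them), which the paper's version avoids.

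One technical point you should supply: after showing $S_{g,\mathcal{C}}\ll S_{B,\mathcal{C}}$ for $g\in C^2$, you still need to verify that $g\mapsto\mathscr{A}g:=\frac{1}{n}\frac{dS_{g,\mathcal{C}}}{dS_{B,\mathcal{C}}}$ descends to a well-defined map on $L^2(S_{B,\mathcal{C}})$-equivalence classes of $C^2$ functions, i.e.\ that $S_{g_1,\mathcal{C}}=S_{g_2,\mathcal{C}}$ whenever $g_1=g_2$ holds $S_{B,\mathcal{C}}$-a.e. This is not automatic since $S_{g,\mathcal{C}}$ is built from $g$ as a $C^2$ function, not as an $L^2$ element, and $S_{B,\mathcal{C}}$ need not have full support. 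The paper resolves this (Lemma~\ref{lem:densuniq}) by the symmetry of mixed volumes: $\int h\,dS_{g_1-g_2,\mathcal{C}}=\int(g_1-g_2)\,dS_{h,\mathcal{C}}=0$ for all $h\in C^2$, since $S_{h,\mathcal{C}}\ll S_{B,\mathcal{C}}$ by the density lemma already established.
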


The above theorem states, in particular, that linear functions are always 
in the kernel of $\mathscr{A}$. For the reasons explained in section 
\ref{sec:overview}, the central question in the study of the extremals of 
the Alexandrov-Fenchel inequality is whether these are the \emph{only} 
elements of $\ker\mathscr{A}$ that are differences of support functions.
The main part of this paper (sections 
\ref{sec:weak}--\ref{sec:lower}) will be devoted to settling this question 
in the setting $C_1=\cdots=C_{n-2}=M$ of Minkowski's quadratic inequality. 

The operator $\mathscr{A}$ of Theorem \ref{thm:forms} is defined somewhat 
abstractly. To actually work with such operators, one would like to have a 
more explicit formulation. Explicit constructions can be obtained in 
various special cases that will play an important role in the remainder of 
this paper. When $C_1,\ldots,C_{n-2}$ are $C^\infty_+$ bodies, 
$\mathscr{A}$ is a classical elliptic operator on $S^{n-1}$ in the sense 
of section \ref{sec:elliptic}. This case is already used in the proof of 
Theorem \ref{thm:forms}, which will be given in section 
\ref{sec:proofforms}. When $C_1,\ldots,C_{n-2}$ are polytopes, it turns 
out that $\mathscr{A}$ is a quantum graph \cite{BK13}; this setting will 
be developed in detail in section \ref{sec:qgraph}. A third setting in 
which $\mathscr{A}$ can be explicitly described will be encountered in 
section \ref{sec:lower} below.

In complete generality, we do not know how to give an expression for 
$\mathscr{A}$ that is amenable to explicit computations. Nonetheless, we 
may always view $\mathscr{A}$ as a highly degenerate elliptic second-order 
operator in the sense of the theory of Dirichlet forms \cite{FOT11,BGL14}. 
While the latter theory will not be needed in the remainder of this paper, 
we briefly develop this viewpoint in section \ref{sec:diri} in order to 
highlight the general structure that lies behind the representation of 
Theorem \ref{thm:forms}.

\begin{rem}
\label{rem:compres}
It should be emphasized at the outset that while $\mathscr{A}$ may be 
thought of quite generally as a kind of elliptic operator on the 
sphere, it does not necessarily possess some of the nice regularity 
properties of classical elliptic operators on compact manifolds. In 
particular, it turns out that $\mathscr{A}$ need not have a compact 
resolvent, as we will see in section 
\ref{sec:lower} below (cf.\ Remark \ref{rem:lowcompres}).
\end{rem}

\begin{rem}
The assumption $S_{B,\mathcal{C}}\not\equiv 0$ in Theorem \ref{thm:forms}
is innocuous. Indeed, if $S_{B,\mathcal{C}}\equiv 0$, then it follows
(for example, using Lemma \ref{lem:density} below) that
$\V(K,L,\mathcal{C})=0$ for all convex bodies $K,L$, so that
the representation of mixed volumes is trivial.
\end{rem}

\subsection{Proof of Theorem \ref{thm:forms}}
\label{sec:proofforms}

Throughout this section, the assumptions and notation of Theorem 
\ref{thm:forms} are in force. The proof is based on the following 
observation.

\begin{lem}
\label{lem:density}
For any $C^2$ function $f$ on $S^{n-1}$, we have
$S_{f,\mathcal{C}}\ll S_{B,\mathcal{C}}$ and
$$
	\bigg\|\frac{dS_{f,\mathcal{C}}}{dS_{B,\mathcal{C}}}\bigg\|_\infty
	\le \|D^2f\|_\infty,
$$
where we use the notation $\|D^2f\|_\infty:=\sup_{x\in 
S^{n-1}}\|D^2f(x)\|$.
\end{lem}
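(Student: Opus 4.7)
The plan is to establish the formula in three stages: first obtain an explicit density representation of $S_{f,\mathcal{C}}$ when all bodies involved are smooth, then bound that density using monotonicity of mixed discriminants, and finally pass to arbitrary $\mathcal{C}$ by approximation.

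\textbf{Stage 1 (smooth reduction).} By Lemma \ref{lem:diffsf}, for $\lambda \geq \|D^2 f\|_\infty$ the restricted Hessian $D^2(f+\lambda h_B)=D^2 f+\lambda I$ is strictly positive definite, so $f+\lambda h_B=h_{K_\lambda}$ for some $C^2_+$ convex body $K_\lambda$, and $f=h_{K_\lambda}-\lambda h_B$. Assume temporarily that $C_1,\ldots,C_{n-2}$ are $C^\infty_+$. Then by linearity of mixed area measures in their arguments together with Lemma \ref{lem:repc2} and multilinearity of the mixed discriminant (Lemma \ref{lem:mdprop}),
$$
S_{f,\mathcal{C}}(d\omega)
= \D(D^2 h_{K_\lambda}-\lambda I,D^2 h_{C_1},\ldots,D^2 h_{C_{n-2}})\,d\omega
= \D(D^2 f,D^2 h_{C_1},\ldots,D^2 h_{C_{n-2}})\,d\omega.
$$

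\textbf{Stage 2 (density bound for smooth $C_i$).} Since $\|D^2 f\|_\infty I\pm D^2 f\geq 0$ pointwise and $D^2 h_{C_i}\geq 0$, the monotonicity property in Lemma \ref{lem:mdprop}(d) combined with multilinearity gives pointwise on $S^{n-1}$
$$
\bigl|\D(D^2 f,D^2 h_{C_1},\ldots,D^2 h_{C_{n-2}})\bigr|
\leq \|D^2 f\|_\infty\,\D(I,D^2 h_{C_1},\ldots,D^2 h_{C_{n-2}}).
$$
Applying Lemma \ref{lem:repc2} again to the right-hand side (noting $I=D^2 h_B$) shows that $|dS_{f,\mathcal{C}}/d\omega|\leq \|D^2 f\|_\infty\,dS_{B,\mathcal{C}}/d\omega$ whenever $C_1,\ldots,C_{n-2}$ are $C^\infty_+$.

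\textbf{Stage 3 (approximation).} Approximate each $C_i$ in the Hausdorff metric by a sequence of $C^\infty_+$ bodies $C_i^{(s)}$ (standard, e.g.\ by smoothing support functions). The body $K_\lambda$, which depends only on $f$, is held fixed. By Theorem \ref{thm:conv} the mixed area measures $S_{B,C_1^{(s)},\ldots,C_{n-2}^{(s)}}$ and $S_{K_\lambda,C_1^{(s)},\ldots,C_{n-2}^{(s)}}$ converge weakly to $S_{B,\mathcal{C}}$ and $S_{K_\lambda,\mathcal{C}}$ respectively, and by linearity the same holds for $S_{f,C_1^{(s)},\ldots,C_{n-2}^{(s)}}\wto S_{f,\mathcal{C}}$. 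For any nonnegative $\varphi\in C(S^{n-1})$, Stage 2 supplies the inequality
$$
\left|\int \varphi\,dS_{f,C_1^{(s)},\ldots,C_{n-2}^{(s)}}\right|
\leq \|D^2 f\|_\infty\int \varphi\,dS_{B,C_1^{(s)},\ldots,C_{n-2}^{(s)}},
$$
which passes to the limit. Applied to $\pm\varphi$, this yields that both $\|D^2 f\|_\infty\,S_{B,\mathcal{C}}\pm S_{f,\mathcal{C}}$ integrate nonnegatively against every nonnegative continuous $\varphi$, hence both are nonnegative Borel measures. Therefore $|S_{f,\mathcal{C}}|\leq \|D^2 f\|_\infty\,S_{B,\mathcal{C}}$ as measures, which gives the absolute continuity together with the claimed essential bound on $dS_{f,\mathcal{C}}/dS_{B,\mathcal{C}}$.

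The only delicate step is Stage 3, since the Stage 2 bound is pointwise on densities while the approximation only yields weak convergence of measures. The remedy is to express the Stage 2 conclusion in the integrated form against nonnegative continuous test functions, which is preserved under weak limits and then converted back into a density bound via the Hahn–Jordan decomposition.
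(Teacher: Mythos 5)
Your proof is correct and follows the same overall strategy as the paper's: derive the explicit density $\D(D^2f, D^2h_{C_1},\ldots,D^2h_{C_{n-2}})$ for smooth $\mathcal{C}$, bound it pointwise by $\|D^2f\|_\infty \cdot \D(I, D^2h_{C_1},\ldots)$ via monotonicity of mixed discriminants, and pass to arbitrary $\mathcal{C}$ by weak convergence of mixed area measures. The only divergence is in the final abstract step: the paper extends $g\mapsto\int g\,dS_{f,\mathcal{C}}$ from $C^0$ to a bounded functional on $L^1(S_{B,\mathcal{C}})$ and invokes $L^1$--$L^\infty$ duality, while you observe that the weak limit preserves nonnegativity of $\|D^2f\|_\infty S_{B,\mathcal{C}}\pm S_{f,\mathcal{C}}$ and conclude via the total-variation comparison $|S_{f,\mathcal{C}}|\le\|D^2f\|_\infty S_{B,\mathcal{C}}$. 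Both are standard, equivalent finishes; yours is arguably slightly more elementary since it avoids the duality theorem. One trivial slip: to get a $C^2_+$ body $K_\lambda$ with $h_{K_\lambda}=f+\lambda h_B$ you need $\lambda>\|D^2f\|_\infty$ strictly, not $\lambda\ge\|D^2f\|_\infty$; and note the argument doesn't actually need $K_\lambda$ at all once $\mathcal{C}$ is smooth, since the density formula for $S_{f,\mathcal{C}}$ follows immediately from Lemma \ref{lem:repc2} by linearity (as the paper observes without introducing $K_\lambda$).
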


\begin{proof}
Let $C_i^{(s)}$ be convex bodies of class $C^\infty_+$ such that
$C_i^{(s)}\to C_i$ as $s\to\infty$ in the sense of Hausdorff convergence
for every $i=1,\ldots,n-2$. The existence of such smooth approximations
is classical, cf.\ \cite[section 3.4]{Sch14}.

For $\mathcal{C}^{(s)}=(C_1^{(s)},\ldots,C_{n-2}^{(s)})$, Lemmas 
\ref{lem:repc2} and \ref{lem:diffsf} imply
$$
        S_{f,\mathcal{C}^{(s)}}(d\omega) =
        \D(D^2f,D^2h_{C_1^{(s)}},\ldots,D^2h_{C_{n-2}^{(s)}})\,d\omega
$$
for any $C^2$ function $f$. In particular, as $D^2h_B=I$, we have
$$
	\frac{dS_{f,\mathcal{C}^{(s)}}}{dS_{B,\mathcal{C}^{(s)}}}
	=
	\frac{\D(D^2f,D^2h_{C_1^{(s)}},\ldots,D^2h_{C_{n-2}^{(s)}})}
	{\D(I,D^2h_{C_1^{(s)}},\ldots,D^2h_{C_{n-2}^{(s)}})}
$$
(note that the denominator is strictly positive by
Lemma \ref{lem:mdprop}(\textit{d}), so the expression is well-defined).
As $\lambda_{\rm min}(D^2f)I \le D^2f \le\lambda_{\rm max}(D^2f)I$,
Lemma \ref{lem:mdprop}(\textit{d}) shows that 
$$
	\bigg|\frac{dS_{f,\mathcal{C}^{(s)}}}{dS_{B,\mathcal{C}^{(s)}}}
	\bigg|\le \|D^2f\|
$$
pointwise. Therefore, Theorem \ref{thm:conv} implies that
\begin{align}
\nonumber
	\bigg|\int g\, dS_{f,\mathcal{C}}\bigg|
	&=
	\lim_{s\to\infty}
	\bigg|\int g\, dS_{f,\mathcal{C}^{(s)}}\bigg|
	\\
\label{eq:densineq}
	&\le
	\limsup_{s\to\infty}
	\int |g|\, \|D^2f\|\,dS_{B,\mathcal{C}^{(s)}}
	\\
\nonumber
	&\le
	\|D^2f\|_\infty
	\int |g|\,dS_{B,\mathcal{C}}
\end{align}
for every continuous function $g$ on $S^{n-1}$. In particular,
$$
	g\mapsto\int g\,dS_{f,\mathcal{C}}
$$
is a bounded linear map from $C^0(S^{n-1})\subset 
L^1(S_{B,\mathcal{C}})$ to $\mathbb{R}$, and therefore extends uniquely to a 
bounded linear functional on $L^1(S_{B,\mathcal{C}})$.
As $L^1(S_{B,\mathcal{C}})^*=L^\infty(S_{B,\mathcal{C}})$,
$$
	\int g \,dS_{f,\mathcal{C}} = 
	\int g\,\varrho\,dS_{B,\mathcal{C}}
$$
for some $\varrho\in L^\infty(S_{B,\mathcal{C}})$.
This proves absolute
continuity $S_{f,\mathcal{C}}\ll S_{B,\mathcal{C}}$, and the bound 
on $\varrho$ follows by taking the supremum in \eqref{eq:densineq}
over $g$ with 
$\int|g|\,dS_{B,\mathcal{C}}\le 1$.
\end{proof}

We will also need the following simple consequence.

\begin{lem}
\label{lem:densuniq}
Let $f,g\in C^2(S^{n-1})$ satisfy $f=g$ $S_{B,\mathcal{C}}$-a.e.
Then $S_{f,\mathcal{C}}=S_{g,\mathcal{C}}$.
\end{lem}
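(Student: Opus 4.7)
The plan is to reduce to showing that $S_{h,\mathcal{C}}=0$ for $h:=f-g\in C^2(S^{n-1})$, which satisfies $h=0$ $S_{B,\mathcal{C}}$-a.e., and then exploit the full symmetry of mixed area measures under permutation of their arguments together with the absolute-continuity bound of Lemma \ref{lem:density}.

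Concretely, I would first invoke the representation formula \eqref{eq:mvrep} together with Lemma \ref{lem:mvprop}(\textit{b}), extended by linearity to differences of support functions (and hence to arbitrary $C^2$ functions via Lemma \ref{lem:diffsf}), to obtain the symmetric identity
\[
	\int \varphi\,dS_{h,\mathcal{C}} = n\,\V(\varphi,h,\mathcal{C}) = \int h\,dS_{\varphi,\mathcal{C}}
\]
for every $\varphi\in C^2(S^{n-1})$. Now fix such a $\varphi$. By Lemma \ref{lem:density} applied to $\varphi$, the signed measure $S_{\varphi,\mathcal{C}}$ is absolutely continuous with respect to $S_{B,\mathcal{C}}$. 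Since $h=0$ $S_{B,\mathcal{C}}$-a.e., this gives $h=0$ $S_{\varphi,\mathcal{C}}$-a.e., and hence $\int h\,dS_{\varphi,\mathcal{C}}=0$. Combined with the displayed identity, we conclude $\int \varphi\,dS_{h,\mathcal{C}}=0$ for every $\varphi\in C^2(S^{n-1})$.

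To upgrade this to arbitrary continuous test functions (and hence conclude that the finite signed measure $S_{h,\mathcal{C}}$ is zero), I would use that $C^2(S^{n-1})$ is uniformly dense in $C^0(S^{n-1})$ (for instance by convolution with a smooth mollifier, or by Stone–Weierstrass applied to polynomials in the ambient coordinates restricted to the sphere) together with the fact that $\varphi\mapsto \int \varphi\,dS_{h,\mathcal{C}}$ is continuous for the sup-norm, since $|S_{h,\mathcal{C}}|(S^{n-1})<\infty$. Uniqueness of finite signed Borel measures on the compact space $S^{n-1}$ (Riesz representation) then yields $S_{h,\mathcal{C}}=0$, i.e., $S_{f,\mathcal{C}}=S_{g,\mathcal{C}}$.

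There is no substantive obstacle here: the whole point is that symmetry of mixed volumes lets us \emph{shift} the regularity and the vanishing between the two slots, and Lemma \ref{lem:density} has already done the analytic work of establishing $S_{\varphi,\mathcal{C}}\ll S_{B,\mathcal{C}}$. The only place one has to be mildly careful is the passage from $C^2$ test functions to continuous ones, but this is routine given that $S_{h,\mathcal{C}}$ is a finite signed measure.
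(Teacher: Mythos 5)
Your proof is correct and essentially identical to the paper's: both exploit the symmetry $\int \varphi\,dS_{h,\mathcal{C}} = n\V(\varphi,h,\mathcal{C}) = \int h\,dS_{\varphi,\mathcal{C}}$ together with the absolute continuity $S_{\varphi,\mathcal{C}}\ll S_{B,\mathcal{C}}$ from Lemma~\ref{lem:density} to deduce that $S_{h,\mathcal{C}}$ integrates every $C^2$ test function to zero. The paper leaves the final density step ($C^2$ dense in $C^0$) implicit, whereas you spell it out, but the argument is the same.
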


\begin{proof}
Let $h\in C^2(S^{n-1})$. Then
$$
	\int h \,dS_{f,\mathcal{C}} -
	\int h \,dS_{g,\mathcal{C}} =
	n\,\V(h,f-g,\mathcal{C}) = 
	\int (f-g)\,dS_{h,\mathcal{C}} = 0,
$$
where we have used \eqref{eq:mvrep}, the symmetry of mixed volumes, and 
Lemma \ref{lem:density}. As this holds for any $h\in C^2(S^{n-1})$, 
the conclusion follows.
\end{proof}

The idea behind Theorem \ref{thm:forms} is now simple.
By Lemma \ref{lem:density}, we can define
\begin{equation}
\label{eq:defa}
	\mathscr{A}f := 
	\frac{1}{n}\frac{dS_{f,\mathcal{C}}}{dS_{B,\mathcal{C}}}
	\quad \mbox{for }f\in C^2(S^{n-1}),
\end{equation}
and Lemma \ref{lem:densuniq} ensures that
$\mathscr{A}$ is well-defined as a linear operator on $C^2(S^{n-1})
\subset L^2(S_{B,\mathcal{C}})$ (here we implicitly identified 
those functions in $C^2$ that agree up to $S_{B,\mathcal{C}}$-null sets).
We aim to extend $\mathscr{A}$ to a bona fide self-adjoint operator by
Friedrichs extension. To this end, we must show that $\mathscr{A}$ 
is semibounded.

\begin{lem}
\label{lem:asemib}
For every $f\in C^2(S^{n-1})$, we have
$$
	n\int f\mathscr{A}f\,dS_{B,\mathcal{C}}=
	\int f\,dS_{f,\mathcal{C}} \le 
	\int f^2\,dS_{B,\mathcal{C}}.
$$
\end{lem}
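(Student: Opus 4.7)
The equality $n\int f\mathscr{A}f\,dS_{B,\mathcal{C}}=\int f\,dS_{f,\mathcal{C}}$ is immediate from the definition \eqref{eq:defa}: multiplying $n\mathscr{A}f=dS_{f,\mathcal{C}}/dS_{B,\mathcal{C}}$ by $f$ and integrating against $S_{B,\mathcal{C}}$ gives the claim. Thus the real content of the lemma is the inequality
\begin{equation*}
L(f):=\int f^2\,dS_{B,\mathcal{C}}-\int f\,dS_{f,\mathcal{C}}\ge 0,\qquad f\in C^2(S^{n-1}).
\end{equation*}

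My proposal is to exploit two observations. First, $L$ is invariant under the addition of a constant to $f$. Indeed, since $1=h_B$ on $S^{n-1}$, the convention of extending mixed area measures linearly to differences of support functions (section \ref{sec:mixva}) gives $S_{f+\alpha,\mathcal{C}}=S_{f,\mathcal{C}}+\alpha S_{B,\mathcal{C}}$ for every $\alpha\in\mathbb{R}$. Moreover, computing the same mixed volume $\V(B,f,\mathcal{C})=\V(f,B,\mathcal{C})$ via \eqref{eq:mvrep} with $K_1=B$ and with $K_1=f$ yields the symmetry identity $\int dS_{f,\mathcal{C}}=\int f\,dS_{B,\mathcal{C}}$. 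A short expansion of $L(f+\alpha)$ using these two facts produces cancellation of every $\alpha$-dependent term, so that $L(f+\alpha)=L(f)$.

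Second, by Lemma \ref{lem:diffsf}, the function $f+\alpha$ is the support function of some convex body $K$ as soon as $\alpha$ is large enough (e.g.\ $\alpha\ge\|D^2f\|_\infty$). By the shift invariance above, it therefore suffices to prove $L(h_K)\ge 0$ for an arbitrary convex body $K$, i.e.\
\begin{equation*}
n\V(K,K,\mathcal{C})\le\int h_K^2\,dS_{B,\mathcal{C}}.
\end{equation*}
This follows by combining two standard inequalities. The Alexandrov-Fenchel inequality \eqref{eq:af} applied to $K,B,C_1,\ldots,C_{n-2}$ gives, after rewriting via \eqref{eq:mvrep},
\begin{equation*}
\tfrac{1}{n^2}\left(\int h_K\,dS_{B,\mathcal{C}}\right)^{\!2}\ge \V(K,K,\mathcal{C})\cdot\tfrac{1}{n}S_{B,\mathcal{C}}(S^{n-1}),
\end{equation*}
while Cauchy-Schwarz in $L^2(S_{B,\mathcal{C}})$ gives the opposite bound
\begin{equation*}
\left(\int h_K\,dS_{B,\mathcal{C}}\right)^{\!2}\le\int h_K^2\,dS_{B,\mathcal{C}}\cdot S_{B,\mathcal{C}}(S^{n-1}).
\end{equation*}
Chaining the two and dividing by the positive quantity $S_{B,\mathcal{C}}(S^{n-1})$ (nonzero by the standing hypothesis $S_{B,\mathcal{C}}\not\equiv 0$) yields the bound.

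Once the two ingredients — the shift invariance of $L$ and the fact that Alexandrov-Fenchel can be read as a reverse Cauchy-Schwarz against $B$ — are identified, the proof is a bookkeeping exercise, and I do not anticipate any real obstacle. The only point meriting care is the consistent use of the convention that $S_{f,\mathcal{C}}$ is a \emph{signed} measure for $f\in C^2(S^{n-1})$, defined by linearity via any decomposition $f=h_K-h_{K'}$ furnished by Lemma \ref{lem:diffsf}; in particular, the identity $\int dS_{f,\mathcal{C}}=\int f\,dS_{B,\mathcal{C}}$ used above must be interpreted in this signed sense.
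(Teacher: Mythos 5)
Your proof is correct, and it takes a genuinely different route from the paper. The paper first approximates $C_1,\ldots,C_{n-2}$ by bodies of class $C^\infty_+$, and then for smooth bodies invokes the Perron--Frobenius property of elliptic operators (Proposition \ref{prop:elliptic}): since $\mathscr{A}1=\frac{1}{n}1$ exhibits a positive eigenfunction, and only the top eigenvalue of an elliptic operator admits one, it follows that $\sup\spec\mathscr{A}=\frac{1}{n}$. Your argument instead combines (i) the shift-invariance $L(f+\alpha)=L(f)$ (which hinges on the identity $\int dS_{f,\mathcal{C}}=n\V(B,f,\mathcal{C})=n\V(f,B,\mathcal{C})=\int f\,dS_{B,\mathcal{C}}$) together with Lemma \ref{lem:diffsf} to reduce to $f=h_K$, and then (ii) the Alexandrov--Fenchel inequality against $B$ plus Cauchy--Schwarz in $L^2(S_{B,\mathcal{C}})$. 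Your route works directly for arbitrary, possibly non-smooth, reference bodies and avoids the smooth-approximation step and elliptic regularity machinery. The price is that you invoke the full Alexandrov--Fenchel inequality \eqref{eq:af} at this earlier stage; the paper deliberately keeps Lemma \ref{lem:asemib} self-contained using only elliptic theory, and only brings in Alexandrov--Fenchel later, in the proof of parts $(a)$ and $(b)$ of Theorem \ref{thm:forms}. There is no circularity in either ordering, since \eqref{eq:af} is an established prior result that the paper takes as given, but the paper's sequencing makes the dependency structure of the section slightly cleaner. Both proofs are sound.
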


\begin{proof}
Let $C_i^{(s)}$ be convex bodies of class $C^\infty_+$ such that
$C_i^{(s)}\to C_i$ as $s\to\infty$. Then
$$
	\int f\,dS_{f,\mathcal{C}^{(s)}} \to
	\int f\,dS_{f,\mathcal{C}},\qquad
	\int f^2\,dS_{B,\mathcal{C}^{(s)}} \to
	\int f^2\,dS_{B,\mathcal{C}}
$$
as $s\to\infty$ by Theorem \ref{thm:conv}. It therefore suffices to assume
in the remainder of the proof that $C_1,\ldots,C_{n-2}$ are convex bodies 
of class $C^\infty_+$.

For $C^\infty_+$ bodies, we may write as in the proof of Lemma 
\ref{lem:density}
$$
	\mathscr{A}f = 
	\frac{1}{n}\frac{dS_{f,\mathcal{C}}}{dS_{B,\mathcal{C}}}
	=
	\frac{1}{n}\frac{\D(D^2f,D^2h_{C_1},\ldots,D^2h_{C_{n-2}})}
	{\D(I,D^2h_{C_1},\ldots,D^2h_{C_{n-2}})}.
$$
Using $D^2f=\nabla_{S^{n-1}}^2f+fI$ and Lemma \ref{lem:mdprop}(\textit{d}),
we see that $\mathscr{A}$ is an elliptic operator in the 
sense of section \ref{sec:elliptic}. Moreover, by 
\eqref{eq:mvrep} and the
symmetry of mixed volumes, $\langle 
f,\mathscr{A}g\rangle_{L^2(S_{B,\mathcal{C}})}=\V(f,g,\mathcal{C})$ is
symmetric in $f,g$. It follows from section 
\ref{sec:elliptic} that $\mathscr{A}$ 
has a self-adjoint extension for which $\lambda:=\sup\spec\mathscr{A}$ is 
the unique eigenvalue associated to a positive eigenfunction. As 
$\mathscr{A}1=\frac{1}{n}1$, we have $\lambda=\frac{1}{n}$ and 
thus
$$
	\int f\,dS_{f,\mathcal{C}} =
	n\langle f,\mathscr{A}f\rangle_{L^2(S_{B,\mathcal{C}})}
	\le \|f\|^2_{L^2(S_{B,\mathcal{C}})},
$$
concluding the proof.
\end{proof}

We are now ready to prove Theorem \ref{thm:forms}.

\begin{proof}[Proof of Theorem \ref{thm:forms}]
For $f,g\in C^2(S^{n-1})$, define $\mathscr{A}f$ as in \eqref{eq:defa} and
let
$$
	\mathcal{E}(f,g):=
	\langle f,\mathscr{A}g\rangle_{L^2(S_{B,\mathcal{C}})}
	= \V(f,g,\mathcal{C}).
$$
By the symmetry of mixed volumes and Lemma \ref{lem:asemib}, $\mathcal{E}$ 
is a densely defined and $\frac{1}{n}$-semibounded symmetric quadratic 
form on $L^2(S_{B,\mathcal{C}})$. It follows from Lemma \ref{lem:friedrichs} 
that $\mathcal{E}$ is closable and that its closure defines a self-adjoint
extension of $\mathscr{A}$. This concludes the construction of the objects
$\mathscr{A}$ and $\mathcal{E}$ that appear in the statement of Theorem 
\ref{thm:forms}. We now proceed to verify each of the claimed properties.

\vskip.2cm

\textbf{Proof of \textit{d} and \textit{e}}. By construction, 
$\mathcal{E}(h_K,h_L)=\V(K,L,\mathcal{C})$ whenever $K,L$ are $C^2_+$ 
bodies. Now let $K$ be an arbitrary convex body, and let $K^{(s)}$ be
convex bodies of class $C^2_+$ such that $K^{(s)}\to K$
as $s\to\infty$. That
$$
	\|h_{K^{(s)}}-h_K\|_{L^2(S_{B,\mathcal{C}})}^2
	\le
	n\,\V(B,B,\mathcal{C})\,
	\|h_{K^{(s)}}-h_K\|_\infty^2\to 0
$$
as $s\to\infty$ 
follows immediately from Hausdorff convergence. Moreover,
\begin{align*}
	&\mathcal{E}(h_{K^{(s)}}-h_{K^{(t)}},
	h_{K^{(s)}}-h_{K^{(t)}}) = \\
	&\V(K^{(s)},K^{(s)},\mathcal{C})
	- 2\,\V(K^{(s)},K^{(t)},\mathcal{C})+
	\V(K^{(t)},K^{(t)},\mathcal{C})
	\to 0
\end{align*}
as $s,t\to\infty$ by Theorem \ref{thm:conv}. Thus $h_K$ is in the 
completion of $C^2(S^{n-1})$ for the norm
$[2\|f\|_{L^2(S_{B,\mathcal{C}})}^2 - 
\mathcal{E}(f,f)]^{1/2}$. Therefore, by construction,
$h_K\in\Dom\mathcal{E}$ and 
$$
	\mathcal{E}(h_K,h_K)= \lim_{s\to\infty}\mathcal{E}(h_{K^{(s)}},
	h_{K^{(s)}}) 
	= \lim_{s\to\infty}\V(K^{(s)},K^{(s)},\mathcal{C})
	= \V(K,K,\mathcal{C}).
$$
The analogous conclusion for $\mathcal{E}(h_K,h_L)$ follows by 
polarization.

\vskip.2cm

\textbf{Proof of \textit{a} and \textit{b}}. Note first that as
$h_B=1$, it follows immediately from the definition \eqref{eq:defa} 
that $\mathscr{A}1=\frac{1}{n}1$. Now suppose $\rank 
1_{(0,\infty)}(\mathscr{A})>1$,
and choose any nonzero
 $f\in 1_{(0,\infty)}(\mathscr{A})L^2(S_{B,\mathcal{C}})$
so that $\langle f,1\rangle_{L^2(S_{B,\mathcal{C}})}=0$. Choose $f_s\in 
C^2(S^{n-1})$ so that $\|f_s-f\|_{L^2(S_{B,\mathcal{C}})}\to 0$ and 
$\mathcal{E}(f_s,f_s)\to \mathcal{E}(f,f)$ as $s\to\infty$ (the existence 
of such a sequence is guaranteed by construction as $f\in\Dom\mathcal{E}$).
By Lemma \ref{lem:diffsf}, for all $s$ and for all sufficiently large 
$t>0$ (depending on $s$), there is a $C^2_+$ convex body $K_t$ such 
that $h_{K_t^{(s)}}=t+f_s$. Therefore, by the Alexandrov-Fenchel inequality,
\begin{align*}
	\langle t+f_s,\mathscr{A}1\rangle_{L^2(S_{B,\mathcal{C}})}^2 &=
	\V(K^{(s)}_t,B,\mathcal{C})^2 \ge
	\V(K^{(s)}_t,K^{(s)}_t,\mathcal{C})\,
	\V(B,B,\mathcal{C}) \\ &=
	\langle t+f_s,\mathscr{A}(t+f_s)\rangle_{L^2(S_{B,\mathcal{C}})}
	\langle 1,\mathscr{A}1\rangle_{L^2(S_{B,\mathcal{C}})}.
\end{align*}
Expanding the squares and using $\mathscr{A}1=\frac{1}{n}1$ yields
$$
	\langle f_s,1\rangle_{L^2(S_{B,\mathcal{C}})}^2 
	\ge
	n^2\,\mathcal{E}(f_s,f_s)\,
	\V(B,B,\mathcal{C}).
$$
In particular, letting $s\to\infty$ yields $\mathcal{E}(f,f)\le 0$,
where we used that $\V(B,B,\mathcal{C})>0$ by the assumption 
$S_{B,\mathcal{C}}\not\equiv 0$. But this contradicts the assumption
that $f\in 1_{(0,\infty)}(\mathscr{A})L^2(S_{B,\mathcal{C}})$. Thus
we have shown that $\rank 1_{(0,\infty)}(\mathscr{A})=1$, proving part
\textit{b}. Part \textit{a} now follows as an immediate consequence.

\vskip.2cm

\textbf{Proof of \textit{c}}. Let $\ell:x\mapsto\langle v,x\rangle$ be a 
linear function. Then $\ell=h_{K+v}-h_K$, so
$$
	\langle f,\mathscr{A}\ell\rangle_{L^2(S_{B,\mathcal{C}})}
	= \V(f,K+v,\mathcal{C})-\V(f,K,\mathcal{C}) = 0
$$
for every $f\in C^2(S^{n-1})$ by Lemma \ref{lem:mvprop}(\textit{e}).
Thus $\mathscr{A}\ell=0$.
\end{proof}

\subsection{Polytopes and quantum graphs}
\label{sec:qgraph}

The aim of this section is to provide an explicit description of the 
objects that appear in Theorem \ref{thm:forms} in the case where 
$C_1,\ldots,C_{n-2}$ are polytopes. In this case, it turns out that 
$\mathscr{A}$ is a ``quantum graph'': the one-dimensional 
Laplacian on the edges of a certain metric graph, with appropriately 
chosen boundary conditions at the vertices \cite{BK13}. For 
simplicity, we will develop this result in detail in the Minkowski case 
$C_1=\cdots=C_{n-2}=M$ that is of primary interest in this paper, and 
explain at the end of the section how the representation for general 
polytopes $C_1,\ldots,C_{n-2}$ may be obtained.

Throughout this section, let $M$ be a fixed polytope in $\mathbb{R}^n$
with nonempty interior.
We denote by $\mathcal{F}$ the set of facets of $M$, and by $n_F$ the 
outer unit normal vector of $F\in\mathcal{F}$.\footnote{
	For a smooth body $K$, we denote by $n_K:\partial K
	\to S^{n-1}$ the outer unit normal map on its boundary (cf.\ 
	section \ref{sec:smooth}). For a polytope $K$, the outer
	unit normal map is constant in the interior of each facet $F$;
	we then denote the facet normal as $n_F\in S^{n-1}$ by
	a slight abuse of notation.
}
We say that $F,F'\in\mathcal{F}$ are neighbors $F\sim F'$ if
$\dim(F\cap F')=n-2$.

We now associate to $M$ a metric graph $G=(V,E)$ inscribed in 
$S^{n-1}$ that will play a basic role in the sequel. The vertices of
$G$ are the facet normals
$$
	V=\{n_F:F\in\mathcal{F}\}\subset S^{n-1}.
$$
Moreover, for every pair of neighboring facets $F\sim F'$, we connect the 
corresponding vertices by an edge $e_{F,F'}\subset S^{n-1}$ that is the 
(shortest) geodesic segment in $S^{n-1}$ connecting $n_F$ and $n_{F'}$. 
Thus the undirected edges of $G$ are
$$
	E=\{e_{F,F'}:F\sim F'\}.
$$
The length of the edge $e_{F,F'}$ will be denoted as 
$l_{F,F'}:=\mathcal{H}^1(e_{F,F'})$.
\begin{figure}
\centering
\begin{tikzpicture}[scale=.9]

\shade[ball color = blue, opacity = 0.15] (1,0) circle [radius=2];

\draw[thick, densely dashed] (3,0) arc [start angle = 0, end angle = 180, 
x radius = 2, y radius = .5];

\draw[thick] (3,0) arc [start angle = 0, end angle = -180, x radius = 2, 
y radius = .5];

\draw[thick] (1,2) [rotate=90] arc [start angle = 0, end angle = 
-180, x radius = 2, y radius = -1];

\draw[thick, densely dashed] (1,2) [rotate=90] arc [start angle 
= 0, end angle = 
-180, x radius = 2, y radius = 1];

\draw[thick] (1,2) [rotate=90] arc [start angle = 0, end angle = 
-180, x radius = 2, y radius = 1.75];

\draw[thick, densely dashed] (1,2) [rotate=90] arc [start angle 
= 0, end angle = 
-180, x radius = 2, y radius = -1.75];

\draw[fill=black] (1,2) circle [radius=.07];
\draw[fill=black] (1,-2) circle [radius=.07];
\draw[fill=black] (2.74,-.24) circle [radius=.07] node[below right] 
{$n_{F}$};
\draw[fill=black] (-0.74,.24) circle [radius=.07];
\draw[fill=black] (.03,-.45) circle [radius=.07];
\draw[fill=black] (1.97,.45) circle [radius=.07];

\draw (-.25,-.44) node[below] {$n_{F'}$};

\draw (1.4,-.26) node {$e_{F,F'}$};

\draw[thick,->] (2.74,-.25) to (2,-.6);
\draw (3.5,-1.2) node[below right] {$n_{F\to F'}$};

\draw[->] (3.5,-1.2) to [out=110,in=-45] (2.4,-.5);

\draw (-1,0) node[left] {$G=$};


\begin{scope}[scale=.9]

\fill[blue!15] (-5.95,.55) -- (-4.03,1.55) -- 
(-4.03,-1.25) -- (-5.95,-2.25) -- (-5.95,.55);

\fill[blue!25] (-5.95,.55) -- (-8.47,.89) -- 
(-8.47,-1.91) -- (-5.95,-2.25) -- (-5.95,.55);

\fill[blue!5] (-8.47,.89) -- (-6.53,1.79) -- 
(-4.03,1.55) -- (-5.95,.55) -- (-8.47,.89);

\draw[thick] (-5.95,.55) -- (-4.03,1.55) -- (-4.03,-1.25) --
(-5.95,-2.25) -- (-5.95,.55);

\draw[thick] (-5.95,.55) -- (-8.47,.89) -- (-8.47,-1.91) --
(-5.95,-2.25);

\draw[thick] (-8.47,.89) -- (-6.53,1.79) -- (-4.03,1.55);

\draw[thick,densely dashed] (-6.53,1.79) -- (-6.53,-1.01) --
(-4.03,-1.25);

\draw[thick,densely dashed] (-6.53,-1.01) -- (-8.47,-1.91);
\end{scope}

\draw (-7.7,0) node[left] {$M=$};

\draw (-6.5,-.6) node {$F'$};

\draw (-4.4,-.3) node {$F$};

\end{tikzpicture}
\caption{Metric graph $G$ associated to a polytope $M$.\label{fig:graph}}
\end{figure}
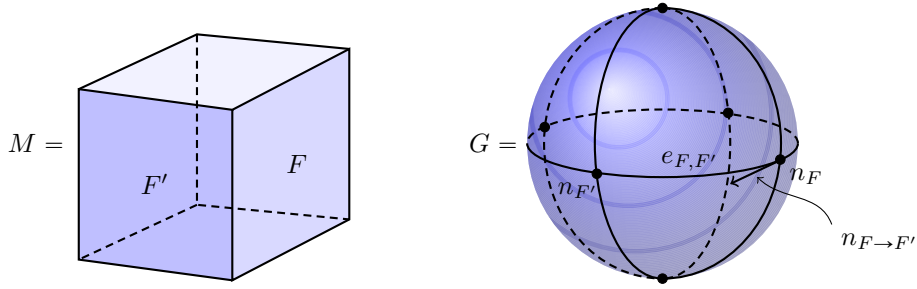

We will parametrize each edge by arclength. To this end, it is convenient 
to fix an arbitrary orientation of the edges by introducing a total 
ordering $\le$ on $\mathcal{F}$. When $F\le F'$, the edge $e_{F,F'}$ will 
be parametrized as $\theta\in[0,l_{F,F'}]$, where $\theta=0$ corresponds 
to $n_F$ and $\theta=l_{F,F'}$ corresponds to $n_{F'}$. For a function 
$f:e_{F,F'}\to\mathbb{R}$, we will write $f'=\frac{df}{d\theta}$ with 
respect to this parametrization. We denote by $H^k$ the usual Sobolev space
of functions on an edge with $k$ weak derivatives in $L^2$.

A function defined on all the edges of $G$ will be denoted 
$f:G\to\mathbb{R}$, and its (weak) derivative $f'$ is defined as above 
on each edge. In particular, any function $f:S^{n-1}\to\mathbb{R}$ may 
be viewed as a function on $G$ by restricting it to the edges, and then 
its derivative $f'$ is defined as above. We will write $f\in C^0(G)$ if 
$f:G\to\mathbb{R}$ is continuous on the edges as well as at each vertex.

Finally, we will denote by $n_{F\to F'}$ the unit tangent vector to the
geodesic segment $e_{F,F'}$ at $n_F$ in the direction of $n_{F'}$. The
directional derivative of a function $f:S^{n-1}\to\mathbb{R}$ at $n_F$ in 
the direction $n_{F\to F'}$ will be denoted $\nabla_{n_{F\to F'}}f(n_F)$. In 
terms of the arclength parametrization, we may evidently write
$$
	\nabla_{n_{F\to F'}}f(n_F) = f'|_{e_{F,F'}}(0)\,1_{F'\ge F}-
	f'|_{e_{F,F'}}(l_{F,F'})\,1_{F'\le F}.
$$
Note, in particular, that $\nabla_{n_{F\to F'}}f(n_F)$ is well defined
whenever $f|_{e_{F,F'}}\in H^2$ as $H^2(e_{F,F'})\subset C^1(e_{F,F'})$ by 
the Sobolev embedding theorem \cite[Theorem 7.26]{GT01}.
The various objects that we have defined above are illustrated in 
Figure \ref{fig:graph}.

We are now ready to state the main result of this section.

\begin{thm}
\label{thm:qgraph}
Let $M$ be a polytope in $\mathbb{R}^n$ with nonempty interior, and
define its metric graph $G$ as above. Then for any bounded 
measurable $f:S^{n-1}\to\mathbb{R}$, we have
\begin{align*}
	\int f\,dS_{M,\mathcal{M}} &=
	\sum_{F\in\mathcal{F}}\mathcal{H}^{n-1}(F)\,f(n_F),\\
	\int f\,dS_{B,\mathcal{M}} &=
	\frac{1}{n-1}
	\sum_{e_{F,F'}\in E}\mathcal{H}^{n-2}(F\cap F') 
	\int_{e_{F,F'}} f\,d\mathcal{H}^1.
\end{align*}
Moreover, the self-adjoint operator $\mathscr{A}$ and closed
quadratic form $\mathcal{E}$ on $L^2(S_{B,\mathcal{M}})$ defined in 
Theorem \ref{thm:forms} can be expressed in this case as follows:
$$
	\mathscr{A}f=\frac{1}{n}\{f''+f\}
$$
where
\begin{align*}
	\Dom\mathscr{A}=\Bigg\{ & f\in C^0(G):
	f|_{e_{F,F'}}\in H^2\mbox{ for all }e_{F,F'}\in E, \\
	&\sum_{F':F'\sim F} \mathcal{H}^{n-2}(F\cap F')\, 
	\nabla_{n_{F\to F'}}f(n_F)
	=0
	\mbox{ for all }F\in\mathcal{F}\Bigg\},
\end{align*}
and
$$
	\mathcal{E}(f,g) =
	\frac{1}{n(n-1)}\sum_{e_{F,F'}\in E} \mathcal{H}^{n-2}(F\cap F')
	\int_{e_{F,F'}} \{fg-f'g'\}\,d\mathcal{H}^1
$$
where $\Dom\mathcal{E}=\{f\in C^0(G):
f|_{e_{F,F'}}\in H^1\mbox{ for all }e_{F,F'}\in E\}$.
\end{thm}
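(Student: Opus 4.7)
My plan is to deduce the three claims in sequence: first the two explicit mixed-area-measure formulas, then the form $\mathcal{E}$ on $C^2$ functions via a density computation for $S_{g,\mathcal{M}}$, and finally the operator $\mathscr{A}$ and its domain via the form--operator correspondence. The formula for $S_{M,\mathcal{M}}$ is immediate from Lemma \ref{lem:maprop}(a), which reduces to the classical polytope identity $S(M,\cdot)=\sum_{F}\mathcal{H}^{n-1}(F)\delta_{n_F}$. For $S_{B,\mathcal{M}}$ I would use the Steiner-type expansion $S(M+\varepsilon B,\cdot)=S_{M,\mathcal{M}}+(n-1)\varepsilon S_{B,\mathcal{M}}+O(\varepsilon^2)$ and decompose $\partial(M+\varepsilon B)$ into translates of facets (constant in $\varepsilon$), cylindrical strips over each $(n-2)$-face $F\cap F'$ whose outer normals sweep exactly the arc $e_{F,F'}$ (contributing $\varepsilon\,\mathcal{H}^{n-2}(F\cap F')\,\mathcal{H}^1|_{e_{F,F'}}$), and spherical patches over faces of codimension $\ge 3$ (which contribute $O(\varepsilon^{k-1})=O(\varepsilon^2)$). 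Matching coefficients of $\varepsilon^1$ gives the stated formula.

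For the form $\mathcal{E}$, the central step is the edge density identity
\[
dS_{g,\mathcal{M}}\big|_{e_{F,F'}}=\frac{\mathcal{H}^{n-2}(F\cap F')}{n-1}(g''+g)\,d\mathcal{H}^1
\qquad\text{for every }g\in C^2(S^{n-1}),
\]
with no singular part at the vertices of $G$. By Lemma \ref{lem:diffsf} and bilinearity in $g$, it suffices to prove this for $g=h_K$ with $K$ of class $C^2_+$, and I would obtain it by repeating the Steiner decomposition for $M+\varepsilon K$: the cylindrical strip over $F\cap F'$ is parametrized by $(y,u)\in(F\cap F')\times e_{F,F'}$ via $(y,u)\mapsto y+\varepsilon\nabla h_K(u)$, and a wedge-product calculation, which crucially uses the orthogonality $T(F\cap F')=\mathrm{span}(n_F,n_{F'})^\perp$ in $\mathbb{R}^n$, shows that its Jacobian linearizes to $\varepsilon\,\langle D^2h_K(u)\dot u,\dot u\rangle$ with $\dot u$ the unit tangent to $e_{F,F'}$. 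Since $D^2g=\nabla^2_{S^{n-1}}g+gI$ and the covariant Hessian evaluated twice on the unit tangent of a unit-speed geodesic is simply the second arclength derivative, $\langle D^2g\,\dot u,\dot u\rangle=g''+g$ along $e_{F,F'}$, proving the density formula.

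Combining this identity with $n\V(f,g,\mathcal{M})=\int f\,dS_{g,\mathcal{M}}$ and integrating by parts on each edge produces
\[
n\V(f,g,\mathcal{M})=\sum_{e_{F,F'}\in E}\frac{\mathcal{H}^{n-2}(F\cap F')}{n-1}\int_{e_{F,F'}}(fg-f'g')\,d\mathcal{H}^1+\mathcal{B},
\]
where the boundary terms $\mathcal{B}$, regrouped at each vertex $n_F$, equal $\frac{1}{n-1}f(n_F)\bigl\langle\nabla g(n_F),\sum_{F'\sim F}\mathcal{H}^{n-2}(F\cap F')\,n_{F\to F'}\bigr\rangle$. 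The inner sum vanishes because the weighted vectors $\mathcal{H}^{n-2}(F\cap F')\,n_{F\to F'}$ are the $(n-2)$-volume-weighted outward unit normals to the facets of the polytope $F$ inside its ambient hyperplane, and the sum of such normals along the boundary of any polytope is zero by the divergence theorem applied to constant vector fields. This proves the formula for $\mathcal{E}$ on $C^2(S^{n-1})$; the extension to the full domain $\Dom\mathcal{E}$ follows by completion in the graph-$H^1$ norm built into the closure construction of section \ref{sec:presa}. The operator $\mathscr{A}$ and its domain are then pinned down by running the integration by parts in reverse: for $g$ whose restriction to each edge lies in $H^2$, the residual boundary terms in $\mathcal{E}(f,g)-\int f\,\tfrac{1}{n}(g''+g)\,dS_{B,\mathcal{M}}$ vanish for all $f\in\Dom\mathcal{E}$ if and only if the Kirchhoff condition $\sum_{F'\sim F}\mathcal{H}^{n-2}(F\cap F')\nabla_{n_{F\to F'}}g(n_F)=0$ holds at every vertex, identifying both the action $\mathscr{A}g=\frac{1}{n}(g''+g)$ and the domain $\Dom\mathscr{A}$.

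The main obstacle is the edge density identity. A naive parametrization of the cylindrical strip by arclength on the image curve in $\partial K$ produces a Jacobian of the form $|D^2h_K\dot u|$, which is not linear in $h_K$ and therefore cannot represent a mixed area measure. Only after recognizing $T(F\cap F')$ as the orthogonal complement of $\mathrm{span}(n_F,n_{F'})$ in $\mathbb{R}^n$ does the $(n-1)$-dimensional wedge product collapse to the scalar $\langle D^2h_K\dot u,\dot u\rangle$, restoring the required linearity in $h_K$. Once this orthogonality is in place, everything else is routine one-dimensional integration by parts on the edges of $G$ combined with the standard Friedrichs-extension identification of the self-adjoint operator associated to a closed form.
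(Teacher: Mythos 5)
Your proposal follows the paper's proof essentially step for step: the decomposition of $\partial(M+\varepsilon K)$ into pieces indexed by faces of $M$, the parametrization $(y,u)\mapsto y+\varepsilon\nabla h_K(u)$ of the strip over each $(n-2)$-face, the identification of the linear-in-$\varepsilon$ part of the Jacobian with $\langle D^2h_K\,\dot u,\dot u\rangle = h_K''+h_K$ along the arc (which is exactly the content of Proposition \ref{prop:polymixa}), the integration by parts on each edge, and the vanishing of the vertex boundary terms via the weighted-normal identity of Lemma \ref{lem:facecond}. The one place you are too brief is the last step: asserting that the stated $\Dom\mathcal{E}=\{f\in C^0(G):f|_e\in H^1\}$ is ``the completion built into the closure construction'' does not by itself identify it with the Friedrichs closure of Theorem \ref{thm:forms}, because the Friedrichs extension is the \emph{smallest} closed extension of the form on $C^2(S^{n-1})$; one must verify that $C^2(S^{n-1})$ is dense in this domain for the form norm, and this is not automatic since an $H^1$ function on the one-dimensional graph $G$ need not be the restriction of any $C^2$ function on $S^{n-1}$. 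The paper supplies precisely this density via an approximation lemma on each edge (Lemma \ref{lem:edgeapprox}), producing piecewise-smooth functions constant near the vertices, followed by a classical smooth extension from $G$ to the sphere.
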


In the remainder of this section, we fix the setting and notation of 
Theorem \ref{thm:qgraph}. The starting point for the proof is an integral 
formula for the mixed area measures $S_{K,\mathcal{M}}$. The cases $K=M$ 
and $K=B$ that appear in the first part of the statement of Theorem 
\ref{thm:qgraph} are classical, cf.\ \cite[eq.\ (4.24)]{Sch14}. The 
analogous representation for general convex bodies $K$ is also well known 
to experts, cf.\ \cite[eq.\ (7.175)]{Sch14} (where it is stated without 
proof) or \cite[Corollary 2.13]{Hug00} for a more general setting.
For completeness, we include a full proof for the case that will be needed here.

\begin{prop}
\label{prop:polymixa}
Let $K$ be a convex body of class $C^2_+$. Then
$$
	\int f\,dS_{K,\mathcal{M}} =
	\frac{1}{n-1}
	\sum_{e_{F,F'}\in E}\mathcal{H}^{n-2}(F\cap F')
	\int_{e_{F,F'}} (h_K''+h_K)\,f\,d\mathcal{H}^1.
$$
\end{prop}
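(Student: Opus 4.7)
The plan is to compute the surface area measure $S(\lambda K + \mu M, \cdot)$ of the Minkowski combination $P_{\lambda,\mu} := \lambda K + \mu M$ explicitly for $\lambda, \mu > 0$ and to extract the coefficient of $\lambda \mu^{n-2}$ from the polynomial expansion
$$
S(P_{\lambda, \mu}, \cdot) \;=\; \sum_{i=0}^{n-1}\binom{n-1}{i}\lambda^{i}\mu^{n-1-i}\, S_{K[i], M[n-1-i]}(\cdot).
$$
This coefficient is $(n-1)\, S_{K,\mathcal{M}}(\cdot)$, so once the left-hand side is evaluated directly, the desired identity follows by matching and integrating against $f$.

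First I would decompose $\partial P_{\lambda, \mu}$ according to the normal fan of $M$. Since $K$ is of class $C^2_+$, $F(K,u) = \{\nabla h_K(u)\}$ is a single point, so $F(P_{\lambda,\mu}, u) = \lambda\nabla h_K(u) + \mu F(M,u)$. For each face $\phi$ of $M$ of dimension $i$, the boundary piece whose outer normals lie in $\relint(N(M,\phi) \cap S^{n-1})$ is the image of the parameterization
$$
\Phi_\phi(u, y) := \lambda\nabla h_K(u) + \mu y, \qquad u \in \relint(N(M,\phi) \cap S^{n-1}),\ y \in \phi,
$$
and these pieces cover $\partial P_{\lambda,\mu}$ up to $\mathcal{H}^{n-1}$-null sets.

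The key step is computing the Jacobian of $\Phi_\phi$. Since $T\phi \subseteq N(M,\phi)^\perp$ and $T_u(N(M,\phi) \cap S^{n-1}) \subseteq N(M,\phi) \cap u^\perp$, these two subspaces are orthogonal complements inside $u^\perp$ for any $u \in \relint(N(M,\phi) \cap S^{n-1})$. Expressing the differential of $\Phi_\phi$ in an orthonormal basis of $u^\perp$ adapted to this splitting and writing it in block-triangular form gives
$$
\mathrm{Jac}\,\Phi_\phi(u, y) \;=\; \lambda^{n-1-i}\,\mu^{i}\, \det\!\bigl(D^{2}h_K(u)\big|_{N(M,\phi)\cap u^\perp}\bigr).
$$
Thus $\phi$ contributes a term of order $\lambda^{n-1-i}\mu^{i}$ to $S(P_{\lambda,\mu},\cdot)$, so only faces of dimension $i = n-2$ contribute to the coefficient of $\lambda\mu^{n-2}$. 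These are precisely the edges $F\cap F'$ with $F\sim F'$, for which $N(M,F\cap F')\cap S^{n-1}$ is the geodesic arc $e_{F,F'}$ and the restricted Hessian $D^2 h_K(u)|_{N(M,F\cap F')\cap u^\perp}$ reduces to the scalar $\langle t, D^{2}h_K(u)\, t\rangle$, where $t(\theta) = \gamma'(\theta)$ is the unit tangent to the arclength parameterization of $e_{F,F'}$.

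Finally I would identify this scalar with $h_K'' + h_K$ along the geodesic. Differentiating $\theta \mapsto h_K(\gamma(\theta))$ twice and using the ambient geodesic equation $\gamma''(\theta) = -\gamma(\theta)$ on $S^{n-1}$, Euler's relation $\langle x, \nabla h_K(x)\rangle = h_K(x)$ for the $1$-homogeneous $h_K$, and the identification of $D^2 h_K(u)$ with the ambient Hessian restricted to $u^\perp$, one gets precisely $\langle t, D^{2}h_K \, t\rangle = h_K''(\theta) + h_K(\theta)$. Combining everything, the $\lambda\mu^{n-2}$-part of $S(P_{\lambda,\mu}, A)$ equals
$$
\lambda\mu^{n-2} \sum_{e_{F,F'}\in E}\mathcal{H}^{n-2}(F\cap F') \int_{A\cap e_{F,F'}} (h_K''+h_K)\, d\mathcal{H}^{1}.
$$
Matching against the polynomial expansion above and integrating against $f$ yields the claim. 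The main obstacle is the clean Jacobian block-matrix computation combined with the geodesic identity; the dimension count that isolates exactly the graph edges as $i = n-2$ is then automatic.
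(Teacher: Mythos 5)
Your proof is correct and follows essentially the same route as the paper's: decompose $\partial(\lambda K+\mu M)$ by the normal fan of $M$, parametrize each piece by (normal direction, point in face), compute the Jacobian in a block-triangular basis to isolate the $(n-2)$-faces, and identify the restricted Hessian with $h_K''+h_K$ on each edge. The only difference is superficial — the paper works with $M+\varepsilon K$ and differentiates at $\varepsilon=0$ rather than reading off the coefficient of $\lambda\mu^{n-2}$, and it devotes a step to explicitly verifying that the parametrization is a diffeomorphism (by writing down its inverse), a point you assert without detail.
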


\begin{proof}
Denote by $\mathcal{F}_i$ the collection of $i$-dimensional faces of $M$, 
and define 
$$
	N_F:=\{u\in S^{n-1}:F(M,u)=F\}.
$$
Then $\{N_F:F\in\bigcup_i\mathcal{F}_i\}$ is a 
partition of $S^{n-1}$. Note that if 
$F,F'\in\mathcal{F}=\mathcal{F}_{n-1}$ are 
neighboring facets $F\sim F'$, we have $F\cap F'\in\mathcal{F}_{n-2}$ 
and $N_{F\cap F'}=e_{F,F'}$.

To compute $S_{K,\mathcal{M}}$, we will compute the area measure 
$S(M+\varepsilon K,\cdot\,)$, and then deduce the mixed area measure 
from its definition given in section \ref{sec:mixva}. To compute the 
area measure, we partition the boundary of the set $M+\varepsilon K$ 
into disjoint sets $F(M+\varepsilon K,N_F):=\bigcup_{u\in N_F} 
F(M+\varepsilon K,u)$ corresponding to boundary points with normals in 
$N_F$, and apply the change of variables formula to each of these sets.
This partition is illustrated in Figure \ref{fig:sch} (in the figure
$M$ is a cube and $K=B$).
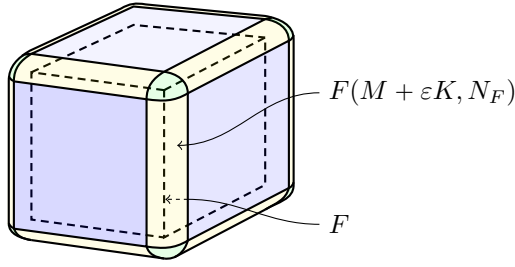
\begin{figure}
\centering
\begin{tikzpicture}[scale=.7]

\draw[thick,densely dashed] (-5.95,.55) -- (-4.03,1.55) -- (-4.03,-1.25) --
(-5.95,-2.25) -- (-5.95,.55);
\draw[thick,densely dashed] (-5.95,.55) -- (-8.47,.89) -- (-8.47,-1.91) --
(-5.95,-2.25);
\draw[thick, densely dashed] (-8.47,.89) -- (-6.53,1.79) -- (-4.03,1.55);

\fill[green,opacity=0.1] (-5.5,.49) to[out=90,in=-7.75] (-5.95,.9)
to[in=90,out=211] (-6.283,.377) to[out=-7.75,in=211] (-5.5,.49);

\fill[green,opacity=0.1] (-6.283,-2.42) to[out=-75,in=172.25] (-5.95,-2.65)
to[in=-105,out=-7.75] (-5.5,-2.31) to[in=-7.75,out=211]
(-6.283,-2.42);

\fill[yellow,opacity=0.1] (-5.5,.49) to[out=90,in=-7.75] (-5.95,.9)
to (-4.03,1.9) to[in=90,out=-7.75] (-3.58,1.49);
\draw[thick] (-5.5,.49) to[out=90,in=-7.75] (-5.95,.9); 
\draw[thick] (-3.58,1.49) to[out=90,in=-7.75] (-4.03,1.9);

\fill[yellow,opacity=0.1] (-6.283,.377) to[out=90,in=211] (-5.95,.9)
to (-8.47,1.24) to[in=90,out=211] (-8.803,.727);
\draw[thick] (-6.283,.377) to[out=90,in=211] (-5.95,.9);
\draw[thick] (-8.803,.727) to[out=90,in=211] (-8.47,1.24);

\fill[yellow,opacity=0.1] (-6.283,.377) to[out=-7.75,in=211] (-5.5,.49) to
(-5.5,-2.31) to[in=-7.75,out=211] (-6.283,-2.423);
\draw[thick] (-6.283,.377) to[out=-7.75,in=211] (-5.5,.49);
\draw[thick] (-6.283,-2.423) to[out=-7.75,in=211] (-5.5,-2.31);

\fill[yellow,opacity=0.1] (-6.283,-2.42) to[out=-75,in=172.25] (-5.95,-2.65)
to (-8.47,-2.3) to[in=-75,out=172.25] (-8.803,-2.07) to
(-6.283,-2.42);
\draw[thick] (-6.283,-2.42) to[out=-75,in=172.25] (-5.95,-2.65);
\draw[thick] (-8.803,-2.07) to[out=-75,in=172.25] (-8.47,-2.3);
\draw[thick] (-5.95,-2.65) to (-8.47,-2.3);

\fill[yellow,opacity=0.1] (-8.5,1.3) to (-6.77,2.1)
to[out=31,in=170] (-6.3,2.17) to (-4.03,1.96) to (-3.9,1.93) to
(-4.03,1.9) to (-6.53,2.14) -- (-8.47,1.24) -- 
(-8.5,1.3);

\fill[green,opacity=0.1] (-8.803,.727) to[out=172.25,in=160] (-8.90,.817)
to[out=90,in=190] (-8.47,1.24) 
to[in=90,out=211] (-8.803,.727);

\fill[green,opacity=0.1] (-3.58,1.49) to[out=21,in=221] (-3.49,1.60)
to[in=-10,out=90] (-4.03,1.96) to (-4.03,1.9) to[in=90,out=-7.75] (-3.58,1.49);

\fill[green,opacity=0.1] (-3.49,-1.25) to[out=-90,in=31] (-4.03,-1.72)
to[in=-105,out=31] (-3.58,-1.31) to[out=21,in=221] (-3.49,-1.25);

\fill[yellow,opacity=0.1] (-8.803,-2.07) to[out=172.25,in=160] 
(-8.925,-1.98) to (-8.92,.817) to (-8.803,.727) to
(-8.803,-2.07);

\fill[yellow,opacity=0.1] (-3.58,1.49) to[out=21,in=221] (-3.49,1.60)
to (-3.49,-1.2) to[in=21,out=221] (-3.58,-1.31) to (-3.58,1.49);

\fill[green,opacity=0.1] (-8.803,-2.07) to[out=172.25,in=160]
(-8.925,-1.98) to[in=170,out=270] (-8.47,-2.3)
to[in=-75,out=172.25] (-8.803,-2.07);

\draw[thick] (-8.47,-2.3) to[out=170,in=270] (-8.9,-1.98) to
(-8.9,.727) to[out=90,in=211] (-8.5,1.3) to (-6.77,2.1)
to[out=31,in=170] (-6.3,2.18) to (-4.03,1.96) 
to[out=-10,in=90] (-3.49,1.52) to (-3.49,-1.25)
to[out=-90,in=31] (-4.03,-1.72);
\draw[thick] (-8.803,-2.07) to[out=172.25,in=-60] (-8.9,-1.98);
\draw[thick] (-8.803,.727) to[out=172.25,in=-60] (-8.9,.817);
\draw[thick] (-8.47,1.24) to[out=180,in=160] (-8.62,1.2);
\draw[thick] (-3.58,-1.31) to[out=21,in=251] (-3.49,-1.2);
\draw[thick] (-3.58,1.49) to[out=21,in=251] (-3.49,1.60);
\draw[thick] (-3.9,1.93) to (-4.03,1.9);

\fill[yellow,opacity=0.1] (-5.95,-2.65) to[in=-105,out=-7.75] (-5.5,-2.31)
to (-3.58,-1.31) to[out=-105,in=31] (-4.03,-1.72) to (-5.75,-2.625);
\draw[thick] (-5.5,-2.31) to[out=-105,in=-7.75] (-5.95,-2.65);
\draw[thick] (-3.58,-1.31) to[out=-105,in=31] (-4.03,-1.72);
\draw[thick] (-4.03,-1.72) to (-5.75,-2.625);

\begin{scope}[xshift=.45cm,yshift=-.06cm]
	\fill[blue,opacity=0.1] (-5.95,.55) -- (-4.03,1.55) -- 
	(-4.03,-1.25) -- (-5.95,-2.25) -- (-5.95,.55);
	\draw[thick] (-5.95,.55) -- (-4.03,1.55) -- (-4.03,-1.25) --
	(-5.95,-2.25) -- (-5.95,.55);
\end{scope}

\begin{scope}[xshift=-.333cm,yshift=-0.173cm]
	\fill[blue,opacity=0.15] (-5.95,.55) -- (-8.47,.89) -- 
	(-8.47,-1.91) -- (-5.95,-2.25) -- (-5.95,.55);
	\draw[thick] (-5.95,.55) -- (-8.47,.89) -- (-8.47,-1.91) --
	(-5.95,-2.25) -- (-5.95,.55);
\end{scope}

\begin{scope}[yshift=.35cm]
	\fill[blue,opacity=0.05] (-8.47,.89) -- (-6.53,1.79) -- 
	(-4.03,1.55) -- (-5.95,.55) -- (-8.47,.89);
	\draw[thick] (-8.47,.89) -- (-6.53,1.79) -- (-4.03,1.55) --
	(-5.95,.55) -- (-8.47,.89);
\end{scope}

\draw (-3,-2) to[out=180,in=0] (-5.5,-1.525);
\draw[->,dash pattern=on 1.5pt off 1.5pt] (-5.5,-1.525) to (-5.925,-1.525);
\draw (-3,-2) node[right] {$F$};

\draw[->] (-3,.5) to[out=180,in=0] (-5.7,-.5);
\draw (-3,.5) node[right] {$F(M+\varepsilon K,N_F)$};

\end{tikzpicture}
\caption{Decomposition of the boundary of $M+\varepsilon K$.\label{fig:sch}}
\end{figure}

\textbf{Step 1}.
Fix $F\in\mathcal{F}_i$. We first conveniently parametrize
$F(M+\varepsilon K,N_F)$.

Recall that as $K$ is a 
$C^2_+$-body, the outer normal map $n_K:\partial K\to S^{n-1}$ is a
$C^1$-diffeomorphism and $n_K^{-1}=\nabla h_K$, cf.\ section \ref{sec:smooth}.
Therefore
$$
	F(M+\varepsilon K,u) =   
	F(M,u)+\varepsilon F(K,u) =
	F+\varepsilon \nabla h_K(u)
$$
for any $u\in N_F$ by Lemma \ref{lem:dirdir}. It follows that
$F(M+\varepsilon K,N_F)$ is the image of the map
$\iota:F\times N_F\to F(M+\varepsilon K,N_F)$ defined by 
$
	\iota(x,u):=x+\varepsilon  \nabla h_K(u)
$.

\textbf{Step 2}.
We now show that $\iota$ is a $C^1$-diffeomorphism.

Let $L_F:=\spn(F-F)$ be the tangent space of $F$, and note that 
$N_F\subset S^{n-1}\cap L_F^\perp$.
Assume for simplicity that $0\in F$, so that $F\subset L_F$ (when 
considering a single face, we may always reduce to this setting by 
translation). Then we may express 
$$
	\iota(x,u)=(x+\varepsilon 
	P_{L_F}\nabla h_K(u),\varepsilon P_{L_F^\perp}\nabla h_K(u))\in
	L_F\oplus L_F^\perp,
$$
where $P_L$ denotes orthogonal projection onto $L$.

Now note that as $h_{P_{L_F^\perp}K}(u)=h_K(P_{L_F^\perp}u)$, 
differentiating yields that 
$$
	P_{L_F^\perp}\nabla h_K(u)=\nabla h_{P_{L_F^\perp}K}(u)
	\quad\mbox{for }u\in S^{n-1}\cap L_F^\perp.
$$
As $P_{L_F^\perp}K$ is a $C^2_+$ body in $L_F^\perp$,  
$n_{P_{L_F^\perp}K}:\partial P_{L_F^\perp}K\to 
S^{n-1}\cap L_F^\perp$ is a $C^1$-diffeomorphism and
$n_{P_{L_F^\perp}K}^{-1}=P_{L_F^\perp}\nabla h_K$. It now follows 
that $\iota$ is a diffeomorphism, as $\iota$ is $C^1$ and
$$
	\iota^{-1}(z,v) = 
	(z-\varepsilon P_{L_F}\nabla h_K(n_{P_{L_F^\perp}K}(\varepsilon^{-1}v)),
	n_{P_{L_F^\perp}K}(\varepsilon^{-1}v))
$$
for $(z,v)\in F(M+\varepsilon K,N_F)\subset L_F\oplus L_F^\perp$ is also
$C^1$.

It is readily seen that the differential $d\iota$ has a block-triangular
form with respect to $L_F\oplus L_F^\perp$, and that its determinant
may be written as $\det(\varepsilon D^2h_{P_{L_F^\perp}K})$
where $D^2h_{P_{L_F^\perp}K}$ is computed in $L_F^\perp$
(equivalently, $D^2h_{P_{L_F^\perp}K}$ is the projection of the Hessian
$\nabla^2h_K$ in $\mathbb{R}^n$ on the tangent space of 
$S^{n-1}\cap L_F^\perp$).

\textbf{Step 3.} Now recall that $\{N_F\}$ partitions $S^{n-1}$. We
can therefore write
\begin{align*}
	S(M+\varepsilon K,A) &=
	\sum_{i=0}^{n-1}
        \sum_{F\in\mathcal{F}_i}
	\mathcal{H}^{n-1}(\{x\in F(M+\varepsilon K,u)\mbox{ for some }u\in 
	A\cap N_F
	\})
	\\ &=
	\sum_{i=0}^{n-1}
	\varepsilon^{n-1-i}
	\sum_{F\in\mathcal{F}_i}
	\mathcal{H}^i(F)
	\int_{N_F} 1_A\det(D^2h_{P_{L_F^\perp}K})\,d\mathcal{H}^{n-1-i},
\end{align*}
where we used the map $\iota$ to perform a change of variables in the
second line.
By the definition of mixed area measures in section
\ref{sec:mixva}, we have
\begin{align*}
	(n-1)\,S_{K,\mathcal{M}}(A) &= \frac{d}{d\varepsilon}S(M+\varepsilon 
	K,A)\bigg|_{\varepsilon=0^+} \\
	&=
        \sum_{F\in\mathcal{F}_{n-2}}
        \mathcal{H}^{n-2}(F)
        \int_{N_F} 1_A\det(D^2h_{P_{L_F^\perp}K})\,d\mathcal{H}^{1}.
\end{align*}
But note that for every $(n-2)$-face $F$, the set $N_F$ is an edge of the 
quantum graph. In this case $D^2h_{P_{L_F^\perp}K}$ is a scalar function
(as $S^{n-1}\cap L_F^\perp$ is one-dimensional) and coincides precisely
with $h_K''+h_K$ on each edge. Thus the proof is complete.
\end{proof}

We also record a simple but essential observation. It immediately implies, 
for example, that $C^2(S^{n-1})\subset \Dom\mathscr{A}$ in Theorem
\ref{thm:qgraph}.

\begin{lem}
\label{lem:facecond}
For every $F\in\mathcal{F}$, we have
$$
	\sum_{F':F'\sim F} \mathcal{H}^{n-2}(F\cap F')\,
	n_{F\to F'} = 0.
$$
\end{lem}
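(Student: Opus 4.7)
The plan is to recognize the identity as the classical closing relation for polytopes (sometimes called the Minkowski identity for a single body, see Lemma~\ref{lem:maprop}(e) applied to $K_1=\cdots=K_{n-1}=F$), applied to the facet $F$ viewed as an $(n-1)$-dimensional polytope in its own affine hull. The entire content of the lemma is thus the identification of the tangent vectors $n_{F\to F'}$ with the outer unit normals of the facets of the polytope $F$.

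The main step I would carry out is this identification. Fix $F\in\mathcal{F}$, let $L_F:=\spn(F-F)=n_F^\perp$, and view $F$ as a full-dimensional convex polytope inside the affine hyperplane $\mathrm{aff}(F)$, whose direction space is $L_F$. Its $(n-2)$-dimensional facets are exactly the sets $F\cap F'$ for $F'\sim F$, and each such facet possesses a well-defined outer unit normal $\nu_{F,F'}\in L_F$ (i.e.\ inside $\mathrm{aff}(F)$). I claim that $\nu_{F,F'}=n_{F\to F'}$. Indeed, since $F\cap F'$ lies in both supporting hyperplanes $\{x:\langle x,n_F\rangle=h_M(n_F)\}$ and $\{x:\langle x,n_{F'}\rangle=h_M(n_{F'})\}$, its direction space is contained in $\{n_F,n_{F'}\}^\perp$, so any vector in $L_F=n_F^\perp$ that is orthogonal to $F\cap F'$ must be a scalar multiple of the projection $n_{F'}-\langle n_{F'},n_F\rangle n_F$. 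Both $\nu_{F,F'}$ and $n_{F\to F'}$ are unit vectors of this form, so it only remains to check they point the same way. By definition $n_{F\to F'}$ is the initial velocity of the geodesic on $S^{n-1}$ from $n_F$ to $n_{F'}$, so $\langle n_{F\to F'},n_{F'}\rangle=\sqrt{1-\langle n_F,n_{F'}\rangle^2}>0$; and $\nu_{F,F'}$ points away from $F$ within $\mathrm{aff}(F)$, which means that moving from a point of $F\cap F'$ slightly in the direction $\nu_{F,F'}$ exits $M$ through the facet $F'$, i.e.\ $\langle \nu_{F,F'},n_{F'}\rangle>0$ as well. Hence $n_{F\to F'}=\nu_{F,F'}$.

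Once this identification is in place, the lemma follows from the standard fact that for any convex polytope $P$ in Euclidean space, the sum of outward area vectors over its facets vanishes:
$$
\sum_{G\text{ facet of }P}\mathcal{H}^{\dim P-1}(G)\,\nu_G=0.
$$
Applying this to $P=F$ (viewed inside $\mathrm{aff}(F)$, which is $(n-1)$-dimensional), the facets are $G=F\cap F'$ with outer unit normals $\nu_G=n_{F\to F'}$ and $(n-2)$-measures $\mathcal{H}^{n-2}(F\cap F')$, yielding exactly the claimed identity. This closing relation itself is immediate from, e.g., the divergence theorem applied to the constant vector field $v$ on $F$ (the flux of $v$ through $\partial F$ equals zero for every $v\in L_F$), or as a direct consequence of Lemma~\ref{lem:maprop}(e) applied inside $\mathrm{aff}(F)$.

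The only nontrivial step is the identification of $n_{F\to F'}$ with $\nu_{F,F'}$; the rest is standard. No genuine obstacle is anticipated, since the geometry is entirely two-dimensional in the plane $\spn\{n_F,n_{F'}\}$ and the sign check is elementary.
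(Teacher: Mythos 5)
Your proof is correct and follows exactly the same route as the paper: view $F$ as an $(n-1)$-dimensional polytope inside $\mathrm{aff}(F)$, identify the tangent vectors $n_{F\to F'}$ with the outer unit normals of the facets $F\cap F'$ of $F$, and invoke the vanishing of the total area vector via Lemma~\ref{lem:maprop}(e). You merely spell out the sign verification in the identification $\nu_{F,F'}=n_{F\to F'}$, which the paper leaves implicit.
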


\begin{proof}
Consider $F$ as an $(n-1)$-dimensional convex body in 
$\mathop{\mathrm{aff}}F$. The facets of $F$ are precisely the sets
$F\cap F'$ for $F'\sim F$, and the corresponding facet normals are
$n_{F\to F'}$. Thus the conclusion follows from Lemma 
\ref{lem:maprop}(\textit{e}).
\end{proof}

We now proceed to the proof of Theorem \ref{thm:qgraph}.

\begin{proof}[Proof of Theorem \ref{thm:qgraph}]
The expressions for $S_{M,\mathcal{M}}$ and $S_{B,\mathcal{M}}$ follow
immediately from the definition of mixed area measures (cf.\ section 
\ref{sec:mixva}) and Proposition \ref{prop:polymixa}.

In the remainder of the proof, we define the operator $\mathscr{A}$ 
as in the statement of Theorem \ref{thm:qgraph}. Our aim is to show 
that the operator thus defined coincides with the general construction of 
Theorem \ref{thm:forms} in the present setting.
To this end, consider first $f\in C^2(S^{n-1})$. By Lemma 
\ref{lem:facecond}, we have $f\in\Dom\mathscr{A}$. Moreover,
$$
	\mathscr{A}f := \frac{1}{n}\{f''+f\} =
	\frac{1}{n}\frac{dS_{f,\mathcal{M}}}{dS_{B,\mathcal{M}}}
$$
by Proposition \ref{prop:polymixa}. Thus the restriction of $\mathscr{A}$ 
to $C^2(S^{n-1})\subset\Dom\mathscr{A}$ agrees with the operator defined 
in \eqref{eq:defa}. It remains to show that $\mathscr{A}$ as defined in 
Theorem \ref{thm:qgraph} is self-adjoint, that $\mathcal{E}$ in Theorem 
\ref{thm:qgraph} is the associated closed quadratic form, and that 
$\mathscr{A}$ agrees with the Friedrichs extension of its restriction to 
$C^2(S^{n-1})$.

Consider any $f,g\in\Dom\mathscr{A}$. Then
$$
	\langle g,\mathscr{A}f\rangle_{L^2(S_{B,\mathcal{M}})} =
	\frac{1}{n(n-1)}
	\sum_{e_{F,F'}\in E}\mathcal{H}^{n-2}(F\cap F')
	\int_{e_{F,F'}} (f''+f)\,g\,d\mathcal{H}^1
$$
by definition. Integrating by parts yields
\begin{align*}
	&\int_{e_{F,F'}} (f''+f)\,g\,d\mathcal{H}^1 = 
	\\ &\quad \int_{e_{F,F'}} \{fg-f'g'\}\,d\mathcal{H}^1 
	- \nabla_{n_{F\to F'}}f(n_F)g(n_F)
	- \nabla_{n_{F'\to F}}f(n_{F'})g(n_{F'}).
\end{align*}
Multiplying this expression by $\mathcal{H}^{n-2}(F\cap F')$
and summing over $e_{F,F'}\in E$ yields
$$
	\langle g,\mathscr{A}f\rangle_{L^2(S_{B,\mathcal{M}})} =
	\frac{1}{n(n-1)}\sum_{e_{F,F'}\in E} \mathcal{H}^{n-2}(F\cap F')
	\int_{e_{F,F'}} \{fg-f'g'\}\,d\mathcal{H}^1,
$$
where the boundary terms vanish by the definition of $\Dom\mathscr{A}$.
We have therefore shown that $\mathcal{E}(f,g)=\langle 
g,\mathscr{A}f\rangle_{L^2(S_{B,\mathcal{M}})}$ for all 
$f,g\in\Dom\mathscr{A}$. It is now a standard exercise to prove that 
$\mathscr{A}$ is self-adjoint and that $\mathcal{E}$ is the associated 
closed quadratic form with the given domains; see, e.g., \cite[Theorems 
1.4.4 and 1.4.11]{BK13}.

Finally, we claim that $\mathscr{A}$ is the Friedrichs extension of its 
restriction to $C^2(S^{n-1})$. Indeed, by applying Lemma 
\ref{lem:edgeapprox} below to every edge, we can write
any $f\in\Dom\mathcal{E}$ as the limit with respect to the norm 
$[2\|f\|_{L^2(S_{B,\mathcal{M}})}^2-\mathcal{E}(f,f)]^{1/2}$ of functions 
$f_k\in C^0(G)$ that are $C^\infty$ on each edge and constant in a 
neighborhood of each vertex. By a 
classical extension argument \cite[Lemma 2.26]{Lee13}, any such function 
may be extended to a $C^\infty$ function on the entire sphere $S^{n-1}$. 
We have therefore shown that $\mathcal{E}$ is the closure of its 
restriction to $C^2(S^{n-1})$, completing the proof.
\end{proof}

Above we used the following simple approximation argument. We 
recall that $H^1([0,l])\subset C^0([0,l])$ by the Sobolev embedding 
theorem \cite[Theorem 7.26]{GT01}, so that the value of a 
function $h\in H^1$ is well defined at every point.

\begin{lem}
\label{lem:edgeapprox}
Let $h:[0,l]\to\mathbb{R}$ satisfy $h\in H^1$. Then there exists a 
sequence of functions $h_k:[0,l]\to\mathbb{R}$ such that for all 
sufficiently large $k$ the following hold:
\begin{enumerate}[a.]
\item $h_k\in C^\infty$.
\item $h_k(x)=h(0)$ on $x\in[0,a_k]$ and
$h_k(x)=h(l)$ on $x\in[l-a_k,l]$ 
for some $a_k>0$.
\item $h_k\to h$ and $h_k'\to h'$ in $L^2$
as $k\to\infty$.
\end{enumerate}
\end{lem}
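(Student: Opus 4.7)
The plan combines an explicit piecewise truncation near the endpoints with a mollification step. Fix $a_k = 1/k$ and define $\tilde h_k : [0,l] \to \mathbb{R}$ to equal $h(0)$ on $[0, 2a_k]$, $h(l)$ on $[l - 2a_k, l]$, $h(x)$ on $[3a_k, l - 3a_k]$, and the unique affine interpolation between the adjacent values on each of $[2a_k, 3a_k]$ and $[l - 3a_k, l - 2a_k]$. The pointwise values $h(0)$ and $h(l)$ are well-defined by the Sobolev embedding noted above the lemma, so $\tilde h_k$ is continuous and piecewise $H^1$, hence $\tilde h_k \in H^1([0,l])$, and is constant in a neighborhood of each endpoint by construction.

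First I would verify that $\tilde h_k \to h$ in $H^1$. The $L^2$ convergence is immediate because $\tilde h_k - h$ is supported in $[0, 3a_k] \cup [l - 3a_k, l]$ and bounded by $2\|h\|_{L^\infty}$. For the derivatives, $\tilde h_k' \equiv 0$ on $[0,2a_k] \cup [l - 2a_k, l]$, $\tilde h_k' = h'$ on $[3a_k, l - 3a_k]$, and on $[2a_k, 3a_k]$ the derivative is the constant $a_k^{-1}\int_0^{3a_k} h'$. Cauchy-Schwarz then gives
$$\int_{2a_k}^{3a_k} |\tilde h_k'|^2 \, dt = \frac{\bigl(h(3a_k) - h(0)\bigr)^2}{a_k} \leq 3 \int_0^{3a_k} |h'|^2 \, dt \longrightarrow 0,$$
and symmetrically on the right interpolation interval. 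Combined with $\int_0^{3a_k} |h'|^2 + \int_{l - 3a_k}^l |h'|^2 \to 0$ (absolute continuity of the $L^2$ integral on $[0,l]$), this yields $\|\tilde h_k' - h'\|_{L^2} \to 0$.

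Next I would mollify. Extend $\tilde h_k$ to $\mathbb{R}$ by declaring it equal to $h(0)$ on $(-\infty, 0)$ and $h(l)$ on $(l,\infty)$; the resulting function is continuous and piecewise $H^1$, hence in $H^1_{\mathrm{loc}}(\mathbb{R})$. Fix a nonnegative $\rho \in C_c^\infty(\mathbb{R})$ with $\supp\rho \subset (-1,1)$ and $\int \rho = 1$, and set $\rho_\delta(x) = \delta^{-1} \rho(x/\delta)$. For each fixed $k$, the classical mollification theorem gives $\rho_\delta * \tilde h_k \to \tilde h_k$ in $H^1([0,l])$ as $\delta \downarrow 0$, so I may choose $\delta_k \in (0, a_k/2)$ small enough that $\|\rho_{\delta_k} * \tilde h_k - \tilde h_k\|_{H^1([0,l])} < 1/k$, and define $h_k := \rho_{\delta_k} * \tilde h_k$.

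The three properties now follow. Property (a) holds because $h_k$ is a convolution with a smooth kernel. For (b), if $x \in [0, 3a_k/2]$ and $|y| < \delta_k < a_k/2$, then $x - y \in (-a_k/2, 2a_k)$, on which $\tilde h_k \equiv h(0)$; hence $h_k(x) = h(0)\int\rho_{\delta_k} = h(0)$, and symmetrically $h_k(x) = h(l)$ on $[l - 3a_k/2, l]$, so the lemma holds with $3/(2k)$ in place of its $a_k$. Property (c) is the triangle inequality $\|h_k - h\|_{H^1} \leq \|h_k - \tilde h_k\|_{H^1} + \|\tilde h_k - h\|_{H^1} < 1/k + o(1) \to 0$. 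The only mildly delicate point, which I regard as the main technical hurdle, is the diagonal selection of $\delta_k$ against the varying sequence $\tilde h_k$; it is handled by fixing $\tilde h_k$ first and then choosing $\delta_k$ as a function of $k$.
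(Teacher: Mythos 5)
Your proof is correct, and it reverses the order of the two key operations compared to the paper. The paper mollifies first — producing smooth $u_k\to h$ in $H^1$, adjusted by a linear function so that $u_k(0)=h(0)$ and $u_k(l)=h(l)$ — and then composes with a smooth reparametrization $\iota_m:[0,l]\to[0,l]$ that collapses $[0,1/m]$ onto $\{0\}$ and $[l-1/m,l]$ onto $\{l\}$ while satisfying $0\le\iota_m'\le 3$, finishing with a diagonal extraction over $(k,m)$. You instead flatten first, replacing $h$ by the explicit piecewise function $\tilde h_k$ (constant near the endpoints, affine on the transition intervals, equal to $h$ in the middle), with the derivative error on the affine pieces controlled by a clean Cauchy--Schwarz bound $a_k^{-1}\bigl(\int_0^{3a_k}h'\bigr)^2\le 3\int_0^{3a_k}|h'|^2$, and then mollify $\tilde h_k$ with a kernel of width $\delta_k<a_k/2$, which preserves the constancy near the endpoints exactly. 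Your route is more explicit and elementary (no need to construct $\iota_m$ or verify that the composition converges in $H^1$); the paper's is slightly slicker in that the composition of two $C^\infty$ maps is trivially $C^\infty$, so only one mollification is ever needed. In both cases the ``diagonalization'' is the same phenomenon: in the paper it is the literal diagonal subsequence of $\{u_k\circ\iota_m\}$, and in yours it is the choice of $\delta_k$ as a function of $k$ after $\tilde h_k$ is fixed.
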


\begin{proof}
The argument is classical; see, e.g., \cite[Theorem 7.2.1]{Dav95}. By 
convolving with a smooth mollifier, we first construct $C^\infty$ 
functions $u_k$ such that $u_k\to h$ and $u_k'\to h'$ in 
$L^2$ \cite[section 7.6]{GT01}. By the Sobolev embedding theorem, 
$u_k\to h$ uniformly. We may therefore assume without loss of generality 
that $u_k(0)=h(0)$ and $u_k(l)=h(l)$ for every $k$; otherwise 
this may be accomplished by adding a linear function to $u_k$ without 
changing the above approximation properties.

Now define for every $m\ge 4/l$ a function $\iota_m:[0,l]\to[0,l]$ with 
the following properties: $\iota_m$ is $C^\infty$ and $0\le\iota_m'\le 3$; 
$\iota_m(x)=x$ for $x\in[\frac{2}{m},l-\frac{2}{m}]$; $\iota_m(x)=0$ for 
$x\in[0,\frac{1}{m}]$; and $\iota_m(x)=l$ for $x\in[l-\frac{1}{m},l]$. 
Then we have $u_k\circ\iota_m\to u_k$ and 
$(u_k\circ\iota_m)'=(u_k'\circ\iota_m)\iota_m'\to u_k'$ in $L^2$ as 
$m\to\infty$ for any $k$. Moreover, $u_k\circ\iota_m$ clearly satisfies 
properties a.\ and b.\ of the statement of the lemma. We may therefore 
construct $\{h_k\}$ by extracting a diagonal subsequence of 
$\{u_k\circ\iota_m\}_{k,m\ge 1}$.
\end{proof}

We conclude this section with two remarks.

\begin{rem}[General mixed volumes]
\label{rem:genqgraph}
For simplicity, we have restricted attention in this section to the
special case of Theorem \ref{thm:forms} where $C_1=\cdots=C_{n-2}=M$.
This is the only setting that will be needed in the sequel. However,
Theorem \ref{thm:qgraph} is readily extended to the setting
where $C_1,\ldots,C_{n-2}$ are arbitrary polytopes. Let us briefly sketch
the relevant constructions, leaving the details to the reader.

Let $C_1,\ldots,C_{n-2}$ be polytopes. Then the polytopes 
$M_\lambda:=\lambda_1C_1+\cdots+\lambda_{n-2}C_{n-2}$ are strongly 
isomorphic for all choices of $\lambda_1,\ldots,\lambda_{n-2}>0$ 
\cite[Corollary 2.4.12]{Sch14}; in particular, all $M_\lambda$ induce the 
same metric graph $G=(V,E)$, whose vertices are the facet normals of 
$M_1:=C_1+\cdots+C_{n-2}$ and whose edges are indexed by 
the $(n-2)$-faces of $M_1$.
Now let $e\in E$, and let $\tilde F^e$ be the associated $(n-2)$-face
of $M_1$. By Lemma \ref{lem:dirdir}, there exists for each $i$ a face
$\tilde F_i^e$ of $C_i$ such that $\tilde F^e=\tilde F_1^e+\cdots+\tilde 
F_{n-2}^e$. It follows from Proposition \ref{prop:polymixa} and the
definitions in section \ref{sec:mixva} that
$$
	\int f\,dS_{K,\mathcal{C}} =
	\frac{1}{n-1}
	\sum_{e\in E}\V(\tilde F_1^e,\ldots,\tilde F_{n-2}^e)
	\int_{e} (h_K''+h_K)\,f\,d\mathcal{H}^1
$$
for every convex body $K$ of class $C^2_+$, where the mixed volume that
appears here
is computed in the $(n-2)$-dimensional subspace spanned by $\tilde F^e$
(modulo translation of the faces $\tilde F_i^e$).
Using this expression we may 
readily adapt Theorem \ref{thm:qgraph} to the present setting with the
same proof. The underlying graph is now the metric graph associated
to $M_1$, and the weights $\mathcal{H}^{n-2}(F\cap F')$ are replaced 
everywhere by the mixed volumes $\V(\tilde F_1^e,\ldots,\tilde F_{n-2}^e)$.
Let us note that some of these weights may vanish in the general case,
unlike in the more restricted setting of Theorem \ref{thm:qgraph}.
\end{rem}

\begin{rem}[The kernel of a quantum graph]
Lemma \ref{lem:hyper} states that the study of equality cases in 
the Minkowski or Alexandrov-Fenchel inequalities is equivalent to 
understanding $\ker\mathscr{A}$. 
In the setting of Theorem \ref{thm:qgraph}, we can attempt to compute this 
kernel explicitly. Indeed, if $\mathscr{A}f=0$, then we must evidently 
have $f|_e(\theta)=a_e\cos(\theta)+b_e\sin(\theta)$ on every edge $e\in 
E$. Thus we only need to compute the vector of coefficients 
$(a_e,b_e)_{e\in E}$. By using the vertex boundary conditions of 
$\Dom\mathscr{A}$, it can be shown that $f\in\ker\mathscr{A}$ if and only 
if the vector of coefficients is in the kernel of a certain matrix, cf.\ 
\cite[section 3.6]{BK13}. In the present setting, this matrix turns out
to be precisely what appears in Alexandrov's polytope proof of the 
Alexandrov-Fenchel inequality \cite{Ale96}. This is essentially the idea 
behind Schneider's proof of the equality cases of the Alexandrov-Fenchel
inequality in the case that $C_1,\ldots,C_{n-2}$ are strongly isomorphic 
simple polytopes \cite[Theorem 7.6.21]{Sch14}.

The polytope setting is rather special, however, and such direct 
computations cannot be performed for general bodies. One may view the 
methods that will be developed in subsequent sections as a kind of 
quantitative replacement for this argument, that enables us to pass to the 
limiting case of arbitrary convex bodies.
\end{rem}

\subsection{Dirichlet forms}
\label{sec:diri}

Our aim in this section is to provide some additional insight into the 
structure of the operator defined in a somewhat abstract manner by Theorem 
\ref{thm:forms}. The results of this section will not be used elsewhere in 
this paper, but are included in order to clarify our general 
constructions.

In two special cases, when $C_1,\ldots,C_{n-2}$ are polyopes and when they 
are bodies of class $C^\infty_+$, we have seen that $\mathscr{A}$ can be 
expressed explicitly as a second-order differential operator. Another 
explicit setting will be encountered in section \ref{sec:lower} below. In 
the general setting, however, it is not so clear what such an operator may 
look like: it is perfectly possible in general for a limit of second-order 
differential operators to exhibit quite different behavior \cite{Mos94}. 
The study of such operators is enabled in a very general setting by the 
theory of Dirichlet forms \cite{FOT11,BGL14}. In the present setting, 
however, we can give a more concrete representation that avoids the 
abstract theory. The main result of this section is the following.

\begin{thm}
\label{thm:diri}
Let $C_1,\ldots,C_{n-2}$ be arbitrary convex bodies in $\mathbb{R}^n$ with 
$S_{B,\mathcal{C}}\ne 0$, and let $\mathscr{A}$ and $\mathcal{E}$
be the operator and quadratic form on $L^2(S_{B,\mathcal{C}})$ of Theorem 
\ref{thm:forms}. Then there exists a measurable function
$A:S^{n-1}\to\mathrm{Sym}_n(\mathbb{R})$ such that\footnote{
	As in the definition of $D^2f$ in section \ref{sec:smooth},
	we view $A(x)$ as a matrix acting on the tangent space $x^\perp$ 
	at each $x\in S^{n-1}$ (that is, $A$ is a measurable 
	$(1,1)$-tensor field on $S^{n-1}$).
}
$A(x)x=0$, $A(x)\ge 0$, and $\Tr[A(x)]=1$ for all $x\in S^{n-1}$, and such 
that
$$
	\mathscr{A}f = \frac{1}{n}\Tr[A\,D^2f],\qquad
	\mathcal{E}(f,g) = \frac{1}{n}\int  \{fg-
	\langle\nabla_{S^{n-1}}f,A
	\,\nabla_{S^{n-1}} g\rangle\}\,
	dS_{B,\mathcal{C}}
$$
for every $f,g\in C^2(S^{n-1})$, where
$\nabla_{S^{n-1}}$ is the covariant derivative on $S^{n-1}$.
\end{thm}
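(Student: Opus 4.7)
The plan is to first establish the theorem for $C_i$ of class $C^\infty_+$ by an explicit construction using mixed discriminants, and then pass to the general case by weak-$*$ compactness. When each $C_i$ is of class $C^\infty_+$, multilinearity of mixed discriminants (Lemma \ref{lem:mdprop}(c)) shows there is a unique symmetric $(1,1)$-tensor field $\tilde A$ on $S^{n-1}$ with $\tilde A(x)x = 0$ and
$$\Tr[\tilde A(x)\,M] = \D(M,\,D^2h_{C_1}(x),\,\ldots,\,D^2h_{C_{n-2}}(x))$$
for every symmetric $M$ on $x^\perp$. Applying this with $M = vv^T$ together with Lemma \ref{lem:mdprop}(d) gives $\tilde A(x) \ge 0$, while $M = I|_{x^\perp}$ gives $\Tr[\tilde A(x)] = \rho(x)$, where $\rho := dS_{B,\mathcal{C}}/d\omega > 0$ is the strictly positive smooth density furnished by Lemma \ref{lem:repc2}. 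Setting $A := \tilde A/\rho$, one has $A \ge 0$, $\Tr A = 1$, $A(x)x = 0$, and the operator identity $\mathscr{A}f = \tfrac{1}{n}\Tr[A\,D^2f]$ is immediate from \eqref{eq:defa} and Lemma \ref{lem:repc2}.

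For the quadratic form identity in the smooth case, the idea is to exploit the symmetry $\V(f,g,\mathcal{C}) = \V(g,f,\mathcal{C})$ in order to circumvent the need for a full Piola-type divergence-free identity for $\tilde A$. Writing
$$2n\V(f,g,\mathcal{C}) = \int f\,\Tr[\tilde A\,D^2 g]\,d\omega + \int g\,\Tr[\tilde A\,D^2 f]\,d\omega,$$
expanding $D^2 = \nabla^2_{S^{n-1}} + (\,\cdot\,)I|_{x^\perp}$, and integrating by parts on the closed manifold $S^{n-1}$ yields
$$2n\V(f,g,\mathcal{C}) = -2\!\int\langle\nabla f,\tilde A\nabla g\rangle\,d\omega \,-\, \int\langle\mathrm{div}\,\tilde A,\nabla(fg)\rangle\,d\omega + 2\!\int fg\rho\,d\omega,$$
using $f\nabla g + g\nabla f = \nabla(fg)$. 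One further integration by parts turns the middle term into $\int fg\,\mathrm{div}(\mathrm{div}\,\tilde A)\,d\omega$. To see that the scalar $\mathrm{div}(\mathrm{div}\,\tilde A)$ vanishes, specialise the symmetry to $\V(f,1,\mathcal{C}) = \V(1,f,\mathcal{C})$: both are equal to $\tfrac{1}{n}\int f\rho\,d\omega$, and a single integration by parts on the second expression shows this forces $\int\langle\mathrm{div}\,\tilde A,\nabla f\rangle\,d\omega = 0$ for every $f\in C^2$, hence $\mathrm{div}(\mathrm{div}\,\tilde A) \equiv 0$. Rearranging then gives $\V(f,g,\mathcal{C}) = \tfrac{1}{n}\int\{fg\rho - \langle\nabla f,\tilde A\nabla g\rangle\}\,d\omega$, which is the claimed expression.

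For the general case, approximate $C_i^{(s)} \to C_i$ by bodies of class $C^\infty_+$ (cf.\ \cite[Section 3.4]{Sch14}) and consider the matrix-valued Radon measures $\nu_s := A_s\,dS_{B,\mathcal{C}^{(s)}}$ on $S^{n-1}$. Since $0 \le A_s(x) \le I|_{x^\perp}$ pointwise (as $A_s \ge 0$ and $\Tr A_s = 1$), the family $\{\nu_s\}$ is uniformly bounded in total variation, so Banach-Alaoglu yields a subsequential weak-$*$ limit $\nu$; lower semicontinuity of total variation combined with Theorem \ref{thm:conv} gives $|\nu| \le S_{B,\mathcal{C}}$, and the Radon-Nikodym theorem produces $A \in L^\infty(S_{B,\mathcal{C}};\mathrm{Sym}_n)$ with $\nu = A\,dS_{B,\mathcal{C}}$. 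The closed convex pointwise conditions $A(x) \ge 0$ and $A(x)x = 0$ pass to the weak-$*$ limit via testing against suitable continuous tensor fields.

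Finally, both identities transfer to the limit. For $f,g\in C^2(S^{n-1})$, the smooth quadratic form identity combined with Theorem \ref{thm:conv} and weak-$*$ convergence of $\nu_s$ tested against the continuous tensor $\nabla f\otimes\nabla g$ yields $\mathcal{E}(f,g) = \tfrac{1}{n}\int\{fg - \langle\nabla f,A\nabla g\rangle\}\,dS_{B,\mathcal{C}}$; the analogous passage to the limit of $\int f\,\Tr[A_s\,D^2g]\,dS_{B,\mathcal{C}^{(s)}} = n\V(f,g,\mathcal{C}^{(s)})$, combined with density of $C^2(S^{n-1})$ in $L^2(S_{B,\mathcal{C}})$, gives $\mathscr{A}g = \tfrac{1}{n}\Tr[A\,D^2g]$ as elements of $L^2$. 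Taking $g\equiv 1$ and using $\mathscr{A}1 = \tfrac{1}{n}1$ then forces $\Tr A = 1$ $S_{B,\mathcal{C}}$-a.e. The main subtlety I expect is precisely the observation that symmetry of mixed volumes alone suffices to eliminate the extra boundary terms produced by integration by parts on $S^{n-1}$, so that the notorious divergence-free property of mixed cofactors of Hessians is not required in its full pointwise form.
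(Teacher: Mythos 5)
Your proof is correct, but it takes a genuinely different route from the paper's, and the comparison is instructive.

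For the quadratic form identity in the smooth case, you integrate by parts on $S^{n-1}$ with respect to $\omega$ and use the observation that symmetry of mixed volumes alone forces the weak divergence-free identity $\int\langle\div\tilde A,\nabla h\rangle\,d\omega=0$ for all $h\in C^2$, which kills the unwanted boundary term without invoking the stronger pointwise Piola identity $\div\tilde A\equiv 0$. The paper instead avoids integration by parts entirely: it uses only the pointwise product rule $\mathscr{A}(fg)=f\mathscr{A}g+g\mathscr{A}f+\tfrac{2}{n}\langle\nabla f,A\nabla g\rangle-\tfrac{1}{n}fg$ together with self-adjointness of $\mathscr{A}$ and $\mathscr{A}1=\tfrac{1}{n}1$. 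That derivation has the advantage of applying verbatim to general bodies once the operator identity is in hand, so the paper never needs to pass the quadratic form identity to a limit, whereas you do.

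For the extension to arbitrary $C_i$, you invoke weak-$*$ compactness of the matrix-valued measures $\nu_s=A_s\,dS_{B,\mathcal{C}^{(s)}}$ (a priori bounded in total variation by $\Tr A_s=1$), absolute continuity of the limit with respect to $S_{B,\mathcal{C}}$, and stability of the closed convex conditions $A\ge 0$, $Ax=0$ under weak-$*$ limits. The paper instead uses Lemma \ref{lem:chainr}, which shows the operator identity is equivalent to a measure identity involving $S_{\ell_i\ell_j,\mathcal{C}}$ that is manifestly stable under Hausdorff convergence via Theorem \ref{thm:conv}; this yields the explicit canonical formula $\bar A_{ij}=\tfrac{1}{2}dS_{\ell_i\ell_j,\mathcal{C}}/dS_{B,\mathcal{C}}$, $A=P_{x^\perp}\bar A P_{x^\perp}$. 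Ellipticity is then verified by reducing it to nonnegativity of $dS_{q_v,\mathcal{C}}+q_v\,dS_{B,\mathcal{C}}$, again stable under Hausdorff limits. Your compactness argument is shorter and more standard PDE fare, but it produces $A$ only as a subsequential weak-$*$ limit (though of course the operator identity then determines it $S_{B,\mathcal{C}}$-a.e., so the two constructions coincide modulo null sets); the paper's construction is explicit and geometric. Both arguments are sound.
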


From a practical viewpoint, the problem with Theorem \ref{thm:diri} is 
that we do not have a useful explicit expression for the tensor field $A$ 
when $C_1,\ldots,C_{n-2}$ are arbitrary bodies. For this reason, this 
result will not be used in the rest of the paper. Nonetheless, Theorem 
\ref{thm:diri} shows that the objects in Theorem \ref{thm:forms} may be 
viewed rather concretely as highly degenerate elliptic second-order 
differential operators.

\begin{rem}
As we are working on $L^2(S_{B,\mathcal{C}})$, the tensor field $A$ need
only be defined $S_{B,\mathcal{C}}$-a.e. It may be viewed as a 
kind of Riemannian metric on $\supp S_{B,\mathcal{C}}$, cf.\ \cite{Hin13}. 
For example, for polytopes it follows from Remark \ref{rem:genqgraph} that 
$\rank A=1$ a.e., which reflects the fact that the underlying metric graph 
is one-dimensional.
\end{rem}

The main observation behind the proof of Theorem \ref{thm:diri} is the 
following.

\begin{lem}
\label{lem:chainr}
Define the functions $\ell_i:S^{n-1}\to\mathbb{R}$, $i=1,\ldots,n$
as $\ell_i(x):= x_i$. Then the following are equivalent.
\begin{enumerate}[1.]
\item There exists a measurable function 
$A:S^{n-1}\to\mathrm{Sym}_n(\mathbb{R})$
such that $A(x)x=0$ and $\Tr[A(x)]=1$ for all $x\in S^{n-1}$, and
$$
	\mathscr{A}f = \frac{1}{n}\Tr[A\,D^2f]\quad\mbox{for all }
	f\in C^2(S^{n-1}).
$$
\item We have
$$
	dS_{f,\mathcal{C}} =
	\frac{1}{2}\sum_{i,j=1}^n
	\frac{\partial^2\bar f}{\partial x_i\partial x_j}
	\,dS_{\ell_i\ell_j,\mathcal{C}}
	\quad\mbox{for all }f\in C^2(S^{n-1}),
$$
where $\bar f$ is the $1$-homogeneous extension of $f$ to $\mathbb{R}^n$.
\end{enumerate}
\end{lem}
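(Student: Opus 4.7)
The plan is to prove both implications by direct computation, exploiting the fact (recalled from section \ref{sec:smooth}) that $\nabla\bar f$ is $0$-homogeneous and hence $\nabla^2 \bar f(x)\cdot x = 0$, and that $D^2 f(x)$ is precisely $\nabla^2\bar f(x)$ restricted to $x^\perp$. Consequently, if $A(x)$ is extended trivially to $\mathbb{R}^n$ by $A(x)x=0$, then $\Tr[A(x)D^2 f(x)] = \sum_{i,j} A_{ij}(x)\frac{\partial^2 \bar f}{\partial x_i \partial x_j}(x)$. Combining this with the defining identity $n\mathscr{A}f\cdot dS_{B,\mathcal{C}} = dS_{f,\mathcal{C}}$ from \eqref{eq:defa}, both conditions become statements about the signed measure $dS_{f,\mathcal{C}}$ expressed through $\nabla^2\bar f$. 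The central algebraic ingredient, to be extracted from the $1$-homogeneous extension $\overline{\ell_i \ell_j}(x) = x_i x_j / |x|$, is the explicit eight-term formula for $(\nabla^2 \overline{\ell_i \ell_j})_{kl}(x)$ at $|x|=1$.

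\textbf{Direction $1 \Rightarrow 2$.} Starting from condition 1, the identity $dS_{f,\mathcal{C}} = \sum_{i,j} A_{ij}(\nabla^2 \bar f)_{ij}\, dS_{B,\mathcal{C}}$ holds for every $f \in C^2(S^{n-1})$; applied to $f = \ell_i\ell_j$ this yields an expression for $dS_{\ell_i\ell_j,\mathcal{C}}$. Substituting this into the right-hand side of condition 2 reduces everything to verifying the pointwise identity
\[
    \sum_{k,l} A_{kl}(x)\,(\nabla^2 \overline{\ell_i\ell_j})_{kl}(x) \;=\; 2A_{ij}(x) - x_i x_j \qquad (|x|=1).
\]
This follows by contracting $A$ against the eight-term formula for the Hessian: the two $\delta$-$\delta$ terms give $2A_{ij}$, the four mixed terms vanish because $Ax=0$ (and $A$ is symmetric), the $\delta_{kl}x_ix_j$ term gives $-x_ix_j$ via $\Tr A=1$, and the quartic term vanishes because $x^\top A x = 0$. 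The residual $-x_ix_j$ contributes nothing when the result is further contracted against $(\nabla^2 \bar f)_{ij}$ since $(\nabla^2\bar f)(x,x)=0$, which completes this direction.

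\textbf{Direction $2 \Rightarrow 1$.} Assume condition 2. By Lemma \ref{lem:density} each $S_{\ell_i\ell_j,\mathcal{C}}$ is absolutely continuous with bounded density with respect to $S_{B,\mathcal{C}}$, so one may define a bounded measurable symmetric tensor field $\tilde A_{ij}(x) := \tfrac12\,\frac{dS_{\ell_i\ell_j,\mathcal{C}}}{dS_{B,\mathcal{C}}}(x)$ and set $A(x) := P(x)\tilde A(x)P(x)$ with $P(x) = I - xx^\top$. Then $A(x)x = 0$ by construction, while $D^2 f = P D^2 f P$ yields $\Tr[AD^2 f] = \Tr[\tilde A D^2 f] = \sum_{i,j}\tilde A_{ij}(\nabla^2\bar f)_{ij}$; multiplying by $dS_{B,\mathcal{C}}$ and invoking condition 2 gives $\Tr[AD^2f]\,dS_{B,\mathcal{C}} = dS_{f,\mathcal{C}} = n\mathscr{A}f\cdot dS_{B,\mathcal{C}}$, so the operator representation holds. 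To check $\Tr A = 1$, I apply condition 2 to $f = h_B\equiv 1$ (whose $1$-homogeneous extension is $|x|$, with Hessian $\delta_{ij}-x_ix_j$ on $S^{n-1}$); the resulting identity of measures translates via $\tilde A$ into $\Tr\tilde A(x) - \tilde A(x,x) = 1$ for $S_{B,\mathcal{C}}$-a.e.~$x$, and since $\Tr(P\tilde A P) = \Tr\tilde A - x^\top \tilde A x$, this is exactly $\Tr A=1$. Extending $A$ arbitrarily on the $S_{B,\mathcal{C}}$-null complement gives the pointwise statement.

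\textbf{Main obstacle.} The only substantive calculation is the Hessian formula for $\overline{\ell_i\ell_j}$ and the careful contraction that produces the $2A_{ij} - x_ix_j$ identity; once this is in hand, the remainder is bookkeeping that exploits $Ax=0$, $\Tr A=1$, and $\nabla^2\bar f \cdot x=0$ to cancel everything not of the desired form.
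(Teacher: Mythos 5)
Your proof is correct. The direction $2\Rightarrow 1$ is essentially identical to the paper's (both define $A(x)=P_{x^\perp}\bar A(x)P_{x^\perp}$ from the densities $\bar A_{ij}=\tfrac{1}{2}\,dS_{\ell_i\ell_j,\mathcal{C}}/dS_{B,\mathcal{C}}$, and both obtain $\Tr A=1$ by testing with $f=1$). The direction $1\Rightarrow 2$, however, takes a genuinely different computational route. The paper writes $f=\bar f(\ell_1,\ldots,\ell_n)$ and applies the chain rule to $D^2f=\nabla_{S^{n-1}}^2f+fI$ intrinsically on the sphere, exploiting $\mathscr{A}\ell_i=0$ (Theorem \ref{thm:forms}(c)) to simplify the first-order terms; the crux is the product-rule identity $2\langle\nabla_{S^{n-1}}\ell_i,A\nabla_{S^{n-1}}\ell_j\rangle=n\mathscr{A}(\ell_i\ell_j)+\ell_i\ell_j$. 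You instead compute the Euclidean Hessian of the $1$-homogeneous extension $\overline{\ell_i\ell_j}(x)=x_ix_j/|x|$ at $|x|=1$ and contract it against $A$, verifying $\sum_{k,l}A_{kl}(\nabla^2\overline{\ell_i\ell_j})_{kl}=2A_{ij}-x_ix_j$ directly from the constraints $Ax=0$, $\Tr A=1$; the final contraction against $\nabla^2\bar f$ kills the $-x_ix_j$ residue because $\nabla^2\bar f(x)\cdot x=0$. Your version avoids covariant calculus on $S^{n-1}$ and is slightly more self-contained (it derives the needed cancellations purely from the algebraic hypotheses on $A$ rather than invoking the separately established fact that linear functions lie in $\ker\mathscr{A}$), at the cost of carrying the explicit eight-term Hessian formula.
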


\begin{proof}
To prove $2\Rightarrow 1$, recall that as $\bar f$ is $1$-homogeneous,
its Hessian in $\mathbb{R}^n$ satisfies $\nabla^2\bar f(x)x=0$ (cf.\ 
section \ref{sec:smooth}). If we therefore define $\bar A(x),A(x)\in
\mathrm{Sym}_n(\mathbb{R})$ as
$$
	\bar A_{ij} := \frac{1}{2}\frac{dS_{\ell_i\ell_j,\mathcal{C}}}
	{dS_{B,\mathcal{C}}},\qquad
	A(x):=P_{x^\perp}\bar A(x)P_{x^\perp}
$$
using Lemma \ref{lem:density}, where
$P_{x^\perp}$ denotes the orthogonal projection on $x^\perp$
in $\mathbb{R}^n$, then
$$
	\mathscr{A}f = 
	\frac{1}{n}\frac{dS_{f,\mathcal{C}}}{dS_{B,\mathcal{C}}}
	= \frac{1}{n}\Tr[A\,D^2f]
$$
by \eqref{eq:defa}. That $\Tr[A(x)]=1$ follows by choosing $f=1$ and
using that $\mathscr{A}1=\frac{1}{n}1$ by Theorem \ref{thm:forms}.
Thus the claim is established.

To prove the converse implication $1\Rightarrow 2$, note first that 
$f=\bar f(\ell_1,\ldots,\ell_n)$. Substituting this into the expression
for $\mathscr{A}f$ and using the chain rule gives
\begin{equation}
\label{eq:chainr}
	n\,\mathscr{A}f = f +
	\sum_{i=1}^n \frac{\partial\bar f}{\partial x_i}\,
	\Tr[A\,\nabla_{S^{n-1}}^2\ell_i]
	+
	\sum_{i,j=1}^n \frac{\partial^2\bar f}{\partial x_i\partial x_j}
	\,\langle\nabla_{S^{n-1}}\ell_i,A\,\nabla_{S^{n-1}}\ell_j\rangle,
\end{equation}
where we used
$D^2f=\nabla_{S^{n-1}}^2f+fI$ and $\Tr[A(x)]=1$. As $\ell_i$ is a 
linear function, we obtain $\Tr[A\,\nabla_{S^{n-1}}^2\ell_i]=
n\mathscr{A}\ell_i -\ell_i=-\ell_i$ by Theorem \ref{thm:forms}. But
note that
$$
	\sum_{i=1}^n \ell_i(x) \frac{\partial\bar f}{\partial x_i}(x)=
	\langle x,\nabla\bar f(x)\rangle = \bar f(x)
$$
by $1$-homogeneity. Thus
the first two terms on the right-hand side of \eqref{eq:chainr} cancel.
To simplify the last term, we use again the chain rule to write
\begin{align*}
	2\langle\nabla_{S^{n-1}}\ell_i,A\,\nabla_{S^{n-1}}\ell_j\rangle
	&= 
	\Tr[A\,\nabla_{S^{n-1}}^2(\ell_i\ell_j)] 
	- \ell_i \Tr[A\,\nabla_{S^{n-1}}^2\ell_j] 
	- \ell_j \Tr[A\,\nabla_{S^{n-1}}^2\ell_i]
	\\ &=
	n\,\mathscr{A}(\ell_i\ell_j)
	+ \ell_i\ell_j.
\end{align*}
But as
$$
	\sum_{i,j=1}^n \ell_i(x)\ell_j(x)\frac{\partial^2\bar f}{\partial 
	x_i\partial x_j}(x) = \langle x,\nabla^2\bar f(x)x\rangle = 0
$$
by $1$-homogeneity, we have shown that
$$
	\mathscr{A}f =
	\frac{1}{2} \sum_{i,j=1}^n \frac{\partial^2\bar f}{\partial x_i\partial x_j}
        \,\mathscr{A}(\ell_i\ell_j).
$$
The proof is concluded by invoking again \eqref{eq:defa}.
\end{proof}

\begin{proof}[Proof of Theorem \ref{thm:diri}]
Suppose first that $C_1,\ldots,C_{n-2}$ are convex bodies of class 
$C^\infty_+$. Then we obtain as in the proof of Lemma \ref{lem:density}
$$
	\mathscr{A}f = \frac{1}{n}\frac{
	\D(D^2f,D^2h_{C_1},\ldots,D^2h_{C_{n-2}})}{
	\D(I,D^2h_{C_1},\ldots,D^2h_{C_{n-2}})}
	\quad\mbox{for }f\in C^2(S^{n-1}).
$$
By linearity of mixed discriminants, this expression may be
written as
$$
	\mathscr{A}f = \frac{1}{n}\Tr[A\,D^2f]
$$
for a continuous function $A:S^{n-1}\to\mathrm{Sym}_n(\mathbb{R})$. As 
$D^2f$ only acts on $x^\perp$, we may choose $A(x)x=0$ without loss of 
generality. Moreover, that $A(x)\ge 0$ follows from Lemma 
\ref{lem:mdprop}(\textit{d}).
Finally, that $\Tr[A(x)]=1$ follows by choosing $f=1$ (so that $D^2f=I$)
in the above expressions for $\mathscr{A}$.

Now let $C_1,\ldots,C_{n-2}$ be arbitrary convex bodies. Note that
by Theorem \ref{thm:conv}, the second condition of Lemma \ref{lem:chainr}
is preserved by taking Hausdorff limits. As we have established above 
the first condition of Lemma \ref{lem:chainr} for bodies of class
$C^\infty_+$, we may extend this conclusion to arbitrary bodies by 
approximation as in the proof of Lemma \ref{lem:asemib}. We have therefore
shown that
$$
	\mathscr{A}f = \frac{1}{n}\Tr[A\,D^2f]
$$
for some $A:S^{n-1}\to\mathrm{Sym}_n(\mathbb{R})$ such that $A(x)x=0$
and $\Tr[A(x)]=1$.

We now show that $A(x)\ge 0$, that is, that $\mathscr{A}$ is
(semi)elliptic. To this end, define the quadratic function 
$q_v(x):=\langle v,x\rangle^2$ for $v\in\mathbb{R}^n$. Then
$$
	\nabla_{S^{n-1}}^2q_v(x) = 
	2P_{x^\perp}v\,(P_{x^\perp}v)^*-2q_v(x)I,
$$
where $P_{x^\perp}$ is the orthogonal projection on $x^\perp$ and
$z^*$ denotes the transpose of the column vector $z$. It follows that
$$
	\mathscr{A}q_v + \frac{1}{n}q_v = 
	\frac{1}{n}\Tr[A\,D^2q_v] + \frac{1}{n}q_v =
	\frac{2}{n}\langle v,Av\rangle,
$$
where we used $A(x)x=0$ and $\Tr[A(x)]=1$. Thus the condition
$A\ge 0$ is equivalent to the statement that $\mathscr{A}q_v + 
\frac{1}{n}q_v\ge 0$ pointwise for every 
$v\in\mathbb{R}^n$, or, equivalently by \eqref{eq:defa}, that
$d\mu_v := dS_{q_v,\mathcal{C}} + q_v dS_{B,\mathcal{C}}$
is a (nonnegative) measure for every $v\in\mathbb{R}^n$. But we have 
already established ellipticity in the case that $C_1,\ldots,C_{n-2}$ are 
of class $C^\infty_+$, and the nonnegativity of $\mu_v$ is preserved by 
Hausdorff convergence due to Theorem \ref{thm:conv}. We can therefore 
conclude that $A\ge 0$ for arbitrary convex bodies
by approximation as in the proof of Lemma \ref{lem:asemib}.

It remains to compute the quadratic form associated to $\mathscr{A}$. Note 
that
$$
	\mathcal{E}(f,g) =
	\langle f,\mathscr{A}g\rangle_{L^2(S_{B,\mathcal{C}})}
	=
	\frac{1}{2}\int \bigg\{f\mathscr{A}g +
	g\mathscr{A}f - \mathscr{A}(fg) + \frac{1}{n}fg
	\bigg\}\,dS_{B,\mathcal{C}}
$$
for $f,g\in C^2(S^{n-1})$, where we have used that $\mathscr{A}$ is 
self-adjoint and $\mathscr{A}1=\frac{1}{n}1$. But 
$$
	\mathscr{A}(fg) =
	\frac{1}{n}\{\Tr[A\,\nabla_{S^{n-1}}^2(fg)] + fg\}
	=
	\frac{2}{n}\langle\nabla_{S^{n-1}}f,A\,\nabla_{S^{n-1}}g\rangle
	+ f\,\mathscr{A}g
	+ g\,\mathscr{A}f
	- \frac{1}{n}fg
$$
by the product rule. We have therefore shown that
$$
	\mathcal{E}(f,g) =
	\frac{1}{n}\int \{
	fg - \langle\nabla_{S^{n-1}}f,A\,\nabla_{S^{n-1}}g\rangle
	\}\,dS_{B,\mathcal{C}}
$$
for $f,g\in C^2(S^{n-1})$. The proof is complete.
\end{proof}

\section{A weak stability theorem}
\label{sec:weak}

The aim of this section is to prove a weak stability result that will be 
used in section \ref{sec:rigid} below as input to the main part of the 
proof of Theorem \ref{thm:mainfull}. The main result of this section is 
the following quantitative form of Theorem \ref{thm:weakintro}; the latter 
follows immediately by combining the following theorem with Theorem 
\ref{thm:supp}.

\begin{thm}
\label{thm:weak}
Let $M$ be a convex body in $\mathbb{R}^n$ with $0\in\intr M$.
Then there is a constant $C_M>0$, depending only on $M$ and on 
the dimension $n$, so that
\begin{align*}
	\V(K,L,\mathcal{M})^2\ge \mbox{} &
	\V(K,K,\mathcal{M})\,\V(L,L,\mathcal{M}) + \mbox{} \\ & \mbox{}
	C_M\V(L,L,\mathcal{M})\inf_{v\in\mathbb{R}^n,a\ge 0}
	\int (h_K-ah_L-\langle v,\cdot\,\rangle)^2
	\,\frac{dS_{M,\mathcal{M}}}{h_M}
\end{align*}
for all convex bodies $K,L$ in $\mathbb{R}^n$.
\end{thm}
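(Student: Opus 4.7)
My plan is to prove the inequality first for smooth bodies $M$ of class $C^\infty_+$ via a quantitative form of the Hilbert method, and then extend to general $M$ with $0 \in \intr M$ by Hausdorff approximation. Continuity of both sides under such approximation is assured by Theorem \ref{thm:conv}, together with the uniform lower bound on $h_M$ afforded by $0 \in \intr M$. We may assume $\V(L,L,\mathcal{M}) > 0$, as otherwise the claimed inequality is trivial, and we may translate $K, L$ so that $h_K, h_L \ge 0$ pointwise on $S^{n-1}$, without affecting the left-hand side by Lemma \ref{lem:mvprop}(e) or the right-hand side after absorbing the translations into the variable $v$. By Theorem \ref{thm:forms}, $\V(K,L,\mathcal{M}) = \mathcal{E}(h_K, h_L)$ where $\mathcal{E}$ is the closed quadratic form of a self-adjoint elliptic operator $\mathscr{A}$ on $L^2(S_{B,\mathcal{M}})$ whose positive eigenspace is $\mathbb{R}\cdot 1$ with eigenvalue $1/n$, and whose kernel contains all linear functions.

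The analytic core is to establish the following quantitative Poincar\'e-type inequality for smooth $M$: there exists $C_M > 0$, depending only on $M$ and $n$, such that
$$-\mathcal{E}(z,z) \;\ge\; C_M \,\inf_{v \in \mathbb{R}^n} \int (z - \langle v, \cdot\rangle)^2 \,\frac{dS_{M,\mathcal{M}}}{h_M}$$
for every $z \in \Dom\mathcal{E}$ with $\int z\,dS_{B,\mathcal{M}} = 0$. This expresses strict negativity of $\mathscr{A}$ on the orthogonal complement (inside $\{1\}^\perp$) of $\ker \mathscr{A}$, with an explicit quantitative modulus taken in the $L^2(dS_{M,\mathcal{M}}/h_M)$ norm modulo linear functions. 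I intend to obtain this inequality via Reilly's formula on the Riemannian manifold $\partial M$, following the strategy of Kolesnikov and Milman \cite{KM17,KM18}. Passing to $\partial M$ via the Gauss map $n_M$ identifies $S_{M,\mathcal{M}}$ with $\mathcal{H}^{n-1}|_{\partial M}$ and converts $\mathcal{E}$ into a Dirichlet integral weighted by the second fundamental form of $\partial M$, which is positive definite by $C^\infty_+$-ness. Applying Reilly's identity with the position vector field as the key test quantity introduces the Jacobian factor $\langle x, n_M(x)\rangle = h_M(n_M(x))$ via the Euler identity, which is precisely what produces the weight $1/h_M$ on the right-hand side.

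Granting the Poincar\'e inequality, the theorem in the smooth case follows as in the proof of Lemma \ref{lem:hyper}. Choose $a_0 \ge 0$ so that $z := h_K - a_0 h_L$ is $L^2(S_{B,\mathcal{M}})$-orthogonal to $1$, which is possible because $\int h_L\,dS_{B,\mathcal{M}} > 0$ under our assumptions (otherwise $h_L = 0$ in $L^2(S_{B,\mathcal{M}})$ and hence $\V(L,L,\mathcal{M})=0$). The identity
$$-\mathcal{E}(z,z) \;=\; -\mathcal{E}(h_K, h_K) + 2 a_0 \mathcal{E}(h_K, h_L) - a_0^2 \mathcal{E}(h_L, h_L) \;\le\; \frac{\mathcal{E}(h_K, h_L)^2}{\mathcal{E}(h_L, h_L)} - \mathcal{E}(h_K, h_K),$$
which is valid because the quadratic in the scalar $a$ attains its minimum at $a^\star = \mathcal{E}(h_K,h_L)/\mathcal{E}(h_L,h_L)$, together with the Poincar\'e inequality applied to $z$ and multiplied through by $\mathcal{E}(h_L, h_L) = \V(L, L, \mathcal{M})$, yields
$$C_M \,\V(L,L,\mathcal{M}) \inf_{v} \int (h_K - a_0 h_L - \langle v, \cdot\rangle)^2 \,\frac{dS_{M,\mathcal{M}}}{h_M} \;\le\; \V(K,L,\mathcal{M})^2 - \V(K,K,\mathcal{M})\,\V(L,L,\mathcal{M}).$$
Since $a_0 \ge 0$, replacing the fixed value $a_0$ by an infimum over $a \ge 0$ on the left can only decrease it, giving the claim in the smooth case.

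The main obstacle is the Poincar\'e step: one needs to extract from Reilly's formula a constant $C_M$ that depends only on coarse geometric quantities of $M$ (such as its inradius and circumradius) and, crucially, remains bounded away from zero along any smoothing approximation. Once this uniform control is in hand, the passage from $C^\infty_+$ bodies to arbitrary $M$ with $0 \in \intr M$ is a routine Hausdorff-convergence argument based on Theorem \ref{thm:conv}, noting that $h_M$ is uniformly bounded above and below along the approximation.
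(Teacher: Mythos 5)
The quadratic-minimization step and the limiting argument are sound, but the Poincar\'e step — the ``analytic core'' of your proposal — has a genuine gap that also points to a conceptual misdirection. You work with the operator $\mathscr{A}$ of Theorem \ref{thm:forms} on $L^2(S_{B,\mathcal{M}})$, and accordingly choose $a_0$ so that $z := h_K - a_0 h_L$ is $L^2(S_{B,\mathcal{M}})$-orthogonal to $1$, i.e.\ $\int z\,dS_{B,\mathcal{M}}=0$, equivalently $a_0=\V(K,B,\mathcal{M})/\V(L,B,\mathcal{M})$. But the Reilly machinery does not live on $L^2(S_{B,\mathcal{M}})$. After the change of variables through the Gauss map, Reilly's identity requires the Neumann solvability condition $\int_{\partial M}g\,dx=0$, which translates to $\int z\,dS_{M,\mathcal{M}}=0$, i.e.\ $a_0=\V(K,M,\mathcal{M})/\V(L,M,\mathcal{M})$. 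These are different normalizations: $\V(K,B,\mathcal{M})\V(L,M,\mathcal{M})=\V(K,M,\mathcal{M})\V(L,B,\mathcal{M})$ is itself a nontrivial extremal condition. The relevant self-adjoint representation for this argument is the operator $\mathscr{\tilde A}f := \frac{1}{n(n-1)} h_M\Tr[(D^2h_M)^{-1}D^2f]$ on $L^2(\tilde\nu)$ with $d\tilde\nu := dS_{M,\mathcal{M}}/h_M$, whose top eigenfunction is $h_M$ rather than the constant $1$; the paper explicitly emphasizes that this normalization is very different from that of Theorem \ref{thm:forms}, and that the operator of Theorem \ref{thm:forms} is not the right object here.

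Concretely, with your choice of $a_0$, the function $z$ generically retains a nonzero $h_M$-component in $L^2(\tilde\nu)$, which contributes a \emph{positive} term to $\mathcal{E}(z,z)$; your claimed bound $-\mathcal{E}(z,z)\ge C_M\inf_v\|z-\langle v,\cdot\rangle\|^2_{L^2(\tilde\nu)}$ cannot be extracted from Reilly's formula without eliminating that component, and there is no reason it should hold in general with only the constraint $\int z\,dS_{B,\mathcal{M}}=0$. There is a second subtlety your proposal suppresses: Reilly (via the Payne--Weinberger step) also requires $\int_{\partial M}xg(x)\,dx=0$, i.e.\ $\int z\,\nabla h_M\,dS_{M,\mathcal{M}}=0$, and this is not the same as ``$z$ orthogonal to linear functions in $L^2(\tilde\nu)$'' that your $\inf_v$ projects out. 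The paper's Lemma \ref{lem:minmax} bridges exactly this discrepancy using the min-max principle and the compact resolvent of the elliptic operator $\mathscr{\tilde A}$. Your proposal is repairable: replace the choice of $a_0$ by $\V(K,M,\mathcal{M})/\V(L,M,\mathcal{M})$, replace $\mathscr{A}$ on $L^2(S_{B,\mathcal{M}})$ by $\mathscr{\tilde A}$ on $L^2(\tilde\nu)$ throughout the Poincar\'e step, and add the min-max argument to reconcile the Reilly orthogonality constraints with the spectral ones; the remainder of your argument (the quadratic minimization and the Hausdorff approximation) then goes through as in the paper.
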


Theorem \ref{thm:weak} is different in spirit than most of the theory 
developed in this paper, in that it is $S_{M,\mathcal{M}}$ rather than 
$S_{B,\mathcal{M}}$ that appears here as the reference measure. As $\supp 
S_{M,\mathcal{M}}$ is much smaller than $\supp S_{B,\mathcal{M}}$, only 
weak information on the extremals may be extracted from this result (cf.\ 
Example \ref{ex:weak}). Nonetheless, this weak information provides 
crucial input to the quantitative rigidity analysis of the following 
section, and we therefore develop it first.

The idea behind the proof of Theorem \ref{thm:weak} is as follows. Lemma 
\ref{lem:hyper} shows that Minkowski's quadratic inequality follows if one 
can show that an associated self-adjoint operator has a one-dimensional 
positive eigenspace. One may readily modify the proof of this fact to show 
that if, in addition, the zero eigenvalue of the operator is separated 
from the rest of the spectrum by a positive gap, then one obtains a 
quantitative improvement along the lines of Theorem \ref{thm:weak}.

It was recently observed by Kolesnikov and Milman \cite{KM17} in their 
study of local $L^p$-Brunn-Minkowski inequalities that, in the case where 
all bodies involved are smooth and symmetric, such a spectral separation 
may be established by means of a differential-geometric technique (albeit 
when the operator is normalized differently than in Theorem 
\ref{thm:forms}, which is responsible for the presence of 
$S_{M,\mathcal{M}}$ rather than $S_{B,\mathcal{M}}$ in Theorem 
\ref{thm:weak}). The approach of \cite{KM17} may also be used to obtain 
a quantitative result for non-symmetric bodies, as we will show in section 
\ref{sec:extri}. In this case, however, the resulting bound no longer has 
a direct spectral interpretation. Nonetheless, we will show in section 
\ref{sec:pfweakpf} that the requisite spectral property can be recovered, 
for a suitably normalized operator, using the min-max principle. Theorem 
\ref{thm:weak} then follows for smooth bodies by reasoning as in 
the proof Lemma \ref{lem:hyper}, and the proof is concluded by smooth 
approximation. The background from Riemannian geometry that is needed in 
this section may be found, e.g., in \cite{CLN06}.

\begin{rem}
By exploiting the linear equivariance of the quantities that appear in 
Theorem \ref{thm:weak}, it may be shown that the constant $C_M$ can in 
fact be chosen to depend on the dimension $n$ only. This property, which 
is important in the theory of \cite{KM17}, is not relevant in our 
setting. We therefore do not repeat the arguments leading to this 
observation, and refer the interested reader to \cite{KM17}.
\end{rem}

\subsection{An extrinsic formulation}
\label{sec:extri}

Behind the proof of Theorem \ref{thm:weak} lies a different perspective 
on mixed volumes than we have encountered so far. By definition we have 
$\V(K,K,\mathcal{M})=\frac{1}{n(n-1)}\frac{d^2}{dt^2}\Vol(M+tK)\big|_{t=0^+}$, 
so that mixed volumes of this kind may be viewed as arising from the 
second variation of the volume of the convex body $M$. First and second 
variation formulae play a classical role in Riemannian geometry, for 
example, in the theory of minimal surfaces \cite{CLN06,CM11}. In this 
setting, however, the variation formulae are not expressed on the sphere 
as in Lemma \ref{lem:repc2}, but rather in terms of the extrinsic geometry 
of $\partial M$ viewed as a hypersurface in $\mathbb{R}^n$. These two 
viewpoints are related by using the outer normal map $n_M$ as a change of 
variables, as we did in section \ref{sec:smooth}. The change of 
perspective is useful, however, as it enables us to exploit classical 
techniques from Riemannian geometry.

\begin{rem}
\label{rem:whyjustmink}
This is the main point in this paper where we rely specifically on the 
restricted setting of Minkowski's quadratic inequality, as opposed to the 
general Alexandrov-Fenchel inequality: the mixed volumes that appear in 
\eqref{eq:minkq} are precisely those that arise as second variations of 
the volume of $M$. For general mixed volumes, the body $M$ no longer plays 
any distinguished role, and it seems unlikely that the methods of this 
section could be useful in this context. In contrast, the techniques of 
sections \ref{sec:forms}, \ref{sec:rigid}, and \ref{sec:lower} do not 
appear to be fundamentally tied to the special setting of 
\eqref{eq:minkq}, and could potentially be adapted to a much more general 
context.
\end{rem}

Throughout this section, we will work with convex bodies $K,L,M$ in 
$\mathbb{R}^n$ of class $C^\infty_+$. We will also assume that $0\in\intr 
M$, so that $h_M>0$. We denote by $\II:=\nabla n_M$ the second fundamental 
form of $\partial M$ (viewed, as usual, as a symmetric linear map 
$\II(x):T_x\partial M\to T_x\partial M$). As in previous sections, the 
symbols $\nabla$, $\nabla_{S^{n-1}}$, and $\nabla_{\partial M}$ denote 
covariant differentiation in $\mathbb{R}^n$, $S^{n-1}$, and $\partial M$, 
respectively.

We begin by making explicit the second variation formula alluded to above.
Following \cite{Col08}, we will derive the formula by a change of 
variables.

\begin{lem}
\label{lem:mixext}
Let $K,L,M$ be convex bodies in $\mathbb{R}^n$ of class $C^\infty_+$. Then
\begin{align*}
	n(n-1)\V(K,L,\mathcal{M}) &=
	\int_{\partial M}(h_K\circ n_M)\,(h_L\circ n_M)\Tr[\II]\,dx \\
	&\qquad
	-
	\int_{\partial M}\langle \nabla_{\partial M}(h_K\circ n_M),
	\II^{-1}\nabla_{\partial M}(h_L\circ n_M)\rangle\,dx.
\end{align*}
\end{lem}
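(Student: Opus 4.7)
The plan is to derive the formula by a change of variables from $S^{n-1}$ to $\partial M$ via the Gauss map $n_M$, following \cite{Col08}. Since $K,L,M$ are of class $C^\infty_+$, Lemma \ref{lem:repc2} combined with Lemma \ref{lem:mdprop}(\textit{b}) gives
$$
n(n-1)\V(K,L,\mathcal{M}) = \int_{S^{n-1}} h_K\,\Tr[\cof(D^2 h_M)\,D^2 h_L]\,d\omega.
$$
The key geometric observation is that $n_M:\partial M\to S^{n-1}$ is a $C^\infty$ diffeomorphism with $dn_M=\II$ and $d(n_M^{-1})=D^2 h_M$, so $\II(x)\circ D^2 h_M(n_M(x)) = I$ on the common tangent space $T_x\partial M = T_{n_M(x)}S^{n-1} = n_M(x)^\perp$. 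Consequently $\cof(D^2 h_M)(n_M(x)) = \II(x)/\det\II(x)$, while the Jacobian of $n_M$ gives $n_M^*\,d\omega = \det(\II)\,dx$. Substituting and canceling $\det(\II)$ yields
$$
n(n-1)\V(K,L,\mathcal{M}) = \int_{\partial M} (h_K\circ n_M)\,\Tr[\II\cdot (D^2 h_L)\circ n_M]\,dx.
$$

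Using the decomposition $D^2 h_L = \nabla_{S^{n-1}}^2 h_L + h_L I$ from section \ref{sec:smooth}, the integrand splits into $(h_L\circ n_M)\Tr[\II]$, contributing precisely the first term on the right-hand side of the claimed identity, and $\Tr[\II\cdot(\nabla_{S^{n-1}}^2 h_L)\circ n_M]$. It therefore suffices to prove
$$
\int_{\partial M}(h_K\circ n_M)\,\Tr[\II\cdot(\nabla_{S^{n-1}}^2 h_L)\circ n_M]\,dx = -\int_{\partial M}\langle\nabla_{\partial M}(h_K\circ n_M),\,\II^{-1}\nabla_{\partial M}(h_L\circ n_M)\rangle\,dx.
$$
The strategy is to recognize the left-hand integrand as a divergence on $\partial M$ and integrate by parts on the closed hypersurface. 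Introduce the tangent vector field $Y(x) := (\nabla_{S^{n-1}} h_L)(n_M(x))$ on $\partial M$; the chain rule together with Euler's identity $\nabla h_L(u) = h_L(u)u + \nabla_{S^{n-1}}h_L(u)$ for the $1$-homogeneous extension readily yields $\nabla_{\partial M}(h_L\circ n_M) = \II\,Y$, and hence $Y = \II^{-1}\nabla_{\partial M}(h_L\circ n_M)$.

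The crucial remaining step is to verify $\mathrm{div}_{\partial M} Y = \Tr[\II\cdot(\nabla_{S^{n-1}}^2 h_L)\circ n_M]$. Fixing $x\in\partial M$ and an orthonormal frame $(e_1,\ldots,e_{n-1})$ of $T_x\partial M$, one has $\langle\nabla^{\partial M}_{e_i} Y, e_i\rangle = \langle D_{e_i} Y, e_i\rangle$ where $D$ denotes the ambient derivative in $\mathbb{R}^n$, since the normal component of $D_{e_i} Y$ is annihilated by $e_i$. The chain rule gives $D_{e_i} Y = DV(n_M(x))\cdot\II(e_i)$ where $V := \nabla_{S^{n-1}}h_L$, and the Gauss formula for $S^{n-1}\subset\mathbb{R}^n$ asserts $D_w V = \nabla^{S^{n-1}}_w V - \langle V,w\rangle u$ for tangent $w$; the normal piece is again killed, leaving $\langle\nabla_{S^{n-1}}^2 h_L\cdot \II(e_i), e_i\rangle$. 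Summing over $i$ gives the divergence identity, after which Stokes' theorem
$$
0 = \int_{\partial M}\mathrm{div}_{\partial M}\!\bigl((h_K\circ n_M)\,Y\bigr)\,dx
$$
together with the product rule rearranges to the desired integration by parts. The main obstacle is the divergence computation itself: it requires juggling the intrinsic geometries of $\partial M$ and $S^{n-1}$ through their common realization in $\mathbb{R}^n$, with $\II$ serving as the bridge between covariant differentiation on the two hypersurfaces.
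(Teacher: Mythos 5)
Your proof is correct, and it takes a genuinely different route from the paper's.

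The paper and your argument agree on the starting identity $n(n-1)\V(K,L,\mathcal{M})=\int_{S^{n-1}}h_K\Tr[\cof(D^2h_M)D^2h_L]\,d\omega$ and on the three pointwise facts that drive the change of variables ($D^2h_M\circ n_M=\II^{-1}$, $\nabla_{S^{n-1}}f\circ n_M=\II^{-1}\nabla_{\partial M}(f\circ n_M)$, $n_M^*\,d\omega=\det\II\,dx$). Where you diverge is in the order of operations. You change variables to $\partial M$ immediately, obtaining $\int_{\partial M}(h_K\circ n_M)\Tr[\II\cdot D^2h_L]\,dx$, split off the $h_L\Tr[\II]$ term, and then carry out an explicit integration by parts on $\partial M$: you introduce $Y=(\nabla_{S^{n-1}}h_L)\circ n_M$, identify $\II^{-1}\nabla_{\partial M}(h_L\circ n_M)=Y$, and compute $\div_{\partial M}Y=\Tr[\II\cdot\nabla^2_{S^{n-1}}h_L]$ by passing through the ambient derivative and the Gauss formula on both hypersurfaces. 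The paper instead does the integration by parts \emph{before} changing variables, and does it without any explicit Riemannian computation: since $(f,g)\mapsto\V(f,g,\mathcal{M})$ is symmetric, the operator $\mathscr{L}f=\Tr[\cof(D^2h_M)D^2f]$ is symmetric on $L^2(\omega)$, so one can write $n(n-1)\V(K,L,\mathcal{M})=\frac12\int\{h_K\mathscr{L}h_L+h_L\mathscr{L}h_K-\mathscr{L}(h_Kh_L)+h_Kh_L\mathscr{L}1\}\,d\omega$ and expand via $D^2f=\nabla^2_{S^{n-1}}f+fI$ and the product rule; this produces the divergence-form integrand $h_Kh_L\Tr[A]-\langle\nabla_{S^{n-1}}h_K,A\nabla_{S^{n-1}}h_L\rangle$ purely algebraically, after which the pullback to $\partial M$ is mechanical. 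The paper's route stays on the round sphere throughout the calculus and leans on the symmetry of mixed volumes to sidestep the divergence identity; your route requires the more delicate step of relating $\nabla^{\partial M}$ to $\nabla^{S^{n-1}}$ through the Gauss map, but has the merit of making the extrinsic second-variation structure on $\partial M$ completely explicit. Both are correct; the one small inessential remark is that Euler's identity is not actually needed to establish $\nabla_{\partial M}(h_L\circ n_M)=\II Y$ — the chain rule and the symmetry of $\II$ suffice.
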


\begin{proof}
First note that by Lemma \ref{lem:repc2} and Lemma 
\ref{lem:mdprop}(\textit{b}), we have
$$
	\V(K,L,\mathcal{M}) = 
	\frac{1}{n(n-1)}
	\int_{S^{n-1}} h_K\Tr[\cof(D^2h_M)D^2h_L]\,
	d\omega.
$$
Let $\mathscr{L}f:=\Tr[\cof(D^2h_M)D^2f]$. It follows from the 
symmetry of mixed volumes that $\mathscr{L}$ is a symmetric operator on 
$C^2(S^{n-1})\subset L^2(\omega)$. We may therefore write
\begin{align*}
	&n(n-1)\V(K,L,\mathcal{M})
	\\ &=
	\frac{1}{2}
	\int_{S^{n-1}} \{h_K\mathscr{L}h_L + h_L\mathscr{L}h_K
	-\mathscr{L}(h_Kh_L)+h_Kh_L\mathscr{L}1\}\,d\omega \\
	&=
	\int_{S^{n-1}}\{h_Kh_L\Tr[\cof(D^2h_M)]-
	\langle\nabla_{S^{n-1}}h_K,\cof(D^2h_M)\nabla_{S^{n-1}}h_L\rangle\}
	\,d\omega,
\end{align*}
where we used $D^2f=\nabla_{S^{n-1}}^2f+fI$ and the product rule.

Now note that as $n_M^{-1}=\nabla h_M$, we have
$$
	D^2h_M\circ n_M =
	\nabla n_M^{-1}\circ n_M =
	(\nabla n_M)^{-1} = \II^{-1}.
$$
Similarly, we can compute by the chain rule
$$
	\nabla_{S^{n-1}}f\circ n_M =
	\II^{-1}\nabla_{\partial M}(f\circ n_M).
$$
Finally, as $D^2h_M>0$, we may write 
$$
	\cof(D^2h_M)=(D^2h_M)^{-1}\det(D^2h_M).
$$
The proof is completed by changing variables according to $n_M^{-1}$
in the expression for $\V(K,L,\mathcal{M})$ and using the
above identities.
\end{proof}

The quantities that appear in Lemma \ref{lem:mixext} are strongly 
reminiscent of the following classical formula of Reilly, obtained by 
integrating the Bochner formula on a manifold with boundary; for the 
proof, we refer to \cite[Lemma A.17]{CLN06}.

\begin{lem}
\label{lem:reilly}
Let $M\subset\mathbb{R}^n$ be a compact set with $C^\infty$ boundary. Then
\begin{align*}
	\int_M (\Delta u)^2\,dx &= 
	\int_M \Tr[(\nabla^2u)^2]\,dx \\
	&\qquad+ 
	\int_{\partial M}
	\{\Tr[\II]u_n^2 + 
	\langle\nabla_{\partial M}u,\II\,\nabla_{\partial M}u\rangle
	- 2\langle\nabla_{\partial M}u_n,\nabla_{\partial M}u\rangle\}
	\,dx
\end{align*}
for any $u\in C^\infty(M)$, where we defined the normal derivative
$u_n:=\langle n_M,\nabla u\rangle$.
\end{lem}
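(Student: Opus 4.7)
The plan is to derive Reilly's formula from the Bochner identity in Euclidean space by integrating and integrating by parts twice. In flat $\mathbb{R}^n$ there is no Ricci term, so the Bochner formula reads $\tfrac12\Delta|\nabla u|^2 = \Tr[(\nabla^2 u)^2] + \langle\nabla u,\nabla\Delta u\rangle$. Integrating this identity over $M$, applying the divergence theorem to the left-hand side (which produces $\int_{\partial M}\nabla^2 u(n_M,\nabla u)\,dx$), and integrating by parts once more on the right-hand side (which turns $\int_M\langle\nabla u,\nabla\Delta u\rangle\,dx$ into $-\int_M(\Delta u)^2\,dx+\int_{\partial M}\Delta u\cdot u_n\,dx$), I obtain
$$
\int_M(\Delta u)^2\,dx = \int_M\Tr[(\nabla^2 u)^2]\,dx + \int_{\partial M}\bigl\{\Delta u\cdot u_n - \nabla^2 u(n_M,\nabla u)\bigr\}\,dx.
$$
All the work left is to rewrite the boundary integrand in terms of intrinsic quantities on $\partial M$.

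For this step I would work in an orthonormal frame $\{e_1,\dots,e_{n-1},n_M\}$ along $\partial M$ and use two standard decompositions. First, the Gauss-type splitting of the ambient Laplacian,
$$
\Delta u = \Delta_{\partial M} u + \Tr[\II]\,u_n + \nabla^2 u(n_M,n_M),
$$
and second, the decomposition $\nabla u = \nabla_{\partial M} u + u_n\,n_M$, which gives $\nabla^2 u(n_M,\nabla u)=\nabla^2 u(n_M,\nabla_{\partial M} u)+u_n\,\nabla^2 u(n_M,n_M)$. Substituting these into the boundary integrand, the $u_n\,\nabla^2 u(n_M,n_M)$ contributions cancel, leaving
$$
\Delta_{\partial M} u\cdot u_n + \Tr[\II]\,u_n^2 - \nabla^2 u(n_M,\nabla_{\partial M} u).
$$

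It remains to simplify $\nabla^2 u(n_M,X)$ for $X\in T\partial M$. Differentiating the identity $u_n = \langle\nabla u,n_M\rangle$ along $X$ and using $\nabla_X n_M=\II(X)\in T\partial M$ together with the orthogonal decomposition of $\nabla u$ gives
$$
\nabla^2 u(n_M,X) = \nabla_X u_n - \langle\II(X),\nabla_{\partial M} u\rangle,
$$
so with $X=\nabla_{\partial M} u$ the boundary integrand becomes
$$
\Delta_{\partial M} u\cdot u_n + \Tr[\II]\,u_n^2 + \langle\nabla_{\partial M} u,\II\,\nabla_{\partial M} u\rangle - \langle\nabla_{\partial M} u_n,\nabla_{\partial M} u\rangle.
$$
A final integration by parts on the closed manifold $\partial M$ converts $\int_{\partial M}\Delta_{\partial M} u\cdot u_n\,dx$ into $-\int_{\partial M}\langle\nabla_{\partial M} u,\nabla_{\partial M} u_n\rangle\,dx$, producing the claimed factor of $-2$ in front of the last term.

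I do not expect any serious obstacle: every step is a routine application of the divergence theorem, the product rule, and the Gauss equations. The only place requiring care is bookkeeping of signs and conventions, in particular that the paper's convention $\II:=\nabla n_M$ yields $\Tr[\II]>0$ on the boundary of a convex body and makes $\II$ appear with the signs shown in the statement; verifying the formula on a sphere of radius $r$ (where $\II=r^{-1}I$, $\Tr[\II]=(n-1)/r$) is a convenient sanity check of the sign conventions used throughout.
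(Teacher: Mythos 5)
Your derivation is correct: integrate the flat Bochner identity, pass the two resulting boundary terms through the divergence theorem, and then rewrite $\Delta u$ and $\nabla^2 u(n_M,\nabla u)$ via the tangential-normal splitting and the differentiated identity for $u_n$, with a final integration by parts on the closed manifold $\partial M$ supplying the factor $2$. This is exactly the argument behind the reference the paper cites (Chow--Lu--Ni, Lemma A.17), which the paper describes as ``obtained by integrating the Bochner formula on a manifold with boundary,'' specialized here to the Ricci-flat ambient space $\mathbb{R}^n$; the sign conventions you checked ($\II=\nabla n_M$, $\Tr[\II]>0$ on a convex boundary) are consistent with the statement.
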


That Minkowski's inequality may be deduced from Reilly's formula was 
observed in a special case by Reilly himself \cite{Rei80}, and more 
generally by Kolesnikov and Milman \cite{KM18}. In particular, it was 
noticed in \cite{KM17} that the latter proof admits a quantitative 
improvement when the bodies are symmetric. The following result is a 
straightforward adaptation of \cite[section 6]{KM17} to the 
non-symmetric case.

\begin{prop}
\label{prop:km}
Let $M$ be a convex body in $\mathbb{R}^n$ of class $C^\infty_+$
with $0\in M$. Then 
$$
	\int_{\partial M}
	\langle\nabla_{\partial M}g,\II^{-1}\nabla_{\partial M}g\rangle
	\,dx -
	\int_{\partial M} g^2\Tr[\II]\,dx \ge
	\frac{r^2}{nR^2}\int_{\partial M}
	\frac{g^2}{h_M\circ n_M}\,dx
$$
for any $C^\infty$ function $g:\partial M\to\mathbb{R}$ such that
$\int_{\partial M}g(x)\,dx =0$ and $\int_{\partial M}xg(x)\,dx=0$,
where $r,R>0$ are chosen such that $rB\subseteq M\subseteq RB$.
\end{prop}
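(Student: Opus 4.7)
My plan is to adapt the Reilly-type approach of Kolesnikov--Milman \cite{KM17}. Since $\int_{\partial M} g\,dx = 0$, the Neumann problem
$$
\Delta u = 0 \text{ in } M,\qquad u_n = g \text{ on } \partial M
$$
admits a smooth solution $u$, unique up to an additive constant. Applying Reilly's formula (Lemma \ref{lem:reilly}) to $u$ and using $\Delta u = 0$ and $u_n = g$ gives
$$
0 = \int_M |\nabla^2 u|^2\,dx + \int_{\partial M} \bigl\{\Tr[\II]\,g^2 + \langle \nabla_{\partial M} u,\II\,\nabla_{\partial M} u\rangle - 2\langle \nabla_{\partial M} g,\nabla_{\partial M} u\rangle\bigr\}\,dx.
$$
The pointwise inequality $2\langle X,Y\rangle \le \langle X,\II^{-1}X\rangle + \langle Y,\II Y\rangle$ (which holds because $\II > 0$), applied with $X = \nabla_{\partial M} g$ and $Y = \nabla_{\partial M} u$ and integrated over $\partial M$, then cancels the $\II$-term and yields
$$
\int_{\partial M} \langle \nabla_{\partial M} g,\II^{-1}\nabla_{\partial M} g\rangle\,dx - \int_{\partial M} g^2\,\Tr[\II]\,dx \ge \int_M |\nabla^2 u|^2\,dx.
$$

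The second moment hypothesis enters through the divergence theorem applied to $x_i\nabla u$: using $\Delta u = 0$,
$$
\int_M \partial_i u\,dx = \int_{\partial M} x_i u_n\,dx = \int_{\partial M} x_i g\,dx = 0 \qquad (i=1,\ldots,n),
$$
so $\nabla u$ has zero mean on $M$.

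The proposition therefore reduces to the quantitative reverse estimate
$$
\int_M |\nabla^2 u|^2\,dx \ge \frac{r^2}{nR^2}\int_{\partial M} \frac{g^2}{h_M\circ n_M}\,dx,
$$
and this is the step I expect to be the main obstacle. The plan is to derive it from two ingredients. First, because each component $\partial_i u$ is harmonic and of mean zero, the Payne--Weinberger Poincar\'e inequality on the convex body $M$ (whose diameter is at most $2R$) gives $\int_M |\nabla u|^2\,dx \lesssim R^2\int_M |\nabla^2 u|^2\,dx$; summing over the $n$ components of $\nabla u$ is responsible for the dimensional factor $1/n$. Second, a weighted trace estimate relates $\int_M |\nabla u|^2\,dx$ to the boundary integral $\int_{\partial M} g^2/(h_M\circ n_M)\,dx$, starting from the identity $\int_M |\nabla u|^2\,dx = \int_{\partial M} ug\,dx$ (which is independent of the choice of additive constant on $u$ because $\int g=0$) and a Cauchy--Schwarz bound weighted by $h_M\circ n_M = \langle x,n_M(x)\rangle$ on $\partial M$ (cf.\ Lemma \ref{lem:dirdir}), with the auxiliary identity $\int_{\partial M} u\,dx = 0$ (which can always be arranged by choice of constant) providing the necessary Poincar\'e inequality on $\partial M$ itself. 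The inclusions $rB\subseteq M\subseteq RB$ enter through the pointwise bounds $r\le h_M\circ n_M\le R$ on $\partial M$, producing the factor $r^2$ in the numerator. The conceptual point is that the two moment constraints on $g$ correspond exactly to removing the kernel $\mathrm{span}\{1,x_1,\ldots,x_n\}$ of the operator $\mathscr{A}$ from Theorem \ref{thm:forms}, that is, precisely the modes on which the quadratic form degenerates, which is what makes a quantitative bound possible in the first place.
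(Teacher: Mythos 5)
Your setup is exactly right and matches the paper up to and including the key reduction: Reilly's formula, the AM--GM inequality $2\langle X,Y\rangle\le\langle X,\II^{-1}X\rangle+\langle Y,\II Y\rangle$ to eliminate the $\II$-term, the observation that the second-moment constraint forces $\int_M\nabla u\,dx=0$, and the Payne--Weinberger Poincar\'e estimate $\int_M\Tr[(\nabla^2u)^2]\,dx\ge\frac{\pi^2}{4R^2}\int_M\|\nabla u\|^2\,dx$. The problem is in your proposed final step. You want to show $\int_M\|\nabla u\|^2\,dx\gtrsim\int_{\partial M}g^2/(h_M\circ n_M)\,dx$, but the identity $\int_M\|\nabla u\|^2\,dx=\int_{\partial M}ug\,dx$ together with a weighted Cauchy--Schwarz and a trace/Poincar\'e estimate on $\partial M$ produces an \emph{upper} bound on $\int_M\|\nabla u\|^2\,dx$ in terms of the boundary integral, not a lower one. (Concretely: $\int_{\partial M}ug\,dx\le(\int_{\partial M}u^2\,h_M\,dx)^{1/2}(\int_{\partial M}g^2/h_M\,dx)^{1/2}$, and any estimate $\int_{\partial M}u^2\,h_M\,dx\lesssim\int_M\|\nabla u\|^2\,dx$ then cancels the good side and gives $\int_M\|\nabla u\|^2\,dx\lesssim\int_{\partial M}g^2/h_M\,dx$ --- the reverse of what is needed.) That step is the genuine crux of the proposition, and your plan does not close it.

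The paper proceeds differently at this point and never uses the Dirichlet energy identity or a boundary Poincar\'e. It first uses $h_M\circ n_M\ge r$ to write, pointwise on $\partial M$,
$$
r^2\frac{g^2}{h_M\circ n_M}\le g^2\,(h_M\circ n_M)=u_n^2\,\langle x,n_M(x)\rangle\le\|\nabla u(x)\|^2\,\langle x,n_M(x)\rangle,
$$
since $u_n^2\le\|\nabla u\|^2$ and $\langle x,n_M\rangle\ge 0$ when $0\in M$. Integrating and applying the divergence theorem to the vector field $x\,\|\nabla u\|^2$ gives
$$
r^2\int_{\partial M}\frac{g^2}{h_M\circ n_M}\,dx\le\int_M\div\!\left(x\|\nabla u\|^2\right)dx=\int_M\left\{n\|\nabla u\|^2+2\langle\nabla u,\nabla^2u\,x\rangle\right\}dx,
$$
and the cross term is controlled by $\|\nabla u\|^2+\|\nabla^2u\,x\|^2\le\|\nabla u\|^2+R^2\Tr[(\nabla^2u)^2]$. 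Combining this with the Payne--Weinberger step then finishes the proof. In short, the missing ingredient in your plan is the pointwise domination $u_n^2\,\langle x,n_M\rangle\le\|\nabla u\|^2\,\langle x,n_M\rangle$ together with the divergence-theorem identity for $x\|\nabla u\|^2$; these replace the weighted trace inequality you were hoping for and are what make the constants land on the correct side.
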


\begin{proof}
As $\int_{\partial M}g\,dx=0$, there exists a $C^\infty$ solution $u$
to the Neumann problem
\begin{align*}
	&\Delta u=0\quad\mbox{on }M,\\
	&u_n=g\quad\mbox{on }\partial M,
\end{align*}
see, e.g., \cite[section 5.7]{Tay11}.
Applying Lemma \ref{lem:reilly} yields
\begin{align*}
	0 &= 
	\int_M \Tr[(\nabla^2u)^2]\,dx 
	+ \int_{\partial M}
	\{g^2\Tr[\II] + 
	\langle\nabla_{\partial M}u,\II\,\nabla_{\partial M}u\rangle
	- 2\langle\nabla_{\partial M}g,\nabla_{\partial M}u\rangle\}
	\,dx \\
	&\ge 
	\int_M \Tr[(\nabla^2u)^2]\,dx +
	\int_{\partial M}
	\{g^2\Tr[\II] -
	\langle\nabla_{\partial M}g,\II^{-1}\nabla_{\partial M}g\rangle\}
	\,dx,
\end{align*}
where we used that $\II>0$ as $M$ is a body of class $C^\infty_+$. 
To complete the proof, it remains to lower bound the first term on the 
right-hand side.

To this end, note that $\div(\langle v,x\rangle\nabla u)=
\langle v,\nabla u\rangle$ for any $v\in\mathbb{R}^n$, as
$\Delta u=0$. Thus
$$
	\int_M \nabla u(x)\,dx = \int_{\partial M}xu_n(x)\,dx
	= \int_{\partial M}xg(x)\,dx = 0.
$$
By a classical Poincar\'e inequality of Payne and Weinberger \cite{PW60}, 
it follows that
$$
	\int_M \Tr[(\nabla^2u)^2]\,dx \ge
	\frac{\pi^2}{4R^2}\int_M\|\nabla u\|^2\,dx.
$$
As $r\le h_M\le R$ and $h_M(n_M(x))=\langle 
x,n_M(x)\rangle$, we can now estimate
\begin{align*}
	r^2\int_{\partial M}\frac{g^2}{h_M\circ n_M}\,dx 
	&\le
	\int_{\partial M}u_n^2\langle x,n_M\rangle\,dx
	\le \int_{\partial M}\|\nabla u\|^2\langle x,n_M\rangle\,dx \\
	&=\int_M \div(x\|\nabla u\|^2)\,dx \\
	&\le (n+1)\int_M\|\nabla u\|^2\,dx +
	R^2\int_M\Tr[(\nabla^2u)^2]\,dx \\
	&\le \bigg(1+\frac{4(n+1)}{\pi^2}
	\bigg)R^2\int_M\Tr[(\nabla^2u)^2]\,dx,
\end{align*}
where we used the divergence theorem and
$$
	\div(x\|\nabla u\|^2) = n\|\nabla u\|^2+2\langle\nabla 
	u,\nabla^2u\,x\rangle\le (n+1)\|\nabla u\|^2 + \|\nabla^2u\,x\|^2.
$$
The proof follows readily by combining the above estimates (for aesthetic 
reasons, we have estimated $1+\frac{4(n+1)}{\pi^2}\le n$ for $n\ge 3$ in 
the statement).
\end{proof}

Lemma \ref{lem:mixext} shows that the left-hand side of the expression in 
Proposition \ref{prop:km} is nothing other than a mixed volume 
$-n(n-1)\V(\tilde g,\tilde g,\mathcal{M})$ for $\tilde g:=g\circ 
n_M^{-1}$, but the significance of the inequality may not be immediately 
obvious. We will presently see that Proposition \ref{prop:km} controls a 
gap in the spectrum of a certain self-adjoint operator associated to mixed 
volumes (different from the one in Theorem \ref{thm:forms}). This 
observation has a number of interesting consequences. For symmetric 
bodies, it implies a local form of the $L^p$-Brunn-Minkowski inequality 
with $p<1$, which was the problem investigated in \cite{KM17}. In the 
present setting, the relevant spectral property will form the basis for 
the proof of Theorem \ref{thm:weak}.

\subsection{Proof of Theorem \ref{thm:weak}}
\label{sec:pfweakpf}

Let us fix, for the time being, a convex body $M$ in $\mathbb{R}^n$
of class $C^\infty_+$ with $0\in\intr M$. Define an operator and a
measure on $S^{n-1}$ by
$$
	\mathscr{\tilde A}f := \frac{1}{n(n-1)}
	h_M\Tr[(D^2h_M)^{-1}D^2f],\qquad
	d\tilde \nu := \frac{\det(D^2h_M)}{h_M}\,d\omega
$$
for $f\in C^\infty(S^{n-1})$. By Lemma \ref{lem:repc2} and Lemma 
\ref{lem:mdprop}(\textit{b}), we have
$$
	\V(f,g,\mathcal{M}) = 
	\langle f,\mathscr{\tilde A}g\rangle_{L^2(\tilde\nu)}
	\quad\mbox{for }f,g\in C^\infty(S^{n-1}).
$$
Consequently, we observe the following basic facts:
\begin{enumerate}[$\bullet$]
\item $\mathscr{\tilde A}$ is an elliptic operator
(as $D^2h_M>0$).
\item $\mathscr{\tilde A}$ defines a symmetric quadratic form
$\langle f,\mathscr{\tilde A}g\rangle_{L^2(\tilde\nu)}$ for
$f,g\in C^\infty(S^{n-1})$.
\item $\mathscr{\tilde A}$ has a self-adjoint extension with
compact resolvent. Moreover, as $\mathscr{\tilde 
A}h_M=\frac{1}{n}h_M$, its largest eigenvalue is $\frac{1}{n}$ and this
eigenvalue is simple (cf.\ section \ref{sec:elliptic}).
\item $\mathscr{\tilde A}\ell=0$ for any linear function 
$\ell(x)=\langle  v,x\rangle$, $v\in\mathbb{R}^n$.
\end{enumerate}
It should be emphasized that the normalization chosen in the definition
of $\mathscr{\tilde A}$ is very different than the one employed in section
\ref{sec:forms}; in particular, the present operator makes sense only for
smooth bodies $M$, and does not give rise to a well-behaved
limiting operator for arbitrary (non-smooth) bodies. Nonetheless, the
present normalization is the appropriate one for exploiting Proposition 
\ref{prop:km}. 

\begin{lem}
\label{lem:minmax}
Let $M$ be a convex body in $\mathbb{R}^n$ of class $C^\infty_+$ with
$rB\subseteq M\subseteq RB$. 
Whenever $f\in C^\infty(S^{n-1})$ satisfies
$f\perp \spn\{h_M,\ell:\ell\mbox{ is linear}\}$ in
$L^2(\tilde\nu)$, we have
$$
	\langle f,\mathscr{\tilde A}f\rangle_{L^2(\tilde\nu)}
	\le -\frac{1}{n^2(n-1)}\frac{r^2}{R^2}
	\|f\|_{L^2(\tilde\nu)}^2.
$$
\end{lem}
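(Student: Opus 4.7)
The plan is to deduce this spectral estimate from Proposition \ref{prop:km} by pulling back everything to $\partial M$ via the outer normal map $n_M:\partial M\to S^{n-1}$. For $f\in C^\infty(S^{n-1})$, the natural substitution is $g:=f\circ n_M$; the area formula $S(M,d\omega)=\det(D^2h_M)\,d\omega$ yields the change of variables
\begin{equation*}
\int_{\partial M}\frac{g(x)^2}{h_M(n_M(x))}\,dx=\|f\|^2_{L^2(\tilde\nu)},
\end{equation*}
and Lemma \ref{lem:mixext} identifies the left-hand side of Proposition \ref{prop:km} with $-n(n-1)\langle f,\mathscr{\tilde A}f\rangle_{L^2(\tilde\nu)}$. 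A direct application of Proposition \ref{prop:km} to $g$ would therefore produce the required estimate with the stated constant, provided its two hypotheses can be verified.

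The first hypothesis $\int_{\partial M}g\,dx=0$ translates by the same change of variables to $\langle f,h_M\rangle_{L^2(\tilde\nu)}=0$, which holds by assumption. The second hypothesis $\int_{\partial M}xg(x)\,dx=0$, however, becomes $\int_{S^{n-1}}n_M^{-1}(u)\,f(u)\det(D^2h_M)(u)\,d\omega(u)=0$; this is \emph{not} equivalent to $f\perp \ell_j$ in $L^2(\tilde\nu)$ for the coordinate linear functions $\ell_j(x)=\langle e_j,x\rangle$, and closing this gap between orthogonality conditions is the main obstacle.

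The key observation that unlocks the argument is that $\mathscr{\tilde A}\ell=0$ for every linear function, so modifying $f$ by a linear combination of the $\ell_j$ leaves $\langle f,\mathscr{\tilde A}f\rangle_{L^2(\tilde\nu)}$ unchanged. I therefore introduce $f':=f-\sum_{j=1}^n c_j\ell_j$ and choose the $c_j$ so that $g':=f'\circ n_M$ satisfies the missing second hypothesis. The divergence theorem produces the identities
\begin{equation*}
\int_{\partial M}\langle e_j,n_M(x)\rangle\,dx=0,\qquad
\int_{\partial M}x_i\langle e_j,n_M(x)\rangle\,dx=\delta_{ij}\Vol(M),
\end{equation*}
which show both that the first hypothesis is preserved under this modification and that a unique choice of $c_j$ enforces the second. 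Applying Proposition \ref{prop:km} to $g'$ then yields $\langle f',\mathscr{\tilde A}f'\rangle_{L^2(\tilde\nu)}\le -\tfrac{r^2}{n^2(n-1)R^2}\|f'\|^2_{L^2(\tilde\nu)}$; combining this with $\langle f',\mathscr{\tilde A}f'\rangle=\langle f,\mathscr{\tilde A}f\rangle$ (from $\mathscr{\tilde A}\ell_j=0$) and $\|f'\|^2_{L^2(\tilde\nu)}\ge\|f\|^2_{L^2(\tilde\nu)}$ (from $f\perp\ell_j$ in $L^2(\tilde\nu)$ via Pythagoras) gives the conclusion.
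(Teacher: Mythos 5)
Your proof is correct, and it takes a genuinely different route from the paper. The paper's proof works spectrally: it cites the compact resolvent of the elliptic operator $\mathscr{\tilde A}$, the simplicity of the top eigenvalue $\frac{1}{n}$, the inclusion $\spec\mathscr{\tilde A}\subseteq(-\infty,0]\cup\{\frac{1}{n}\}$ (from Minkowski's inequality as in Theorem \ref{thm:forms}), and the fact that linear functions lie in $\ker\mathscr{\tilde A}$ to identify $\spn\{h_M,\ell:\ell\mbox{ linear}\}$ with the span of the top $n+1$ eigenvectors. The quantity to be bounded is then $\lambda_{n+2}$, and the min-max principle (Lemma \ref{lem:minmaxprinciple}) allows the orthogonality constraint to be traded for orthogonality to $L=\spn\{h_M,h_M\langle v,\nabla h_M\rangle:v\in\mathbb{R}^n\}$, which is exactly the constraint produced by Proposition \ref{prop:km} after the change of variables of Lemma \ref{lem:mixext}.

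You instead perform an explicit gauge transformation $f\mapsto f'=f-\sum_j c_j\ell_j$, choosing the $c_j$ (via the divergence-theorem identities $\int_{\partial M}\langle e_j,n_M\rangle\,dx=0$ and $\int_{\partial M}x_i\langle e_j,n_M\rangle\,dx=\delta_{ij}\Vol(M)$, with $\Vol(M)>0$ since $rB\subseteq M$) so that $g'=f'\circ n_M$ satisfies both hypotheses of Proposition \ref{prop:km}. The quadratic form is unchanged because $\mathscr{\tilde A}\ell_j=0$ and the form is symmetric, and $\|f'\|_{L^2(\tilde\nu)}\ge\|f\|_{L^2(\tilde\nu)}$ follows from $f\perp\ell_j$ by Pythagoras, so $\langle f,\mathscr{\tilde A}f\rangle=\langle f',\mathscr{\tilde A}f'\rangle\le -\tfrac{r^2}{n^2(n-1)R^2}\|f'\|^2\le -\tfrac{r^2}{n^2(n-1)R^2}\|f\|^2$. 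Every step checks out, including the change of variables $\int_{\partial M}g^2/(h_M\circ n_M)\,dx=\|f\|^2_{L^2(\tilde\nu)}$ via $S(M,d\omega)=\det(D^2h_M)\,d\omega$.

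The comparison is instructive: your argument is more elementary and self-contained, in that it bypasses the min-max principle, the compact-resolvent regularity theory, and any a priori knowledge of $\spec\mathscr{\tilde A}$ — only translation invariance ($\mathscr{\tilde A}\ell=0$), symmetry of the form, the divergence theorem, and Pythagoras are used. The paper's spectral route is arguably more conceptual, making explicit that the bound quantifies the gap between the $(n+1)$-st and $(n+2)$-nd eigenvalues (a fact that is thematically central to the rest of the paper), but your gauge argument is sharper in presupposing less structure.
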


\begin{proof}
The statement is spectral in nature. As $\mathscr{\tilde A}$ has a compact 
resolvent, it has a discrete spectrum and a complete set of 
eigenfunctions (section \ref{sec:elliptic}).
As stated above, the largest eigenvalue of $\mathscr{\tilde A}$ is 
$\frac{1}{n}$ and its one-dimensional eigenspace is spanned by $h_M$. 
Moreover, Minkowski's inequality implies as in the proof of Theorem 
\ref{thm:forms} that 
$\spec\mathscr{\tilde A}\subseteq (-\infty,0]\cup\{\frac{1}{n}\}$, and all 
linear functions are eigenfunctions with eigenvalue $0$.
Therefore, by Lemma \ref{lem:minmaxprinciple},
$$
	\sup_{f\perp\spn\{h_M,\ell:\ell\text{ is linear}\}}
	\frac{\langle f,\mathscr{\tilde A}f\rangle_{L^2(\tilde\nu)}}{
	\|f\|^2_{L^2(\tilde\nu)}}
	\le
	\sup_{f\perp L}
	\frac{\langle f,\mathscr{\tilde A}f\rangle_{L^2(\tilde\nu)}}{
	\|f\|^2_{L^2(\tilde\nu)}}	
$$
for any linear space $L\subset C^\infty(S^{n-1})$ with $\dim L\le n+1$.

Now let $f:S^{n-1}\to\mathbb{R}$ be $C^\infty$.
Choosing $g:=f\circ n_M$ in Proposition \ref{prop:km} and changing
variables as in Lemma \ref{lem:mixext}, we find that
$$
	n(n-1)\,\langle f,\mathscr{\tilde A}f\rangle_{L^2(\tilde\nu)}
	=
	n(n-1)\,\V(f,f,\mathcal{M})
	\le
	-\frac{r^2}{nR^2}
	\|f\|^2_{L^2(\tilde\nu)}
$$
whenever
$$
	\int f h_M\,d\tilde\nu = 0,\qquad
	\int f h_M\nabla h_M\,d\tilde\nu=0.
$$
If we therefore choose $L=\spn\{h_M,h_M\langle v,\nabla h_M\rangle:
v\in\mathbb{R}^n\}$, then we have shown
$$
	\sup_{f\perp L}
        \frac{\langle f,\mathscr{\tilde A}f\rangle_{L^2(\tilde\nu)}}{
        \|f\|^2_{L^2(\tilde\nu)}}
	\le
	-\frac{1}{n^2(n-1)}\frac{r^2}{R^2}.
$$
As clearly $\dim L\le n+1$, the proof is complete.
\end{proof}

Before we proceed to the proof of Theorem \ref{thm:weak},
we state a simple lemma.

\begin{lem}
\label{lem:covariance}
For any convex body $M$ with $0\in\intr M$, we have
$$
	G_M := \int xx^*\, \frac{S_{M,\mathcal{M}}(dx)}{h_M(x)} > 0.
$$
\end{lem}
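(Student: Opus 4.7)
\emph{Plan.} The matrix $G_M$ is automatically positive semidefinite: since $0\in\intr M$ forces $h_M>0$ on $S^{n-1}$, the integrand $xx^*/h_M(x)$ is a nonnegative multiple of a rank-one positive semidefinite matrix at every $x$. The task is therefore to rule out degeneracy, i.e., to show that any $v\in\mathbb{R}^n$ with $v^*G_Mv=0$ must vanish.

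So suppose $v^*G_Mv=0$. The vanishing $\int \langle v,x\rangle^2 h_M(x)^{-1}\,S_{M,\mathcal{M}}(dx)=0$ combined with the strict positivity of the continuous weight $1/h_M$ on $S^{n-1}$ forces the continuous function $x\mapsto\langle v,x\rangle$ to vanish identically on $\supp S_{M,\mathcal{M}}$. I would then test this condition against the support function of the segment $K=[0,v]$, which is $h_{[0,v]}(x)=\max(0,\langle v,x\rangle)$. Applying the representation \eqref{eq:mvrep} yields
$$
n\,\V([0,v],M,\mathcal{M}) \;=\; \int \max(0,\langle v,x\rangle)\,S_{M,\mathcal{M}}(dx),
$$
so the hypothesis would force this mixed volume to vanish. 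The desired contradiction is therefore the claim that $\V([0,v],M,\mathcal{M})>0$ whenever $v\ne 0$.

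This positivity is a classical Cavalieri computation: assuming $v$ is a unit vector, slicing $M+t[0,v]$ by hyperplanes perpendicular to $v$ shows $\Vol(M+t[0,v])=\Vol(M)+t\,\mathcal{H}^{n-1}(\pi_{v^\perp}M)$ for all $t\ge 0$, where $\pi_{v^\perp}$ denotes orthogonal projection onto $v^\perp$; matching this with the polynomial expansion of $\Vol(M+t[0,v])$ in $t$ gives $\V([0,v],M,\mathcal{M})=n^{-1}\mathcal{H}^{n-1}(\pi_{v^\perp}M)$, which is strictly positive because the nonempty interior of $M$ forces $\pi_{v^\perp}(M)$ to contain an $(n-1)$-dimensional ball (alternatively, the positivity is a special case of \cite[Theorem 5.1.8]{Sch14}). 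I do not anticipate any real obstacle, as this positivity step is the only substantive ingredient and it is standard.
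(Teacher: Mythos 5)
Your proof is correct, and it reaches the contradiction by a genuinely different route than the paper's. Both arguments first note that $v^\ast G_M v=0$ forces $\langle v,\cdot\,\rangle$ to vanish on $\supp S_{M,\mathcal{M}}$, using positivity of $h_M$. The paper then invokes Theorem \ref{thm:supp} to identify $\supp S_{M,\mathcal{M}}$ with the closure of the $0$-extreme (equivalently, regular) normal directions, and uses the fact that a convex body with nonempty interior is the intersection of its regular supporting halfspaces: if all regular normals were orthogonal to $v$, then $M$ would contain a line in direction $v$, contradicting compactness. You instead feed the segment $[0,v]$, whose support function $\max(0,\langle v,\cdot\,\rangle)$ vanishes wherever $\langle v,\cdot\,\rangle$ does, into the representation formula \eqref{eq:mvrep} to conclude $\V([0,v],M,\mathcal{M})=0$, and then contradict this via the Cavalieri identity $\V([0,v],M,\mathcal{M})=\tfrac{1}{n}\mathcal{H}^{n-1}(\pi_{v^\perp}M)>0$ (equivalently, the positivity criterion \cite[Theorem 5.1.8]{Sch14}). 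Your route avoids the support characterization entirely and replaces the structural ``intersection of regular halfspaces'' argument by a short mixed-volume computation; both are standard ingredients, so this is a matter of taste rather than content, but your version is somewhat more self-contained relative to the tools already heavily used in the paper.
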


\begin{proof}
If the conclusion were false, then there must exist 
$w\in\mathbb{R}^n\backslash\{0\}$ such that $\langle w,G_Mw\rangle=0$. 
This would imply that $\langle w,x\rangle=0$ for all $0$-extreme normal 
vectors $x$ of $M$ by Theorem \ref{thm:supp}. But that is impossible, as 
a convex body with 
nonempty interior is the intersection of its regular supporting halfspaces 
\cite[Theorem 2.2.6]{Sch14}; if the normals of all these halfspaces were 
orthogonal to $w$, then $M$ would be noncompact, contradicting the 
definition of a convex body.
\end{proof}

We are now ready to complete the proof of Theorem \ref{thm:weak}.

\begin{proof}[Proof of Theorem \ref{thm:weak}]
We may clearly assume that
$\V(L,L,\mathcal{M})>0$, as otherwise the statement reduces to
Minkowski's quadratic inequality. Then $\V(L,M,\mathcal{M})>0$
as well, as by Minkowski's inequality
$\V(L,M,\mathcal{M})^2\ge\V(L,L,\mathcal{M})\Vol(M)>0$.

Let us first consider the case that $M$ is of class $C^\infty_+$ with
$rB\subseteq M\subseteq RB$ for $r,R>0$, and that $K,L$ are of class 
$C^\infty_+$. Using $\langle h_L,h_M\rangle_{L^2(\tilde\nu)} =
n\V(L,M,\mathcal{M})>0$ and Lemma \ref{lem:covariance}, we can 
define $a\ge 0$ and $v\in\mathbb{R}^n$ as
$$
	a := \frac{\langle h_K,h_M\rangle_{L^2(\tilde\nu)}}
	{\langle h_L,h_M\rangle_{L^2(\tilde\nu)}} =
	\frac{\V(K,M,\mathcal{M})}{\V(L,M,\mathcal{M})},\quad
	v := \int (h_K(x)-ah_L(x)) 
	G_M^{-1}x\,\frac{S_{M,\mathcal{M}}(dx)}{h_M(x)}.
$$
Then it is readily verified (using $d\tilde\nu=dS_{M,\mathcal{M}}/h_M$) that
$$
	\delta := h_K - ah_L - \langle v,\cdot\,\rangle
	\perp\spn\{h_M,\ell:\ell\mbox{ is linear}\}
	\quad\mbox{in }L^2(\tilde\nu).
$$
Applying Lemma \ref{lem:minmax} and translation-invariance of mixed
volumes yields
\begin{align*}
	-\frac{1}{n^2(n-1)}\frac{r^2}{R^2}
	\|\delta\|_{L^2(\tilde\nu)}^2 &\ge
	\V(\delta,\delta,\mathcal{M}) \\ &=
	\V(K,K,\mathcal{M}) - 2a\V(K,L,\mathcal{M})
	+a^2\V(L,L,\mathcal{M})
	\\ &\ge
	\V(K,K,\mathcal{M}) - 
	\frac{\V(K,L,\mathcal{M})^2}{\V(L,L,\mathcal{M})},
\end{align*}
where we minimized over $a$ in the last inequality. The conclusion follows
readily.

We now consider the general case where $K,L,M$ are arbitrary convex bodies 
in $\mathbb{R}^n$ and $0\in\intr M$. It is classical \cite[section 
3.4]{Sch14} that we may choose convex bodies $K^{(s)}$, $L^{(s)}$, 
$M^{(s)}$ of class $C^\infty_+$ such that $K^{(s)}\to K$, 
$L^{(s)}\to L$, $M^{(s)}\to M$ as $s\to\infty$ in the sense of
Hausdorff convergence. Note that as $0\in\intr M$, there exist $r,R>0$ 
such that $rB\subseteq M^{(s)}\subseteq RB$ for all $s$ sufficiently 
large. Thus we have shown that
\begin{align*}
	&\V(K^{(s)},L^{(s)},\mathcal{M}^{(s)})^2\ge 
	\V(K^{(s)},K^{(s)},\mathcal{M}^{(s)})\,\V(L^{(s)},L^{(s)},\mathcal{M}^{(s)}) 
	\\ & \qquad
	+ C_M
	\V(L^{(s)},L^{(s)},\mathcal{M}^{(s)})
	\int (h_{K^{(s)}}-a^{(s)}h_{L^{(s)}}-\langle 
	v^{(s)},\cdot\,\rangle)^2
	\,\frac{dS_{M^{(s)},\mathcal{M}^{(s)}}}{h_{M^{(s)}}}
\end{align*}
for all $s$ sufficiently large, where $a^{(s)},v^{(s)}$ are chosen
as in the first part of the proof and we defined
$C_M:=r^2/(R^2n^2(n-1))$. 
It remains to take 
$s\to\infty$ 
in this inequality. Convergence of the mixed volumes follows directly 
from Theorem \ref{thm:conv}. Moreover, that $a^{(s)}\to a$ and $v^{(s)}\to 
v$ for some $a\ge 0$, $v\in\mathbb{R}^n$ follows readily from 
Theorem~\ref{thm:conv} and the explicit expressions for $a,v$ 
given in the first 
part of the proof. Thus the integrand in the above inequality converges 
uniformly to $(h_K-ah_L-\langle v,\cdot\,\rangle)^2/h_M$, which implies 
convergence of the integral. This concludes the proof.
\end{proof}

\section{A quantitative rigidity theorem}
\label{sec:rigid}

The weak stability result of the previous section implies that if equality 
holds in \eqref{eq:minkq} (and $M$ has nonempty interior and 
$\V(L,L,\mathcal{M})>0$) then, up to homothety, $K$ and $L$ have the same 
supporting hyperplanes in the $0$-extreme normal directions of $M$. This 
is however far from characterizing the extremals of Minkowski's quadratic 
inequality, as was illustrated in Example \ref{ex:weak}. Nonetheless, we 
will show that this weak information can be amplified to recover the full 
equality cases, because the extremals of Minkowski's inequality turn out 
to be very rigid: once they are fixed in the $0$-extreme directions of 
$M$, their extension to the $1$-extreme directions of $M$ is uniquely 
determined. This rigidity property, formulated above as Theorem 
\ref{thm:rigidintro}, lies at the heart of our proof of Theorem 
\ref{thm:mainfull}.

Theorem \ref{thm:rigidintro} is an immediate consequence of the following
quantitative result.

\begin{thm}
\label{thm:rigid}
Let $M$ be a convex body in $\mathbb{R}^n$ with nonempty interior.
There exist $C_M>0$ and a measure $\mu_M$, depending only
on $M$, so that $\supp\mu_M\subseteq\supp S_{M,\mathcal{M}}$ and
\begin{align*}
	\V(K,L,\mathcal{M})^2\ge \mbox{} &
	\V(K,K,\mathcal{M})\,\V(L,L,\mathcal{M}) + \mbox{} \\ & \mbox{}
	C_M\V(L,L,\mathcal{M})\big\{
	\|h_K-h_L\|^2_{L^2(S_{B,\mathcal{M}})} -
	\|h_K-h_L\|^2_{L^2(\mu_M)}
	\big\}
\end{align*}
for all convex bodies $K,L$ in $\mathbb{R}^n$.
\end{thm}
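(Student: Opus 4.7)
The plan is to first establish the theorem for polytopes $M$ with nonempty interior, and then extend to general $M$ by Hausdorff approximation. Choose polytopes $M^{(s)}\to M$ in Hausdorff distance (possible by \cite[\S 3.4]{Sch14}); by Theorem \ref{thm:conv}, all mixed volumes and mixed area measures converge. Provided the polytope-case constants $C_{M^{(s)}}$ are uniformly bounded below and the measures $\mu_{M^{(s)}}$ have uniformly bounded total mass, a weakly convergent subsequence of $\mu_{M^{(s)}}$ produces a limit $\mu_M$ whose support inherits the inclusion $\supp\mu_M\subseteq\supp S_{M,\mathcal{M}}$ from weak convergence of mixed area measures, and the inequality passes to the limit. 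The essential task is therefore the polytope case.

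For a polytope $M$ with nonempty interior, Theorem \ref{thm:qgraph} presents $\mathscr{A}$ as a quantum graph on $G=(V,E)$ inscribed in $S^{n-1}$, with $V=\{n_F\}=\supp S_{M,\mathcal{M}}$ and edges $e_{F,F'}$ of length $l_e<\pi$ along which $\mathscr{A}u=\tfrac{1}{n}(u''+u)$. The key step is the following \emph{quantitative rigidity} inequality: there exist $C_M>0$ and a positive measure $\mu_M$ supported on $V$ such that
\[
-\mathcal{E}(u,u)\;\ge\;C_M\bigl(\|u\|^2_{L^2(S_{B,\mathcal{M}})}-\|u\|^2_{L^2(\mu_M)}\bigr)\qquad\text{for every }u\in\Dom\mathcal{E}.
\]
Granted this rigidity, the theorem follows at once: apply the inequality to $u:=h_K-h_L$, multiply by $\V(L,L,\mathcal{M})>0$, and combine with the elementary defect identity
\[
\V(K,L,\mathcal{M})^2-\V(K,K,\mathcal{M})\V(L,L,\mathcal{M})=-\V(L,L,\mathcal{M})\,\mathcal{E}(z,z),\qquad z:=h_K-a_{\min}h_L,
\]
noting that since $z$ is the minimizer of $a\mapsto\mathcal{E}(h_K-ah_L,h_K-ah_L)$ we have $\mathcal{E}(z,z)\le\mathcal{E}(h_K-h_L,h_K-h_L)$, so the Minkowski defect dominates $-\V(L,L,\mathcal{M})\,\mathcal{E}(h_K-h_L,h_K-h_L)$.

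To prove the rigidity inequality, decompose any $u\in\Dom\mathcal{E}$ as $u=\bar u+v$, where $\bar u$ is the \emph{edgewise harmonic extension} of $u|_V$ (solving $\bar u''+\bar u=0$ on each edge with endpoint values $\bar u(v)=u(v)$, uniquely solvable because $l_e<\pi$), and $v:=u-\bar u$ vanishes at every vertex. Edgewise integration by parts (with boundary terms killed by $v|_V=0$) yields the orthogonality $\mathcal{E}(\bar u,v)=0$, so $\mathcal{E}(u,u)=\mathcal{E}(\bar u,\bar u)+\mathcal{E}(v,v)$. The Wirtinger inequality $\int_e(v')^2\ge(\pi/l_e)^2\int_e v^2$ on each edge produces a spectral gap on the Dirichlet part: $-\mathcal{E}(v,v)\ge c_M\|v\|^2_{L^2(S_{B,\mathcal{M}})}$ with $c_M>0$ depending on the graph. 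The explicit formula $\bar u(\theta)=u(v_0)\sin(l_e-\theta)/\sin l_e+u(v_1)\sin\theta/\sin l_e$ expresses both $\|\bar u\|^2_{L^2(S_{B,\mathcal{M}})}$ and $|\mathcal{E}(\bar u,\bar u)|$ as bounded quadratic forms in the vertex data $(u(v))_{v\in V}$. Choosing $\mu_M=\sum_v\gamma_v\delta_v$ with sufficiently large weights $\gamma_v$ absorbs both quantities into $\|u\|^2_{L^2(\mu_M)}$; combining with $\|u\|^2\le 2\|\bar u\|^2+2\|v\|^2$ and the Wirtinger bound then yields the rigidity inequality.

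The main obstacle is ensuring uniform quantitative control as the polytopes $M^{(s)}$ converge to a general $M$: edges may collapse ($l_e\to 0$, inflating the Wirtinger constant $1/c_M$) or approach length $\pi$ (degenerating the harmonic-extension formula via $\sin l_e\to 0$), and the vertex weights $\gamma_v$ must grow accordingly to absorb the harmonic-part contribution. Finding a scaling-stable formulation — in the authors' words, ``the correct quantitative formulation that does not degenerate when we take the appropriate limit'' — is the crux of executing this plan.
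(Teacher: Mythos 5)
Your reduction of the theorem to the quantum-graph rigidity inequality is sound, and your orthogonal decomposition $u=\bar u+v$ into an edgewise harmonic extension plus a vertex-vanishing remainder is a legitimate alternative to the paper's approach (the paper instead proves a single per-edge inequality, Lemma~\ref{lem:edge}, by a substitution $g=f/y$ against a cosine profile, which delivers the bulk--vertex tradeoff directly without orthogonalizing). But your proposal contains two genuine gaps, one of which you acknowledge without resolving and one of which is simply false as stated.

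\textbf{Gap 1: no uniform constants.} You identify the crux --- "the correct quantitative formulation that does not degenerate" --- but do not supply it. Your Wirtinger-plus-harmonic-extension argument gives a constant $c_M$ and weights $\gamma_v$ for each fixed polytope, but without a uniform lower bound on $c_M$ and a uniform upper bound on $\mu_M(S^{n-1})$ along an approximating sequence $M^{(s)}\to M$, the inequality evaporates in the limit. (Incidentally, $l_e\to 0$ \emph{improves} your Wirtinger constant rather than inflating it; the genuine degeneracy is $l_e\to\pi$, via $\sin l_e\to 0$ in the harmonic-extension formula.) The paper resolves this by Lemma~\ref{lem:2d}: if $rB\subseteq M\subseteq RB$, then $l_e^2\le\pi^2\bigl(1-\tfrac{r^2}{R^2+r^2}\bigr)$ for \emph{every} edge, giving a uniform spectral gap depending only on $r,R$ (Corollary~\ref{cor:poincedge}). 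This is exactly the scaling-stable formulation you gesture at but do not produce, and without it your plan does not close.

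\textbf{Gap 2: the support inclusion does not pass to the limit.} You claim that a weak limit $\mu_M$ of $\mu_{M^{(s)}}$ "inherits the inclusion $\supp\mu_M\subseteq\supp S_{M,\mathcal{M}}$ from weak convergence of mixed area measures." This is not true: support inclusions are not preserved under weak convergence (e.g.\ $\nu_s=\delta_0+\tfrac{1}{s}\delta_1\wto\delta_0$ while $\mu_s=\delta_1\wto\delta_1$, yet $\supp\mu_s\subseteq\supp\nu_s$). The paper's own Figure~\ref{fig:noncan} exhibits precisely this failure: a cube with an edge sliced off at width $\varepsilon$ has a facet $F$ with $\mu_{M_\varepsilon}(\{n_F\})$ bounded away from $0$, but $n_F$ converges to an edge direction of the cube, which is $1$-extreme but not $0$-extreme; so the weak limit of $\mu_{M_\varepsilon}$ charges a point outside $\supp S_{M_0,\mathcal{M}_0}$. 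A generic polytope approximation therefore fails. The paper circumvents this in Lemma~\ref{lem:0approx} by building $M_k$ as an intersection of halfspaces supported at a countable dense subset of the regular normal directions of $M$, which forces $\supp S_{M_k,\mathcal{M}_k}\subseteq\supp S_{M,\mathcal{M}}$ for \emph{every} $k$ and hence $\supp\mu_M\subseteq\supp S_{M,\mathcal{M}}$ in the limit. This careful choice of approximating polytopes is an essential ingredient that your proposal omits.
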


The formulation of Theorem \ref{thm:rigid} is subtle due to the measure 
$\mu_M$ appearing here. As will be explained in the proof, this measure 
does not appear to have a canonical geometric interpretation. Ideally, one 
would have liked to prove a version of Theorem \ref{thm:rigid} where 
$\mu_M$ is replaced by the measure $S_{M,\mathcal{M}}/h_M$ that appears in 
Theorem \ref{thm:weak}. If this were possible, then one would even obtain 
a sharp quantitative analogue of Theorem \ref{thm:mainfull} (that is, a 
stability form of Minkowski's quadratic inequality). It is far from clear, 
however, how such a result might be proved: we do not know how to directly 
relate the measures $S_{M,\mathcal{M}}$ and $S_{B,\mathcal{M}}$. 
Fortunately, to characterize the extremals it suffices to work with the 
measure $\mu_M$ in Theorem \ref{thm:rigid}, which may be viewed as a 
projection of $S_{B,\mathcal{M}}$ on the $0$-extreme normal vectors of 
$M$.

\begin{rem}
\label{rem:gap}
Spectrally, a stability form of Minkowski's quadratic inequality 
\begin{align*}
	\V(K,L,\mathcal{M})^2 \stackrel{?}{\ge}\mbox{}
	&\V(K,K,\mathcal{M})\,\V(L,L,\mathcal{M}) +\mbox{}
	\\ & C_M\V(L,L,\mathcal{M})
	\inf_{v\in\mathbb{R}^n,a\ge 0}\|h_K-ah_L-\langle 
	v,\cdot\,\rangle\|^2_{L^2(S_{B,\mathcal{M}})}
\end{align*}
may be shown as in the proof of Lemma \ref{lem:hyper} to be
equivalent to the following: (i) the kernel of the operator 
$\mathscr{A}$ in Theorem \ref{thm:forms} consists only of linear functions 
(which characterizes the extremals); and (ii) the remainder of 
the 
spectrum is separated from zero by a positive constant (which quantifies 
the deficit). If $\mathscr{A}$ were to have compact resolvent, then (ii)
would follow directly from (i) by discreteness of the spectrum.
Unfortunately, as we will see in section \ref{sec:lower}, it is not true 
in general that $\mathscr{A}$ has compact resolvent. For this reason, it 
is far from clear whether we might expect even in principle to replace 
$\mu_M$ by $S_{M,\mathcal{M}}/h_M$ in Theorem \ref{thm:rigid}. 
Understanding the answer to this question would be of considerable 
interest.
\end{rem}

The rest of this section is organized as follows. In section 
\ref{sec:pfmainfull}, we complete the proof of Theorem \ref{thm:mainfull}
using Theorems \ref{thm:weak} and \ref{thm:rigid}. Sections 
\ref{sec:rigidpoly} and \ref{sec:rigidpf} are devoted to the proof of
Theorem \ref{thm:rigid}. In section \ref{sec:rigidpoly}, we consider
the special case where $M$ is a polytope. We then extend the conclusion
to general bodies $M$ in section \ref{sec:rigidpf}.

\subsection{Proof of Theorem \ref{thm:mainfull}}
\label{sec:pfmainfull}

Before we proceed to the proof of Theorem \ref{thm:rigid}, let us show how 
Theorems \ref{thm:weak} and \ref{thm:rigid} combine to complete the proof 
of Theorem \ref{thm:mainfull}. Note that the \emph{if} direction of 
Theorem \ref{thm:mainfull} was already proved by Schneider \cite{Sch85} by 
a slightly different method; it is the \emph{only if} direction that is 
new. For completeness, we include a proof here of 
both directions using the methods of this paper.

\begin{proof}[Proof of Theorem \ref{thm:mainfull}]
In the following, we assume that $K,L,M$ are convex bodies in 
$\mathbb{R}^n$ such that $M$ has nonempty interior and 
$\V(L,L,\mathcal{M})>0$.

Suppose first that there exist $a\ge 0$ and $v\in\mathbb{R}^n$ so that
$K$ and $aL+v$ have the same supporting hyperplanes
in all $1$-extreme normal directions of $M$. Then
$$
	h_K-ah_L = \langle v,\cdot\,\rangle
	\quad S_{B,\mathcal{M}}\mbox{-a.e.}
$$
by Theorem \ref{thm:supp}. Therefore, denoting by $\mathscr{A}$ and 
$\mathcal{E}$ the operator and quadratic form of Theorem \ref{thm:forms}, 
we have $h_K-ah_L\in\ker\mathscr{A}$ and 
$\mathcal{E}(h_L,h_L)=\V(L,L,\mathcal{M})>0$. Thus equality in
\eqref{eq:minkq} follows from Lemma \ref{lem:hyper}.

Conversely, suppose that we have equality in \eqref{eq:minkq}.
By translation-invariance of mixed volumes, we may assume without loss
of generality that $0\in\intr M$. Then Theorem \ref{thm:weak} implies that 
there exist $a\ge 0$ and $v\in\mathbb{R}^n$ such that
$$
	\delta := h_K-ah_L - \langle v,\cdot\,\rangle = 0
	\quad S_{M,\mathcal{M}}\mbox{-a.e.}
$$
(note that the infimum in Theorem \ref{thm:weak} is clearly attained, as 
it is the minimum of a nonnegative quadratic function).

By continuity, it follows that $\delta$ vanishes on $\supp\mu_M\subseteq
\supp S_{M,\mathcal{M}}$, where $\mu_M$ is as in 
Theorem \ref{thm:rigid}. Consequently, applying Theorem \ref{thm:rigid} 
with $L\mapsto aL+v$ yields
$$
        \delta = h_K-ah_L- \langle v,\cdot\,\rangle = 0
        \quad S_{B,\mathcal{M}}\mbox{-a.e.},
$$
where we have used the invariance of Minkowski's quadratic inequality
under translation and scaling of $L$. By continuity, it follows
that $\delta$ vanishes on $\supp S_{B,\mathcal{M}}$.
Thus Theorem \ref{thm:supp} implies that $K$ and $aL+v$ have the same 
supporting hyperplanes in all $1$-extreme normal directions of $M$,
completing the proof.
\end{proof}

\subsection{Proof of Theorem \ref{thm:rigid}: polytopes}
\label{sec:rigidpoly}

In this section we consider the case that $M$ is a polytope with 
nonempty interior. At a qualitative level, the rigidity property of the 
extremals of Minkowski's inequality admits in this case a very intuitive 
interpretation. Suppose we have equality in \eqref{eq:minkq}, so that 
$f:=h_K-ah_L\in\ker\mathscr{A}$ for some $a\ge 0$. Suppose in addition 
that we have fixed the values of $f$ in the $0$-extreme normal directions 
of $M$, which are in this case the vertices of the metric graph associated 
to $M$. Then it follows from Theorem \ref{thm:qgraph} that $f$ solves the 
Dirichlet problem $f''+f=0$ on each edge of the metric graph with 
boundary data on the vertices. It is readily verified by explicit 
computation that this one-dimensional Dirichlet problem has a unique 
solution as long as the lengths of all the edges are less than $\pi$, 
which must be the case as $M$ has nonempty interior. Thus the value of
$f$ is uniquely determined on the $1$-extreme normal directions of $M$
once we have fixed its values on the $0$-extreme normal directions.

This intuitive argument appears to be rather special to the case of 
polytopes: for a general body $M$, the structure of the sets of $0$- and 
$1$-extreme normal vectors can be highly irregular, and it is far from 
clear even how to make sense of the Dirichlet problem in this setting. 
Instead, we will proceed by developing a quantitative formulation of the 
above intuition for polytopes. The key point is to find the ``right'' 
formulation that does not degenerate when we approximate an arbitrary 
convex body $M$ by polytopes. Once such a formulation has been found, we 
will be able to extend its conclusion to the general setting by taking 
limits.

We now proceed to make these ideas precise. In the rest of this 
subsection, $M$ will be a polytope in $\mathbb{R}^n$ with nonempty 
interior, and we adopt without further comment the definitions and 
notation of section \ref{sec:qgraph}. Our starting point is the following
Poincar\'e-type inequality on a single edge of the metric graph.

\begin{lem}
\label{lem:edge}
Let $M$ be a polytope in $\mathbb{R}^n$ with nonempty interior, and let
$F\sim F'$ be neighboring facets. Then for any function 
$f\in H^1(e_{F,F'})$ and $0<\varepsilon<1$, we have
$$
	l_{F,F'}^2\int_{e_{F,F'}}
	(f')^2\,d\mathcal{H}^1 \ge
	(1-\varepsilon)^2\pi^2\int_{e_{F,F'}}f^2\,d\mathcal{H}^1
	-\frac{2}{\varepsilon}l_{F,F'}\{f(n_F)^2+f(n_{F'})^2\}.
$$
\end{lem}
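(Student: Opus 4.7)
The inequality is a one-variable Poincar\'e estimate on the arc $e_{F,F'}$, which upon parametrization by arclength becomes the claim that for $f \in H^1(0, l)$ with $l := l_{F,F'}$,
$$
l^2 \int_0^l (f')^2\,d\mathcal{H}^1 \ge (1-\varepsilon)^2 \pi^2 \int_0^l f^2\,d\mathcal{H}^1 - \tfrac{2l}{\varepsilon}\bigl(f(0)^2 + f(l)^2\bigr).
$$
The plan is to derive this from the classical Dirichlet Poincar\'e inequality by extending $f$ linearly to vanish at auxiliary points placed just outside $[0, l]$.

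Set $a := \varepsilon l /(2(1-\varepsilon))$, chosen so that $l + 2a = l/(1-\varepsilon)$. Define $\tilde f \in H^1(-a, l+a)$ to agree with $f$ on $[0, l]$ and to be the affine function connecting the value $f(0)$ at $s = 0$ to $0$ at $s = -a$ on the added interval $[-a, 0]$ (symmetrically on $[l, l+a]$). Then $\tilde f$ is continuous and vanishes at $s = -a$ and $s = l + a$, so $\tilde f \in H^1_0(-a, l+a)$; the classical Dirichlet Poincar\'e inequality on that interval therefore applies with its sharp constant $(\pi/(l+2a))^2 = (1-\varepsilon)^2\pi^2/l^2$, which is precisely the targeted leading constant.

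It remains to tabulate the contributions on the two linear pieces, which is elementary: $(\tilde f')^2$ integrates to $(f(0)^2 + f(l)^2)/a$ and $\tilde f^2$ to $a(f(0)^2 + f(l)^2)/3$. Rearranging Poincar\'e's inequality and multiplying through by $l^2$ produces a boundary coefficient equal to $l(1-\varepsilon)\bigl(\tfrac{2}{\varepsilon} - \tfrac{\varepsilon\pi^2}{6}\bigr)$, which is bounded crudely by $2l/\varepsilon$, establishing the stated estimate.

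The main obstacle is identifying the correct trial function. A more naive decomposition $f = \ell + r$ with $\ell$ the linear interpolant on $[0, l]$ and $r \in H_0^1$, combined with Young's inequality, delivers the correct leading constant $(1-\varepsilon)^2\pi^2$ but a boundary coefficient of order $\pi^2/(4\varepsilon)$, which exceeds $2/\varepsilon$ since $\pi^2 > 8$. The extension argument above sidesteps this Young-inequality loss by applying the Dirichlet eigenvalue bound directly on an enlarged interval; recognizing that $a = \varepsilon l/(2(1-\varepsilon))$ is exactly the extension length that matches both targeted constants simultaneously is the only subtle step in the argument.
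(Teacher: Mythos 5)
Your argument is correct, and it takes a genuinely different (though secretly related) route from the one in the paper. The paper's proof chooses the test function $y(\theta)=\cos\bigl(\tfrac{(1-\varepsilon)\pi}{l}(\theta-\tfrac{l}{2})\bigr)$, writes $g=f/y$, expands $\int (f')^2$, and integrates by parts so that the identity $yy''=-\tfrac{(1-\varepsilon)^2\pi^2}{l^2}y^2$ produces the leading term and the boundary values $y'/y$ produce the error term, which is then bounded via $\cot(\varepsilon\pi/2)\le 2/(\varepsilon\pi)$. You instead extend $f$ affinely to $[-a,l+a]$ with $a=\tfrac{\varepsilon l}{2(1-\varepsilon)}$, so that $l+2a=l/(1-\varepsilon)$, and hit the extension with the sharp one--dimensional Dirichlet Poincar\'e inequality on that interval; the leading constant then comes out automatically, and the boundary term is obtained by subtracting the explicit integrals $\tfrac{1}{a}(f(0)^2+f(l)^2)$ and $\tfrac{a}{3}(f(0)^2+f(l)^2)$ of $(\tilde f')^2$ and $\tilde f^2$ over the two wings. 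Your final coefficient $l(1-\varepsilon)\bigl(\tfrac{2}{\varepsilon}-\tfrac{\pi^2\varepsilon}{6}\bigr)$ is indeed $\le \tfrac{2l}{\varepsilon}$, so the stated bound follows (in fact you obtain a slightly stronger inequality). The two approaches are connected at a deeper level: your enlarged interval $[-a,l+a]$ has $\sin\bigl(\tfrac{\pi(\theta+a)}{l+2a}\bigr)$ as its first Dirichlet eigenfunction, and a short computation shows this is precisely the paper's $y(\theta)$; thus the paper is implicitly running the same argument in ``ground-state transform'' form, while your version makes the underlying Dirichlet eigenvalue comparison explicit. The trade-off is that the paper's manipulation is more self-contained (no extension lemma, no computation of integrals on the wings), whereas your extension argument makes the choice of constant $a$ conceptually transparent and avoids the trigonometric estimate.

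Two small points worth making explicit if you were to write this up: (i) the extension $\tilde f$ really does lie in $H^1_0(-a,l+a)$, since $H^1(0,l)\hookrightarrow C^0([0,l])$ makes the boundary values $f(0),f(l)$ well-defined, the affine pieces match these values continuously, and the resulting weak derivative is in $L^2$; and (ii) after rearranging, the conclusion needs the comparison $l(1-\varepsilon)\bigl(\tfrac{2}{\varepsilon}-\tfrac{\pi^2\varepsilon}{6}\bigr)\le\tfrac{2l}{\varepsilon}$, which holds regardless of the sign of the left-hand side (so no case analysis is required).
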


\begin{proof}
Assume without loss of generality that $F\le F'$, and recall that we 
parametrize functions $f:e_{F,F'}\to\mathbb{R}$ as
$f(\theta)$ for $\theta\in[0,l_{F,F'}]$, where $\theta=0$ corresponds
to vertex $n_F$ and $\theta=l_{F,F'}$ corresponds to vertex $n_{F'}$.
Define the function
$$
	y(\theta) := \cos\bigg(
	\frac{(1-\varepsilon)\pi}{l_{F,F'}}\bigg(
	\theta-\frac{l_{F,F'}}{2}\bigg)\bigg),
$$
and note that $y(\theta)>0$ for $\theta\in[0,l_{F,F'}]$.
Defining $g:=f/y$, we compute
\begin{align*}
	\int_0^{l_{F,F'}} (f')^2\,d\theta &=
	\int_0^{l_{F,F'}} \{(g')^2y^2 + g^2(y')^2 + 
		(g^2)'yy'\}\,d\theta \\
	&=
	\int_0^{l_{F,F'}} \{(g')^2y^2-g^2yy''\}\,d\theta +
	g^2yy'\bigg|_0^{l_{F,F'}},
\end{align*}
where we integrated the last term by parts. But note that
$$
	yy'' = -\frac{(1-\varepsilon)^2\pi^2}{l_{F,F'}^2}y^2,\quad
	\frac{y'}{y}(0)=-\frac{y'}{y}(l_{F,F'}) =
	\frac{(1-\varepsilon)\pi}{l_{F,F'}}
		\tan\bigg(\frac{(1-\varepsilon)\pi}{2}\bigg)
	\le\frac{2}{l_{F,F'}\varepsilon}.
$$
It follows that
$$
	\int_0^{l_{F,F'}} (f')^2\,d\theta \ge
	\frac{(1-\varepsilon)^2\pi^2}{l_{F,F'}^2}
	\int_0^{l_{F,F'}} f^2\,d\theta 
	-\frac{2}{l_{F,F'}\varepsilon}\{f(l_{F,F'})^2+f(0)^2\}.
$$
Rearranging this expression yields the conclusion.
\end{proof}

\begin{rem}
Let us note that Lemma \ref{lem:edge} may indeed be viewed as a 
quantitative formulation of uniqueness of the Dirichlet problem on an 
edge. Indeed, suppose $f_1,f_2$ both satisfy $f_i''+f_i=0$ on $e_{F,F'}$, 
and that $f_1,f_2$ agree on the vertices $n_F,n_{F'}$. Then applying
Lemma \ref{lem:edge} to $f=f_1-f_2$ and letting $\varepsilon\to 0$
yields
$$
	0 = -l_{F,F'}^2\int_{e_{F,F'}} f(f''+f)\,d\mathcal{H}^1
	= l_{F,F'}^2\int_{e_{F,F'}} \{(f')^2-f^2\}\,d\mathcal{H}^1
	\ge C\int_{e_{F,F'}}f^2\,d\mathcal{H}^1
$$
with $C=\pi^2-l_{F,F'}^2$, where we integrated by parts in the second
equality. Thus provided $l_{F,F'}<\pi$, the two solutions must coincide
$f_1=f_2$.
\end{rem}

Next, we note that when $M$ has nonempty interior, then the lengths 
$l_{F,F'}$ of all edges must be bounded away from $\pi$. The following 
lemma quantifies this idea.

\begin{lem}
\label{lem:2d}
Let $M$ be a polytope in $\mathbb{R}^n$ such that $rB\subseteq 
M\subseteq RB$. Then
$$
	\tan\bigg(\frac{l_{F,F'}}{2}\bigg)\le\frac{R}{r}
	\quad\mbox{for all }e_{F,F'}\in E.
$$
In particular, we can estimate
$$
	l_{F,F'}^2 \le 
	\pi^2\bigg(
	1-\frac{r^2}{R^2+r^2}
	\bigg)
	\quad\mbox{for all }e_{F,F'}\in E.
$$
\end{lem}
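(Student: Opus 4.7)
The plan is to reduce this to an elementary planar geometry question by projecting $M$ onto the two-dimensional plane orthogonal to $F\cap F'$. Let $L_0\subset\mathbb{R}^n$ denote the $(n-2)$-dimensional linear subspace parallel to $\mathrm{aff}(F\cap F')$, and let $\pi\colon\mathbb{R}^n\to L_0^\perp$ be the orthogonal projection onto its two-dimensional complement. First I would verify the following setup: $M_2:=\pi(M)$ is a convex polygon with $rB_2\subseteq M_2\subseteq RB_2$, where $B_2$ denotes the unit disk in $L_0^\perp$; both $n_F$ and $n_{F'}$ lie in $L_0^\perp$ (since they are orthogonal to $F\cap F'\subset L_0$), so the angle between them measured inside $L_0^\perp$ equals the geodesic length $l_{F,F'}$; the face $F\cap F'$ collapses under $\pi$ to a single point $p\in M_2$; and the hyperplanes containing $F$ and $F'$ descend to two distinct supporting lines of $M_2$ at $p$ with outer unit normals $n_F$ and $n_{F'}$. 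It follows that $p$ is a vertex of $M_2$ whose interior angle equals $\pi-l_{F,F'}$.

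With the reduction in hand, the task becomes the following planar statement: if $P\subset\mathbb{R}^2$ is a convex polygon with $rB_2\subseteq P\subseteq RB_2$ and $p$ is a vertex of $P$ at distance $d:=|p|\in[r,R]$ from the origin, then the interior angle of $P$ at $p$ is at least $2\arcsin(r/d)$. This is essentially immediate: the tangent cone of $P$ at $p$ contains every direction from $p$ pointing into $rB_2\subseteq P$, and the set of such directions forms a wedge of opening angle $2\arcsin(r/d)$ (the angular diameter of $rB_2$ as seen from $p$, a standard trigonometric computation using that the tangent length from $p$ to $rB_2$ equals $\sqrt{d^2-r^2}$). Writing $\alpha:=l_{F,F'}$ and applying $\tan$ (which is increasing on $(0,\pi/2)$) to both sides of $\alpha/2\le\pi/2-\arcsin(r/d)$ yields
\[
\tan(\alpha/2)\le\cot(\arcsin(r/d))=\frac{\sqrt{d^2-r^2}}{r}\le\frac{R}{r},
\]
where the last step uses $d\le R$. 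This is the first claim.

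For the second claim, squaring $\tan(l_{F,F'}/2)\le R/r$ and rearranging gives $\sin^2(l_{F,F'}/2)\le R^2/(R^2+r^2)=1-r^2/(R^2+r^2)$. Since $\dim(F\cap F')=n-2$ forbids the facets $F$ and $F'$ from being parallel, one has $l_{F,F'}<\pi$, so $l_{F,F'}/2\in(0,\pi/2)$, and Jordan's inequality $\sin x\ge(2/\pi)x$ on $[0,\pi/2]$ yields $\sin^2(l_{F,F'}/2)\ge l_{F,F'}^2/\pi^2$. Combining the two bounds produces the desired estimate $l_{F,F'}^2\le\pi^2\bigl(1-r^2/(R^2+r^2)\bigr)$. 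The main obstacle in the whole argument is really just the careful bookkeeping in the first paragraph---identifying the outer normals at the projected vertex $p$ of $M_2$ with $n_F$ and $n_{F'}$---after which everything reduces to elementary planar trigonometry.
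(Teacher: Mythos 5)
Your proof is correct and takes a genuinely different route than the paper's, though the two are geometrically cognate. The paper works directly in $\mathbb{R}^n$: it introduces the supporting hyperplanes $H_F$, $H_{F'}$ of the two facets, observes that $h_M(n_F),h_M(n_{F'})\ge r$ and $H_F\cap H_{F'}\cap RB\ne\varnothing$, and then bounds $l_{F,F'}$ by an explicit orthogonal decomposition of $\|x\|^2$ adapted to $\spn\{n_F,n_{F'}\}$, requiring a small case split between $l_{F,F'}<\pi/2$ and $l_{F,F'}\ge\pi/2$. You instead project onto the $2$-plane $L_0^\perp$ normal to $F\cap F'$ and reduce everything to the statement that at a vertex $p$ of a planar convex polygon sandwiched between $rB_2$ and $RB_2$, the interior angle is at least the angular diameter $2\arcsin(r/|p|)$ of the inscribed disk. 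Your route is more pictorial, avoids the paper's case split, and makes the second claim more self-contained by identifying the paper's unexplained inequality $4\arctan(x)^2\le\pi^2 x^2/(1+x^2)$ as Jordan's inequality $\sin y\ge(2/\pi)y$ after the substitution $y=\arctan x$. The two are equivalent at bottom — your vertex $p$ is exactly the nearest point of $H_F\cap H_{F'}$ to the origin, and the paper's coordinate decomposition is effectively computing $|p|$ — but the projection reformulation is cleaner. One small remark on rigor: your step 7 asserts the interior angle at $p$ \emph{equals} $\pi-l_{F,F'}$, which implicitly invokes that an $(n-2)$-face of a polytope lies in exactly two facets so that $N(M,F\cap F')=\mathrm{cone}\{n_F,n_{F'}\}$; for the argument one only needs the one-sided containment, giving interior angle $\le\pi-l_{F,F'}$, so this is harmless, but it is worth stating which direction you actually use.
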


\begin{proof}
Define for every facet $F\in\mathcal{F}$ the supporting hyperplane
$$
	H_F :=\{x\in\mathbb{R}^n:\langle n_F,x\rangle = h_M(n_F)\}. 
$$
Let $F\sim F'$ be neighboring facets. We make the following claims.
\begin{enumerate}[a.]
\itemsep\abovedisplayskip
\item We claim that $h_M(n_F)\ge r$ and $h_M(n_{F'})\ge r$. Indeed, note 
that as $rB\subseteq M$, we have $h_M(n_F)\ge h_{rB}(n_F) = r$,
and similarly for $F'$.
\item We claim that $H_F\cap H_{F'}\cap RB\ne\varnothing$. Indeed, this
follows readily by noting that
$F\cap F'\subset H_F\cap H_{F'}$ and 
$\varnothing\ne F\cap F'\subset M\subseteq RB$.
\end{enumerate}
Now note that we can write for any $x\in\mathbb{R}^n$
$$
	\|x\|^2 =
	\langle x,n_F\rangle^2 +
	\frac{(\langle x,n_{F'}\rangle - \cos(l_{F,F'})\langle x,n_F\rangle)^2
	}{\sin(l_{F,F'})^2}
	+
	\|P_{\{n_F,n_{F'}\}^\perp}x\|^2,
$$
where we used $\langle n_F,n_{F'}\rangle = \cos(l_{F,F'})$. Thus 
$$
	R^2 \ge \inf_{x\in H_F\cap H_{F'}}\|x\|^2
	= 
	h_M(n_F)^2 +
	\frac{(h_M(n_{F'}) - \cos(l_{F,F'})h_M(n_F))^2}{\sin(l_{F,F'})^2}
$$
by claim b above. Applying claim a yields
$$
	R^2 \ge  
	r^2\bigg(1 +
	\frac{(1 - \cos(l_{F,F'}))^2}{\sin(l_{F,F'})^2}\bigg)
$$
provided $\frac{\pi}{2}\le l_{F,F'}<\pi$. It follows that
$$
	\tan\bigg(\frac{l_{F,F'}}{2}\bigg) =
	\frac{1-\cos(l_{F,F'})}{\sin(l_{F,F'})} \le \frac{R}{r}.
$$
Indeed, for $\frac{\pi}{2}\le l_{F,F'}<\pi$ this is immediate from the
previous expression, while for $l_{F,F'}<\frac{\pi}{2}$ this
follows as $\tan(\frac{\pi}{4})=1\le \frac{R}{r}$. To deduce the
second part of the statement, it remains to note
that $4\arctan(x)^2\le \pi^2x^2/(1+x^2)$.
\end{proof}

Combining Lemmas \ref{lem:edge} and \ref{lem:2d} yields the following.

\begin{cor}
\label{cor:poincedge}
Let $M$ be a polytope in $\mathbb{R}^n$ such that $rB\subseteq M
\subseteq RB$. Then for any neighboring facets
$F\sim F'$ of $M$ and any function $f\in H^1(e_{F,F'})$, we have
$$
	\int_{e_{F,F'}}\{(f')^2-f^2\}\,d\mathcal{H}^1 \ge
	\frac{r^2}{2R^2}
	\int_{e_{F,F'}}f^2\,d\mathcal{H}^1
	-\frac{4R^2}{r^2}l_{F,F'}\{f(n_F)^2+f(n_{F'})^2\}.
$$
\end{cor}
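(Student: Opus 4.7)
My strategy is to adapt the test-function argument underlying Lemma~\ref{lem:edge} so that it produces the Poincar\'e constant $\frac{r^2}{2R^2}$ appearing on the right-hand side of the corollary, and then to use Lemma~\ref{lem:2d} to control the resulting boundary contribution.

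First, I would set $\alpha := \sqrt{1 + r^2/(2R^2)}$ and take the test function $y(\theta) := \cos(\alpha(\theta - l/2))$ on $[0,l] = e_{F,F'}$, where $l := l_{F,F'}$. A short computation using Lemma~\ref{lem:2d} shows $\alpha l/2 < \pi/2$, so $y > 0$ on the edge. Writing $f = gy$ and carrying out the integration by parts exactly as in the proof of Lemma~\ref{lem:edge} (but now with $y'' + \alpha^2 y = 0$), I obtain
$$\int_{e_{F,F'}}\{(f')^2 - \alpha^2 f^2\}\,d\mathcal{H}^1 = \int_{e_{F,F'}}(g')^2 y^2\,d\mathcal{H}^1 - \alpha\tan(\alpha l/2)\,(f(n_F)^2 + f(n_{F'})^2),$$
where the boundary term $[g^2 yy']_0^l$ simplifies via $\sin(\alpha l) = 2\sin(\alpha l/2)\cos(\alpha l/2)$ together with $g(n_F)^2 + g(n_{F'})^2 = (f(n_F)^2 + f(n_{F'})^2)/\cos^2(\alpha l/2)$. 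Dropping the nonnegative term $\int (g')^2 y^2$ and using $\alpha^2 = 1 + r^2/(2R^2)$ reduces the corollary to verifying the trigonometric inequality
$$\alpha\tan(\alpha l/2) \;\le\; \frac{4R^2}{r^2}\,l.$$

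To prove this inequality I would use that $\tan(x)/x$ is increasing on $[0,\pi/2)$ together with the $l$-independent upper bound $\alpha l/2 \le x^* := (\pi/2)\sqrt{(2R^2+r^2)/(2(R^2+r^2))}$ coming from Lemma~\ref{lem:2d}; this reduces matters to showing $\alpha^2\tan(x^*)/x^* \le 8R^2/r^2$. Bounding $\tan(x^*)/x^* \le 1/(x^*\cos(x^*))$ and estimating $\pi/2 - x^* \ge \pi\gamma/4$ (via $1 - \sqrt{1-\gamma} \ge \gamma/2$) with $\gamma := r^2/(2(R^2+r^2)) \in (0,1/4]$, together with the sharp lower bound $\cos(x^*) = \sin(\pi/2 - x^*) \ge (\pi/2 - x^*)(1 - (\pi/2-x^*)^2/6)$, the required estimate reduces after straightforward algebra to a polynomial inequality in $\rho := r^2/R^2 \in (0,1]$ that holds because $\pi^2 > 9$.

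\textbf{Main obstacle.} The conceptual step -- identifying $\alpha^2 = 1 + r^2/(2R^2)$ as the right eigenvalue -- is natural once one recognizes that the shifted differential operator $f \mapsto -(f'' + \alpha^2 f)$ annihilates the test function $y$, so the integration-by-parts machinery of Lemma~\ref{lem:edge} will produce exactly the desired spectral gap. The difficulty is quantitative: $\tan(x^*)/x^*$ diverges as $r/R \to 0$ (since $x^* \to \pi/2$ in that limit), so to preserve the sharp constant $4R^2/r^2$ one cannot afford the crude bound $\sin(y) \ge 2y/\pi$ and must instead use the Taylor refinement $\sin(y) \ge y(1 - y^2/6)$ for small $y$. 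This is precisely where the hypothesis $r \le R$ -- which forces $\gamma \le 1/4$ and hence keeps $\pi/2 - x^*$ bounded away from $0$ -- plays its role.
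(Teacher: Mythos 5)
Your approach reaches the same conclusion but by a genuinely different route. The paper's proof treats Lemma~\ref{lem:edge} as a flexible black box with a free parameter $\varepsilon$: the test function there is $y(\theta)=\cos(\tfrac{(1-\varepsilon)\pi}{l}(\theta-l/2))$, whose frequency adapts to the edge length $l$, and the corollary is obtained purely algebraically by first applying Lemma~\ref{lem:2d} to replace $\pi^2/l^2$ with $\tfrac{R^2+r^2}{R^2}$ and then choosing $\varepsilon=\tfrac{r^2}{4(R^2+r^2)}$. You instead hardcode the target spectral constant by taking the $l$-independent test function $y(\theta)=\cos(\alpha(\theta-l/2))$ with $\alpha^2=1+r^2/(2R^2)$; the integration-by-parts identity then produces the desired deficit $\tfrac{r^2}{2R^2}\int f^2$ exactly, with no subsequent algebraic manipulation needed, but the price you pay is that the boundary coefficient $\alpha\tan(\alpha l/2)$ must be controlled directly by the trigonometric inequality $\alpha\tan(\alpha l/2)\le\tfrac{4R^2}{r^2}\,l$, which is no longer a one-line algebraic check.

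That trigonometric estimate is the one spot where your proposal is materially sketchier than the paper. Your observations are all correct: $y>0$ on the edge because $\alpha l/2\le x^*:=\tfrac{\pi}{2}\sqrt{1-\gamma}<\tfrac{\pi}{2}$ with $\gamma=\tfrac{r^2}{2(R^2+r^2)}$; the boundary term reduces via $g=f/y$ to $-\alpha\tan(\alpha l/2)\,(f(n_F)^2+f(n_{F'})^2)$; and, since $\tan(x)/x$ is increasing, the whole estimate reduces to $\alpha^2\tan(x^*)/x^*\le 8R^2/r^2$. You also correctly identify that the crude bound $\sin\delta\ge 2\delta/\pi$ is insufficient (it would fail near $r=R$) and that the Taylor refinement $\sin\delta\ge\delta(1-\delta^2/6)$, together with $\delta:=\tfrac{\pi}{2}-x^*\ge\tfrac{\pi\gamma}{4}$ and $x^*\ge\tfrac{\pi\sqrt{3}}{4}$, is needed. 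With these estimates the claim indeed becomes $(2+\rho)(1+\rho)\le\tfrac{\pi^2\sqrt{3}}{2}(1-\tfrac{\pi^2}{384})$ for $\rho=r^2/R^2\in(0,1]$, which holds comfortably ($6\le 8.3$), though the phrase ``holds because $\pi^2>9$'' undersells what is actually being used. In short: a correct and conceptually attractive alternative (it reads the test function as an eigenfunction of the shifted operator), but one whose concluding numerical verification needs to be spelled out more carefully to constitute a complete proof, whereas the paper's $\varepsilon$-parametrized split keeps the final step trivial.
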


\begin{proof}
Applying Lemma \ref{lem:2d} to the left-hand side of Lemma \ref{lem:edge}
and rearranging the resulting expression yields the following inequality:
\begin{align*}
	\int_{e_{F,F'}}
	\{(f')^2-f^2\}\,d\mathcal{H}^1 &\ge
	\bigg(
	\frac{R^2+r^2}{R^2}
	(1-\varepsilon)^2-1\bigg)
	\int_{e_{F,F'}}f^2\,d\mathcal{H}^1 \\ &\qquad
	-
	\frac{R^2+r^2}{R^2}\frac{2}{\varepsilon\pi^2}
	\,l_{F,F'}\{f(n_F)^2+f(n_{F'})^2\}.
\end{align*}
Now choose $\varepsilon = \frac{r^2}{4(R^2+r^2)}$. Then
$$
	\frac{R^2+r^2}{R^2}
        (1-\varepsilon)^2-1 \ge
	\frac{R^2+r^2}{R^2}
	(1-2\varepsilon) - 1
	=
	\frac{r^2}{2R^2},
$$
while
$$
	\frac{R^2+r^2}{R^2}\frac{2}{\varepsilon\pi^2} =
	\frac{8}{\pi^2}\frac{(R^2+r^2)^2}{r^2R^2}
	\le
	\frac{32}{\pi^2}\frac{R^2}{r^2}.
$$
To conclude, we estimate $\frac{32}{\pi^2}\le 4$ for aesthetic appeal.
\end{proof}

We are now ready to prove a form of Theorem \ref{thm:rigid} for polytopes.

\begin{prop}
\label{prop:polyrigid}
Let $M$ be a polytope in $\mathbb{R}^n$ such that
$rB\subseteq M\subseteq RB$. Define a measure $\mu_M$ on the
vertices of the associated metric graph by setting
$$
	\mu_M(\{n_F\}) := \frac{1}{n-1}\sum_{F':F'\sim F}
	\mathcal{H}^{n-2}(F\cap F')\,l_{F,F'}
$$
for all facets $F$ of $M$. Then we have
\begin{align*}
	&\V(K,L,\mathcal{M})^2
	\ge 
	\V(K,K,\mathcal{M})\,\V(L,L,\mathcal{M}) \\ &
	\qquad + \V(L,L,\mathcal{M})\bigg(
	\frac{r^2}{2nR^2}\int (h_K-h_L)^2\,dS_{B,\mathcal{M}}
	-\frac{4R^2}{nr^2}\int (h_K-h_L)^2\,d\mu_M\bigg)
\end{align*}
for all convex bodies $K,L$ in $\mathbb{R}^n$.
\end{prop}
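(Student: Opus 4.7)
The plan is to combine Theorem \ref{thm:qgraph} with the edge-wise Poincar\'e bound of Corollary \ref{cor:poincedge}, and then convert the resulting linear estimate on $\mathcal{E}$ into the required quadratic inequality. By Theorem \ref{thm:qgraph},
$$
n\,\mathcal{E}(f,f)=\frac{1}{n-1}\sum_{e_{F,F'}\in E}\mathcal{H}^{n-2}(F\cap F')\int_{e_{F,F'}}\{f^{2}-(f')^{2}\}\,d\mathcal{H}^{1}
$$
for every $f\in\Dom\mathcal{E}$. The first step is to apply Corollary \ref{cor:poincedge} on each edge and sum with the weights $\tfrac{1}{n-1}\mathcal{H}^{n-2}(F\cap F')$. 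The edge integral of $f^{2}$ reassembles into $\int f^{2}\,dS_{B,\mathcal{M}}$ by the formula in Theorem \ref{thm:qgraph}, while a swap of edge- and vertex-indexing (each facet $F$ contributes once per neighbour) converts the endpoint terms into $\int f^{2}\,d\mu_{M}$, by the very definition of $\mu_{M}$. This will yield the gap estimate
$$
n\,\mathcal{E}(f,f)\le-\frac{r^{2}}{2R^{2}}\int f^{2}\,dS_{B,\mathcal{M}}+\frac{4R^{2}}{r^{2}}\int f^{2}\,d\mu_{M}\qquad(f\in\Dom\mathcal{E}).
$$

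I will then apply this to $f=h_{K}-h_{L}$, which lies in $\Dom\mathcal{E}$ since support functions are continuous on $G$ and Lipschitz (hence $H^{1}$) on each great-circle edge. Expanding $\mathcal{E}(h_{K}-h_{L},h_{K}-h_{L})$ by bilinearity and using $\mathcal{E}(h_{P},h_{Q})=\V(P,Q,\mathcal{M})$, this rearranges into the linear inequality $2C\ge A+B+D$, where $A=\V(K,K,\mathcal{M})$, $B=\V(L,L,\mathcal{M})$, $C=\V(K,L,\mathcal{M})$ and
$$
D=\frac{r^{2}}{2nR^{2}}\int(h_{K}-h_{L})^{2}\,dS_{B,\mathcal{M}}-\frac{4R^{2}}{nr^{2}}\int(h_{K}-h_{L})^{2}\,d\mu_{M}
$$
is precisely the deficit appearing in the statement of the proposition.

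The final and most delicate step will be to upgrade $2C\ge A+B+D$ into the sought quadratic inequality $C^{2}\ge AB+BD$. If $D\le 0$ then $BD\le 0$, and the claim reduces to Minkowski's quadratic inequality $C^{2}\ge AB$. If $D>0$ then $A+B+D>0$ and squaring is legitimate, giving $4C^{2}\ge(A+B+D)^{2}$; the elementary algebraic identity
$$
(A+B+D)^{2}-4(AB+BD)=(A-B+D)^{2}\ge 0
$$
then closes the argument. I do not expect serious difficulty here, since the substantive analytic work is already packaged in Corollary \ref{cor:poincedge} (which itself rests on the edge Poincar\'e inequality of Lemma \ref{lem:edge} and the geometric length bound of Lemma \ref{lem:2d}); the only points requiring care will be the bookkeeping in the edge-to-vertex reassembly of the summed Poincar\'e bound, and the case split by the sign of $D$ in the last step.
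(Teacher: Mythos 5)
Your proof is correct and follows essentially the same route as the paper's: sum the edgewise Poincar\'e bound of Corollary~\ref{cor:poincedge} with the weights from Theorem~\ref{thm:qgraph} to obtain the gap estimate $n\,\mathcal{E}(f,f)\le-\frac{r^2}{2R^2}\int f^2\,dS_{B,\mathcal{M}}+\frac{4R^2}{r^2}\int f^2\,d\mu_M$, substitute $f=h_K-h_L$, and close by algebra. The only minor divergence is the final step, where the paper uses $A-2C+B\ge A-C^2/B$ for $B>0$ (with $B=0$ trivial), whereas you case-split on the sign of $D$ and invoke Minkowski's quadratic inequality in the $D\le 0$ branch --- both are valid, the paper's being slightly more direct.
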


\begin{proof}
Let $\mathcal{E}$ be the quadratic form of Theorem \ref{thm:qgraph}
and $f\in\Dom\mathcal{E}$. Multiplying the inequality of Corollary
\ref{cor:poincedge} by $\mathcal{H}^{n-2}(F\cap F')$ and summing
over all edges yields
$$
	0
	\ge
	\mathcal{E}(f,f)
	+\frac{r^2}{2nR^2}
	\int f^2\,dS_{B,\mathcal{M}}
	-\frac{4R^2}{nr^2}\int f^2\,d\mu_M.
$$
Now let $f=h_K-h_L$. Then we obtain
\begin{align*}
	0
	&\ge
	\V(K,K,\mathcal{M})-2\,\V(K,L,\mathcal{M})+\V(L,L,\mathcal{M})
	\\
	&\qquad+\frac{r^2}{2nR^2}
	\int (h_K-h_L)^2\,dS_{B,\mathcal{M}}
	-\frac{4R^2}{nr^2}\int (h_K-h_L)^2\,d\mu_M.
\end{align*}
It remains to note that
$$
	\V(K,K,\mathcal{M})-2\,\V(K,L,\mathcal{M})+\V(L,L,\mathcal{M}) 
	\ge
	\V(K,K,\mathcal{M})-\frac{\V(K,L,\mathcal{M})^2}{
	\V(L,L,\mathcal{M})}
$$
when $\V(L,L,\mathcal{M})>0$, so the conclusion follows readily in this
case.  On
the other hand, when $\V(L,L,\mathcal{M})=0$ the conclusion is trivial.
\end{proof}

\subsection{Proof of Theorem \ref{thm:rigid}: general case}
\label{sec:rigidpf}

In order to prove Theorem \ref{thm:rigid} for an arbitrary convex body 
$M$, we will approximate it by polytopes and take limits in Proposition 
\ref{prop:polyrigid}. The main issue that we will encounter is to 
understand the behavior of the measure $\mu_M$ under taking limits.

At first sight, one might hope that $\mu_M$ is a natural geometric object 
that remains meaningful for arbitrary convex bodies, just like 
$S_{B,\mathcal{M}}$ or $S_{M,\mathcal{M}}$. This does not appear to be the 
case, however. It is important to note that even within the class of 
polytopes, the measure $\mu_M$ is not continuous with respect to Hausdorff 
convergence, as is illustrated by the following example.

\begin{example}
Consider a cube $M_\varepsilon$ with one of its edges sliced off at width
$\varepsilon$; this construction is illustrated in Figure 
\ref{fig:noncan}. Then $M_\varepsilon$ has, for all $\varepsilon>0$, an 
additional facet $F$ as compared to $M_0$. It is readily seen that 
$\inf_{\varepsilon>0}\mu_{M_{\varepsilon}}(\{n_F\})>0$, while 
$\mu_{M_0}(\{n_F\})=0$. Thus $M_\varepsilon\to M_0$ but 
$\mu_{M_\varepsilon}\not\to\mu_{M_0}$ as $\varepsilon\to 0$.
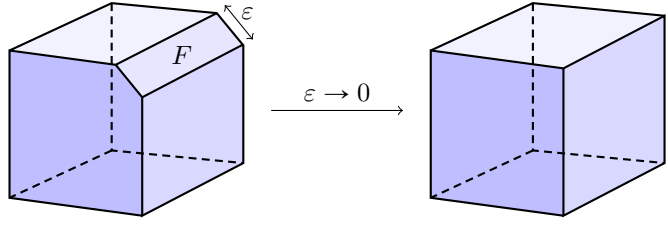
\begin{figure}
\centering
\begin{tikzpicture}[scale=.7]

\begin{scope}[xshift=-8cm]

\fill[blue!15] (-5.95,0) -- (-4.03,1) -- 
(-4.03,-1.25) -- (-5.95,-2.25) -- (-5.95,0);

\fill[blue!25] (-6.454,.618) -- (-8.47,.89) -- 
(-8.47,-1.91) -- (-5.95,-2.25) -- (-5.95,0);

\fill[blue!5] (-8.47,.89) -- (-6.53,1.79) -- 
(-4.534,1.6) -- (-6.454,.618) -- (-8.47,.89);

\fill[blue!10] (-6.454,.618) -- (-5.95,0) -- (-4.04,1)
-- (-4.534,1.6) -- (-6.454,.618);

\draw[thick] (-4.03,1) -- (-4.03,-1.25) --
(-5.95,-2.25) -- (-5.95,0);

\draw[thick] (-6.454,.618) -- (-5.95,0) -- (-4.04,1)
-- (-4.534,1.6) -- (-6.454,.618);

\draw (-5.2,0.8) node {$F$};

\draw[thick] (-6.454,.618) -- (-8.47,.89) -- (-8.47,-1.91) --
(-5.95,-2.25);

\draw[thick] (-8.47,.89) -- (-6.53,1.79) -- (-4.534,1.6);

\draw[thick,densely dashed] (-6.53,1.79) -- (-6.53,-1.01) --
(-4.03,-1.25);

\draw[thick,densely dashed] (-6.53,-1.01) -- (-8.47,-1.91);

\draw[<->] (-3.85,1.1) -- (-4.38,1.75);

\draw (-3.95,1.6) node {$\varepsilon$};

\end{scope}

\draw[->] (-11.5,-.25) -- (-9,-.25) node[midway,above] {$\varepsilon\to 0$};

\fill[blue!15] (-5.95,.55) -- (-4.03,1.55) -- 
(-4.03,-1.25) -- (-5.95,-2.25) -- (-5.95,.55);

\fill[blue!25] (-5.95,.55) -- (-8.47,.89) -- 
(-8.47,-1.91) -- (-5.95,-2.25) -- (-5.95,.55);

\fill[blue!5] (-8.47,.89) -- (-6.53,1.79) -- 
(-4.03,1.55) -- (-5.95,.55) -- (-8.47,.89);

\draw[thick] (-5.95,.55) -- (-4.03,1.55) -- (-4.03,-1.25) --
(-5.95,-2.25) -- (-5.95,.55);

\draw[thick] (-5.95,.55) -- (-8.47,.89) -- (-8.47,-1.91) --
(-5.95,-2.25);

\draw[thick] (-8.47,.89) -- (-6.53,1.79) -- (-4.03,1.55);

\draw[thick,densely dashed] (-6.53,1.79) -- (-6.53,-1.01) --
(-4.03,-1.25);

\draw[thick,densely dashed] (-6.53,-1.01) -- (-8.47,-1.91);

\end{tikzpicture}
\caption{Example of discontinuity of the measure $\mu_M$.\label{fig:noncan}}
\end{figure}
\end{example}

For a polytope $M$, Theorem \ref{thm:qgraph} shows that the 
mass assigned by $\mu_M$ to a vertex of the metric graph is precisely the 
$S_{B,\mathcal{M}}$-measure of its incident edges. We may therefore view 
$\mu_M$ as a kind of projection of $S_{B,\mathcal{M}}$ onto the 
$0$-extreme normal vectors of $M$. It is not clear, however, what this 
might mean for a general convex body $M$, and the above example 
illustrates that one cannot hope to canonically define such projections by 
approximation of general bodies by polytopes. Nonetheless, as
$$
	\mu_M(S^{n-1}) = 2\, S_{B,\mathcal{M}}(S^{n-1}) =
	2n\,\V(B,B,\mathcal{M})
$$
by Theorem \ref{thm:qgraph}, the total mass of $\mu_M$ is uniformly 
bounded for any convergent sequence of polytopes, and we may therefore
extract a weakly convergent subsequence of these measures
(using the classical fact that any bounded family of measures on a compact 
metric space is
relatively sequentially compact for the topology of weak convergence, cf.\ 
\cite[Theorem 8.6.2]{Bog07}). While the
limiting measure is not uniquely defined by the limiting body, we
can nonetheless guarantee it satisfies our desired properties by 
working with specially chosen polytope approximations.

\begin{lem}
\label{lem:0approx}
Let $M$ be any convex body in $\mathbb{R}^n$ with $0\in\intr M$.
Then there exists a sequence of polytopes $M_k$ in $\mathbb{R}^n$ with the 
following properties:
\begin{enumerate}[a.]
\item $M_k\to M$ in Hausdorff metric.
\item There exist $r,R>0$ so that $rB\subseteq M_k\subseteq RB$ for all $k$.
\item $\supp S_{M_k,\mathcal{M}_k}\subseteq \supp S_{M,\mathcal{M}}$
for all $k$.
\item $\mu_{M_k}$ converges weakly to a limiting measure
$\mu_{M}$ with $\supp\mu_M\subseteq\supp S_{M,\mathcal{M}}$.
\end{enumerate}
\end{lem}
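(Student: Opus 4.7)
The plan is to construct $M_k$ as outer polytopal approximations of $M$ obtained by intersecting supporting halfspaces of $M$ only in $0$-extreme normal directions. Writing $D$ for the set of $0$-extreme normal vectors of $M$, each such approximation will automatically have all facet normals in $D$, giving (c) for free via Theorem \ref{thm:supp}; (a) and (b) will follow from standard compactness, and (d) from a weak-compactness extraction.

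\textbf{Construction and verification of (a)--(c).} The key geometric observation is that every regular normal direction of $M$ belongs to $D$: at a regular boundary point all outer normals lie on a single ray, hence no two can be linearly independent. Since $M$ is the intersection of its regular supporting halfspaces \cite[Theorem 2.2.6]{Sch14}, we conclude $M = \bigcap_{u \in D} H_u^-$, where $H_u^- := \{x : \langle u, x\rangle \le h_M(u)\}$. A standard compactness argument then produces, for each $k$, a finite $U_k \subseteq D$ with $M_k := \bigcap_{u \in U_k} H_u^- \subseteq M + (1/k)B$; by first fixing a finite $U_0 \subseteq D$ large enough to make $M_0$ bounded (which is possible because $\cl D = \supp S_{M,\mathcal{M}}$ cannot be contained in any closed halfspace, by the argument in the proof of Lemma \ref{lem:covariance}) and taking $U_k \supseteq U_0$, one obtains polytopes $M_k \supseteq M$ of uniformly bounded diameter. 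Properties (a) and (b) are immediate (with $r$ the inradius of $M$ at $0$ and $R$ a bound on $M_0 \supseteq M_k$), and (c) follows from Theorem \ref{thm:supp}: the facet normals of $M_k$ form a subset of $U_k \subseteq D \subseteq \supp S_{M,\mathcal{M}}$ and coincide with $\supp S_{M_k,\mathcal{M}_k}$.

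\textbf{Extraction for (d) and main obstacle.} By Theorem \ref{thm:qgraph},
$\mu_{M_k}(S^{n-1}) = 2\,S_{B,\mathcal{M}_k}(S^{n-1}) = 2n\,\V(B,B,\mathcal{M}_k)$, which converges by Theorem \ref{thm:conv} and is therefore uniformly bounded. Weak sequential compactness of bounded Borel measures on the compact space $S^{n-1}$ then yields a subsequence along which $\mu_{M_k} \wto \mu_M$. Since each $\mu_{M_k}$ is supported in the closed set $C := \supp S_{M,\mathcal{M}}$ by (c), testing against any continuous $\varphi \ge 0$ vanishing on $C$ gives $\int \varphi\,d\mu_M = \lim \int \varphi\,d\mu_{M_k} = 0$, so $\supp \mu_M \subseteq C$. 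The main obstacle is really conceptual rather than technical: one must recognize that, however irregular the set of $0$-extreme normals may be for a general convex body, these directions alone already determine $M$ as an intersection of halfspaces; once this observation is in hand, the rest is routine compactness and continuity of mixed volumes.
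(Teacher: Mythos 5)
Your proof is correct and takes essentially the same route as the paper: both constructions build outer polytopal approximations of $M$ by intersecting supporting halfspaces whose normals are $0$-extreme (equivalently, regular), which guarantees $\supp S_{M_k,\mathcal{M}_k}\subseteq\supp S_{M,\mathcal{M}}$ via Theorem~\ref{thm:supp} and Schneider's fact that $M$ is the intersection of its regular supporting halfspaces, and then extract a weakly convergent subsequence of $\mu_{M_k}$ using the uniform mass bound from Theorem~\ref{thm:qgraph}. The only cosmetic difference is that the paper fixes a countable dense family of regular normals and truncates at index $k$, whereas you select finite subsets $U_k$ by a compactness argument; the two are interchangeable.
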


\begin{proof}
Recall that a regular boundary point of $M$ is a point in $\partial M$ 
that has a unique outer normal vector; in particular, the normal vector at 
a regular boundary point is $0$-extreme \cite[section 2.2]{Sch14}.
Choose a countable dense subset of the regular boundary points of
$M$, and let $\{n_i\}_{i\ge 1}$ be the corresponding normal directions.
Then $\{n_i\}\subseteq\supp S_{M,\mathcal{M}}$ by Theorem \ref{thm:supp}. 
Moreover, as a convex body with nonempty interior is the intersection of 
its regular supporting halfspaces \cite[Theorem 2.2.6]{Sch14}, we have
$$
	M = \bigcap_{i\ge 1}\{x\in\mathbb{R}^n:\langle x,n_i\rangle
	\le h_M(n_i)\}.
$$
Now define
$$
	M_k' := \bigcap_{1\le i\le k}\{x\in\mathbb{R}^n:
	\langle x,n_i\rangle\le h_M(n_i)\}.
$$
Then we have the following properties.
\begin{enumerate}[i.]
\item $M_k'$ is a polytope for all sufficiently large $k$.
\item $M_k'\to M$  as $k\to\infty$ in Hausdorff metric by
\cite[Lemma 1.8.2]{Sch14}.
\item $rB\subseteq M\subseteq M_k'$ for all $k$ with $r>0$, as 
$0\in\intr M$.
\item $M_k'\subseteq RB$ for all sufficiently large $k$ with
$R=\mathop\mathrm{diam}M$ by property ii.
\item $\supp S_{M_k',\mathcal{M}_k'}\subseteq\{n_i\}_{1\le i\le k}
\subseteq \supp S_{M,\mathcal{M}}$ for all $k$ by Theorem \ref{thm:supp}.
\end{enumerate}
Now note that when $M_k'\subseteq RB$, we can estimate
$$
	\mu_{M_k'}(S^{n-1}) =
	2n\,\V(B,B,\mathcal{M}_k') \le
	2n R^{n-2}\,\Vol(B).
$$
By property iv, the mass of $\mu_{M_k'}$ is uniformly bounded for all 
sufficiently large $k$. We may therefore extract a subsequence $\{M_k\}$ 
of $\{M_k'\}$ such that $\mu_{M_k}$ converges weakly to a limiting measure 
$\mu_M$, and such that properties a--c in the statement of the Lemma hold.
It remains to show that $\supp\mu_M\subseteq\supp S_{M,\mathcal{M}}$;
this follows immediately, however, as
$\supp\mu_{M_k}=\supp S_{M_k,\mathcal{M}_k}
\subseteq \supp S_{M,\mathcal{M}}$ for all $k$.
\end{proof}

We can now complete the proof of Theorem \ref{thm:rigid}.

\begin{proof}[Proof of Theorem \ref{thm:rigid}]
By translation-invariance of mixed volumes and mixed area measures, we may 
assume without loss of generality that $0\in \intr M$. Define
the sequence of polytopes $M_k$ and the measure $\mu_M$ as in
Lemma \ref{lem:0approx}. Applying Proposition \ref{prop:polyrigid}
to $M_k$ and taking the limit as $k\to\infty$, the conclusion follows
readily from Theorem \ref{thm:conv}. (For aesthetic reasons, we have
rescaled the definition of the measure $\mu_M$ in the statement of Theorem 
\ref{thm:rigid} so that only a single constant $C_M$ appears; this makes
no difference, of course, to the statement of the result.) 
\end{proof}

\section{The lower-dimensional case}
\label{sec:lower}

In the setting of Theorem \ref{thm:mainfull}, we have seen that the 
extremals of Minkowski's inequality have a simple spectral interpretation: 
the kernel of the operator $\mathscr{A}$ of Theorem \ref{thm:forms} always 
contains the linear functions, and Theorem \ref{thm:mainfull} shows that 
when $M$ has nonempty interior, these are the \emph{only} elements of the 
kernel that are differences of support functions of convex bodies.

When $M$ is a lower-dimensional body, however, Theorem \ref{thm:mainlower} 
states that new equality cases appear. Thus, unlike in the 
full-dimensional case, nonlinear differences of support functions can 
appear in the kernel of $\mathscr{A}$. This may suggest that the 
lower-dimensional situation is more complicated, as we must understand the 
new elements of the kernel. In fact, somewhat surprisingly, the 
lower-dimensional situation turns out to be considerably simpler: when $M$ 
has empty interior, the operator $\mathscr{A}$ can be described explicitly 
in complete generality (i.e., not just in special cases such as smooth 
bodies or polytopes). Once the operator has been constructed, we will be 
able to compute its kernel directly, and the proof of Theorem 
\ref{thm:mainlower} will follow. These ideas will be developed in the 
remainder of this section.

To gain some insight into the lower-dimensional situation, it is 
instructive to consider first the case of a lower-dimensional polytope 
$M\subset w^\perp$ for some $w\in S^{n-1}$. The following discussion is 
illustrated in Figure \ref{fig:lower}. To understand the operator 
associated to $M$, we first approximate it by the ``cylinder'' 
$M_\varepsilon:= M+\varepsilon[0,w]$ which has nonempty interior. The body 
$M_\varepsilon$ has two types of facets:
\begin{enumerate}[1.]
\item Two facets with normals $\pm w$ are translates of $M$.
\item The remaining facet normals are the normals of 
the $(n-2)$-faces of $M$ in $w^\perp$.
\end{enumerate}
The body $M_\varepsilon$ defines a quantum graph according to 
Theorem \ref{thm:qgraph}. We now formally let $\varepsilon\to 0$ and 
investigate what happens to the quantum graph in the limit. For each pair 
of facets $F\sim F'$ of $M_\varepsilon$ of type 2, we evidently
have $\mathcal{H}^{n-2}(F\cap F')=O(\varepsilon)$.
Thus all edges in the quantum graph associated to 
$M_\varepsilon$ that lie in $w^\perp$ vanish as $\varepsilon\to 0$. 
Consequently, the limiting graph has an extremely simple structure:
it has exactly two vertices at the antipodal points $\pm w$; and its 
edges are the geodesic arcs between $\pm w$ in the directions of 
the $(n-2)$-faces of $M$ in $w^\perp$.
\begin{figure} 
\centering 
\begin{tikzpicture}[scale=.7]


\begin{scope}
\shade[ball color = blue, opacity = 0.15] (1,0) circle [radius=2];

\draw[thick, densely dashed] (3,0) arc [start angle = 0, end angle = 180, 
x radius = 2, y radius = .5];

\draw[thick] (3,0) arc [start angle = 0, end angle = -180, x radius = 2, 
y radius = .5];

\draw[thick] (1,2) [rotate=90] arc [start angle = 0, end angle = 
-180, x radius = 2, y radius = -1];

\draw[thick, densely dashed] (1,2) [rotate=90] arc [start angle 
= 0, end angle = 
-180, x radius = 2, y radius = 1];

\draw[thick] (1,2) [rotate=90] arc [start angle = 0, end angle = 
-180, x radius = 2, y radius = 1.75];

\draw[thick, densely dashed] (1,2) [rotate=90] arc [start angle 
= 0, end angle = 
-180, x radius = 2, y radius = -1.75];

\draw[fill=black] (1,2) circle [radius=.07];
\draw[fill=black] (1,-2) circle [radius=.07];
\draw[fill=black] (2.74,-.24) circle [radius=.07];
\draw[fill=black] (-0.74,.24) circle [radius=.07];
\draw[fill=black] (.03,-.45) circle [radius=.07];
\draw[fill=black] (1.97,.45) circle [radius=.07];

\draw (4.2,-1.2) node[below] 
{\small $S_{B,\mathcal{M}_\varepsilon}(e)=O(\varepsilon)$};

\draw[->] (3.5,-1.2) to [out=110,in=-45] (1.5,-.6);

\draw (-1,0) node[left] {$G_\varepsilon=$};
\end{scope}


\begin{scope}[xshift=7.2cm,yshift=4.5cm]

\begin{scope}[scale=.9,yshift=.75cm,xshift=-.5cm]

\fill[blue!15] (-5.95,-.95) -- (-4.03,0.05) -- 
(-4.03,-1.25) -- (-5.95,-2.25) -- (-5.95,-.95);

\fill[blue!25] (-5.95,-.95) -- (-8.47,-.61) -- 
(-8.47,-1.91) -- (-5.95,-2.25) -- (-5.95,-.95);

\fill[blue!5] (-8.47,-.61) -- (-6.53,.29) -- 
(-4.03,.05) -- (-5.95,-.95) -- (-8.47,-.61);

\draw[thick] (-5.95,-.95) -- (-4.03,.05) -- (-4.03,-1.25) --
(-5.95,-2.25) -- (-5.95,-.95);

\draw[thick] (-5.95,-.95) -- (-8.47,-.61) -- (-8.47,-1.91) --
(-5.95,-2.25);

\draw[thick] (-8.47,-.61) -- (-6.53,.29) -- (-4.03,.05);

\draw[thick,densely dashed] (-6.53,.29) -- (-6.53,-1.01) --
(-4.03,-1.25);

\draw[thick,densely dashed] (-6.53,-1.01) -- (-8.47,-1.91);

\draw[<->] (-3.7,0.05) -- (-3.7,-1.25) node[midway,right] {$\varepsilon$};
\end{scope}

\draw (-8.2,0) node[left] {$M_\varepsilon=$};

\end{scope}


\begin{scope}[xshift=11cm]

\shade[ball color = blue, opacity = 0.15] (1,0) circle [radius=2];

\draw[thick] (1,2) [rotate=90] arc [start angle = 0, end angle = 
-180, x radius = 2, y radius = -1];

\draw[thick, densely dashed] (1,2) [rotate=90] arc [start angle 
= 0, end angle = 
-180, x radius = 2, y radius = 1];

\draw[thick] (1,2) [rotate=90] arc [start angle = 0, end angle = 
-180, x radius = 2, y radius = 1.75];

\draw[thick, densely dashed] (1,2) [rotate=90] arc [start angle 
= 0, end angle = 
-180, x radius = 2, y radius = -1.75];

\draw[fill=black] (1,2) circle [radius=.07];
\draw[fill=black] (1,-2) circle [radius=.07];

\draw (-1,0) node[left] {$G=$};

\end{scope}


\begin{scope}[xshift=18.2cm,yshift=4.5cm]

\begin{scope}[scale=.9,xshift=-.5cm,yshift=.4cm]

\fill[blue!5] (-8.47,-.61) -- (-6.53,.29) -- 
(-4.03,.05) -- (-5.95,-.95) -- (-8.47,-.61);

\draw[thick] (-8.47,-.61) -- (-6.53,.29) -- 
(-4.03,.05) -- (-5.95,-.95) -- (-8.47,-.61);

\end{scope}

\draw (-8.2,0) node[left] {$M=$};

\draw[opacity=0] (-10,0) rectangle (-11.5,0);

\end{scope}

\draw[->] (4.5,0) -- (8,0) node[midway,above] {$\varepsilon\to 0$};
\draw[->] (4.5,4.5) -- (8,4.5) node[midway,above] {$\varepsilon\to 0$};

\end{tikzpicture}
\caption{Metric graph associated to a polytope with empty interior.\label{fig:lower}}
\end{figure}
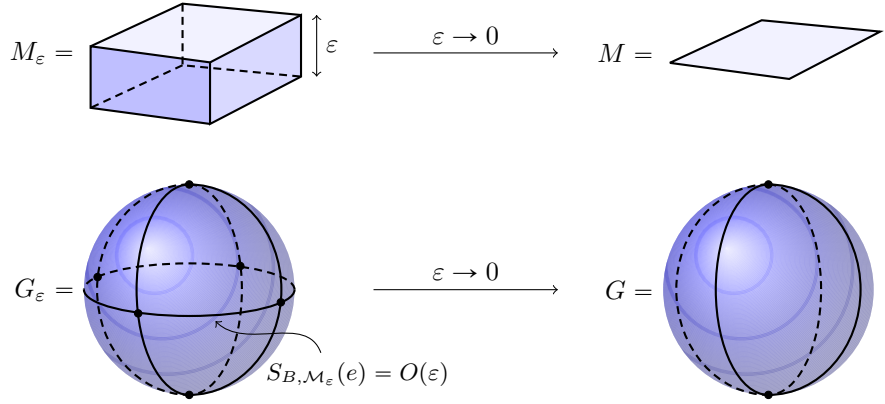

This structure also explains why additional equality cases appear in the 
lower-dimensional setting: as all edges of the graph associated to $M$ 
have length $\pi$, the solution to the Dirichlet problem 
$\mathscr{A}f=\frac{1}{n}\{f''+f\}=0$ on each edge is no longer unique. 
Indeed, if $f$ is such a solution on a given edge, then $\theta\mapsto 
f(\theta)+a\sin(\theta)$ is also a solution with the same boundary data on 
the vertices for any $a\in\mathbb{R}$. This resonance phenomenon results 
in many new elements of the kernel of $\mathscr{A}$ in the setting of 
lower-dimensional polytopes. Because of the simple structure of the graph, 
however, one can compute all elements of $\ker\mathscr{A}$ explicitly, and 
we encounter none of the challenges that arose in the full-dimensional 
setting.

It is not difficult to work out the details of the above argument for 
polytopes. In this case, Theorem \ref{thm:mainlower} was proved in 
\cite[Theorem 4.2]{Sch94} (see also \cite{ET94}) from a somewhat different 
perspective. However, we will show below that essentially the same 
construction remains valid when $M$ is any lower-dimensional convex body. 
In this case, the operator $\mathscr{A}$ turns out to be very similar to 
the quantum graph of a lower-dimensional polytope, except there may now be 
an infinite (even uncountable) number of edges in the graph. Some care 
must be taken, therefore, to construct this operator properly and to 
compute its domain, which will be done in section \ref{sec:lowerop}. Once 
this has been accomplished, however, the proof of Theorem 
\ref{thm:mainlower} will follow readily in section \ref{sec:pflower} from 
an explicit computation of $\ker\mathscr{A}$.

\subsection{Construction of the operator}
\label{sec:lowerop}

Throughout this section, we fix $w\in S^{n-1}$ and a convex body 
$M\subset w^\perp$. We define the measure $S_{\mathcal{M}}$ on 
$S^{n-1}\cap w^\perp$ to be the area measure of $M$ when viewed as a
convex body in $w^\perp$, that is,
$$
	S_{\mathcal{M}}(A) := \mathcal{H}^{n-2}(\{x\in w^\perp:
	x\in F(M,u)\mbox{ for some }u\in A\})
$$
for $A\subseteq S^{n-1}\cap w^\perp$. We will assume 
that $\dim M \ge n-2$, so that $S_{\mathcal{M}}\not\equiv 0$.

In view of the structure illustrated in Figure \ref{fig:lower}, it will
be convenient to parametrize $S^{n-1}$ in polar coordinates $(\theta,z)\in 
[0,\pi]\times (S^{n-1}\cap w^\perp)$ as
$$
	\iota:[0,\pi]\times (S^{n-1}\cap w^\perp)\to S^{n-1},\qquad
	\iota(\theta,z) := w\cos\theta+z\sin\theta.
$$
Note that the parametrization is unique except at $\theta\in\{0,\pi\}$,
where $\iota(0,z)=w$ and $\iota(\pi,z)=-w$ for every $z$. Therefore,
a continuous function $f\in C^0(S^{n-1})$ is given in this parametrization
by a function $f(\theta,z)$ such that $f(0,\cdot\,)$ and $f(\pi,\cdot\,)$ 
are constant functions. Note, however, that the directional derivatives
$\frac{\partial f}{\partial\theta}(\theta,\cdot\,)$ are generally not 
constant functions at $\theta\in\{0,\pi\}$ even when $f\in C^1(S^{n-1})$.

The main result of this section is the following.

\begin{thm}
\label{thm:lowera}
Let $w\in S^{n-1}$ and let $M\subset w^\perp$ be a convex body with
$\dim M\ge n-2$. Then for any $f:S^{n-1}\to\mathbb{R}$, we have
$$
	\int f\,dS_{B,\mathcal{M}} =
	\frac{1}{n-1}
	\int_0^\pi
	\int_{S^{n-1}\cap w^\perp}
	f(\theta,z)\,S_{\mathcal{M}}(dz)\,d\theta.
$$
Moreover, the operator $\mathscr{A}$ defined by
$$
	\mathscr{A}f(\theta,z) = \frac{1}{n}\bigg\{
	\frac{\partial^2f}{\partial\theta^2}(\theta,z)+f(\theta,z)
	\bigg\}
$$
with
\begin{align*}
	\Dom\mathscr{A} =
	\bigg\{ 
	&
	f\in L^2(S_{B,\mathcal{M}}):f(\,\cdot\,,z)\in H^2((0,\pi))
	\mbox{ for }S_{\mathcal{M}}\mbox{-a.e.\ } z, \\
	&
	\frac{\partial^2f}{\partial\theta^2}\in L^2(S_{B,\mathcal{M}}),~~
	f(0,\cdot\,)\mbox{ and }f(\pi,\cdot\,)\mbox{ are }
	S_{\mathcal{M}}\mbox{-a.e.\ constant},
\\
	&\int_{S^{n-1}\cap w^\perp}
	\frac{\partial f}{\partial\theta}(\theta,z)\,S_{\mathcal{M}}(dz)=0
	\mbox{ for }\theta\in\{0,\pi\}\bigg\}
\end{align*}
is self-adjoint on $L^2(S_{B,\mathcal{M}})$ and satisfies all the
properties of Theorem \ref{thm:forms}.
\end{thm}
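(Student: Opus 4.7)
My plan has four steps: derive the measure formula for $S_{B,\mathcal{M}}$, compute the action of $\mathscr{A}$ on smooth functions, verify self-adjointness on the stated domain, and identify the result with the Friedrichs extension from Theorem~\ref{thm:forms}.

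For Step~1, I approximate $M$ by polytopes $M_k\subset w^\perp$ (Hausdorff convergence) and each $M_k$ by the full-dimensional cylinder $M_{k,\varepsilon}:=M_k+\varepsilon[0,w]$, then apply Theorem~\ref{thm:qgraph} to $M_{k,\varepsilon}$. The facets of $M_{k,\varepsilon}$ split into top/bottom translates of $M_k$ (normals $\pm w$) and side facets $G+\varepsilon[0,w]$ for each facet $G$ of $M_k$ in $w^\perp$ (normal $n_G$). The top-side and bottom-side edges have weight $\mathcal{H}^{n-2}(G)=S_{\mathcal{M}_k}(\{n_G\})$ and length $\pi/2$, while the side-side edges have weight $O(\varepsilon)$ and vanish as $\varepsilon\to 0$. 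In the limit the two half-arcs through $n_G$ fuse into a single geodesic of length $\pi$ from $w$ to $-w$, namely $\theta\mapsto\iota(\theta,n_G)$, and two successive applications of Theorem~\ref{thm:conv} (first $\varepsilon\to 0$, then $k\to\infty$) yield the claimed formula.

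For Step~2, the same scheme applied to Proposition~\ref{prop:polymixa} gives $\int g\,dS_{K,\mathcal{M}}=\frac{1}{n-1}\int_0^\pi\!\int g(\partial_\theta^2 h_K+h_K)\,S_\mathcal{M}(dz)\,d\theta$ for $K\in C^2_+$, since the arclength second derivative on $\theta\mapsto\iota(\theta,z)$ is just $\partial_\theta^2$. By linearity and Lemma~\ref{lem:diffsf}, \eqref{eq:defa} then yields $\mathscr{A}f=\frac{1}{n}(\partial_\theta^2 f+f)$ for every $f\in C^2(S^{n-1})$. For Step~3, integration by parts in $\theta$ on each arc yields
\[
\langle g,\mathscr{A}f\rangle_{L^2(S_{B,\mathcal{M}})}=\tfrac{1}{n(n-1)}\!\int_0^\pi\!\!\int(fg-f_\theta g_\theta)\,S_\mathcal{M}(dz)\,d\theta + \text{b.t.},
\]
and the boundary terms at $\theta\in\{0,\pi\}$ vanish because $g(0,\cdot)$ and $g(\pi,\cdot)$ are $S_\mathcal{M}$-a.e.\ constant, while $\int f_\theta(\theta,z)\,S_\mathcal{M}(dz)=0$ by the Kirchhoff condition in $\Dom\mathscr{A}$. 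The resulting symmetric form is closed on the stated $\Dom\mathcal{E}$ by a standard argument adapted from the proof of Theorem~\ref{thm:qgraph} (via \cite[Theorems 1.4.4 and 1.4.11]{BK13}).

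For Step~4, I note that for any $f\in C^2(S^{n-1})$ the Kirchhoff condition at $w$ reads $\int_{S^{n-1}\cap w^\perp}\nabla_z f(w)\,S_\mathcal{M}(dz)=0$; since $z\mapsto\nabla_z f(w)$ is the restriction of a linear functional on $\mathbb{R}^n$, this follows from Lemma~\ref{lem:maprop}(\textit{e}) applied to $S_\mathcal{M}$ in the hyperplane $w^\perp$ (analogously at $-w$). Thus $C^2(S^{n-1})\subset\Dom\mathscr{A}$, and by Step~2 the restriction to $C^2(S^{n-1})$ coincides with \eqref{eq:defa}. A density argument (smoothing each arc as in Lemma~\ref{lem:edgeapprox} and then producing $C^2$-extensions to $S^{n-1}$ via a partition-of-unity construction) identifies $\mathscr{A}$ with the Friedrichs extension built in Theorem~\ref{thm:forms}, so all of its properties transfer automatically. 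The main obstacle is this density step: since the ``edges'' form an uncountable family parametrized by $z\in\supp S_\mathcal{M}$, the smoothing must be carried out measurably in $z$ and controlled uniformly in the form norm---this is the only delicate point, the rest being a routine adaptation of the quantum-graph machinery of section~\ref{sec:qgraph}.
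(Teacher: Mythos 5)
Your Steps 1--2 (the cylinder approximation $M_{k,\varepsilon}=M_k+\varepsilon[0,w]$, vanishing of the side-side edges, and the resulting formula $\mathscr{A}f=\frac{1}{n}(\partial_\theta^2f+f)$ on $C^2$) are correct and follow the same route as the paper's Lemma~\ref{lem:lowermix}.

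The trouble starts in Step 3. Showing that the form is symmetric on $\Dom\mathscr{A}$ by integration by parts is fine, but that only gives that $\mathscr{A}$ is a \emph{symmetric} operator, not that $\Dom\mathscr{A}$ is exactly the domain of a self-adjoint operator. You invoke \cite[Theorems 1.4.4 and 1.4.11]{BK13}, but those quantum-graph theorems are stated for graphs with a (locally) \emph{countable} edge set; here the ``edges'' are parametrized by $z\in\supp S_\mathcal{M}$, which is typically uncountable, and the Kirchhoff condition is an \emph{integral} condition against $S_\mathcal{M}$ rather than a finite sum. There is no off-the-shelf quantum-graph theorem that applies. The paper's Lemma~\ref{lem:salower} handles this by hand: after showing symmetry it directly proves $\Dom\mathscr{A}^*\subseteq\Dom\mathscr{A}$, by testing an arbitrary $f\in\Dom\mathscr{A}^*$ against separable test functions $\varphi(\theta)h(z)$ to get the $H^2$ regularity and then against $1\pm\cos\theta$ and $\sin(k\theta)h(z)$ to recover, respectively, the integral Kirchhoff condition and the constancy of $f(0,\cdot\,)$, $f(\pi,\cdot\,)$. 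You would need to supply an argument of this kind (or at least explain how to reduce to the countable case) before claiming self-adjointness.

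Step 4 is a genuinely harder route than the theorem requires, and you yourself flag the uncountable-edge density argument as ``the main obstacle'' without resolving it---so as written this is an open gap. The paper explicitly avoids identifying $\mathscr{A}$ with the Friedrichs extension: since $\mathscr{A}$ agrees with \eqref{eq:defa} on $C^2(S^{n-1})\subset\Dom\mathscr{A}$, the closed form associated to $\mathscr{A}$ is \emph{a} closed extension of the $C^2$-restriction, and hence contains the Friedrichs closure $\mathcal{E}$ of Theorem~\ref{thm:forms} (the smallest closed extension). Properties \textit{d}--\textit{e} hold for $\mathcal{E}$, hence for any larger closed form, so they transfer without any density argument. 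Properties \textit{a}--\textit{c} cannot be obtained this way, however---they are about the spectrum of the specific operator---and neither your Step 3 nor your Step 4 addresses them: the paper gets them from an explicit computation of the full spectral decomposition (Lemma~\ref{lem:lowerspec}), which you would need to add.
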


\begin{rem}
With some additional work, one can show that the operator $\mathscr{A}$ 
of Theorem \ref{thm:lowera} is in fact the one constructed in the 
proof of Theorem \ref{thm:forms}, that is, it is the Friedrichs 
extension of \eqref{eq:defa} in the present setting. This is not needed, 
however, for the applications of this theorem, and the particularly simple 
structure of the present setting enables us to short-circuit some 
technical arguments.
\end{rem}


The proof of Theorem \ref{thm:lowera} is similar to that of Theorem 
\ref{thm:qgraph}. We begin by making precise the procedure illustrated in 
Figure \ref{fig:lower}.

\begin{lem}
\label{lem:lowermix}
Let $w\in S^{n-1}$, let $M\subset w^\perp$ be a convex body with
$\dim M\ge n-2$, and let $K$ be a convex body of class $C^2_+$. Then
$$
	\int f\,dS_{K,\mathcal{M}} =
	\frac{1}{n-1}
	\int_0^\pi
	\int_{S^{n-1}\cap w^\perp}
	\bigg\{
	\frac{\partial^2h_K}{\partial\theta^2}(\theta,z)+h_K(\theta,z)
	\bigg\}
	\,f(\theta,z)
	\,S_{\mathcal{M}}(dz)\,d\theta.
$$
\end{lem}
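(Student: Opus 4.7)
My plan is to derive the identity by the thickening procedure suggested by Figure \ref{fig:lower}: approximate the lower-dimensional body $M$ by the ``cylinder'' $N_\varepsilon := M + \varepsilon J$, where $J := [-w/2,\,w/2]$, apply Proposition \ref{prop:polymixa} to this polytope of nonempty interior, and identify the surviving contributions in the limit $\varepsilon \to 0$. By the weak continuity of mixed area measures under Hausdorff convergence (Theorem \ref{thm:conv}), both sides of the claimed identity are continuous in $M$ when $f$ is a continuous test function, so it suffices to prove it for polytopes $M \subset w^\perp$ of full dimension $n-1$ inside $w^\perp$; the case $\dim M = n-2$ will follow by further polytope approximation inside $w^\perp$.

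For such a polytope $M$, the body $N_\varepsilon$ is itself a polytope whose facets split into three kinds: a top face $T_\varepsilon = M + \varepsilon w/2$ with outer normal $w$, a bottom face $B_\varepsilon = M - \varepsilon w/2$ with outer normal $-w$, and one side face $F' + \varepsilon J$ with outer normal $n_{F'} \in S^{n-1} \cap w^\perp$ for each facet $F'$ of $M$ in $w^\perp$. Applying Proposition \ref{prop:polymixa} to $N_\varepsilon$, the edges of its metric graph fall into three classes: (i) top-to-side edges, each a geodesic of length $\pi/2$ from $w$ to some $n_{F'}$ with weight $\mathcal{H}^{n-2}(T_\varepsilon \cap (F' + \varepsilon J)) = \mathcal{H}^{n-2}(F')$; (ii) bottom-to-side edges of the same length and weight; and (iii) side-to-side edges between $n_{F_1'}$ and $n_{F_2'}$ whose weight is $\mathcal{H}^{n-2}((F_1' \cap F_2') + \varepsilon J) = \varepsilon\,\mathcal{H}^{n-3}(F_1' \cap F_2') = O(\varepsilon)$. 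The key observation is that the top-to-side and bottom-to-side edges sharing endpoint $n_{F'}$ concatenate seamlessly into the full geodesic $\theta \mapsto \iota(\theta, n_{F'})$, $\theta \in [0,\pi]$, because both the arclength element $d\mathcal{H}^1$ and the intrinsic differential expression $h_K'' + h_K$ appearing in the proposition are orientation-independent.

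Summing these top-to-side and bottom-to-side contributions over all facets $F'$ of $M$ and using the identity $\sum_{F'} \mathcal{H}^{n-2}(F')\,\delta_{n_{F'}} = S_{\mathcal{M}}$, valid for $M$ regarded as a full-dimensional polytope inside $w^\perp$, I would obtain
\[
\int f\,dS_{K, N_\varepsilon, \ldots, N_\varepsilon} = \frac{1}{n-1} \int_{S^{n-1} \cap w^\perp} \int_0^\pi \bigl\{h_K''(\theta,z) + h_K(\theta,z)\bigr\}\,f(\theta,z)\,d\theta\,S_{\mathcal{M}}(dz) + O(\varepsilon).
\]
Sending $\varepsilon \to 0$, the Hausdorff convergence $N_\varepsilon \to M$ combined with Theorem \ref{thm:conv} forces the left-hand side to converge to $\int f\,dS_{K,\mathcal{M}}$, while the $O(\varepsilon)$ remainder from the side-to-side edges vanishes; this establishes the formula for polytope $M$, and the general case follows by the continuity argument of the first paragraph. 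The main obstacle will be the careful accounting of the quantum-graph combinatorics: verifying that the edge weights really are the facet-intersection $\mathcal{H}^{n-2}$-volumes claimed above, and that the two half-geodesics concatenate consistently with the arclength parametrization built into Proposition \ref{prop:polymixa}. A secondary concern is the degenerate case $\dim M = n-2$, where $M$ has empty interior already inside $w^\perp$ and must first be approximated by full-dimensional polytopes within $w^\perp$ before thickening; this adds only a routine extra limiting step.
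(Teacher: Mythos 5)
Your proposal is correct and follows essentially the same approach as the paper's own proof: thicken $M$ to a cylinder, apply Proposition~\ref{prop:polymixa} to the resulting polytope, observe that the side-to-side edges carry $O(\varepsilon)$ weight while the top-to-side and bottom-to-side half-geodesics concatenate into the full arc $[0,\pi]$, and pass to the limit by Theorem~\ref{thm:conv}, extending to non-polytopal $M$ by a further Hausdorff approximation inside $w^\perp$. The only cosmetic difference is your symmetric thickening $M + \varepsilon[-w/2, w/2]$ versus the paper's $M + \varepsilon[0,w]$; since mixed area measures are translation-invariant this is immaterial, and your more explicit bookkeeping of the edge weights fills in details the paper glosses over with ``so we obtain.''
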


\begin{proof}
Suppose first that $M$ is a polytope of dimension $\dim M=n-1$. Denote by
$\mathcal{F}_M$ the set of its $(n-2)$-dimensional faces, and by
$n_F\in S^{n-1}\cap w^\perp$ the outer normal of $F\in\mathcal{F}_M$ when
viewed as a convex body in $w^\perp$.
Now define for $\varepsilon>0$ the convex body 
$M_\varepsilon:=M+\varepsilon[0,w]$ in $\mathbb{R}^n$. Then 
$M_\varepsilon$ has the following facets:
\begin{enumerate}[1.]
\item $F(M_\varepsilon,w)=M+\varepsilon w$ and
$F(M_\varepsilon,-w)=M$.
\item $F(M_\varepsilon,n_F)=F+\varepsilon[0,w]$ for $F\in\mathcal{F}_M$.
\end{enumerate}
Thus $\mathcal{H}^{n-2}(F(M_\varepsilon,n_F)\cap 
F(M_{\varepsilon},n_{F'}))=O(\varepsilon)$ for any
$F,F'\in\mathcal{F}_M$, so we obtain
\begin{align*}
	&\int f\,dS_{K,\mathcal{M}_\varepsilon} = \\
	&\quad\frac{1}{n-1}
	\sum_{F\in\mathcal{F}_M}\mathcal{H}^{n-2}(F)
	\int_0^\pi
	\bigg\{
	\frac{\partial^2h_K}{\partial\theta^2}(\theta,n_F)+h_K(\theta,n_F)
	\bigg\}
	\,f(\theta,n_F)\,
	d\theta
	+ O(\varepsilon)
\end{align*}
for any continuous function $f$ by Proposition \ref{prop:polymixa}. 
Letting $\varepsilon\to 0$ using Theorem \ref{thm:conv},
and noting that $S_{\mathcal{M}}$ is, by definition, the measure defined
by $S_{\mathcal{M}}(\{n_F\})=\mathcal{H}^{n-2}(F)$ for 
$F\in\mathcal{F}_M$, concludes the proof when $M$ is an
$(n-1)$-dimensional polytope.

Now note that any convex body $M\subset w^\perp$ is the limit in
Hausdorff metric of a sequence of $(n-1)$-dimensional polytopes in
$w^\perp$ \cite[p.\ 39]{BF87}. Thus the conclusion extends to arbitrary
$M$ by approximation using Theorem \ref{thm:conv}.
\end{proof}

The expression for $S_{B,\mathcal{M}}$ in Theorem \ref{thm:lowera} follows 
immediately from Lemma \ref{lem:lowermix}. We now turn our attention to 
proving that $\mathscr{A}$ is self-adjoint on $L^2(S_{B,\mathcal{M}})$.

\begin{lem}
\label{lem:salower}
The operator $\mathscr{A}$ of Theorem \ref{thm:lowera} is self-adjoint.
\end{lem}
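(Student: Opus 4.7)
The plan is to verify both symmetry of $\mathscr{A}$ and the opposite inclusion $\Dom\mathscr{A}^* \subseteq \Dom\mathscr{A}$, which together give self-adjointness. Via Theorem~\ref{thm:lowera} one identifies $L^2(S_{B,\mathcal{M}})$ with $L^2\bigl([0,\pi]\times(S^{n-1}\cap w^\perp),\,\tfrac{1}{n-1}\,d\theta\otimes S_{\mathcal{M}}\bigr)$; this reduces $\mathscr{A}$ to the one-dimensional operator $\frac{1}{n}(\partial_\theta^2+1)$ acting on the $\theta$-factor, coupled across fibers only through the boundary conditions at $\theta\in\{0,\pi\}$.

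For symmetry, given $f,g\in\Dom\mathscr{A}$, I would apply integration by parts twice in $\theta$ fiber-by-fiber in $z$ and then integrate against $S_{\mathcal{M}}(dz)$. The boundary contribution at $\theta=0$ is $\int[g(0,z)f'(0,z)-g'(0,z)f(0,z)]\,S_{\mathcal{M}}(dz)$; since $f(0,\cdot)$ and $g(0,\cdot)$ are $S_{\mathcal{M}}$-a.e.\ constant, these scalar factors pull outside the integrals and what remains vanishes by the integrated-derivative boundary conditions. The same reasoning handles $\theta=\pi$, giving $\langle \mathscr{A}f,g\rangle=\langle f,\mathscr{A}g\rangle$.

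For the adjoint inclusion, let $g\in\Dom\mathscr{A}^*$ with $\mathscr{A}^*g=h\in L^2(S_{B,\mathcal{M}})$. Testing $\langle g,\mathscr{A}f\rangle=\langle h,f\rangle$ against tensor products $f(\theta,z)=\varphi(\theta)\psi(z)$ with $\varphi\in C_c^\infty((0,\pi))$ and $\psi\in L^2(S_{\mathcal{M}})$ (which lie in $\Dom\mathscr{A}$ because $\varphi$ and $\varphi'$ vanish near both endpoints), Fubini and the arbitrariness of $\psi$ force $g''(\cdot,z)+g(\cdot,z)=nh(\cdot,z)$ distributionally on $(0,\pi)$ for $S_{\mathcal{M}}$-a.e.\ $z$. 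Hence $g(\cdot,z)\in H^2((0,\pi))$ a.e., $\partial_\theta^2 g\in L^2(S_{B,\mathcal{M}})$, and applying the uniform trace bound $|u(0)|^2+|u(\pi)|^2+|u'(0)|^2+|u'(\pi)|^2\le C(\|u\|_{L^2}^2+\|u''\|_{L^2}^2)$ fiberwise places $g(0,\cdot),g(\pi,\cdot),g'(0,\cdot),g'(\pi,\cdot)$ in $L^2(S_{\mathcal{M}})$. Repeating the symmetry integration by parts now reduces the adjoint identity to the vanishing of the boundary term $B(f,g)$ for every $f\in\Dom\mathscr{A}$. Choosing $f(\theta,z)=\varphi(\theta)\psi(z)$ with $\varphi(0)=\varphi(\pi)=\varphi'(\pi)=0$, $\varphi'(0)\neq 0$, and $\psi\in L^2(S_{\mathcal{M}})$ satisfying $\int\psi\,dS_{\mathcal{M}}=0$ (so that $f\in\Dom\mathscr{A}$) yields $\int g(0,z)\psi(z)\,S_{\mathcal{M}}(dz)=0$ for all such $\psi$, forcing $g(0,\cdot)$ to be $S_{\mathcal{M}}$-a.e.\ constant; symmetrically for $g(\pi,\cdot)$. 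Finally, taking $f$ to depend only on $\theta$ with $f(0)=1$, $f(\pi)=0$ (resp.\ $f(0)=0$, $f(\pi)=1$)—which automatically lies in $\Dom\mathscr{A}$—extracts $\int g'(0,z)\,S_{\mathcal{M}}(dz)=0$ (resp.\ at $\pi$). Thus $g\in\Dom\mathscr{A}$ and $\mathscr{A}g=h$.

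The technical heart of the argument is the boundary-condition recovery in the last step: one must design test functions rich enough to separate each of the four boundary conditions on $g$ individually, while still respecting the nontrivial coupling constraints defining $\Dom\mathscr{A}$ (the constancy and integrated-derivative conditions). A secondary point that needs care is the uniform fiberwise trace estimate used to give meaning to the boundary integrals against $S_{\mathcal{M}}$, but this follows directly from the one-dimensional Sobolev embedding with an explicit constant.
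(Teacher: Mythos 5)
Your proof follows essentially the same line as the paper: verify symmetry via integration by parts with the boundary term killed by the domain conditions, derive the fiberwise second-order equation and $H^2$-regularity from tensor-product test functions with $\varphi \in C_c^\infty((0,\pi))$, and then recover the four boundary conditions on $g$ by choosing test functions in $\Dom\mathscr{A}$ that isolate them one at a time. The paper instead takes the concrete test functions $1\pm\cos\theta$ (for the integrated-derivative conditions) and $\sin(k\theta)h(z)$ for $k=1,2$ with $\int h\,dS_{\mathcal{M}}=0$ (for the constancy conditions), but this is only a cosmetic difference; your explicit invocation of the one-dimensional trace estimate to justify that $g(0,\cdot),\,g'(0,\cdot)$, etc.\ lie in $L^2(S_{\mathcal{M}})$ is a point the paper leaves implicit, and it is good to state it.

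One small inaccuracy: in the final step you say that a function $f$ depending only on $\theta$ with $f(0)=1$, $f(\pi)=0$ ``automatically lies in $\Dom\mathscr{A}$.'' It does not: since $f$ is constant in $z$, the condition $\int \partial_\theta f(\theta,z)\,S_{\mathcal{M}}(dz)=0$ at $\theta\in\{0,\pi\}$ reduces to $f'(0)=f'(\pi)=0$ (as $S_{\mathcal{M}}$ has positive total mass). You must therefore additionally impose $f'(0)=f'(\pi)=0$; such a $\varphi$ is easy to construct (and, e.g., the paper's $\frac{1}{2}(1\pm\cos\theta)$ serves this purpose). With that fix the argument is complete.
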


\begin{proof}
Note first that if $f,g\in\Dom\mathscr{A}$, then
\begin{align*}
	&\langle f,\mathscr{A}g\rangle_{L^2(S_{B,\mathcal{M}})} 
	=
	\frac{1}{n(n-1)}
	\int_0^\pi
	\int_{S^{n-1}\cap w^\perp}
	f(\theta,z)\,
	\bigg\{
	\frac{\partial^2g}{\partial\theta^2}(\theta,z)+g(\theta,z)
	\bigg\}
	\,S_{\mathcal{M}}(dz)\,d\theta \\
	&\quad=
	\frac{1}{n(n-1)}
	\int_0^\pi
	\int_{S^{n-1}\cap w^\perp}
	\bigg\{f(\theta,z)\,g(\theta,z) -
	\frac{\partial f}{\partial\theta}(\theta,z)\,
	\frac{\partial g}{\partial\theta}(\theta,z)\bigg\}
	\,S_{\mathcal{M}}(dz)\,d\theta
	\\ &\quad\qquad
	+
	\frac{1}{n(n-1)}
	\int_{S^{n-1}\cap w^\perp}
	f(\theta,z)\,\frac{\partial g}{\partial\theta}(\theta,z) 
	\,S_{\mathcal{M}}(dz)
	\bigg|_{\theta=0}^{\theta=\pi},
\end{align*}
where we integrated by parts. But the definition of $\Dom\mathscr{A}$ 
ensures that the boundary term vanishes. It follows that
$\mathscr{A}$ is a symmetric operator, and in particular
$\Dom\mathscr{A}\subseteq\Dom\mathscr{A}^*$. It therefore remains to
prove the converse inclusion.

Fix in the rest of the proof $f\in\Dom\mathscr{A}^*$. We must show that 
$f$ satisfies each of the defining properties of $\Dom\mathscr{A}$.

First, note that for any smooth compactly supported function
$\varphi\in C_0^\infty((0,\pi))$ and any $h\in L^2(S_{\mathcal{M}})$, the
function $g(\theta,z):=\varphi(\theta)h(z)$ satisfies $g\in\Dom\mathscr{A}$.
Thus
\begin{align*}
	&\frac{1}{n(n-1)}
	\int \bigg[\int_0^\pi f(\theta,z)\,
	\{\varphi''(\theta)+\varphi(\theta)\}\,d\theta\bigg]\,
	h(z)\,S_{\mathcal{M}}(dz)
	=
	\langle f,\mathscr{A}g\rangle_{L^2(S_{B,\mathcal{M}})} \\
	&\qquad
	=
	\langle \mathscr{A}^*f,g\rangle_{L^2(S_{B,\mathcal{M}})} 
	=
	\frac{1}{n-1}
	\int \bigg[\int_0^\pi \mathscr{A}^*f(\theta,z)\,
	\varphi(\theta)\,d\theta\bigg]\,
	h(z)\,S_{\mathcal{M}}(dz).
\end{align*}
As $h$ is arbitrary, we have
$$
	\int_0^\pi f(\theta,z)\,
        \varphi''(\theta)\,d\theta
	=
	\int_0^\pi (n\mathscr{A}^*-I)f(\theta,z)\,
        \varphi(\theta)\,d\theta\quad 
	\mbox{for }S_{\mathcal{M}}\mbox{-a.e.\ }z
$$
for any $\varphi\in C_0^\infty((0,\pi))$. As $H^2((0,\pi))$ is separable
\cite[section 7.5]{GT01}, this identity remains valid 
simultaneously for all $\varphi\in 
C_0^\infty((0,\pi))$ (that is, the exceptional set may be chosen
independent of $\varphi$). As $(n\mathscr{A}^*-I)f(\,\cdot\,,z)\in 
L^2((0,\pi))$ for $S_{\mathcal{M}}$-a.e.\ $z$ by Fubini's theorem, we 
have shown that $f(\,\cdot\,,z)\in H^2((0,\pi))$ for
$S_{\mathcal{M}}$-a.e.\ $z$ and that
$\mathscr{A}^*f = \frac{1}{n}\{\frac{\partial^2f}{\partial\theta^2}+f\}$
$S_{B,\mathcal{M}}$-a.e.\ (in particular, 
$\frac{\partial^2f}{\partial\theta^2}\in L^2(S_{B,\mathcal{M}})$).

It remains only to establish the vertex boundary conditions at
$\theta\in\{0,\pi\}$. To this end, note first that if 
$g\in\Dom\mathscr{A}$ is arbitrary, then
$$
	\langle f,\mathscr{A}g\rangle_{L^2(S_{B,\mathcal{M}})}
	=
	\langle \mathscr{A}^*f,g\rangle_{L^2(S_{B,\mathcal{M}})}
	=
	\langle 
	\textstyle{\frac{1}{n}\{\frac{\partial^2f}{\partial\theta^2}+f\}},
	g\rangle_{L^2(S_{B,\mathcal{M}})}.
$$
Integrating by parts as in the beginning of the 
proof shows that 
$$
	\int_{S^{n-1}\cap w^\perp}
	\bigg\{
	f(\theta,z)\,\frac{\partial g}{\partial\theta}(\theta,z) -
	g(\theta,z)\,\frac{\partial f}{\partial\theta}(\theta,z) 
	\bigg\}
	\,S_{\mathcal{M}}(dz)
	\bigg|_{\theta=0}^{\theta=\pi}
	= 0
$$
for every $g\in\Dom\mathscr{A}$. We choose a different test function
$g$ to deduce each boundary condition. First, let
$g(\theta,z):=1\pm\cos(\theta)$. Then $g\in\Dom\mathscr{A}$, so
we conclude
$$
	\int_{S^{n-1}\cap w^\perp}
	\frac{\partial f}{\partial\theta}(\theta,z) \,S_{\mathcal{M}}(dz)
	= 0\quad\mbox{for }\theta\in\{0,\pi\}.
$$
Next, let $g(\theta,z):=\sin(k\theta)h(z)$ for $k=1,2$ and
$h\in L^2(S_{\mathcal{M}})$ with $\int h\,dS_{\mathcal{M}}=0$. Then $g\in 
\Dom\mathscr{A}$, so we conclude that
$$
	\int_{S^{n-1}\cap w^\perp}
	f(0,z)\,h(z)
	\,S_{\mathcal{M}}(dz) =
	\int_{S^{n-1}\cap w^\perp}
	f(\pi,z)\,h(z)
	\,S_{\mathcal{M}}(dz)
	= 0.
$$
As this holds for all $h$ of the above form, it must be the case
that $f(0,\cdot\,)$ and $f(\pi,\cdot\,)$ are $S_{\mathcal{M}}$-a.e.\ 
constant. The proof is complete.
\end{proof}

The reason that the setting of this section is particularly simple
is that we can compute the full spectral decomposition of $\mathscr{A}$.

\begin{lem}
\label{lem:lowerspec}
The operator $\mathscr{A}$ of Theorem \ref{thm:lowera} satisfies
$$
	\spec\mathscr{A} = \{\lambda_k:k\in\mathbb{Z}_+\},\qquad
	\lambda_k := \frac{1-k^2}{n}.
$$
Moreover, the eigenspace $E_k$ associated to eigenvalue $\lambda_k$
is given by
$$
	E_k := \bigg\{f:
	f(\theta,z)=h(z)\sin(k\theta)+a\cos(k\theta),~
	a\in\mathbb{R},~
	h\in L^2(S_\mathcal{M}),~
	\int h\,dS_{\mathcal{M}}=0
	\bigg\}.
$$
\end{lem}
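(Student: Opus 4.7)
The plan is to reduce the eigenvalue problem for $\mathscr{A}$ to two classical one-dimensional Sturm-Liouville problems by exploiting the tensor-product structure of $L^2(S_{B,\mathcal{M}})$. The formula for $S_{B,\mathcal{M}}$ in Theorem \ref{thm:lowera} identifies $L^2(S_{B,\mathcal{M}})$ isometrically (up to a constant) with $L^2((0,\pi),d\theta)\otimes L^2(S_{\mathcal{M}})$. I would further split $L^2(S_{\mathcal{M}})=\mathbb{R}\cdot 1\oplus L^2_0(S_{\mathcal{M}})$, where $L^2_0$ consists of functions of $S_{\mathcal{M}}$-mean zero, and decompose any $f\in\Dom\mathscr{A}$ as $f=f_0+f_1$, where $f_0(\theta):=|S_{\mathcal{M}}|^{-1}\int f(\theta,z)\,S_{\mathcal{M}}(dz)$ depends only on $\theta$ and $f_1$ has $z$-mean zero for every $\theta$.

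The key observation is that the somewhat awkward boundary conditions in the definition of $\Dom\mathscr{A}$ decouple cleanly under this splitting. Requiring $f(0,\cdot)$ and $f(\pi,\cdot)$ to be $S_{\mathcal{M}}$-a.e.\ constant is equivalent to imposing the Dirichlet condition $f_1(0,\cdot)=f_1(\pi,\cdot)=0$, while the integral condition $\int\partial_\theta f(\theta,z)\,S_{\mathcal{M}}(dz)=0$ at $\theta\in\{0,\pi\}$ reduces to the Neumann condition $f_0'(0)=f_0'(\pi)=0$ (the contribution from $f_1$ vanishes because it has zero $z$-mean for each $\theta$). Since $\mathscr{A}=\frac{1}{n}(\partial_\theta^2+1)$ acts only in the $\theta$-variable and clearly commutes with averaging in $z$, it leaves each summand invariant and splits as a direct sum of two operators on $L^2((0,\pi),d\theta)$: one with Neumann data on the $f_0$-component and one with Dirichlet data on the $f_1$-component (tensored with the identity on $L^2_0(S_{\mathcal{M}})$).

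Classical Fourier analysis on $(0,\pi)$ then finishes the proof. The Neumann Laplacian has eigenvalues $-k^2$, $k\ge 0$, with eigenfunctions $\{\cos(k\theta)\}_{k\ge 0}$ forming a complete orthogonal basis; the Dirichlet Laplacian has eigenvalues $-k^2$, $k\ge 1$, with basis $\{\sin(k\theta)\}_{k\ge 1}$. Translating by $+1$ and dividing by $n$, and tensoring the Dirichlet eigenfunctions with an arbitrary orthonormal basis of $L^2_0(S_{\mathcal{M}})$, yields that $\mathscr{A}$ has eigenvalue $\lambda_k=(1-k^2)/n$ with eigenspace consisting of all $a\cos(k\theta)+h(z)\sin(k\theta)$ where $a\in\mathbb{R}$ and $h\in L^2_0(S_{\mathcal{M}})$ (the sine term being absent for $k=0$). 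This matches the description of $E_k$ in the statement. Direct verification that these functions lie in $\Dom\mathscr{A}$ and satisfy $\mathscr{A}f=\lambda_k f$ is an elementary check, and completeness of the combined Neumann/Dirichlet bases on $(0,\pi)$ together with completeness of the $1\oplus L^2_0$-decomposition of $L^2(S_{\mathcal{M}})$ shows that the eigenfunctions span $L^2(S_{B,\mathcal{M}})$. Self-adjointness of $\mathscr{A}$ (Lemma \ref{lem:salower}) then forces $\spec\mathscr{A}=\{\lambda_k:k\in\mathbb{Z}_+\}$.

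The main technical point to watch is the decomposition step: one must show that $f\mapsto(f_0,f_1)$ preserves the defining regularity of $\Dom\mathscr{A}$, in particular that $f_0\in H^2((0,\pi))$ and that $f_1(\cdot,z)\in H^2((0,\pi))$ for $S_{\mathcal{M}}$-a.e.\ $z$, so that the claimed boundary conditions on $f_0$ and $f_1$ are even meaningful. This should follow from Fubini's theorem applied to $\partial_\theta^2 f\in L^2(S_{B,\mathcal{M}})$, combined with the fact that averaging in $z$ commutes with weak differentiation in $\theta$; once this is checked, the rest of the argument is essentially a textbook separation-of-variables computation.
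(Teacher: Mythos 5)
Your proof is correct and takes a genuinely different route from the paper's, though the two are related under the surface. The paper proceeds by first verifying directly that the stated $E_k$ are invariant subspaces on which $\mathscr{A}$ acts as $\lambda_k$, then proving that $\overline{\spn}\bigcup_k E_k=L^2(S_{B,\mathcal{M}})$ by expanding an arbitrary product function $\varphi(\theta)h(z)$ simultaneously in the $\cos$-basis and in the $\sin$-basis of $L^2([0,\pi])$ and splitting $h$ into a constant plus a mean-zero part, and finally invoking orthogonality of eigenspaces of a self-adjoint operator to deduce $\spec\mathscr{A}=\{\lambda_k\}$. Your argument instead lifts this splitting to the level of the operator itself: you decompose $L^2(S_{B,\mathcal{M}})\cong L^2([0,\pi])\oplus\bigl(L^2([0,\pi])\otimes L^2_0(S_{\mathcal{M}})\bigr)$, show that the two boundary conditions in $\Dom\mathscr{A}$ decouple into a Neumann condition on the first summand and a Dirichlet condition on the second, and identify $\mathscr{A}$ as a direct sum of the Neumann Laplacian and (Dirichlet Laplacian)$\,\otimes\,I$. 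This is more structural: the double Fourier expansion in the paper's spanning argument is exactly your decomposition in disguise, but you make the mechanism explicit, and the eigenspaces $E_k$ then fall out without any separate verification step. What the paper's route buys in exchange is that it bypasses the domain bookkeeping you correctly flag at the end: one has to check that $f\mapsto(f_0,f_1)$ preserves membership in the relevant $H^2$ spaces and that differentiation in $\theta$ commutes with averaging in $z$ in the weak sense (which it does, by Fubini and Cauchy--Schwarz applied to $\partial^2_\theta f\in L^2(S_{B,\mathcal{M}})$), whereas the paper's direct verification avoids this entirely. Both proofs are complete modulo routine details.
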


\begin{proof}
Let $E_k$ be the spaces defined in the statement of the lemma.
It is readily verified that $E_k\subset\Dom\mathscr{A}$ and 
$\mathscr{A}f=\lambda_kf$ for each $k$ and $f\in E_k$.

We now claim that 
$$
	E:=\overline{\spn}\Bigg(\bigcup_{k\ge 0}E_k\Bigg)
	=L^2(S_{B,\mathcal{M}}).
$$
As $S_{B,\mathcal{M}}$ is a product measure on $[0,\pi]\times (S^{n-1}\cap 
w^\perp)$, it suffices to show that any function of the form 
$g(\theta,z):=\varphi(\theta)\,h(z)$ with $\varphi\in L^2([0,\pi])$ and 
$h\in L^2(S_{\mathcal{M}})$ lies in $E$. To this end, note that both
$\{\sin(k\theta):k\ge 1\}$ and $\{\cos(k\theta):k\ge 0\}$ are complete 
orthogonal bases of $L^2([0,\pi])$ (these are the Dirichlet and Neumann 
eigenfunctions of the Laplacian on $[0,\pi]$, respectively). Thus
we may write
$$
	\varphi(\theta) = 
	\sum_{k\ge 0}a_k\cos(k\theta) =
	\sum_{k\ge 1}b_k\sin(k\theta)
$$
in $L^2([0,\pi])$ for some coefficient sequences $a_k,b_k$. Moreover, we 
can evidently write $h(z)=h_0(z)+c$ where $h_0\in L^2(S_{\mathcal{M}})$ 
and $\int h_0\,dS_{\mathcal{M}}=0$.
Thus
$$
	g(\theta,z) =
	\sum_{k\ge 0}\{b_kh_0(z)\sin(k\theta)
	+ ca_k\cos(k\theta)\} \in E,
$$
completing the proof of the claim.

Denote by $P_k$ the orthogonal projection in $L^2(S_{B,\mathcal{M}})$ onto 
$E_k$. As the eigenspaces of a self-adjoint operator are orthogonal, it 
follows that
$$
	I = \sum_{k\ge 0} P_k \le 
	\sum_{k\ge 0} 1_{\{\lambda_k\}}(\mathscr{A}) =
	1_{\{\lambda_k:k\in\mathbb{Z}_+\}}(\mathscr{A}) \le
	I.
$$
Thus $\spec\mathscr{A}=\{\lambda_k:k\in\mathbb{Z}_+\}$ and
$P_k=1_{\{\lambda_k\}}(\mathscr{A})$ for all $k$.
\end{proof}

Finally, we make the following simple observation.

\begin{lem}
\label{lem:lowerc2}
In the setting of Theorem \ref{thm:lowera}, we have
$C^2(S^{n-1})\subset\Dom\mathscr{A}$.
\end{lem}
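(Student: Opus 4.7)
The plan is to verify each of the defining properties of $\Dom\mathscr{A}$ (as listed in Theorem \ref{thm:lowera}) for an arbitrary $f\in C^2(S^{n-1})$. Most of these are immediate from smoothness, and the one nontrivial point will be the vertex mass-balance condition, which I expect to reduce to Lemma \ref{lem:maprop}(\textit{e}) applied to $M$ viewed as a convex body in $w^\perp$.

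First I would dispatch the routine conditions. Since $f\in C^2(S^{n-1})$ is bounded with bounded derivatives, $f\in L^2(S_{B,\mathcal{M}})$ and $\frac{\partial^2 f}{\partial \theta^2}\in L^2(S_{B,\mathcal{M}})$ by the explicit formula for $S_{B,\mathcal{M}}$ in Theorem \ref{thm:lowera}. Moreover, for each fixed $z$ the map $\theta\mapsto f(\iota(\theta,z))$ is $C^2$ on $[0,\pi]$, hence in $H^2((0,\pi))$. For the constancy condition, observe that $\iota(0,z)=w$ and $\iota(\pi,z)=-w$ for every $z\in S^{n-1}\cap w^\perp$, so in fact $f(0,z)=f(w)$ and $f(\pi,z)=f(-w)$ are constant functions of $z$ (not merely $S_{\mathcal{M}}$-a.e.).

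The heart of the argument is the vertex condition at $\theta\in\{0,\pi\}$. Differentiating the parametrization, $\partial_\theta\iota(\theta,z)=-w\sin\theta+z\cos\theta$, so at $\theta=0$ this equals $z$, and at $\theta=\pi$ it equals $-z$. Therefore
\begin{equation*}
\frac{\partial f}{\partial\theta}(0,z)=(df)_w(z),\qquad
\frac{\partial f}{\partial\theta}(\pi,z)=-(df)_{-w}(z),
\end{equation*}
where $(df)_{\pm w}$ is the differential of $f$ as a smooth function on $S^{n-1}$, evaluated as a linear functional on $T_{\pm w}S^{n-1}=w^\perp$. In particular, for each $\theta\in\{0,\pi\}$ the integrand $z\mapsto\partial_\theta f(\theta,z)$ is the restriction to $S^{n-1}\cap w^\perp$ of a linear function $z\mapsto \langle v_\theta,z\rangle$ for some $v_\theta\in w^\perp$.

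Since $M$ is a convex body in $w^\perp$ of dimension $\ge n-2$, the measure $S_{\mathcal{M}}$ is its (top-dimensional) surface area measure in $w^\perp$, so Lemma \ref{lem:maprop}(\textit{e}) applied inside $w^\perp$ yields
\begin{equation*}
\int_{S^{n-1}\cap w^\perp}\langle v,z\rangle\, S_{\mathcal{M}}(dz)=0\qquad\text{for every }v\in w^\perp.
\end{equation*}
Applying this with $v=v_0$ and $v=v_\pi$ gives the required boundary conditions at $\theta=0$ and $\theta=\pi$, completing the verification that $f\in\Dom\mathscr{A}$. The only step with any content is this last one, and its resolution is the observation that vertex mass-balance on the ``limiting quantum graph'' is exactly the classical centering property of the surface area measure of $M$ inside $w^\perp$.
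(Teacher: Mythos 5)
Your proposal is correct and takes essentially the same approach as the paper: the only nontrivial point is the vertex boundary condition, which both you and the paper resolve by identifying $\frac{\partial f}{\partial\theta}(\theta,z)$ at $\theta\in\{0,\pi\}$ with a linear function of $z$ and invoking Lemma \ref{lem:maprop}(\textit{e}) for $S_{\mathcal{M}}$ viewed as the area measure of $M$ inside $w^\perp$. You spell out the routine membership and constancy conditions (which the paper dismisses as trivial) and are a bit more careful about the sign at $\theta=\pi$, but the substance is the same.
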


\begin{proof}
It suffices to note that for $f\in C^2(S^{n-1})$, we have
$$
	\int
        \frac{\partial f}{\partial\theta}(0,z)\,S_{\mathcal{M}}(dz)
	=
	\int \langle z,\nabla f(w)\rangle\,S_{\mathcal{M}}(dz)
	=0 
$$
and
$$
	\int
        \frac{\partial f}{\partial\theta}(\pi,z)\,S_{\mathcal{M}}(dz)
	=
	\int \langle z,\nabla f(-w)\rangle\,S_{\mathcal{M}}(dz)
	=0 
$$
by Lemma \ref{lem:maprop}(\textit{e}). The remaining properties 
are trivial.
\end{proof}

We can now complete the proof of Theorem \ref{thm:lowera}.

\begin{proof}[Proof of Theorem \ref{thm:lowera}]
The expression for $S_{B,\mathcal{M}}$ follows from Lemma 
\ref{lem:lowermix}, and self-adjointness of $\mathscr{A}$ on
$L^2(S_{B,\mathcal{M}})$ was proved in Lemma \ref{lem:salower}. Properties 
\textit{a}--\textit{c} of Theorem \ref{thm:forms} can be read off from 
Lemma \ref{lem:lowerspec}.

Finally, note that by Lemmas \ref{lem:lowermix} and \ref{lem:lowerc2}, the 
operator $\mathscr{A}$ agrees with \eqref{eq:defa} on $C^2(S^{n-1})$. Thus 
the closed quadratic form associated to $\mathscr{A}$ is a closed 
extension of its restriction to $C^2(S^{n-1})$. The quadratic form 
$\mathcal{E}$ of Theorem \ref{thm:forms} is the smallest such extension 
(the Friedrichs extension); thus properties \textit{d}--\textit{e} of 
Theorem \ref{thm:forms} remain valid in the present setting (and for any 
other closed extension of $\mathcal{E}$).
\end{proof}

\begin{rem}
\label{rem:lowcompres}
Unless $M$ is a polytope, Lemma \ref{lem:lowerspec} implies
that the eigenspaces of $\mathscr{A}$ are infinite-dimensional. Thus
$\mathscr{A}$ does not have a compact resolvent.
This provides an explicit example of the issue that was highlighted in
Remark \ref{rem:compres}. Let us also note that this phenomenon is not
specific to the lower-dimensional setting; for example, it may be verified
that a similar situation occurs if we replace $M$ by $M_\varepsilon:=
M+\varepsilon[0,w]$, which has nonempty interior. We omit the details.
\end{rem}

\begin{rem}
\label{rem:fdj}
Even when $M$ has nonempty interior, we have
only established that $\ker\mathscr{A}\cap\{h_Q-h_R:Q,R\mbox{ convex 
bodies}\}$ consists of linear functions. A function that is not a
difference of support functions does not give rise to extremals of 
Minkowski's inequality, so is not relevant for the purposes of this paper. 
Nonetheless, one may wonder whether it is possible that $\ker\mathscr{A}$ 
contains such functions.

When $M$ has empty interior, Lemma \ref{lem:lowerspec} shows that this may 
in fact happen. Indeed, as the function $h\in L^2(S_{\mathcal{M}})$ need 
only be measurable, it is perfectly possible in general to construct 
eigenfunctions that do not admit a continuous extension to $S^{n-1}$. Such 
eigenfunctions cannot arise as the difference of support functions, as 
support functions are always continuous. Thus $\ker\mathscr{A}$ may 
contain many elements that do not contribute to the characterization of 
extremals.

It is natural to conjecture that when $M$ has nonempty interior, such 
examples cannot occur and that $\ker\mathscr{A}$ consists \emph{only} of 
linear functions (cf.\ Remark \ref{rem:gap}). Such a result cannot be
achieved, however, by the methods of this paper. The key obstruction is 
the theory of section \ref{sec:rigid}: that $\supp\mu_M\subseteq\supp 
S_{M,\mathcal{M}}$ only guarantees that \emph{continuous} functions that
vanish $S_{M,\mathcal{M}}$-a.e.\ must vanish $\mu_M$-a.e., so we 
cannot rule out discontinuous elements of $\ker\mathscr{A}$ by this 
method.
\end{rem}

\subsection{Proof of Theorem \ref{thm:mainlower}}
\label{sec:pflower}

By Lemma \ref{lem:hyper}, understanding the equality cases of Minkowski's 
quadratic inequality reduces to understanding the kernel of $\mathscr{A}$. 
In the present setting, however, we have already computed the kernel in 
Lemma \ref{lem:lowerspec}. It therefore remains to furnish its elements 
with a geometric interpretation.

Before we turn to the proof of Theorem \ref{thm:mainlower}, however, we 
must extend the conclusion of Theorem \ref{thm:supp} to the present 
setting.

\begin{lem}
\label{lem:lowersupp}
Let $w\in S^{n-1}$ and let $M\subset w^\perp$ be a convex body with
$\dim M\ge n-2$. Then
$\supp S_{B,\mathcal{M}} =
\cl\{u\in S^{n-1}:u\mbox{ is a }1\mbox{-extreme normal vector
of } M\}$.
\end{lem}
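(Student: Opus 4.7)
My plan is to build on the explicit formula from Theorem \ref{thm:lowera}, which expresses $S_{B,\mathcal{M}}$ as (up to the constant $\frac{1}{n-1}$) the pushforward of $d\theta \otimes S_{\mathcal{M}}$ under the continuous map $\iota(\theta, z) = w\cos\theta + z\sin\theta$ on the compact domain $[0,\pi] \times (S^{n-1} \cap w^\perp)$. Since $\iota$ is continuous on a compact set (so $\cl(\iota(S)) = \iota(\cl S)$), this will immediately give $\supp S_{B,\mathcal{M}} = \iota([0,\pi] \times \supp S_\mathcal{M})$. When $\dim M = n-1$, the measure $S_\mathcal{M}$ is the classical surface area measure of $M$ viewed as a full-dimensional body in $w^\perp$, and I will apply Theorem \ref{thm:supp} in $w^\perp \cong \mathbb{R}^{n-1}$ to identify $\supp S_\mathcal{M}$ with the closure of the $0$-extreme normals of $M$ in $w^\perp$. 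The degenerate case $\dim M = n-2$ I will handle similarly, observing that $\supp S_\mathcal{M} \subseteq \{\pm v\}$ (where $v$ is a unit normal to $\mathrm{aff}\,M$ within $w^\perp$) and that $\pm v$ are exactly the $0$-extreme normals of $M$ in $w^\perp$ in this case.

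The geometric core of the proof will be to identify the $1$-extreme normals of $M$ in $\mathbb{R}^n$ precisely as the set $\{\pm w\} \cup \iota((0,\pi) \times \{z : z \text{ is } 0\text{-extreme for } M \text{ in } w^\perp\})$. The starting observation is that since $M \subset w^\perp$, the line $\mathbb{R} w$ is contained in $N_M(x)$ for every $x \in M$, so $N_M(x) = N^{w^\perp}_M(x) + \mathbb{R} w$. Using this, I will show that $\pm w$ are always $1$-extreme: any decomposition $\pm w = u_1 + u_2 + u_3$ with $u_i = z_i + c_i w$ linearly independent forces $\sum z_i = 0$ in $N^{w^\perp}_M(x)$, and since the latter is pointed (or contains only the line $\mathbb{R} v$ in the $\dim M = n-2$ case), the $z_i$ lie in a subspace of dimension $\le 1$, placing the three $u_i$ in a space of dimension $\le 2$ and contradicting linear independence. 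For $u = w\cos\theta + z\sin\theta$ with $\theta \in (0, \pi)$, I will establish both directions: if $z$ is not $0$-extreme in $w^\perp$, then at some $x \in M$ one has $z = z_1 + z_2$ with $z_1, z_2 \in N^{w^\perp}_M(x)$ linearly independent, and the triple $(\sin\theta \cdot z_1, \sin\theta \cdot z_2, \cos\theta \cdot w)$ — or $(z_1, z_2 + w, -w)$ when $\cos\theta = 0$ — gives three linearly independent elements of $N_M(x)$ summing to $u$; conversely, if $z$ is $0$-extreme, the cone-theoretic fact that a vector on an extreme ray of a convex cone admits no decomposition into cone elements off the ray forces the $z_i$-components of any summation to lie on $\mathbb{R}_{\ge 0} z$, trapping all three $u_i$ in the $2$-dimensional subspace $\spn(z, w)$.

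Combining these two ingredients, the set of $1$-extreme normals will equal $\{\pm w\} \cup \iota((0,\pi) \times \{z: z \text{ is } 0\text{-extreme for } M \text{ in } w^\perp\})$; since $\iota(\{0,\pi\} \times \{z\}) = \{\pm w\}$ for any fixed $z$ and $\supp S_\mathcal{M}$ is nonempty, taking closures then yields $\cl\{u \in S^{n-1}: u \text{ is }1\text{-extreme for }M\} = \iota([0,\pi] \times \supp S_\mathcal{M}) = \supp S_{B,\mathcal{M}}$. I expect the main obstacle to be the geometric characterization in the second paragraph: verifying both directions of the $1$-extremal/$0$-extremal correspondence cleanly requires careful linear-algebraic bookkeeping in the normal cones, with the equatorial case $\cos\theta = 0$ and the degenerate case $\dim M = n-2$ (where $N^{w^\perp}_M(x)$ fails to be pointed) each needing separate attention.
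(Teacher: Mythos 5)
Your proposal is correct and follows the same structure as the paper's proof: use polar coordinates and the product formula of Theorem \ref{thm:lowera} to reduce to identifying $\supp S_{\mathcal M}$ (via Theorem \ref{thm:supp} when $\dim M=n-1$, or by hand when $\dim M=n-2$), where the paper dispatches the correspondence between $1$-extreme normals of $M$ in $\mathbb R^n$ and $0$-extreme normals in $w^\perp$ with ``it follows readily'' while you supply the cone-theoretic details. One small caution in writing it up: being a $0$-extreme normal vector is not literally the same as lying on an extreme ray of $N^{w^\perp}_M(x)$ once that cone has nontrivial lineality (precisely the $\dim M=n-2$ case, where the lineality space is $\mathbb R v$), but you already flag that degenerate case, and the argument still closes because the lineality space then coincides with $\mathbb R z$.
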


\begin{proof}
As $w$ is normal to every point in $M$, a vector $u\in S^{n-1}$ is normal 
to a given point in $M$ if and only if its projection
$P_{w^\perp}u$ is normal to that point. In particular, it follows readily 
that $u\ne w$ is a $1$-extreme normal vector of $M$ if and only if 
$P_{w^\perp}u$ is a $0$-extreme normal vector of $M$ when viewed as
a convex body in $w^\perp$.

Now note that, by the expression for $S_{B,\mathcal{M}}$ given in
Theorem \ref{thm:lowera}, we have
$\supp S_{B,\mathcal{M}} = [0,\pi]\times\supp S_{\mathcal{M}}$ 
(in polar coordinates). When $\dim M=n-1$, we have
$\supp S_{\mathcal{M}}=\cl\{0\mbox{-extreme normal vectors of }M\mbox{ in }
w^\perp\}$ by Theorem \ref{thm:supp}, and the conclusion follows.
On the other hand, when $\dim M=n-2$, we have $M\subset\spn\{v,w\}^\perp$ 
for some
$v\perp w$. Thus $\pm v$ are the only $0$-extreme normal directions of $M$ 
in $w^\perp$. On the other hand, it follows readily from the definition 
that $\supp S_{\mathcal{M}}=\{\pm v\}$ in this case, so that the 
conclusion again follows.
\end{proof}

We are now ready to complete the proof of Theorem \ref{thm:mainlower}.

\begin{proof}[Proof of Theorem \ref{thm:mainlower}]
By translation-invariance of mixed volumes, we may assume without loss of 
generality that $M\subset w^\perp$. Moreover, we may assume $\dim M\ge 
n-2$, as otherwise $\V(L,L,\mathcal{M})=0$ for all bodies $L$ 
\cite[Theorem 5.1.8]{Sch14}.

Now let $K,L$ be any convex bodies in $\mathbb{R}^n$ with 
$\V(L,L,\mathcal{M})>0$.
By Theorem \ref{thm:lowera} and Lemma \ref{lem:hyper}, we have equality
in Minkowski's inequality
$$
	\V(K,L,\mathcal{M})^2=
	\V(K,K,\mathcal{M})\,\V(L,L,\mathcal{M})
$$
if and only if
$h_K-ah_L\in\ker\mathscr{A}$ for some $a\in\mathbb{R}$. Thus the proof
will be concluded once we establish that the following two statements are 
equivalent:
\begin{enumerate}[1.]
\itemsep\abovedisplayskip
\item $h_K-ah_L\in\ker\mathscr{A}$ for some $a\in\mathbb{R}$.
\item $\tilde L:=\frac{\V(K,L,\mathcal{M})}{\V(L,L,\mathcal{M})}L$ has the
property that $K+F(\tilde L,w)$ and $\tilde L+F(K,w)$ have the same 
supporting hyperplanes in all $1$-extreme normal directions of $M$.
\end{enumerate}
Let us prove each in turn.

\vskip.2cm

$\boldsymbol{1\Rightarrow 2}$. Let $h_K-ah_L\in\ker\mathscr{A}$.
First, note that
$$
	0 = \langle h_L,\mathscr{A}(h_K-ah_L)\rangle_{L^2(S_{B,\mathcal{M}})}
	= \V(K,L,\mathcal{M})-a\,\V(L,L,\mathcal{M})
$$
by Theorem \ref{thm:lowera}. Thus we must have
$$
	a = \frac{\V(K,L,\mathcal{M})}{\V(L,L,\mathcal{M})},
$$
and it follows that $\tilde L=aL$.
To proceed, we observe that Lemma \ref{lem:lowerspec} implies that
$$
	h_K(\theta,z)-h_{\tilde L}(\theta,z) = 
	\eta(z)\sin\theta+ \alpha\cos\theta\quad 
	S_{B,\mathcal{M}}\mbox{-a.e.\ }(\theta,z)
$$
for some $\alpha\in\mathbb{R}$ and $\eta\in L^2(S_{\mathcal{M}})$ with
$\int \eta\,dS_{\mathcal{M}}=0$. Evidently
$$
	\alpha = h_K(0,z)-h_{\tilde L}(0,z) = h_K(w)-h_{\tilde L}(w),
$$
and
$$
	\eta(z) = \frac{\partial h_K}{\partial\theta}(0,z)
        -\frac{\partial h_{\tilde L}}{\partial\theta}(0,z) =
	\nabla_zh_K(w)-\nabla_zh_{\tilde L}(w) =
	h_{F(K,w)}(z)-h_{F(\tilde L,w)}(z)
$$
by Lemma \ref{lem:dirdir}. But note that at 
the point $x=w\cos\theta+z\sin\theta\in S^{n-1}$ corresponding
to the polar coordinates $(\theta,z)$, we can write using
$F(K,w)-wh_K(w)\subset w^\perp$
$$
	h_{F(K,w)}(x) = h_{F(K,w)}(z)\sin\theta + h_K(w)\cos\theta.
$$
The analogous formula holds for $\tilde L$, and we conclude that
$$
	h_K(x)-h_{\tilde L}(x) = 
	h_{F(K,w)}(x)-h_{F(\tilde L,w)}(x)\quad
	S_{B,\mathcal{M}}\mbox{-a.e.\ }x.
$$
By continuity of support functions, this identity remains valid for all 
$x\in\supp S_{B,\mathcal{M}}$, and the implication $1\Rightarrow 2$ 
follows by Lemma \ref{lem:lowersupp}.

\vskip.2cm

$\boldsymbol{2\Rightarrow 1}$. 
By Lemma \ref{lem:lowersupp} and continuity, we can assume
$$
	h_K(x)-h_{\tilde L}(x) = 
	h_{F(K,w)}(x)-h_{F(\tilde L,w)}(x)\quad
	\mbox{for }
	x\in\supp S_{B,\mathcal{M}}.
$$
We will prove directly that this implies equality in Minkowski's quadratic
inequality (and hence $h_K-ah_L\in\ker\mathscr{A}$ by Lemma \ref{lem:hyper}).

We begin by noting that $\supp S_{C,\mathcal{M}}=\supp S_{B,\mathcal{M}}$
for every body $C$ of class $C^2_+$ by Lemma \ref{lem:lowermix}.
Thus Theorem \ref{thm:conv} implies that
$\supp S_{C,\mathcal{M}}\subseteq\supp S_{B,\mathcal{M}}$ for any convex
body $C$. Choosing $C=F(K,w)$ and $C=F(\tilde L,w)$, respectively, we find
\begin{align*}
	&\V(h_K-h_{\tilde L},h_{F(K,w)}-h_{F(\tilde L,w)},\mathcal{M}) \\ &\quad=
	\frac{1}{n}\int (h_K-h_{\tilde L})\,dS_{h_{F(K,w)}-h_{F(\tilde L,w)},\mathcal{M}}
	\\
	&\quad=
	\frac{1}{n}\int (h_{F(K,w)}-h_{F(\tilde L,w)})\,dS_{h_{F(K,w)}-h_{F(\tilde L,w)},\mathcal{M}}
	\\
	&\quad= 
	\V(h_{F(K,w)}-h_{F(\tilde L,w)},h_{F(K,w)}-h_{F(\tilde L,w)}, \mathcal{M})=0,
\end{align*}
where the last equality holds as $\dim(F(K,w)+F(\tilde L,w)+M)<n$. On the
other hand, choosing $C=K$ and $C=\tilde L$, we obtain similarly
$$
	\V(h_K-h_{\tilde L},h_K-h_{\tilde L},\mathcal{M}) =
	\V(h_{F(K,w)}-h_{F(\tilde L,w)},h_K-h_{\tilde L},\mathcal{M})=0.
$$
Thus we have shown that
$$
	\V(K,K,\mathcal{M}) 
	-2\,\V(K,\tilde L,\mathcal{M})+\V(\tilde L,\tilde L,\mathcal{M})=0.
$$
The conclusion follows by substituting
$\tilde L = \frac{\V(K,L,\mathcal{M})}{\V(L,L,\mathcal{M})}L$ into this
expression.
\end{proof}

\subsection*{Acknowledgments}

We thank Emanuel Milman and Fedja Nazarov for helpful comments, and we are 
grateful to the anonymous referees whose detailed suggestions helped
us improve the presentation of the paper.

This work was supported in part by NSF grants CAREER-DMS-1148711 and 
DMS-1811735, ARO through PECASE award W911NF-14-1-0094, and the Simons 
Collaboration on Algorithms \& Geometry. The project was initiated while 
the authors were in residence at MSRI in Berkeley, CA in Fall 2017, 
supported by NSF grant DMS-1440140. The hospitality of MSRI and of the 
organizers of the program on Geometric Functional Analysis is gratefully 
acknowledged.


\end{document}